\documentclass[reqno,11pt]{amsart}

\usepackage{amssymb}
\usepackage[all,cmtip]{xy}
\usepackage{leftidx}
\usepackage{comment}
\usepackage[textsize=tiny]{todonotes}
\usepackage{tikz-cd}
\newtheorem{Proposition}{Proposition}[section]
\newtheorem{Corollary}[Proposition]{Corollary}
\newtheorem{Definition}[Proposition]{Definition}
\newtheorem{Lemma}[Proposition]{Lemma}
\newtheorem{Theorem}[Proposition]{Theorem}
\newcounter{mt}

\newtheorem{MainTheorem}[mt]{Theorem}
\newtheorem{Remark}[Proposition]{Remark}
\newcounter{ct}

\DeclareMathOperator{\Val}{Val}
\DeclareMathOperator{\nc}{nc}
\DeclareMathOperator{\vol}{vol}
\DeclareMathOperator{\Kl}{Kl}

\DeclareMathOperator{\Gr}{Gr}

\DeclareMathOperator{\Span}{Span}
\DeclareMathOperator{\Dens}{Dens}

\DeclareMathOperator{\ori}{or}
\DeclareMathOperator{\sgn}{sgn}

\DeclareMathOperator{\Ad}{Ad}
\DeclareMathOperator{\id}{id}
\DeclareMathOperator{\ad}{ad}

\DeclareMathOperator{\Deg}{Deg}

\DeclareMathOperator{\ev}{ev}
\newcommand{\C}{\mathbb{C}}
\newcommand{\p}{\mathbb{P}}
\newcommand{\R}{\mathbb{R}}
\newcommand{\RR}{\mathbb{R}}

\newcommand{\calA}{\mathcal{A}}

\newcommand{\calC}{\mathcal{C}}
\newcommand{\calD}{\mathcal{D}}

\newcommand{\calG}{\mathcal{G}}
\newcommand{\calH}{\mathcal{H}}

\newcommand{\calT}{\mathcal{T}}

\newcommand{\calV}{\mathcal{V}}
\newcommand{\calZ}{\mathcal{Z}}
\newcommand{\TC}{\mathcal{TC}}
\newcommand{\DTC}{\mathcal{DTC}}

\newcommand{\GV}{\mathbf{GV}} %graded vector space
\newcommand{\DGV}{\mathbf{DGV}} %differential graded vector space
\newcommand{\DBGV}{\mathbf{DBGV}} %differential bigraded vector space
\newcommand{\GLA}{\mathbf{GLA}} %graded Lie algebra
\newcommand{\DGLA}{\mathbf{DGLA}} %differential graded Lie algebra
\newcommand{\DBGLA}{\mathbf{DBGLA}} %differential bigraded Lie algebra
\newcommand{\DGGA}{\mathbf{DGGA}} %differential graded Gerstenhaber algebra
\newcommand{\GUAA}{\mathbf{GUAA}} %graded unitary associative algebra

\newcommand{\MC}{\mathrm{MC}}

\newcommand{\g}{\mathfrak g}
\newcommand{\h}{\mathfrak h}

  \newcommand{\largewedge}{\mbox{\Large $\wedge$}}

\makeatletter
\newcommand{\exterior}[1]{\mathop{\mathpalette\exterior@{#1}}}
\newcommand{\exterior@}[2]{%
	\raisebox{\depth}{%
		\fontsize{\sf@size}{1}%
		\m@th
		$\ifx#1\displaystyle\textstyle\else#1\fi\largewedge$}%
	^{\mspace{-2mu}#2}%
	\kern-\scriptspace
}
\makeatother

\usepackage{amsmath}
\DeclareFontFamily{U}{mathx}{}
\DeclareFontShape{U}{mathx}{m}{n}{<-> mathx10}{}
\DeclareSymbolFont{mathx}{U}{mathx}{m}{n}
\DeclareMathAccent{\widecheck}{0}{mathx}{"71}

\allowdisplaybreaks

\title{The volume of tubes in Lie groups}
\author[Andreas Bernig]{Andreas Bernig}
\author[Dmitry Faifman]{Dmitry Faifman}
\author[Jan Kotrbat{\'y}]{Jan Kotrbat{\'y}}

\address{Institut f\"ur Mathematik, Goethe-Universit\"at Frankfurt, Robert-Mayer-Str. 10, 60054 Frankfurt, Germany}
\email{bernig@math.uni-frankfurt.de}

\address{D\'epartement de Math\'ematiques et de Statistique, Universit\'e de
	Montr\'eal, CP 6128 succ Centre-Ville, Montr\'eal, QC H3C 3J7, Canada}
\email{dmitry.faifman@umontreal.ca}

\address{Charles University, Faculty of Mathematics and Physics, Sokolovsk\'a 49/83, 186 00 Prague, Czechia}
\email{kotrbaty@karlin.mff.cuni.cz}

\thanks{A.B. was supported by DFG grant BE 2484/10-1.}
\thanks{D.F. was supported by an NSERC Discovery grant and ISF grant No. 1750/20}
\thanks{J.K. was supported by CU grants PRIMUS/24/SCI/009 and UNCE/24/SCI/022.}

\begin{document}

	\begin{abstract} The problem of computing the volume of tubes in riemannian manifolds goes back to Weyl and Hotelling. Here we find explicit Taylor series for the volume of a tube in a Lie group equipped with a bi-invariant metric. The coefficients are smooth valuations, given by the convolution powers of the surface area valuation. 
		 We show that the tube coefficients can be naturally described as the unique valuations given by universal formulas through the formalism of differential graded Lie and Gerstenhaber algebras; in fact, they are generated by the gauge action on the Maurer--Cartan cone in the free differential graded Lie algebra on one generator. Moreover, we introduce a new convolution product on the corresponding free Gerstenhaber algebra which is compatible with the convolution of valuations and differential forms.

	To complete the picture, we show that a Lie group---not necessarily connected---admits a smooth bi-invariant valuation, beyond the Euler characteristic and the Haar measure, if and only if it admits a bi-invariant riemannian metric.
	\end{abstract}

	\maketitle 
	
	\tableofcontents

	\section{Introduction and main results}

In 1939,  extending previous work of Hotelling \cite{hotelling}, Weyl  \cite{weyl_tubes} computed the volume of the tube ($\epsilon$-neighbourhood) about a compact $n$-dimensional submanifold $Z\subset\R^N$: For $\epsilon\ll1$, 
\begin{equation}\label{eq:weyl_tube} \vol (Z_\epsilon)= \sum_{k=0}^n \omega_{N-k}\mu_k(Z)\epsilon^{N-k},\end{equation}
where $\omega_j$ is the volume of the unit ball  in $\R^j$. Remarkably, the coefficients $\mu_k(Z)$ only depend on the metric induced on $Z$, and are aptly named the \emph{intrinsic volumes} of $Z$. They can be expressed as integrals over $Z$ of the Lipschitz--Killing curvatures, which are certain  polynomials in the entries of the Riemann curvature tensor.

Computing tube formulas in more general settings proved to be challenging, with few cases settled to date. In real space forms, tube formulas were worked out in works of Allendoerfer \cite{allendoerfer} and Santal\'o \cite{santalo}. Complex space forms were treated by Gray in \cite{gray_submanifolds}. Finally, explicit tube formulas for all rank one symmetric spaces were computed by Gray--Vanhecke \cite{gray_vanhecke}. For a comprehensive overview of these results, see Gray's book \cite{gray_tubes}. More recently, Alesker's theory of valuations was used to establish general kinematic formulas on complex space forms by the first named author, Fu, and Solanes  \cite{bernig_fu_solanes}, on exceptional spheres by Solanes--Wannerer \cite{solanes_wannerer}, and  to further study tube formulas by Solanes--Trillo \cite{solanes_trillo}.

Valuations on smooth manifolds were introduced by Alesker in the early 2000s, as a natural extension to smooth manifolds of the notion of valuation in convex geometry \cite{alesker_val_man2,alesker_survey07, alesker_val_man4, alesker_val_man3}. A smooth valuation on an $n$-dimensional manifold $M$ is a finitely-additive measure on a family of test bodies, e.g. the compact submanifolds with corners $\mathcal P(M)$, given by

\begin{displaymath}
	Z\mapsto \int_Z \mu +\int _{\nc(Z)}\omega,
\end{displaymath} 
where $\mu$ is a smooth measure on $M$, $\omega$ an $(n-1)$-form on the sphere bundle of $M$, and $\nc(Z)$ is the normal cycle of $Z$.

The space of smooth valuations on $M$ is denoted $\mathcal V^\infty(M)$. The intrinsic volumes are examples of smooth valuations on $\RR^n$. More generally, for any riemannian manifold $M$, the \emph{tube coefficients}
\[\upsilon_k(Z)=\frac{1}{k!}\left.\frac{d^k}{d\epsilon^k}\right|_{0}\vol(Z_\epsilon),\quad Z\in\mathcal P(M)\]
are elements of $\calV^\infty(M)$,  as was shown in a recent paper by Solanes and Trillo \cite{solanes_trillo}.

Alesker has introduced a natural product on $\calV^\infty(M)$, which can be informally described as follows: If $A, B\in\mathcal P(M)$, and $\chi$ denotes the Euler characteristic, then $\chi(A\cap \bullet)\cdot \chi(B\cap \bullet)=\chi(A\cap B\cap \bullet)$. While $\chi(A\cap \bullet)$ is not a smooth valuation, all smooth valuations can be generated by taking integrals of such valuations over measured families of subsets $A$ \cite{faifman_hofstaetter}.

It was observed by Alesker that Weyl's result implies the existence on a riemannian manifold $(M,g)$ of a distinguished collection of smooth valuations $\mu_j$, $0\leq j\leq n=\dim M$, still called intrinsic volumes, which commute with restrictions under isometric embeddings. They can be defined as follows:
For any compact submanifold with corners $Z\subset M$, and for any isometric embedding of $M$ into euclidean space $\R^N$, \eqref{eq:weyl_tube} holds for $\epsilon\ll1$.

The so-called Lipschitz--Killing space $\mathcal {LK}(M)\subset\mathcal V^\infty(M)$ spanned by the intrinsic volumes is a subalgebra of $\mathcal V^\infty(M)$ under the Alesker product, in fact $\mu_1^j=c_j \mu_j$ with $c_j$ a universal coefficient.
Furthermore, Fu--Wannerer \cite{fu_wannerer} have shown that $\mathcal {LK}(M)$ consists of exactly those valuations that are given by universal formulas in terms of the riemannian structure, more precisely using the Cartan apparatus of solder/connection/curvature forms.
Similar distinguished subspaces of valuations were also found in K\"ahler manifolds \cite{bernig_fu_solanes_wannerer}, pseudo-riemannian manifolds \cite{bernig_faifman_solanes_pseudo} and contact manifolds \cite{faifman_heisenberg}.

In this paper we prove a Lie group counterpart of these results. Let $G$ be a Lie group. The subspace $\mathcal V^\infty(G)^{G\times G}\subset \mathcal V^\infty(G)$ of bi-invariant valuations admits a natural convolution product, introduced by Alesker and the first named author in the compact case and extended to the general case by the authors in \cite{bernig_faifman_kotrbaty_part1}.   
Assume $G$ is equipped with a bi-invariant metric. We will show that the tube coefficients in $G$ are given by universal formulas in terms of the Lie structure and the metric, and span a convolution subalgebra of $\mathcal V^\infty(G)^{G\times G}$.

While not every Lie group admits a bi-invariant metric, this is in fact the case for any Lie group admitting interesting smooth bi-invariant valuations, as our first theorem shows.
 
\begin{MainTheorem}\label{mthm:existence}
	A  Lie group $G$ of dimension at least 2 admits a smooth bi-invariant valuation outside of $\Span\{\chi, \vol_G\}$ if and only if $G$ admits a bi-invariant riemannian metric.
\end{MainTheorem}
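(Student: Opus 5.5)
We reduce both directions to the linear algebra of the adjoint action of $G$ on $\g=T_eG$. A bi-invariant valuation is determined by its behaviour near the identity. Via the translation-invariant description of smooth valuations on the tangent space, it gives rise to elements of $\Val^\infty(\g)^{\Ad(G)}$. Recall that by Alesker's filtration, $\calV^\infty(G)=\calW_0\supset\calW_1\supset\dots\supset\calW_n$, with $\calW_i/\calW_{i+1}$ isomorphic to smooth sections of the bundle $\Val_i^\infty(TG)$. Left-invariance trivializes this bundle as $G\times\Val_i^\infty(\g)$, and right-invariance forces the value at $e$ to lie in $\Val_i^\infty(\g)^{\Ad(G)}$. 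Moreover, the degree $0$ and degree $n$ graded pieces correspond exactly to multiples of $\chi$ and $\vol_G$ (using that $\Ad$ preserves the Haar density up to the modular character, which we must handle separately).

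\emph{Direction ``$\Leftarrow$''.} If $G$ carries a bi-invariant metric, we simply exhibit the intrinsic volumes $\mu_1,\dots,\mu_{n-1}$ of the Riemannian manifold $(G,g)$. They are invariant under all isometries, in particular under left and right translations. For $n\ge2$, $\mu_1$ is not in $\Span\{\chi,\vol_G\}$, because its graded image in $\calW_1/\calW_2$ is the nonzero first intrinsic volume of the Euclidean space $(\g,g_e)$.

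\emph{Direction ``$\Rightarrow$''.} Suppose $\phi\in\calV^\infty(G)^{G\times G}\setminus\Span\{\chi,\vol_G\}$. Subtracting the appropriate multiple of $\chi$, we may assume $\phi\in\calW_1$. Let $i\in\{1,\dots,n\}$ be maximal with $\phi\in\calW_i$.
\begin{itemize}
\item If $i=n$, then $\phi$ is a multiple of $\vol_G$, a contradiction.
\item If $1\le i\le n-1$, the graded image is a nonzero $\psi\in\Val_i^\infty(\g)^{\Ad(G)}$.
\end{itemize}
The key step is then a linear-algebra lemma: if a subgroup $H=\Ad(G)\subset GL(V)$ fixes a nonzero smooth even or odd valuation of degree $1\le i\le n-1$, then $\overline H$ is compact, i.e.\ $H$ preserves a Euclidean inner product on $V$. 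An $\Ad(G)$-invariant inner product on $\g$ is exactly a bi-invariant metric. Note that $G$ need not be connected; the argument works with the full image $\Ad(G)$, so disconnectedness causes no trouble.

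\emph{Proof of the lemma (main obstacle).} By McMullen's decomposition we may take $\psi$ homogeneous of degree $i$. By the Klain or Schneider embedding, or more robustly via Alesker's irreducibility theorem, smooth valuations have continuous Klain or Crofton-type functions on $\Gr_i(V)$ (or on partial flags, in the odd case). Consider the closure $\overline H$ in $GL(V)$ and suppose it is noncompact. Then there are $h_k\in\overline H$ with $h_k/\|h_k\|\to A$, a nonzero singular limit. Invariance gives $\psi(h_k^{-1}K)=\psi(K)$ for all convex bodies $K$. Homogeneity of degree $i$ together with continuity in the Hausdorff topology then forces $\psi$ to vanish on bodies degenerated along $\ker A$ or $\operatorname{im}A$.

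To conclude that $\psi=0$, we would instead use that $\overline H$ is a closed subgroup fixing $\psi$, hence contained in the stabilizer of $\psi$. We would then show that this stabilizer is compact for $0<i<n$. The plan is:
\begin{itemize}
\item Use the $GL(V)$-equivariant structure and Alesker's irreducibility theorem.
\item Show that the stabilizer of a nonzero vector is a proper algebraic-type subgroup.
\item Show it contains no unipotent or hyperbolic one-parameter subgroup $\exp(tX)$: for such $X$, growth and decay of volumes of $\exp(tX)K$ under appropriate $K$, e.g.\ $i$-dimensional discs adapted to eigenspaces, contradict constancy of the associated continuous Klain/Crofton data via homogeneity.
\end{itemize}
Handling the unipotent case (polynomial rather than exponential growth) and the odd part of $\psi$ is where we expect the real work to lie. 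A fallback is to apply the argument to the even part $\psi+\psi\circ(-1)$, or, if that vanishes, to a suitable Alesker product of $\psi$ with an invariant valuation to produce a nonzero even invariant.

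Finally, the modular character: if $G$ is not unimodular, $\vol_G$ is not bi-invariant. This is consistent with the statement, since a bi-invariant metric forces unimodularity and Haar is then bi-invariant.
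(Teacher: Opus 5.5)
Your reduction matches the paper's: subtract multiples of $\chi$ and $\vol_G$, then take the principal symbol at $e$ of $\phi\in\mathcal W_k(G)\setminus\mathcal W_{k+1}(G)$ to get a nonzero $\Ad(G)$-invariant $\psi\in\Val_k^\infty(\g)$ with $1\le k\le n-1$. Your ``if'' direction is also the paper's.

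\textbf{First gap: disconnected groups.} The compactness lemma is the whole content of the theorem beyond the connected case, and your plan for it does not reach that case. Showing that the stabilizer of $\psi$ has no hyperbolic or unipotent one-parameter subgroup $\exp(tX)$ only controls its identity component. But $\overline{\Ad(G)}$ can have infinitely many components. For $G=\R^2\rtimes_A\mathbb Z$ with $A=\mathrm{diag}(2,1/2)$, $\Ad(G)=\{A^m\}_{m\in\mathbb Z}$ is a closed, infinite, discrete group with no nontrivial one-parameter subgroup, so your criterion says nothing there. ``Disconnectedness causes no trouble'' is exactly where the argument breaks.

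Claiming the stabilizer is ``algebraic-type'' would give finitely many components. However, $\Val_k^\infty(V)$ is not an algebraic representation, you give no argument, and algebraicity is only known a posteriori from compactness.

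The paper instead proves an elementwise statement (Theorem \ref{thm:elliptic}): any single $A\in\mathrm{GL}_n(\R)$ fixing a nonzero $\psi\in\Val_k^\infty(\R^n)$, $1\le k\le n-1$, has bounded powers. After replacing $\Ad(G)$ by its closure, it invokes the theorem of Fresnel--van der Put that a closed linear group all of whose elements are of elliptic type is compact. Passing from elementwise boundedness to boundedness of the group is a nontrivial group-theoretic input that your proposal lacks.

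\textbf{Second gap: smoothness is never used.} Even for a single element, your sketch uses only continuity of Klain/Crofton data. No such argument can work, because the statement is false for continuous valuations. On $\R^2$, $K\mapsto\vol_1(\pi(K))$ with $\pi(x,y)=y$ is a nonzero even continuous valuation of degree $1$ invariant under all shears $(x,y)\mapsto(x+ty,y)$. Likewise, the even degree-$1$ valuation with Klain function $\sqrt{|\sin 2\theta|}$ is continuous and invariant under $\mathrm{diag}(e^t,e^{-t})$. So unipotent and mixed-spectrum elements can only be excluded by genuinely using smoothness.

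The paper splits according to $\sigma(A)$:
\begin{itemize}
\item If all $|\lambda|>1$ (or all $<1$), it plays the growth of $|\det(A^{m}|_E)|$, uniform over $E\in\Gr_k$ (Corollary \ref{cor:determinant}), against boundedness of the Klain section for the even part. For the odd part it uses instead an $A$-invariant lift through Schneider's exact sequence.
\item Otherwise it uses the smooth $n$-form $\tau_\psi$, a section over $\mathbb P_+(V^*)$, together with Lemmas \ref{lem:zero_implies_infinite}, \ref{lem:unbounded} and \ref{lem:few_zero_orbits}. This handles both parities at once.
\end{itemize}

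Your fallback for odd $\psi$ via Alesker products also fails as stated. Apart from $\chi$ and $\vol$ there is no invariant valuation to multiply with, and $\psi\cdot\psi$ may vanish or have degree $\ge n$.
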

In \cite[Theorem B]{bernig_faifman_kotrbaty_part1} we proved this under the additional assumption that $G$ is connected. The general case established here relies on some deep results from group theory.

Next we look for an abstract space that will act as a space of universal formulas, generating valuations on arbitrary Lie groups equipped with a bi-invariant metric. Consider the free differential graded Lie algebra $\calA$ on one generator $\xi$, namely, $\mathcal A=\langle \xi, \gamma\rangle$ with $\deg \xi =0$,  and $\gamma=d\xi$. The corresponding graded exterior algebra $\calG=\largewedge \mathcal A$ is the free differential graded Gerstenhaber algebra on one generator $\xi$. It comes equipped with two gradings, the degree $\deg$ inherited from $\calA$, and the wedge degree $\deg_w$, which we combine into $\Deg x=\deg x-\deg_w x$. It also has two differential operators: $d$ which is inherited from $\calA$, and  the Koszul boundary operator $\partial$, which we combine into $Dx=dx+(-1)^{\Deg x+1}\partial x$. Finally, there are two products on $\calG$: the Schouten--Nijenhuis extension of the Lie bracket on $\calA$, and the wedge product. We will later show that these two products can also be naturally combined into one convolution product rendering $(\calG, \ast, \Deg, D)$ a differential graded algebra.

For any Lie algebra $\mathfrak g$ we have a realization map $\Re_\g:\calG\to \Omega_\g:=\Omega(\g, \largewedge\g)$, mapping $\xi$ to the identity map $\id \in C^\infty(\g, \g)$, and $\gamma$ to the tautological $1$-form $d(\id)\in\Omega^1(\g, \g)$. Later $\g$ will be the Lie algebra of a Lie group equipped with a bi-invariant metric, and we will be considering a closely related spherical realization map $\Re_\g^S$ mapping $\calG$ to real-valued forms on the sphere bundle of $G$. 

 We define the space of abstract tube coefficients \[\calT\calC= \{\omega\in \calG: \Deg \omega=-1, D\omega\in\calH\},\]
where $\calH$ is spanned by all wedge products not having $\xi$ as a factor. The degree restriction simply translates under the realization map to the requirement that a tube coefficient is an $(n-1)$-form on the sphere bundle, while the requirement $D\omega\in\calH$ effectively means that its de Rham differential is a vertical form (see definitions in Section \ref{sec:tube_coefficients}); up to a measure, any smooth valuation can be given by such an $(n-1)$-form. 
We define also the closely associated space $\calD\calT\calC=\{\tau\in \calH: \Deg \tau=0, D\tau=0\}$. While a-priori it is only clear that $D(\calT\calC)\subset \calD\calT\calC$, we will see that in fact $\calD\calT\calC$ coincides with $D(\calT\calC)$ up to constants.

In \cite{bernig_faifman_kotrbaty_part1} we defined a convolution product on $\Omega_{\ori}(SG)^{G\times G}\otimes \Dens(\g^*)$, where $\Dens(\g^*)$ is the one-dimensional space of dual densities on the Lie algebra of $G$, which corresponds to a convolution product on $\Omega(\mathbb P_+(\g^*), \largewedge\g)^{G\times G}$. One can similarly define a convolution on $\Omega_\g=\Omega(\g, \largewedge\g)$, see Proposition \ref{def_convolution_of_forms}. We prove that the Gerstenhaber algebra $\calG$ admits a convolution product which corresponds to those various convolutions under the respective realization maps.

\begin{MainTheorem} \label{mainthm_conv_dgga}
	
	There exists a unique convolution product $*$ on $\calG$ such that for every Lie algebra $\g$, the realization map $\Re:(\calG,\ast)\to (\Omega_\g,\ast)$ is a morphism of algebras. It is associative, satisfies $\Deg(\tau\ast \zeta)=\Deg(\tau)+\Deg(\zeta)$ and the Leibniz rule
	\begin{displaymath}
		D(\tau * \zeta)=D\tau * \zeta+(-1)^{\Deg \tau} \tau * D\zeta.
	\end{displaymath}
Moreover, $\calD\calT\calC \subset (\calG,*)$ is a convolution subalgebra.
\end{MainTheorem}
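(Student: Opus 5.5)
We first fix the convolution on $\Omega_\g=\Omega(\g,\largewedge\g)$ from Proposition~\ref{def_convolution_of_forms}. Following \cite{bernig_faifman_kotrbaty_part1}, it pairs the form parts via a Fourier-type (Hodge-star/contraction) operation on the $\Omega(\g)$-factor and multiplies the $\largewedge\g$-factors. The latter product mixes the wedge product with the Schouten--Nijenhuis bracket coming from the group structure. Concretely, the plan is to write $\tau*\zeta$ on $\Omega_\g$ as a sum over the number of contractions. Each term is a bidifferential expression built from $\wedge$, the Schouten bracket $[\cdot,\cdot]$, and the pairing of the tautological form $d(\id)$ against the $\largewedge\g$ slots.

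\emph{Existence.} The first step is to show that $\Re_\g(\calG)$ is closed under $*$. We also need the resulting product to be given by a $\g$-independent formula on $\calG$. For this we define $*$ on $\calG$ directly by the same universal formula. Writing elements of $\calG$ as wedge products of Lie words in $\xi,\gamma$, we set $x*y=\sum_k \pm\, \iota^{(k)}(x,y)$. Here $\iota^{(k)}$ contracts $k$ factors $\gamma$ of $x$ against $k$ factors of $y$ through the Schouten bracket extension, with Koszul signs governed by $\Deg$. We then check on generators $\xi,\gamma$ that $\Re_\g$ intertwines the two formulas. Both sides are derivation-like in each argument: the Schouten bracket is a biderivation and $\Re_\g$ is a morphism of Gerstenhaber algebras. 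It therefore suffices to verify compatibility on monomials, which reduces to finitely many identities for $\id$ and $d(\id)$.

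\emph{Uniqueness, and the transfer of properties.} The key lemma is that realization is jointly faithful: if $\Re_\g(x)=0$ for all Lie algebras $\g$, then $x=0$. Free Lie algebras in finitely many generators inject into products of matrix Lie algebras, for example $\mathfrak{gl}_N$ for large $N$, or the free nilpotent quotients. We would try first to evaluate at generic points of $\g=\mathfrak{gl}_N$, letting $\xi$ and $d\xi$ run over independent elements, and reduce to this injectivity. Polynomial identity arguments separate the finitely many Lie words of bounded degree. Given faithfulness, uniqueness follows at once. Associativity, additivity of $\Deg$, and the Leibniz rule also follow, since these hold on $\Omega_\g$ for every $\g$. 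Additivity of $\Deg$ matches the form degree and multivector degree bookkeeping. The Leibniz rule for $D=d\pm\partial$ on $\Omega_\g$ comes from \cite{bernig_faifman_kotrbaty_part1}, where the convolution is compatible with the differential on $\Omega_{\ori}(SG)$. Alternatively it can be checked directly on $\Omega_\g$ using that $\partial$ realizes the Chevalley--Eilenberg boundary.

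\emph{Subalgebra.} Let $\tau,\zeta\in\calD\calT\calC$. Then $\Deg(\tau*\zeta)=0$ by additivity, and $D(\tau*\zeta)=0$ by the Leibniz rule. It remains to show $\tau*\zeta\in\calH$. Since $\calH$ is the subalgebra spanned by wedge products with no bare $\xi$ factor, we must check from the explicit formula that contractions and Schouten brackets never produce an isolated $\xi$. A bracket of Lie words of positive length has positive length, and $\gamma$ contracts to words containing $\gamma$. So $\calH*\calH\subset\calH$.

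The main obstacle is the existence step. Pinning down the universal formula with correct signs, and checking that $\Re_\g$ intertwines it, is delicate. The faithfulness lemma is the conceptual crux, but it is standard.
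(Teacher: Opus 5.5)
Your overall architecture matches the paper's. You define an abstract product on $\calG$ by substituting wedge factors of $\zeta$ into the $\gamma$-slots of $\tau$, check compatibility with every $\Re_\g$, prove joint faithfulness of the $\Re_\g$ (Theorem \ref{thm_injectivity_realization}), transfer associativity and the Leibniz rule, and check directly that $\calH$ is closed; your $\calD\calT\calC$ paragraph is fine. \textbf{The genuine gap is the Leibniz rule on the Lie-algebra side.} The input from \cite{bernig_faifman_kotrbaty_part1} you allude to is Proposition \ref{prop:d=D}: $d$ corresponds to $D$ under \eqref{eq:identification}. That is not a Leibniz rule for $*$. In this paper the Leibniz rule for the sphere-bundle convolution (Corollary \ref{cor_leibniz_part1}) is \emph{derived from} Proposition \ref{prop:Leibniz}, not conversely. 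Nor could a sphere-bundle Leibniz rule be pulled back to $\calG$. Going from $\Omega_\g$ to forms on $SG$ requires an invariant inner product and is a restriction to $S(\g)$, which loses information. Meanwhile faithfulness is proved with free nilpotent Lie algebras, which carry no invariant inner product. So the Leibniz rule must be proved directly on $\Omega_\g^{\ad}$; both the product and the rule are stated there, not on all of $\Omega_\g$. That computation is the technical core your one-line alternative skips. Commuting $d$ past $\iota_{e_I^\sharp}$ creates Lie-derivative terms. The $\ad$-invariance $\mathcal L_{x^\sharp}\tau=\ad_x\tau$ turns these into terms $\sum_{a,b}[e_a,e_b]\wedge\iota_{e_a^*}(\cdot)\wedge\iota_{e_b^*}(\cdot)$, and there are also terms containing $\iota_{[e_a^\sharp,e_b^\sharp]}$. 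Computing $\partial\hat S_{r-1}(\tau\otimes\zeta)$ yields the same two kinds of terms. The leftover $\partial\hat S_r$-contributions cancel in $D(\tau*\zeta)$ exactly because $\epsilon^{r+1}_{k_1,l_1,k_2,l_2}=(-1)^{k_1+k_2+l_2}\epsilon^{r}_{k_1,l_1,k_2,l_2}$.

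The existence step is likewise only announced, and your description of the target product is inaccurate. The convolution on $\Omega_\g^{\ad}$ involves no Hodge star and no Schouten bracket: it is $\sum_r\epsilon^r\hat S_r$ with $\hat S_r(\tau\otimes\zeta)=\sum_{|K|=r}\iota_{e_K^\sharp}\tau\wedge\iota_{e_K^*}\zeta$ and $e^\sharp|_\xi=[e,\xi]$. Its $r$-th term is an operator of order $r$ in each argument, so ``both sides are derivation-like, check on $\xi$ and $\gamma$'' is not a valid reduction as stated. In the paper, the abstract $\hat S_r$ works in four steps:
\begin{enumerate}
\item split off $r$ wedge factors of $\zeta$ (the map $f_r$);
\item apply $\ad_{-\xi}$ to them;
\item move them in front with the sign $\beta_r$;
\item substitute them for the $\gamma$'s of $\tau$, then antisymmetrize and Dynkin-project (the contraction $\iota$).
\end{enumerate}
Proposition \ref{prop:realization_morphism} then holds because $\Re_\g\tau$ evaluated on vectors is literally substitution for $\gamma$. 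This is combined with $\sum_k e_k^*(\tilde\zeta_j)[e_k,\xi]=[\tilde\zeta_j,\xi]$, read as a composition of forms. As for faithfulness, truncation to a finite-dimensional $\h$ is the easy half. A Lie morphism injective on the span $W$ of words of bounded length is injective on $\largewedge^k W$, whether $\h$ is a free nilpotent quotient or a finite product of copies of $\mathfrak{gl}_N$. The half you do not address is injectivity of $\tau\mapsto\tau(X;Y_1,\dots,Y_d)\in\largewedge^k\g_f(U)$ on $\calG^k_d$, for a basis of a $(d+1)$-dimensional $U$. The paper proves this via tensor algebras, Dynkin projectors and a multigrading argument over shuffles (Lemma \ref{lemma_evaluation_k1}, Proposition \ref{prop_injectivity_realization_dgga}).
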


We then find an explicit basis for $\calT\calC$, exploiting the formalism of Maurer--Cartan elements and the gauge action in the differential graded Lie algebra $\calA$.
Define 
\begin{align*}
	\zeta_\epsilon  & :=e^{\epsilon \xi} * 0 =  - \sum_{i=0}^\infty \frac{\epsilon^{i+1}}{(i+1)!} (\ad \xi)^i \gamma\in \calA[[\epsilon]],\\
	 \rho_\epsilon & = \sum_{m=0}^\infty \varrho_m \epsilon^m :=\widehat\exp(-\zeta_\epsilon)\in \calG[[\epsilon]],
\end{align*}
where the last exponent is in the sense of formal power series using the wedge product.

\begin{MainTheorem} \label{mainthm_expl_descr_va}
The space $\mathcal{TC}$ of abstract tube coefficients is given by $\xi \wedge \mathcal{DTC}$. $\mathcal {DTC}$ is spanned by $\varrho_m$, $m\geq 0$. Furthermore,  $\varrho_m=\frac{1}{m!}\varrho_1^{\ast m}$. 
\end{MainTheorem}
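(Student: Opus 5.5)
The plan has three parts: first show $\calT\calC=\xi\wedge\calD\calT\calC$ with Koszul-homotopy arguments, then identify $\calD\calT\calC$ with the span of the $\varrho_m$ using the Maurer--Cartan structure, and finally obtain $\varrho_m=\frac1{m!}\varrho_1^{*m}$ from the Leibniz rule and the convolution of Theorem \ref{mainthm_conv_dgga}.

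\textbf{Step 1: $\calT\calC=\xi\wedge\calD\calT\calC$.} Since $\calA$ is free on $\xi,\gamma$ with $\deg\xi=0$, every element of $\calG$ decomposes uniquely as $\omega=\xi\wedge\alpha+\beta$ with $\alpha,\beta\in\calH$; here $\calH$ is the exterior algebra on the brackets other than $\xi$ itself. I would use that $\partial$, the Koszul boundary built from the bracket, together with the operator $\iota_\xi$ of contraction of the $\xi$ factor, behaves like a homotopy. On $\calH$ the contraction vanishes, and $\xi\wedge\cdot$ is a contracting homotopy in the wedge direction.

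For $\omega\in\calT\calC$, compute $D\omega=D(\xi\wedge\alpha)+D\beta$. The terms of $D(\xi\wedge\alpha)$ with a free factor $\xi$ are $\xi\wedge(\text{something involving }D\alpha)$, up to sign. The condition $D\omega\in\calH$ therefore forces $D\alpha=0$. It also forces $\beta$ to be determined by $\alpha$: the $\xi$-free part of the equations expresses $\beta$ through $\gamma$-terms, and I expect $\beta=0$ after using $\Deg\omega=-1$. Together with $\Deg\alpha=0$ this gives $\alpha\in\calD\calT\calC$. Conversely, for $\alpha\in\calD\calT\calC$ one checks directly that $D(\xi\wedge\alpha)$ has no $\xi$ factor. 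The sign bookkeeping here is routine but must be done carefully.

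\textbf{Step 2: $\calD\calT\calC=\operatorname{Span}\{\varrho_m\}$.} First I show each $\varrho_m$ is $D$-closed. The element $\zeta_\epsilon=e^{\epsilon\xi}\ast 0$ is obtained by the gauge action from the Maurer--Cartan element $0$, so it satisfies the Maurer--Cartan equation $d\zeta_\epsilon+\frac12[\zeta_\epsilon,\zeta_\epsilon]=0$. For a degree-one element $z$ of $\calA$, the Koszul differential satisfies $\partial\widehat\exp(z)=\pm\frac12[z,z]\wedge\widehat\exp(z)$, by the standard Chevalley--Eilenberg computation. Hence
\[
D\widehat\exp(-\zeta_\epsilon)=\bigl(-d\zeta_\epsilon\mp\tfrac12[\zeta_\epsilon,\zeta_\epsilon]\bigr)\wedge\widehat\exp(-\zeta_\epsilon),
\]
and with the sign convention in $D$ the bracket term cancels $d\zeta_\epsilon$ by Maurer--Cartan, so $D\rho_\epsilon=0$. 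Each $\zeta_\epsilon$ lies in the span of the elements $(\ad\xi)^i\gamma$, which belong to $\calH$ and have $\Deg=0$. Therefore $\varrho_m\in\calD\calT\calC$.

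For the converse, $\calD\calT\calC$ lies in the exterior algebra on the elements $c_i=(\ad\xi)^i\gamma$, because $\Deg=0$ forces degree equal to wedge degree, and $\xi$-free brackets of degree one are precisely the multiples of the $c_i$. I would then show that the kernel of $D$ there is at most one-dimensional in each $\epsilon$-weight, where the weight is the number of $\xi$'s plus the number of $\gamma$'s. To do this, write $D$ on $\largewedge\operatorname{Span}\{c_i\}$ explicitly via $dc_i$ and $[c_i,c_j]$. Then filter by the lowest index appearing and show inductively that a closed element is determined by its coefficient of $c_0^{\wedge m}$-type leading terms. This dimension count is the main obstacle. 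I would try first to guess and verify an explicit homotopy, using $\iota$-contraction against $\gamma$, that reduces closed elements of weight $m$ to multiples of $\varrho_m$.

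\textbf{Step 3: $\varrho_m=\frac1{m!}\varrho_1^{*m}$.} By Theorem \ref{mainthm_conv_dgga}, $\calD\calT\calC$ is closed under $\ast$ and $\ast$ is additive in weight, so $\varrho_1^{*m}$ is a multiple of $\varrho_m$ by Step 2. To fix the constant, I would compare leading terms or apply a realization $\Re_\g$ with $\g$ abelian. In that case $\zeta_\epsilon=-\epsilon\gamma$, so $\rho_\epsilon=e^{\epsilon\gamma}$, and the convolution reduces to the known convolution of translation-invariant forms. Under it $\gamma^{\wedge m}/m!$ equals the $m$-th convolution power of $\gamma$ divided by $m!$, and this pins the constant down.
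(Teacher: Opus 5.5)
Your Step 1 is correct, and more direct than the paper, which goes through the Leibniz rule for $*$. For $\Deg\alpha=0$ one has $D(\xi\wedge\alpha)=\gamma\wedge\alpha+[\xi,\alpha]+\xi\wedge D\alpha$, where the first two terms lie in $\mathcal H$, and $\beta=0$ simply because $\Deg\geq 0$ on $\mathcal H$. The inclusion $\varrho_m\in\mathcal{DTC}$ via Maurer--Cartan is also fine.

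\textbf{The gap.} It is the reverse inclusion $\mathcal{DTC}\subset\operatorname{Span}\{\varrho_m\}$, which is the substantive part of the theorem. You state the needed one-dimensionality of $\ker D$ in each symbol degree and call it the main obstacle, but ``filter by the lowest index'' and ``guess a homotopy'' are not an argument. Two things are missing.

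First, a reduction. Write $\rho=\sum_k\rho_k$ with $\rho_k\in\largewedge^k\mathcal A_1$. Since $d$ preserves and $\partial$ lowers wedge degree, $D\rho=0$ splits into $d\rho_k=\partial\rho_{k+1}$, so the top component $\rho_K$ is $d$-closed. It then suffices to know that every $d$-closed element of $\largewedge^K\mathcal A_1$ is some $c\gamma^K$: then $\rho-cK!\,\varrho_K$ has smaller top wedge degree, and you induct.

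Second, that lemma itself, which is not bookkeeping. $\largewedge\mathcal A_1$ is the polynomial algebra on $c_\ell=[\xi^\ell\gamma]$, and $dc_\ell=\sum_{k=0}^{\ell-1}[\xi^{\ell-1-k}\gamma\xi^k\gamma]$. However, the brackets $[\xi^a\gamma\xi^b\gamma]$ in $\mathcal A_2$ satisfy the higher Jacobi relations of Proposition \ref{prop:higher_jacobi}, so even $dc_\ell\neq0$ for $\ell\geq 1$ needs proof. The paper gets this from the explicit expansion in $T(V)$ (Lemmas \ref{lem:universal_commutators} and \ref{lemma_sum_unequal0}). It then extracts coefficients in $\largewedge^{K-1}\mathcal A_1\wedge\mathcal A_2$ (Lemma \ref{lem:unique_closed}).

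\textbf{How your homotopy idea can be made to work.} It works for $d$ on the top component rather than for $D$. The complex $V=\R\xi\oplus\R\gamma$ is acyclic. Hence so are $V^{\otimes n}$ and its $d$-stable summand $L_n(V)$ (via the Dynkin projector), hence $\mathcal A$. Likewise $\largewedge^k\mathcal A$ is acyclic for $k\geq1$, being a $d$-stable summand of $\mathcal A^{\otimes k}$ via graded antisymmetrization. So a $d$-closed $\omega\in\largewedge^k\mathcal A_1$ equals $d\eta$ with $\deg\eta=k-1$. Since $\mathcal A_0=\R\xi$, this forces $\eta=\xi\wedge\beta$ with $\beta\in\largewedge^{k-1}\mathcal A_1$. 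Then $\omega=\gamma\wedge\beta+\xi\wedge d\beta$ gives $d\beta=0$, and induction yields $\omega=c\gamma^k$. Some such argument must be supplied.

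\textbf{Step 3.} As written, it inherits the gap, because ``$\varrho_1^{*m}$ is a multiple of $\varrho_m$'' rests on Step 2. Your normalization, via an abelian $\g$ of dimension at least $m$ or via the $\gamma^m$-coefficient, is fine. The paper avoids the dependence altogether: it proves $\varrho_m*\gamma=(m+1)\varrho_{m+1}$ directly from $\hat S_1(\zeta_\epsilon\otimes\gamma)=-[\xi,\zeta_\epsilon]$, the derivation property of $\hat S_1(\cdot\otimes\gamma)$, and the differential equation satisfied by $\zeta_\epsilon$. So $\varrho_m=\frac{1}{m!}\varrho_1^{*m}$ holds there independently of the spanning result.
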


Theorems \ref{mainthm_conv_dgga} and \ref{mainthm_expl_descr_va} together tell us that on every Lie group $G$ equipped with a bi-invariant metric, there is a distinguished subalgebra $\calT\calC(G)$ of the convolution algebra of valuations on $G$ that are constructed using an appropriate realization map from the algebra $\calT\calC$ of abstract tube coefficients.  Thus, for each such $G$ we have a natural homomorphism of algebras $\C[t] \to \left(\mathcal V^\infty(G)^{G \times G} \otimes \Dens(\g^*),*\right)$. This exhibits some formal resemblance with the valuation-theoretic version of Weyl's principle \cite{fu_wannerer}, which says that for each riemannian manifold, there is a natural homomorphism of algebras $\C[t] \to \left(\mathcal V^\infty(M),\cdot\right)$, mapping $t^k$ to (a certain normalization of) the intrinsic volume $\mu_k$. In this case, the formal model of differential forms on the sphere bundle is based on riemannian data (connection and curvature forms), while in our case it is based on Lie algebraic data, combined with the choice of a bi-invariant metric.

	We show that, when $G$ is semisimple, the convolution algebra $\mathcal{TC}(G)$ is infinite-dimensional, unless its Lie algebra is $\mathfrak{so}(3,\R)$. In particular, $\mathcal{TC}(G)$ is in general distinct from the Lipschitz--Killing subspace.   An important exception is the case $G=\R^n$, where the two spaces are not only isomorphic as vector spaces, but, through the Alesker--Fourier transform  \cite{alesker_fourier,faifman_wannerer_fourier}, as algebras.

Finally we elucidate the geometric meaning of $\calT\calC$. Consider a Lie group $G$ equipped with a bi-invariant metric. First we have a map $\Re_\g^S$ that to an element $\omega \in \calT\calC$ associates an $(n-1)$-form on the sphere bundle of $G$. It is given by the composition of the map $\Re_\g$ above with the restriction to the sphere bundle and with the Hodge star operator. Finally, integration over the normal cycle of $\Re_\g^S$ defines a valuation $\Re_\g^V(\omega) \in \mathcal V^\infty(G) \otimes \Dens(\g^*)$.

Denote by $\upsilon_m\in\mathcal V^\infty(G) \otimes \Dens(\g^*)$, $m\geq 1$, the valuation given by
 
	\[\upsilon_m= \Re_\g^V(\omega_m)=\int _{\nc(\bullet)} \Re_\g ^S\omega_m,\]
where $\omega_m:=\frac{1}{m}\xi\wedge \varrho_{m-1}$. Denote by $\upsilon_0:=\vol \otimes \vol^*$ the unit element in $\mathcal V^\infty(G) \otimes \Dens(\g^*)$. 

\begin{MainTheorem} \label{mthm:tube}
Let $G$ be a Lie group equipped with a bi-invariant metric. Then for all $Z\in\mathcal P(G)$ and $\epsilon\ll1$, 
\[  \vol(Z_\epsilon)=\sum _{k=0}^\infty\upsilon_k(Z)\epsilon^{k}.\] 
\end{MainTheorem}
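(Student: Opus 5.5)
We will prove the tube formula by the classical Weyl--Steiner route: parametrize $Z_\epsilon\setminus Z$ by the normal exponential map on the normal cycle, and use bi-invariance to compute its Jacobian in closed form. Since the metric is bi-invariant, geodesics through $g\in G$ are $t\mapsto g\exp(tv)$ for $v\in\g$. For $\epsilon$ small, every point of $Z_\epsilon\setminus Z$ is reached exactly once (with multiplicity given by the normal cycle) by
\[
\Phi:\nc(Z)\times(0,\epsilon)\to G,\qquad ((g,v),t)\mapsto g\exp(tv),
\]
where we identify the sphere bundle $SG$ with $G\times S(\g)$ by left translation. For $Z\in\mathcal P(G)$ this is justified by the positive reach of compact submanifolds with corners at small scales, as in Solanes--Trillo.

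This gives
\[
\vol(Z_\epsilon)=\vol(Z)+\int_{\nc(Z)}\int_0^\epsilon \iota_{\partial_t}\Phi^*\vol_G\,dt,
\]
so the task reduces to computing $\iota_{\partial_t}\Phi^*\vol_G$ as an $(n-1)$-form on $SG$, polynomial in $t$ up to exponential-series factors. The term $\vol(Z)$ accounts for $\upsilon_0$.

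The key computation is the differential of $\exp$. We use
\[
d\exp_{tv}(w)=dL_{\exp tv}\,\frac{1-e^{-\ad(tv)}}{\ad(tv)}\,w .
\]
Pulling back the left-invariant Maurer--Cartan form $\theta$ along $\Phi$, we get on $G\times\g\times\R$
\[
\Phi^*\theta=\Ad(\exp(-tv))\,\theta_G+\frac{1-e^{-t\ad v}}{t\ad v}\,d(tv).
\]
The last term equals $v\,dt-\sum_{i\ge0}\frac{(-t)^{i+1}}{(i+1)!}(\ad v)^i dv$ up to sign conventions. With $\xi\mapsto v$ and $\gamma\mapsto dv$, this is exactly the realization of $\zeta_\epsilon=e^{\epsilon\xi}*0$ (the gauge action on $0$). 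In other words, the gauge-action formula $e^{\epsilon\xi}*0$ is precisely the Maurer--Cartan pullback under $\exp$. The factor $\Ad(\exp(-tv))$ is orthogonal because the metric is bi-invariant, so it does not affect volume.

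The volume form is $\vol_G=\theta^1\wedge\dots\wedge\theta^n$ in an orthonormal basis, which is a Berezin-type top-degree extraction. Taking the top exterior power of $\theta_G+v\,dt-\zeta$ and contracting with $\partial_t$ produces, as a formal generating function, the exponential $\widehat\exp(-\zeta_t)$ wedged with $\xi$. This computation lives in $\Omega(\g,\largewedge\g)$, followed by pairing with the volume form, i.e.\ the Hodge star that enters the definition of $\Re^S_\g$. Consequently
\[
\iota_{\partial_t}\Phi^*\vol_G=\sum_m t^{m}\,\Re^S_\g(\xi\wedge\varrho_m).
\]
Integrating in $t$ from $0$ to $\epsilon$ turns $t^{m-1}$ into $\epsilon^m/m$, which matches $\omega_m=\frac1m\xi\wedge\varrho_{m-1}$. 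Integrating over $\nc(Z)$ then gives $\upsilon_m(Z)\epsilon^m$.

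The main obstacle is this algebraic identification. One must check carefully that the wedge-degree and form-degree bookkeeping in $\calG$, together with the $\Deg$ and sign conventions, match the expansion of $\largewedge^n(\Phi^*\theta)$. In particular, the $\Ad(\exp(-tv))\theta_G$ part must mix with the $dv$ part only through a determinant. To handle this we will first reduce $\Ad(\exp(-tv))$ away: it is a global orthogonal transformation of $\g$ in each fibre, hence has determinant $1$ and commutes with the Hodge pairing. We then verify the generating-function identity on monomials, using $\deg_w$ to track how many $\theta$-slots are filled by $\zeta$-terms. A second, minor point is convergence. For $\epsilon$ below the injectivity and reach radius, the series in $\ad v$ converges since $\|\ad v\|$ is bounded on unit vectors, and only finitely many wedge powers survive in degree $n-1$. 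This justifies equating $\vol(Z_\epsilon)$ with the full power series.
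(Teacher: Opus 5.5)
Your argument is essentially the paper's second, geometric proof of Theorem~\ref{mthm:tube}: parametrize the tube by $(g,v,t)\mapsto g\exp(tv)$, compute the Jacobian via $d\exp$, discard the orthogonal factor $\Ad(\exp(-tv))$, and recognize the top-degree extraction after $\ast^{-1}$ as $\Re^S_\g(\xi\wedge\widehat\exp(-\zeta_t))$. Integrating $\iota_{\partial_t}\Phi^*\vol_G$ over $\nc(Z)\times(0,\epsilon)$, instead of differentiating $\vol(Z_\epsilon)$ and using the Gauss lemma for $\vol(\partial Z_\epsilon)$ as the paper does, is an inessential variation.

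One correction. Your $dv$-term $\frac{1-e^{-t\ad v}}{\ad v}\,dv$ is literally $\Re_\g\zeta_{-t}$, not $-\Re_\g\zeta_t$. The match with $\widehat\exp(-\zeta_t)$ only appears after you pull $e^{-t\ad v}$ out of the \emph{entire} one-form, which is allowed because it commutes with $\ad v$ and fixes $v$. This gives $\Phi^*\theta=e^{-t\ad v}\bigl(\theta_G+v\,dt+\frac{e^{t\ad v}-1}{\ad v}\,dv\bigr)$, which is the paper's formula $\Phi(X,Y)=\Ad_{q^{-1}}(X+F_\epsilon(\ad\xi)Y)$. If you removed $\Ad(\exp(-tv))$ from the $\theta_G$ term alone, you would instead get $\widehat\exp(\zeta_{-t})$, in which the terms $[\xi^i\gamma]$ with $i$ odd enter with the wrong sign.
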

Thus the image  $\mathcal{TC}(G)\subset\calV^\infty(G) \otimes \Dens(\g^*)$ of $\calT\calC$ under the realization map is spanned by the tube volume coefficients. 

 We will give two different proofs of this theorem. The first is more abstract and uses the free differential Gerstenhaber algebra described above together with recent results on tubes in riemannian manifolds due to Solanes and Trillo. The second approach is more geometric and direct, but gives less insight on the universal nature of the tube coefficients.

While we assume that our bi-invariant metric is riemannian, it is straightforward to accommodate the case of a bi-invariant Finsler metric by adjusting the definition of  the realization map $\Re^S_\g$.

\subsection{Plan of the paper}
In Section \ref{sec:dgla} we introduce the various algebraic categories that we will need, and describe explicitly the construction of free objects. In Section \ref{sec_free_dgga_one_generator} we study the free differential graded Gerstenhaber algebra on one generator $\calG$, which is key in the construction of the universal tube coefficients algebra $\calT\calC$, and find an explicit basis of $\calD\calT\calC$. In Section \ref{sec_realization} we prove an injectivity result, which will allow us to prove identities on $\calG$ by verifying them in some special cases. Sections \ref{sec_convolution_double_forms_g} and \ref{sec:abstract_convolution} construct a convolution product on $\calG$ and establish its key properties, proving Theorem \ref{mainthm_conv_dgga} and completing the proof of Theorem \ref{mainthm_expl_descr_va}. We then turn to valuations on Lie groups, proving Theorem \ref{mthm:tube} in Section \ref{sec:tube_coefficients} and Theorem \ref{mthm:existence} in Section \ref{sec:existence}.

\subsection*{Acknowledgements.} We are indebted to Thomas Wannerer for some illuminating suggestions. A large part of this work was carried out during  D.F.'s term at Tel Aviv University,  A.B.'s research stay at IHES, and our visit at the R\'enyi institute and we wish to thank those institutions for their support and productive atmosphere.

\section{Differential graded Lie algebras and Gerstenhaber algebras} 
\label{sec:dgla}

{In this section we build an abstract framework which will later be used to construct a canonical space of invariant valuations on a Lie group. More precisely, we construct explicitly the free Gerstenhaber algebra generated by a graded vector space. Its existence also follows from general category-theoretic considerations, however, the explicit construction will be useful. In particular, it allows us to construct the Koszul boundary operator $\partial$ on this algebra which will eventually be used to define an abstract analog of the space of closed differential forms on the sphere bundle of a Lie group. We will show that this space corresponds, via the exponential map, to the Maurer--Cartan elements in a differential graded Lie algebra. Finally, to construct the Maurer--Cartan elements, we use the gauge action, a procedure well known in general deformation theory.

Our main reference is \cite{manetti}. We will consider the following categories: 
\begin{itemize}
\item $\GV$: graded real vector spaces.
\item $\DGV$: differential graded vector spaces.
\item $\DBGV$: differential bigraded vector spaces.
\item $\GLA$: graded Lie algebras.
\item $\DGLA$: differential graded Lie algebras.
\item $\DBGLA$: differential bigraded Lie algebras.
\item $\DGGA$: differential graded Gerstenhaber algebras.
\item $\GUAA$: graded unitary associative algebras.
\end{itemize}
In all cases the morphisms are the obvious ones, i.e., those preserving all structures like degrees, differentials, brackets etc.

\subsection{Graded vector spaces and differential graded vector spaces}
 
 \begin{Definition}
 	\begin{enumerate}
 		\item A \emph{graded vector space} is a vector space with a direct sum decomposition $V=\bigoplus_{n \in \mathbb Z} V_n$. If $v\in V_n$ is homogeneous, we write $\bar v=\deg v=n$ for its degree. 
 		\item A \emph{differential graded vector space} is a graded vector space $V=\bigoplus_{n \in \mathbb Z} V_n$ with a linear map $d:V \to V$ such that $d(V_n) \subset V_{n+1}$ and $d \circ d=0$. 
 	\end{enumerate}
 \end{Definition}

\begin{Proposition}\label{prop_free_dgv}
	Let $V$ be a graded vector space. Then there exists a (unique up to a $\DGV$-isomorphism) differential graded vector space  $W$, a $\GV$-morphism $\iota:V \to W$ such that the following universal property holds: If $U$ is a differential graded vector space and $\phi:V \to U$ a $\GV$-morphism, then $\phi$ can be uniquely extended to a $\DGV$-morphism $\tilde \phi:W \to U$.    
\end{Proposition}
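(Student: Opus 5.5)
The construction is explicit: take $W = V \oplus V[-1]$. Concretely, set
\[ W_n = V_n \oplus V'_{n}, \qquad V'_n := V_{n-1}, \]
where we write elements of the second summand as $dv$ for $v \in V_{n-1}$. The differential is defined by
\[ d(v, dw) = (0, dv), \]
that is, $d$ sends $v \in V_n$ to the copy $dv \in W_{n+1}$ and kills the second summand. Clearly $d(W_n) \subset W_{n+1}$ and $d \circ d = 0$. The map $\iota : V \to W$ is the inclusion of the first summand, and it is a $\GV$-morphism.

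For the universal property, let $U$ be a differential graded vector space with differential $d_U$, and let $\phi : V \to U$ be a $\GV$-morphism. Define
\[ \tilde\phi(v, dw) := \phi(v) + d_U \phi(w). \]
This is linear and preserves degree, since $\deg w = \deg(dw) - 1$ and $d_U$ raises degree by one. It commutes with the differentials:
\[ \tilde\phi(d(v, dw)) = \tilde\phi(0, dv) = d_U\phi(v), \]
while
\[ d_U\tilde\phi(v, dw) = d_U\phi(v) + d_U^2\phi(w) = d_U\phi(v). \]
So $\tilde\phi$ is a $\DGV$-morphism, and $\tilde\phi \circ \iota = \phi$.

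Uniqueness of the extension holds because $W$ is spanned by $\iota(V) \cup d\iota(V)$. Any $\DGV$-extension $\psi$ must satisfy $\psi(\iota v) = \phi(v)$ and
\[ \psi(d\iota w) = d_U \psi(\iota w) = d_U \phi(w), \]
so $\psi = \tilde\phi$.

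Uniqueness of $W$ up to $\DGV$-isomorphism is the standard universal-property argument. Suppose $(W', \iota')$ is another solution. Then there are induced morphisms $W \to W'$ and $W' \to W$. Each composite is the unique extension of $\iota$ (respectively $\iota'$) to a $\DGV$-endomorphism, and the identity is also such an extension, so each composite equals the identity.

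There is no serious obstacle here. The only point requiring care is the degree bookkeeping of the shifted copy, i.e. that $dv$ sits in degree $\deg v + 1$.
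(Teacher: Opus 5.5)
Your proposal is correct and matches the paper's proof: the same construction $W=V\oplus V$ with the second copy shifted up by one degree, the same differential $d(v_1,v_2)=(0,v_1)$, and the same extension $\tilde\phi(v,w)=\phi(v)+d_U\phi(w)$. You additionally spell out the chain-map check, the uniqueness of the extension (via $W$ being spanned by $\iota(V)\cup d\iota(V)$), and the uniqueness of $W$ up to $\DGV$-isomorphism, all of which the paper leaves implicit.
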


\proof 
We set $W:=V \oplus V$. The differential is defined by $d(v_1,v_2):=(0,v_1)$. The grading is defined by $\deg(v,0)=\deg v, \deg(0,v)=\deg v+1$. The extension of $\phi$ is $\tilde \phi(v,w)=\phi(v)+d\phi(w)$.
\endproof

The categories of bigraded vector spaces and differential bigraded vector spaces are defined similarly and such that $d(V_n^m) \subset V_{n+1}^m$. We will usually write $\deg=n, \deg_w=m$. Note that every graded vector space can be considered as a bigraded vector space with $\deg_w \equiv 1$. 

\subsection{Graded Lie algebras}

\begin{Definition} \label{def_graded_Lie_algebra}
	A \emph{graded Lie algebra} is a graded vector space $\calA$ with a bilinear map $[\bullet,\bullet]:\calA \times \calA \to \calA$ satisfying the following properties: 
	\begin{enumerate}
		\item $[\calA_i,\calA_j] \subset \calA_{i+j}$ (homogeneity),
		\item $[X,Y]+(-1)^{\bar X \bar Y} [Y,X]=0$ (graded skew-symmetry),
		\item $(-1)^{\bar X \bar Z} [[X,Y],Z]+(-1)^{\bar Y \bar X}[[Y,Z],X]+(-1)^{\bar Z \bar Y}[[Z,X],Y]=0$ (graded Jacobi identity).
	\end{enumerate}
\end{Definition}

\begin{Proposition} \label{prop_univ_prop_GLA}
	Let $V$ be a graded vector space. Then there exists a (unique up to $\GLA$ isomorphism) graded Lie algebra $L(V)$ and a $\GV$-morphism $\iota:V \to L(V)$ such that the following universal property holds:
	For every graded Lie algebra $C$ and every $\GV$-morphism $\phi: V \to C$ there exists a unique $\GLA$-morphism $\tilde \phi:L(V) \to C$ such that the following diagram commutes 
	\begin{displaymath}
		\xymatrix{L(V) \ar[dr]^{\tilde \phi} & \\V \ar[u]^\iota \ar[r]^\phi & C}
	\end{displaymath}
	$L(V)$ is called the \emph{free graded Lie algebra} generated by $V$. 
\end{Proposition}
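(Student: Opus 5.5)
Uniqueness is formal, so the plan is to construct $L(V)$ inside the free graded associative algebra. Suppose $(L,\iota)$ and $(L',\iota')$ both satisfy the universal property. Applying it to $\iota'$ and to $\iota$ gives $\GLA$-morphisms $\tilde\iota':L\to L'$ and $\tilde\iota:L'\to L$. The composites $\tilde\iota\circ\tilde\iota'$ and $\id_L$ both extend $\iota$, so by the uniqueness part of the universal property they agree. The same holds on the other side, which gives the $\GLA$-isomorphism.

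For existence, let $T(V)=\bigoplus_{k\ge0}V^{\otimes k}$ be the tensor algebra, graded by total degree $\deg(v_1\otimes\cdots\otimes v_k)=\sum\bar v_i$. This is a graded unitary associative algebra, and it is free in $\GUAA$: every $\GV$-morphism $V\to B$ into a graded associative unital algebra extends uniquely to an algebra morphism. Equip it with the graded commutator
\[
[x,y]=xy-(-1)^{\bar x\bar y}yx .
\]
A direct check shows this makes any graded associative algebra a graded Lie algebra. Homogeneity is clear, skew-symmetry is immediate, and the graded Jacobi identity follows by expanding the twelve terms and using associativity. Define $L(V)\subset T(V)$ to be the graded Lie subalgebra generated by $V$, that is, the span of iterated brackets of homogeneous elements of $V$. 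Let $\iota$ be the inclusion of $V$.

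Given a $\GV$-morphism $\phi:V\to C$ into a graded Lie algebra, uniqueness of $\tilde\phi$ is clear, since $L(V)$ is generated by $V$ as a Lie algebra. For existence, pass to the graded universal enveloping algebra
\[
U(C)=T(C)/\bigl(x\otimes y-(-1)^{\bar x\bar y}y\otimes x-[x,y]\bigr).
\]
Extend $V\to C\to U(C)$ to an algebra morphism $\Phi:T(V)\to U(C)$. Then $\Phi$ respects graded commutators, so it maps $L(V)$ into the image of $C$ in $U(C)$. We would then like to set $\tilde\phi=\Phi|_{L(V)}$.

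The main obstacle is that this only produces a map into the image of $C$ in $U(C)$. Making $\tilde\phi$ land in $C$ requires the canonical map $C\to U(C)$ to be injective, which is the graded Poincaré--Birkhoff--Witt theorem. Over $\R$ this holds, but the statement needs care: one must assume $[x,x]=0$ for even $x$ (automatic) and $[y,[y,y]]=0$ for odd $y$, which follows from the graded Jacobi identity in characteristic zero. I would cite \cite{manetti} for this.

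Alternatively, one can avoid PBW altogether and define $L(V)$ abstractly as $\mathcal F/I$, where:
\begin{itemize}
\item $\mathcal F$ is the free graded nonassociative algebra on $V$, spanned by bracketed words;
\item $I$ is the two-sided ideal generated by the skew-symmetry and Jacobi relations.
\end{itemize}
Here $\tilde\phi$ exists at once, by extending $\phi$ to $\mathcal F$ and noting that $I$ lies in the kernel. The only remaining point is that $\iota:V\to\mathcal F/I$ need not be injective for the universal property itself, so this construction suffices. I would present this abstract construction as the proof, and remark that by PBW it agrees with the Lie subalgebra of $T(V)$.
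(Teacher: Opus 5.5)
Your proof is correct, but it takes a different route from the paper. You prove the statement as literally posed via the presentation $\mathcal F/I$: existence is pure universal algebra, and uniqueness is the usual formal argument. Your first route through $U(C)$ is also sound once graded PBW is cited, and $\iota$ is in fact injective, as mapping to $V$ with zero bracket shows.

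The paper instead takes the concrete model $L(V)=\bigoplus_n L_n(V)\subset T(V)$ as the definition. It writes $\tilde\phi$ down explicitly as the restriction to $L(V)$ of the linear map $T(V)\to C$, $v_1\otimes\cdots\otimes v_n\mapsto\frac1n[\phi(v_1),[\phi(v_2),\ldots,[\phi(v_{n-1}),\phi(v_n)]]]$ (Proposition \ref{prop_dynkin_projector}, with details deferred to \cite{manetti}). Checking that this is a Lie morphism is an induction on tensor length using only graded skew-symmetry and Jacobi. The key point is that $a\in L(V)$ acts on $C$, through the algebra map $T(V)\to\mathrm{End}(C)$ extending $v\mapsto\ad_{\phi(v)}$, as $\ad_{\tilde\phi(a)}$. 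So $U(C)$ and PBW never enter, and the obstacle you flagged does not arise.

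What the concrete route buys is the model itself, on which the rest of the paper relies:
\begin{itemize}
\item restricting $d$ from $T(V)$ via the Dynkin projector in Proposition \ref{prop:free_dgla};
\item computing brackets as noncommutative polynomials in Lemma \ref{lem:universal_commutators};
\item the well-definedness of the symbol degree;
\item the evaluation maps in Lemma \ref{lemma_evaluation_k1};
\item the contraction map of Section \ref{sec:abstract_convolution}.
\end{itemize}
So if you present only $\mathcal F/I$, the identification $\mathcal F/I\cong L(V)\subset T(V)$ is exactly the nontrivial content those later steps need, yet you relegate it to a closing remark. It should be proved, either by graded PBW as you suggest or more cheaply by the same Dynkin argument. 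That argument shows that the map $T(V)\to\mathcal F/I$, $v_1\otimes\cdots\otimes v_n\mapsto\frac1n[v_1,[v_2,\ldots,[v_{n-1},v_n]]]$, restricts to a left inverse of the canonical surjection $\mathcal F/I\to L(V)$.
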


The construction is as follows: Let $T(V)=\oplus_{n=0}^\infty V^{\otimes n}$ be the tensor algebra on $V$, which is the universal associative algebra generated by $V$. The grading of $V$ induces a natural grading on $T(V)$. The graded Lie bracket is given on homogeneous elements $X, Y \in T(V)$ by
\begin{displaymath}
	[X, Y]:=X\otimes Y-(-1)^{\deg X\deg Y}Y\otimes X.
\end{displaymath}
	Then $L(V)$ is defined as the smallest graded Lie subalgebra of $T(V)$ containing $V$. Equivalently, $L(V)=\oplus_{n=1}^{\infty} L_n(V)$, where $L_1(V)=V$ and $L_n(V)=[V, L_{n-1}(V)]$.

\begin{Proposition} \label{prop_dynkin_projector}
	The \emph{Dynkin projector} $\rho: T(V) \to T(V)$ defined by  
\begin{displaymath}
	\rho(V^{\otimes 0})=0, \quad \rho(v_1 \otimes \ldots \otimes v_n):=\frac{1}{n} [v_1,[v_2,\ldots,[v_{n-1},v_n]]], \quad v_1,\ldots,v_n \in V
\end{displaymath}
is a projection onto $L(V)$. The map $\tilde \phi$ from Proposition \ref{prop_univ_prop_GLA} is given by 
\begin{displaymath}
	\tilde \phi (\rho(v_1 \otimes \cdots \otimes v_n))=\frac{1}{n} [\phi(v_1),[\phi(v_2),\ldots,[\phi(v_{n-1}),\phi(v_n)]]].
\end{displaymath}
\end{Proposition}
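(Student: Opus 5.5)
We must show two things about the Dynkin projector $\rho$: that it is a projection onto $L(V)$, and that it yields the stated formula for $\tilde\phi$. We work in the graded setting, so every sign below carries the Koszul signs of the bracket $[X,Y]=X\otimes Y-(-1)^{\bar X\bar Y}Y\otimes X$. We also write $\rho$ on $V^{\otimes n}$ as $\frac1n\beta_n$, where $\beta_n(v_1\otimes\cdots\otimes v_n)=[v_1,[v_2,\ldots,[v_{n-1},v_n]]]$ is the right-normed bracketing.

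The image of $\rho$ is clearly contained in $L(V)$, since each $\beta_n(v_1\otimes\cdots\otimes v_n)$ is an iterated bracket of elements of $V$. Conversely, $L(V)=\bigoplus_n L_n(V)$ with $L_n=[V,L_{n-1}]$, so $L_n$ is spanned by right-normed brackets and these lie in the image. It therefore suffices to prove the Dynkin--Specht--Wever identity $\beta_n(x)=n\,x$ for $x\in L_n(V)$; this gives $\rho|_{L(V)}=\id$, hence $\rho^2=\rho$ with image $L(V)$.

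To prove the identity, let $T(V)$ act on itself by left multiplication and define the adjoint-type map $\theta:T(V)\to\operatorname{End}(T(V))$ as the unique algebra morphism with $\theta(v)=\ad v=[v,\cdot\,]$. Since each $\ad v$ is a graded derivation of $T(V)$, the map $\theta$ is well defined. By induction on $n$,
\[
\beta_n(v_1\otimes\cdots\otimes v_n)=\theta(v_1\otimes\cdots\otimes v_{n-1})(v_n).
\]
Extending this formally, we get $\beta(a\otimes b)=\theta(a)(\beta(b))$ for homogeneous $a\in T(V)$ and $b$ of positive tensor length. For $y\in L(V)$, $\theta(y)=\ad y$: this holds on generators, and $\theta$ intertwines the commutator in $T(V)$ with the commutator of endomorphisms, where one checks that $\ad$ of a graded commutator is the graded commutator of the $\ad$'s by the graded Jacobi identity.

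Now induct on $n$. For $x\in L_{n}$, it suffices by linearity and the previous paragraph to treat $x=[a,b]$ with $a\in L_p$, $b\in L_q$, $p+q=n$; one may in fact use the spanning set $x=[v,b]$ together with more general inductive hypotheses. Compute
\[
\beta(a\otimes b)-(-1)^{\bar a\bar b}\beta(b\otimes a)=\theta(a)\beta(b)-(-1)^{\bar a\bar b}\theta(b)\beta(a).
\]
By induction this equals $q[a,b]-(-1)^{\bar a\bar b}p[b,a]=(p+q)[a,b]$, using graded skew-symmetry. This proves $\beta_n(x)=nx$.

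For the formula for $\tilde\phi$, note that a $\GLA$-morphism preserves brackets. Hence
\[
\tilde\phi([v_1,[\ldots,v_n]])=[\phi(v_1),[\ldots,\phi(v_n)]],
\]
and dividing by $n$ gives the claim. The same formula also serves as a construction of $\tilde\phi$: define $\Phi=\tilde\phi_{T}\circ\rho$ on $L(V)$, where we use the associative extension into the universal enveloping algebra, or more simply argue that the map is well defined since $L(V)$ is free.

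The main obstacle is the sign bookkeeping in the graded version of $\theta(y)=\ad y$ and in the final inductive step. This is where the graded Jacobi identity from Definition \ref{def_graded_Lie_algebra} enters, and we would verify it carefully on homogeneous elements.
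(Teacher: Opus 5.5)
Your proof is correct and is the standard Dynkin--Specht--Wever argument in the graded setting: the algebra map $\theta$ with $\theta(v)=\ad v$, the identity $\beta(a\otimes b)=\theta(a)\beta(b)$, and $\theta|_{L(V)}=\ad$, followed by induction on $L_n=[V,L_{n-1}]$. This is what the paper's reference to Manetti's Theorem 2.4.4 and Corollary 2.4.5 amounts to. Two small remarks: $\theta$ is well defined simply by the universal property of $T(V)$ (the derivation property is irrelevant), and your final paragraph on constructing $\tilde\phi$ is unnecessary, since the statement takes $\tilde\phi$ from Proposition~\ref{prop_univ_prop_GLA} as given. Justifying that paragraph by ``$L(V)$ is free'' would in any case be circular if it were meant as a construction.
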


\proof
Analogous to \cite[Theorem 2.4.4 and Corollary 2.4.5]{manetti}.
\endproof

\begin{Definition} \label{def_bigradded_Lie_algebra}
A \emph{bigraded Lie algebra} is a bigraded vector space $(\calA,\deg,\deg_w)$ with a bilinear map $[\bullet,\bullet]: \calA \times \calA \to \calA$ satisfying the following properties: 
\begin{enumerate}
	\item $\deg [X,Y]=\deg X+\deg Y, \deg_w [X,Y]=\deg_w X+\deg_w Y-1$ (bihomogeneity); 
	\item $[X,Y]+(-1)^{\deg X \deg Y+(\deg_wX-1)(\deg_wY-1)} [Y,X]=0$ (bigraded skew-symmetry); 
	\item $[X, [Y,Z]]=[[X,Y], Z]+ (-1)^{\deg X \deg Y+(\deg_w X-1)(\deg_wY-1)}[Y, [X, Z]]$ (bigraded Jacobi identity).
\end{enumerate}
\end{Definition}

\subsection{Differential graded Lie algebras}

\begin{Definition} \label{def_differential_bigradded_Lie_algebra}

		A \emph{differential graded Lie algebra (DGLA)} is a graded Lie algebra that is also a differential graded vector space such that the graded Leibniz rule  
		\begin{equation} \label{eq_graded_Leibniz_rule}
			d[X,Y]=[dX,Y]+(-1)^{\deg X} [X,dY]
		\end{equation} 
		is satisfied.  A differential bigraded Lie algebra (DBGLA) is a bigraded Lie algebra that is also a bigraded differential vector space such that \eqref{eq_graded_Leibniz_rule} holds.
\end{Definition}

Thus a DGLA can be considered as a DBGLA with $\deg_w=1$.

\begin{Proposition}\label{prop:free_dgla}
	Let $(V,d_V)$ be a differential graded vector space. Then there exists a (unique up to $\DGLA$-isomorphism) DGLA $(\calA,d_{\calA})$ and a $\DGV$-morphism $\iota:(V,d_V) \to (\calA,d_{\calA})$ such that the following universal property is satisfied:
	For every DGLA $(C,d_C)$ and every $\DGV$-morphism $\phi: (V,d_V) \to (C,d_C)$ there exists a unique $\DGLA$-morphism $\tilde \phi:(\calA,d_{\calA}) \to (C,d_C)$ such that the following diagram commutes 
	\begin{displaymath}
		\xymatrix{(\calA,d_{\calA}) \ar[dr]^{ \tilde \phi} & \\(V, d_V) \ar[u]^\iota \ar[r]^\phi & (C,d_C).}
	\end{displaymath}
	$(\calA,d_{\calA})$ is called the \emph{free differential graded Lie algebra} generated by $(V,d_V)$. 
\end{Proposition}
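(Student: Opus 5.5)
The plan is to take $\calA := L(W)$, the free graded Lie algebra (Proposition \ref{prop_univ_prop_GLA}) on the underlying graded vector space $W$ of $V$, and transport the differential $d_V$ to it. Since $L(W)\subset T(W)$, I first extend $d_V$ to the tensor algebra as the unique graded derivation
\begin{displaymath}
	d(v_1\otimes\cdots\otimes v_n)=\sum_{i=1}^n (-1)^{\bar v_1+\cdots+\bar v_{i-1}} v_1\otimes\cdots\otimes d_V v_i\otimes\cdots\otimes v_n .
\end{displaymath}
This map raises degree by one, and $d\circ d=0$ because $d^2$ is again a derivation (an even one, being the square of an odd derivation) that vanishes on generators. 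A direct check shows that a graded derivation of the tensor product is also a graded derivation of the graded commutator $[X,Y]=X\otimes Y-(-1)^{\bar X\bar Y}Y\otimes X$, i.e.\ \eqref{eq_graded_Leibniz_rule} holds on $T(W)$. Since $d(W)\subset W$, induction on $n$ using $L_n(W)=[W,L_{n-1}(W)]$ gives $d(L_n(W))\subset L_n(W)$. So $d$ restricts to $\calA=L(W)$ and makes it a DGLA, and the inclusion $\iota:V\to\calA$ is a $\DGV$-morphism.

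For the universal property, let $\phi:V\to C$ be a $\DGV$-morphism. Forgetting differentials, Proposition \ref{prop_univ_prop_GLA} gives a unique $\GLA$-morphism $\tilde\phi:\calA\to C$ extending $\phi$; this already gives uniqueness of the $\DGLA$ extension. For existence it remains to show $\tilde\phi\circ d_{\calA}=d_C\circ\tilde\phi$. The set $\{X\in\calA:\tilde\phi(dX)=d_C\tilde\phi(X)\}$ contains $W$ because $\phi$ commutes with the differentials. It is closed under brackets by the Leibniz rule in both algebras and because $\tilde\phi$ preserves brackets:
\begin{displaymath}
	\tilde\phi(d[X,Y])=[\tilde\phi dX,\tilde\phi Y]+(-1)^{\bar X}[\tilde\phi X,\tilde\phi dY]=d_C[\tilde\phi X,\tilde\phi Y].
\end{displaymath}
Since $\calA$ is generated by $W$ as a Lie algebra, this set is all of $\calA$. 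Uniqueness of $(\calA,\iota)$ up to $\DGLA$-isomorphism then follows from the universal property by the standard argument.

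The only step needing genuine care is the sign bookkeeping: checking that the derivation of $T(W)$ is compatible with the graded commutator, and that $d^2=0$ on $T(W)$. Both are routine Koszul-sign computations; I would do them on pure tensors, writing $X$ and $Y$ as homogeneous products.

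Alternatively, one could take $\calA=L(W)$ with $d$ defined directly through the Dynkin projector of Proposition \ref{prop_dynkin_projector}. The tensor-algebra route avoids having to prove well-definedness on $L(W)$ separately, so I would use that.
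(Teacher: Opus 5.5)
Your proposal is correct and follows the paper's proof: extend $d_V$ to $T(V)$ as a graded derivation, restrict it to $L(V)$, and obtain the $\DGLA$-morphism as the unique $\GLA$-extension, whose compatibility with the differentials you check on brackets via the Leibniz rule. The only cosmetic difference is that you get $d(L_n)\subset L_n$ directly from the Leibniz rule and $L_n=[V,L_{n-1}]$, whereas the paper observes that $d$ commutes with the Dynkin projector $\rho$.
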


\proof
We define a map $d:T(V) \to T(V)$ in the natural way:
\begin{align}
	d(v_1 \otimes v_2 \otimes \cdots v_n) & :=dv_1 \otimes v_2 \otimes \cdots \otimes v_n+(-1)^{\bar v_1} v_1 \otimes dv_2 \otimes \cdots \otimes v_n \nonumber\\
	& \quad + \ldots+(-1)^{\bar v_1+\cdots+\bar v_{n-1}} v_1 \otimes \ldots \otimes v_{n-1} \otimes dv_n, \label{eq_d_on_tensor_power}
\end{align} 
turning $T(V)$ into a DGLA.
 
Note that $d$ commutes with the Dynkin projector $\rho$. Indeed, it suffices to check the identity $d\rho x=\rho dx$ for $x=v_1\otimes\dots\otimes v_n$, which is straightforward by induction on $n$. Hence the restriction of $d$ to the free graded Lie algebra $L(V) \subset T(V)$  generated by $V$ turns $L(V)$ into a DGLA $(\calA,d_{\calA})$, where $\calA=L(V)$. 

We claim that $(\calA,d_{\calA})$ satisfies the universal property. If $\phi:(V,d_V) \to (C,d_C)$ is a $\DGV$-morphism, we can forget the differential and look at this as a graded vector space morphism. By the universal property of the free graded Lie algebra, we obtain a unique $\GLA$-morphism $\tilde \phi: \calA \to C$. 

We claim that it is also a $\DGLA$-morphism, i.e. that $\tilde \phi d_{\calA}x=d_C\tilde \phi x$ for all $x \in \calA$. We may assume $x=n\rho(v_1\otimes\dots\otimes v_n)$. We then use induction on $n$, noting that the case of $n=1$ holds by assumption. Denoting $w=(n-1)\rho(v_2\otimes\dots\otimes v_n)$, we have $x=[v_1, w]$ and so by the induction assumption
\begin{align*}
	\tilde \phi d_{\calA} x&= \tilde\phi [d_V v_1, w]+(-1)^{\overline v_1}\tilde\phi [v_1, d_{\calA}w]\\&=[\tilde\phi  d_V v_1, \tilde\phi  w]+(-1)^{\overline v_1}[\tilde\phi  v_1, \tilde\phi  d_{\calA}w]\\&= [d_C \tilde\phi v_1, \tilde\phi w]+(-1)^{\overline v_1} [\tilde\phi v_1, d_C\tilde\phi w]\\&=d_C[\tilde \phi v_1, \tilde \phi w]
\\&= d_C\tilde\phi[v_1, w]\\&=d_C\tilde\phi x.
\end{align*}
 This finishes the proof.
\endproof

\subsection{Graded exterior powers}
\label{subsec_graded_ext_power}

We collect some basic facts about graded exterior powers and refer to \cite[Section 10.1]{manetti} for more details. 

Let $V$ be a graded vector space, where we write $\deg v$ for the degree of a homogeneous element. 

\begin{Proposition} \label{prop_univ_prop_graded_ext_power}
	Let $V$ be a graded vector space. Then there exists a graded unitary associative algebra $(\largewedge V,\wedge,\deg_w)$ that is unique up to a unique $\GUAA$-morphism and a $\GV$-morphism $\iota:V \to \largewedge V$ such that the following universal property is satisfied:
	For every graded unitary associative algebra  $G$ and every $\GV$-morphism $\phi:V \to G$ such that 
	\begin{displaymath}
		\phi(v_1)\phi(v_2)=(-1)^{\deg v_1 \deg v_2+1} \phi(v_2)\phi(v_1),
	\end{displaymath} 
	there exists a unique $\GUAA$-morphism $\tilde \phi:\largewedge V \to G$ such that the following diagram commutes 
	\begin{displaymath}
		\xymatrix{\largewedge V \ar[dr]^{ \tilde \phi} & \\V \ar[u]^\iota \ar[r]^\phi & G}
	\end{displaymath}
\end{Proposition}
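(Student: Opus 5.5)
We construct $\largewedge V$ explicitly as a quotient of the tensor algebra and then check the universal property. Set $T(V)=\bigoplus_{n\ge0}V^{\otimes n}$ with its concatenation product. Let $I\subset T(V)$ be the two-sided ideal generated by all elements
\begin{displaymath}
	v_1\otimes v_2+(-1)^{\deg v_1\deg v_2}\, v_2\otimes v_1,\qquad v_1,v_2\in V \text{ homogeneous}.
\end{displaymath}
Define $\largewedge V:=T(V)/I$, write $\wedge$ for the induced product, and let $\iota$ be the composition $V=V^{\otimes 1}\hookrightarrow T(V)\to \largewedge V$. Put $\deg_w=n$ on $V^{\otimes n}$. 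The generators of $I$ are homogeneous, of tensor degree $2$ and of total internal degree $\deg v_1+\deg v_2$. Hence $I$ is a bigraded ideal, so $\largewedge V$ inherits both the grading $\deg_w$ and the internal degree $\deg$. It is unital and associative, since it is a quotient of $T(V)$. This grading convention is the one needed for the Gerstenhaber structure introduced later. By construction, $\iota$ satisfies the relation $\iota(v_1)\iota(v_2)=(-1)^{\deg v_1\deg v_2+1}\iota(v_2)\iota(v_1)$.

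For existence of $\tilde\phi$, take a graded unitary associative algebra $G$ and a $\GV$-morphism $\phi:V\to G$ satisfying the stated sign relation. The universal property of the tensor algebra gives a unique unital algebra morphism $\Phi:T(V)\to G$ extending $\phi$, namely $\Phi(v_1\otimes\cdots\otimes v_n)=\phi(v_1)\cdots\phi(v_n)$. Since $V$ generates $T(V)$, $\Phi$ is also unique.

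Because $\Phi$ is multiplicative, it kills every generator of $I$:
\begin{displaymath}
	\Phi\bigl(v_1\otimes v_2+(-1)^{\deg v_1\deg v_2}v_2\otimes v_1\bigr)=\phi(v_1)\phi(v_2)-(-1)^{\deg v_1\deg v_2+1}\phi(v_2)\phi(v_1)=0.
\end{displaymath}
It therefore kills the two-sided ideal $I$, and descends to a unital algebra morphism $\tilde\phi:\largewedge V\to G$ with $\tilde\phi\circ\iota=\phi$.

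For uniqueness, $\iota(V)$ generates $\largewedge V$ as a unital algebra, so any two morphisms agreeing on $\iota(V)$ coincide. For the degree, interpret ``$\GUAA$-morphism'' as preserving the relevant grading. Then $\tilde\phi$ respects degrees because $\phi$ does and the grading of $\largewedge V$ is the one induced from $T(V)$.

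Uniqueness of $\largewedge V$ up to a unique isomorphism is the standard argument. Given two solutions, the universal property yields mutually inverse morphisms, because the composites restrict to the identity on the generators, and the uniqueness clause then forces them to be the identity.

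The one step that needs care is the grading bookkeeping. One must specify whether $G$ is graded by $\deg_w$ or by a total degree, and check that the ideal $I$ is homogeneous for whichever grading is used. Since each generator of $I$ lies in a single bidegree, this holds for every choice. As a cross-check one can compare with the standard description in \cite[Section 10.1]{manetti}: there $\largewedge V$ is the graded symmetric algebra on the suspension $V[1]$, and the sign $(-1)^{\deg v_1\deg v_2+1}$ is exactly the Koszul sign for elements of degree $\deg v_i+1$.
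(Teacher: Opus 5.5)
Your argument is correct and is the same as the paper's: you take $\largewedge V=T(V)/I$ with $I$ generated by the elements $v_1\otimes v_2+(-1)^{\deg v_1\deg v_2}v_2\otimes v_1$, let $\deg_w$ be inherited from the tensor degree, and obtain the universal property by descending the tensor-algebra extension, which you spell out in more detail than the paper does. One small correction to your closing cross-check: $(-1)^{\deg v_1\deg v_2+1}$ is not the Koszul sign $(-1)^{(\deg v_1+1)(\deg v_2+1)}$ for the shifted degrees (the two differ by $(-1)^{\deg v_1+\deg v_2}$), so the comparison with the graded symmetric algebra on $V[1]$ only holds via the d\'ecalage isomorphism, which inserts signs; the sign here is instead the bigraded rule $(-1)^{\deg X\deg Y+\deg_w X\deg_w Y}$ of Proposition~\ref{prop_grading_on_exterior_power}.
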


\proof 
The proof is the usual one: Take $T(V)=\bigoplus_k V^{\otimes k}$ the tensor algebra of $V$ and factor out by the two-sided ideal $I$ in $T(V)$ generated by all elements $v_1 \otimes v_2+(-1)^{\deg v_1 \deg v_2}v_2 \otimes v_1$, where $v_1,v_2 \in V$ are homogeneous elements. Clearly $I$ is homogeneous with respect to the natural grading on $T(V)$, and hence the quotient inherits a grading denoted $\deg_w$.
\endproof
We will call $\deg_w$ the \emph{wedge degree}. When convenient we put $\widehat X=\deg_wX$.

\begin{Proposition} \label{prop_grading_on_exterior_power}
	The grading $\deg$ of $V$ extends to a grading on $\largewedge V$ by imposing additivity under the wedge product. For
	$X, Y\in \largewedge V$ bi-homogeneous it then holds that
	\begin{equation} \label{eq_supersymmetry}
		X \wedge Y=(-1)^{\deg X\deg Y+\deg_w X\deg_wY}Y\wedge X.
	\end{equation}  
\end{Proposition}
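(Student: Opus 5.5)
The plan is to define $\deg$ on $T(V)$ by additivity, show that the ideal $I$ is bihomogeneous so that $\deg$ descends to $\largewedge V$, and then prove the sign rule \eqref{eq_supersymmetry} first for generators and then by induction on wedge degree.

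\textbf{Step 1: the degree descends.} On $T(V)$ set $\deg(v_1\otimes\cdots\otimes v_k)=\deg v_1+\cdots+\deg v_k$ for homogeneous $v_i\in V$. This grading is additive under $\otimes$ by construction. Each generator $v_1\otimes v_2+(-1)^{\deg v_1\deg v_2}v_2\otimes v_1$ of $I$ is homogeneous for both $\deg$ and the tensor length, with degrees $\deg v_1+\deg v_2$ and $2$. Multiplying on either side by homogeneous tensors keeps elements bihomogeneous, so $I$ is a bihomogeneous ideal. Hence the quotient $\largewedge V=T(V)/I$ inherits the bigrading $(\deg,\deg_w)$, and $\deg$ is additive under $\wedge$ because it is additive under $\otimes$. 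Uniqueness is clear, since additivity together with the values on $V$ forces $\deg$ on all monomials, and these span $\largewedge V$.

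\textbf{Step 2: the sign rule on generators.} For $v_1,v_2\in V$ the defining relation gives
\[v_1\wedge v_2=-(-1)^{\deg v_1\deg v_2}v_2\wedge v_1=(-1)^{\deg v_1\deg v_2+1\cdot 1}v_2\wedge v_1.\]
This is \eqref{eq_supersymmetry} with $\deg_w v_i=1$.

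\textbf{Step 3: extension to all bihomogeneous elements.} By bilinearity it suffices to take monomials $X=x_1\wedge\cdots\wedge x_p$ and $Y=y_1\wedge\cdots\wedge y_q$ with $x_i,y_j\in V$ homogeneous. To move $Y$ past $X$ we transpose each $y_j$ past each $x_i$ once, which is $pq$ elementary swaps. By Step 2 each swap contributes the factor $(-1)^{\deg x_i\deg y_j+1}$. The total sign is therefore
\[(-1)^{\sum_{i,j}\deg x_i\deg y_j+pq}=(-1)^{\deg X\deg Y+\deg_wX\deg_wY},\]
using bilinearity of the exponent and the additivity of $\deg$ from Step 1.

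\textbf{Main obstacle.} The only delicate point is the well-definedness in Step 1, namely the homogeneity of $I$. This is immediate from the form of its generators.
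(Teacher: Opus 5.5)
Your proof is correct and follows the paper's route: you define $\deg$ additively on $T(V)$, use the homogeneity of $I$ so that the grading descends to $\largewedge V$, and derive the sign rule from the defining relations. The only cosmetic difference is that you count the $pq$ adjacent transpositions explicitly, whereas the paper just says ``by induction on the wedge degree''; these are the same argument.
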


\proof 
We define a grading $\deg$ on $T(V)$ such that for all homogeneous elements $v_1,\ldots,v_l \in V$,
\begin{displaymath}
	\deg(v_1 \otimes \ldots \otimes v_l)  =\sum_i \deg v_i.
\end{displaymath}
The ideal $I$ is homogeneous, and hence the quotient $\largewedge V$ inherits a grading $\deg$. The displayed equation then follows by induction on the wedge degree of $X$ and $Y$.
\endproof

We will need another description of $\largewedge V$ as a subset of $T(V)$ instead of quotient. For homogeneous elements $v_1,\ldots,v_n$ and a permutation $\pi \in \mathcal S_n$, the Koszul sign $\varepsilon(\pi)$ is the signature of the restriction of $\pi$ to the indices such that $v_i$ has odd degree. The antisymmetric Koszul sign is defined by $\chi(\pi):=\varepsilon(\pi) \sgn(\pi)$. 

We define the linear operators $p_k: V^{\otimes k} \to V^{\otimes k}$ by $p_0=\id$, 
\begin{align}\label{eq_graded_antisymmetrization}
	p_k(v_1 \otimes \ldots \otimes v_k)= \frac{1}{k!} \sum_{\pi \in \mathcal S_k} \chi(\pi) v_{\pi_1} \otimes \ldots \otimes v_{\pi_k},
\end{align}
and set $p:=\bigoplus_k p_k:T(V) \to T(V)$.
Then one can check that $p$ is a projection with kernel $I$, from which it follows that $\largewedge V \cong \mathrm{Image}(p) \subset T(V)$.

\subsection{Differential graded Gerstenhaber algebra}
	
\begin{Definition}
	A \emph{differential graded Gerstenhaber algebra (DGGA)} is a quadruple $(\calG,d,[\bullet,\bullet],\wedge)$ such that $(\calG,d,[\bullet,\bullet])$ is a differential bigraded  Lie algebra, $(\calG,\wedge)$ is a bigraded unitary associative algebra and the two structures are compatible in the sense that for $X,Y,Z \in \calG$
	\begin{align}
		d(X \wedge Y)& =dX \wedge Y+(-1)^{\deg X} X \wedge dY \label{eq_compatibility_d_wedge}\\
		[X\wedge Y, Z]&=X\wedge[Y, Z]+(-1)^{\deg X\deg Y+\deg_w X\deg_w Y}Y\wedge [X, Z]. \label{eq_compatibility_bracket_wedge}
	\end{align} 
\end{Definition}

We remark that from the antisymmetry of the Lie bracket and \eqref{eq_compatibility_bracket_wedge}, it follows that 
\begin{equation} \label{eq_compatibility_bracket_wedge2}
	[X, Y\wedge Z] =[X, Y]\wedge Z+(-1)^{\deg Y\deg Z+\deg_wY\deg_wZ}[X, Z]\wedge Y.
\end{equation}

\begin{Proposition} \label{prop_free_dgga}
	If $\mathcal A$ is a DGLA, there is a unique structure of a DGGA on $\largewedge \mathcal A$ such that the inclusion $\mathcal A \to \largewedge \mathcal A$ is a $\DBGLA$-morphism.  $\largewedge \mathcal A$ is called the \emph{free differential graded Gerstenhaber algebra} generated by $\mathcal A$. 
\end{Proposition}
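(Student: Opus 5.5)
We prove existence and uniqueness together by showing that the compatibility axioms force the differential and the bracket on $\largewedge\mathcal A$, and that the forced formulas are well defined and satisfy the axioms.

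\emph{The differential.} Equation \eqref{eq_compatibility_d_wedge} together with $d|_{\mathcal A}=d_{\mathcal A}$ leaves only one choice on monomials:
\begin{displaymath}
d(a_1\wedge\cdots\wedge a_k)=\sum_i (-1)^{\bar a_1+\cdots+\bar a_{i-1}} a_1\wedge\cdots\wedge da_i\wedge\cdots\wedge a_k .
\end{displaymath}
For existence, note that \eqref{eq_d_on_tensor_power} defines a derivation of $T(\mathcal A)$ with respect to $\deg$. It maps each generator $a\otimes b+(-1)^{\bar a\bar b}b\otimes a$ of $I$ back into $I$: expanding and using $\overline{da}=\bar a+1$ gives again combinations of such generators. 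Hence $d$ descends to $\largewedge\mathcal A$. We have $d^2=0$ because $d^2$ is a derivation (an even commutator $\tfrac12[d,d]$) that vanishes on generators. Moreover $d$ preserves $\deg_w$ and raises $\deg$ by one.

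\emph{The bracket.} Equations \eqref{eq_compatibility_bracket_wedge} and \eqref{eq_compatibility_bracket_wedge2}, together with $[1,\cdot]=0$ (forced by $\deg_w[X,Y]=\deg_wX+\deg_wY-1$ and the Leibniz rule applied to $1\wedge 1$), determine the bracket of monomials from the bracket on $\mathcal A$. Explicitly, $[a_1\wedge\cdots\wedge a_k,\,b_1\wedge\cdots\wedge b_l]$ is the Schouten--Nijenhuis sum
\begin{displaymath}
\sum_{i,j}\pm\,[a_i,b_j]\wedge a_1\wedge\cdots\widehat{a_i}\cdots\wedge a_k\wedge b_1\wedge\cdots\widehat{b_j}\cdots\wedge b_l ,
\end{displaymath}
with Koszul signs computed from the bigrading, using the parity $\deg+\deg_w$ in the sense of \eqref{eq_supersymmetry}. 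This proves uniqueness.

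For existence, the cleanest route avoids checking signs by hand on the quotient. First, for fixed $b\in\mathcal A$, define $\ad_b$ on $T(\mathcal A)$ as a graded derivation extending $[\cdot,b]$ (respectively $[b,\cdot]$), with the sign dictated by $\deg$ and $\deg_w$. It preserves $I$ by graded skew-symmetry of the bracket on $\mathcal A$, so it descends to $\largewedge\mathcal A$. Next, let $X\in\largewedge\mathcal A$ act on the right factor as a derivation for the shifted parity $\deg+\deg_w-1$; this yields a well-defined bilinear map. One then checks the three required properties:
\begin{enumerate}
\item bigraded skew-symmetry, by a direct sign computation on monomials;
\item the Jacobi identity, which reduces to generators because both sides are derivations in each argument (given skew-symmetry and the Leibniz rules), and on generators it is the Jacobi identity of $\mathcal A$;
\item compatibility of $d$ with the bracket via \eqref{eq_graded_Leibniz_rule}, since $d[X,\cdot]-[dX,\cdot]\mp[X,d\,\cdot]$ is a derivation vanishing on $\mathcal A$.
\end{enumerate}
Finally, the inclusion $\mathcal A\to\largewedge\mathcal A$ preserves the bracket and $d$, with $\deg_w=1$, so it is a $\DBGLA$-morphism.

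The main obstacle is the sign bookkeeping: showing that the bracket descends through $I$ and satisfies the bigraded Jacobi identity with the stated signs. I would handle this by working with the total parity $\deg+\deg_w$, in which $\largewedge\mathcal A$ is graded-commutative and the bracket has parity shifted by one. This is the standard Gerstenhaber setting, as in \cite[Section 10.1]{manetti}, so the proof can follow the standard argument there.
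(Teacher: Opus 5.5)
Your proposal is essentially correct and follows the same route as the paper. Uniqueness is forced by the compatibility rules, $d$ descends because $dI\subset I$, and the bracket is the Schouten--Nijenhuis extension. The Leibniz rule and Jacobi identity are then reduced to generators via the derivations $A_X$ and $T(X,Y)$: first for arguments in $\mathcal A$, then extended to $\largewedge\mathcal A$ using skew-symmetry. Your one-line Leibniz step needs this two-stage argument too. Your version also adds checks of $d^2=0$ and of well-definedness of the bracket on the quotient, which the paper leaves implicit.

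One caution concerns signs. Under \eqref{eq_supersymmetry}, $\largewedge\mathcal A$ is not graded-commutative for the total parity $\deg+\deg_w$: for example $\gamma\wedge\xi=-\xi\wedge\gamma$, although $\gamma$ has even total parity. So derivations must be extended with the bigraded signs of \eqref{eq_derivation_adX}. Alternatively, you can first apply the sign twist $X\cdot Y=(-1)^{\deg_w X\deg Y}X\wedge Y$, which converts to total-parity conventions.
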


\proof 
The grading of $\calA$ induces a grading $\deg$ on $\largewedge \calA$. The second grading $\deg_w$ comes from the natural grading $\largewedge \calA=\bigoplus_k \largewedge^k \calA$.

We have to construct the differential $d$ and the Lie bracket $[\bullet,\bullet]$ on $\largewedge \calA$. Uniqueness of both is clear from \eqref{eq_compatibility_d_wedge}, \eqref{eq_compatibility_bracket_wedge} and \eqref{eq_compatibility_bracket_wedge2}. 

To construct $d$, note that the map $d:T(\calA) \to T(\calA)$ defined in \eqref{eq_d_on_tensor_power} satisfies $dI \subset I$ and hence induces a map $d:\largewedge \calA \to \largewedge \calA$. Equation \eqref{eq_compatibility_d_wedge} is then obvious.

The graded Schouten--Nijenhuis extension of the Lie bracket (see  \cite[Equation (II.14)]{graded_schouten_nijenhuis}) is given by

\begin{align*}
	&[v_1 \wedge \ldots \wedge v_k,w_1 \wedge \ldots \wedge w_l] := \\&\sum_{i,j} \epsilon_{i,j}  v_1 \wedge \ldots \wedge \widecheck v_i \wedge \ldots \wedge v_k 
	 \wedge  [v_i,w_j] \wedge w_1 \wedge \ldots \wedge \widecheck w_j \wedge \ldots \wedge w_l,
\end{align*}
where $\widecheck v_i$ means that $v_i$ is omitted, and 
\begin{displaymath}
 		\epsilon_{i,j}:=(-1)^{i+j+\deg v_i \sum_{a=i+1}^{k} \deg v_a+\deg w_j \sum_{b=1}^{j-1} \deg w_b}.
\end{displaymath}

The bihomogeneity of the bracket is obvious and the bigraded skew-symmetry is an easy computation. We have to prove the bigraded Jacobi identity and the bigraded Leibniz rule (see Definitions \ref{def_bigradded_Lie_algebra} and \ref{def_differential_bigradded_Lie_algebra}). Our proof of these equations is adapted from \cite[Lemma 9.1.4]{manetti}.	

For $X \in \largewedge \calA$, let us write $\ad_X:=[X,\bullet]$. One can easily verify that $\ad_X$ is a graded derivation in the sense that 
\begin{equation} \label{eq_derivation_adX}
	\ad_X(Y \wedge Z)=\ad_X(Y) \wedge Z +(-1)^{\deg X \deg Y+\deg_wY(\deg_wX-1)} Y \wedge \ad_X Z.
\end{equation} 

Using the antisymmetry of the bracket, this implies eq. \eqref{eq_compatibility_bracket_wedge}. 

With the Leibniz rule \eqref{eq_compatibility_d_wedge}, we see that the operator $$A_X:=d \circ \ad_X+(-1)^{\deg X+1}\ad_X \circ d-\ad_{dX}$$ satisfies 
\begin{equation} \label{eq_leibniz_rule_AX}
	A_X(Y \wedge Z)=A_XY \wedge Z \pm Y \wedge A_XZ, 
\end{equation}
where the sign depends on the bidegrees of $X,Y$. We want to show that $A_X=0$, which is precisely the bigraded Leibniz rule. Suppose first that $X$ and $Y$ are both of wedge degree $1$, i.e. elements of $\calA$. Then the equation $A_XY=0$ boils down to the graded Leibniz rule \eqref{eq_graded_Leibniz_rule} satisfied in the DGLA $\calA$. By \eqref{eq_leibniz_rule_AX} we conclude that $A_XY=0$ for $X \in \calA$ and $Y \in \largewedge \calA$. Using the skew-symmetry of the bracket, we see that $A_XY=\pm A_YX$, where the sign depends on the bidegrees of $X,Y$. We thus have $A_YX=0$ for all $Y \in \largewedge \calA, X \in \calA$. By \eqref{eq_leibniz_rule_AX}, we even have $A_YX=0$ for all $X,Y \in \largewedge \calA$, which we wanted to prove.  

Let us next prove the bigraded Jacobi identity (see Definition \ref{def_bigradded_Lie_algebra}). With
\begin{displaymath}
	T(X,Y):=\ad_X \circ \ad_Y + (-1)^{\overline X \overline Y+(\widehat X-1)(\widehat Y-1)+1}\ad_Y \circ \ad_X-\ad_{\ad_XY}
\end{displaymath}
we want to prove that $T(X,Y)=0$ for all $X,Y \in \largewedge \calA$. Repeated application of \eqref{eq_derivation_adX} shows that $T(X,Y)$ satisfies the equation 
\begin{equation} \label{eq_derivation_T}
	T(X,Y)(Z \wedge W)=T(X,Y)Z \wedge W \pm  Z \wedge T(X,Y) W.
\end{equation}
where the sign depends on the bidegrees of $X,Y,Z$.
If $X,Y,Z \in \calA$, then $T(X,Y)Z=0$ by the graded Jacobi identity and the graded skew-symmetry (Definition \ref{def_graded_Lie_algebra}) in the graded Lie algebra $\calA$. By \eqref{eq_derivation_T} it follows that $T(X,Y)Z=0$ for all $X,Y \in \calA, Z \in \largewedge \calA$. An easy computation shows that 
\begin{displaymath}
	T(X,Y)Z=\pm T(Y,Z)X=\pm T(Z,X)Y,
\end{displaymath}
where the sign depends on the bidegrees of $X,Y,Z$. It follows from what we already have shown that $T(Y,Z)X=0$ for all $X,Y \in \calA, Z \in \largewedge \calA$. By \eqref{eq_derivation_T} this even holds for all $X \in \largewedge \calA$. Hence $T(Z,X)Y=0$ for all $Z,X \in \largewedge \calA$ and $Y \in \calA$, and finally for all $Y \in \largewedge \calA$ by \eqref{eq_derivation_T}.
\endproof
For  $\calG:=\largewedge \calA$, we write $\calG^j=\largewedge^j \calA$ and $\calG_k=\{x\in\calG: \deg x=k\}$.

\begin{Proposition} \label{prop_univ_prop_dgga}
	Let $G$ be a differential graded Gerstenhaber algebra and $\calA$ a differential graded Lie algebra, considered as a bigraded vector space with $\deg_w=1$. Let $\phi:\calA \to G$ be a $\DBGLA$-morphism. Then there exists a unique $\DGGA$-morphism $\tilde \phi:\largewedge \calA \to G$ extending $\phi$. 
\end{Proposition}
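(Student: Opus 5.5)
Uniqueness comes first and is immediate. By construction, $\largewedge \calA$ is generated as a unitary associative algebra under $\wedge$ by $\calA$ (its elements are sums of wedge products $v_1\wedge\cdots\wedge v_k$ with $v_i\in\calA$). A $\DGGA$-morphism is in particular a morphism of bigraded unitary associative algebras. So $\tilde\phi(v_1\wedge\cdots\wedge v_k)=\phi(v_1)\wedge\cdots\wedge\phi(v_k)$ is forced, and $\tilde\phi(1)=1$.

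For existence, I will invoke Proposition \ref{prop_univ_prop_graded_ext_power}, applied to the $\GV$-morphism $\phi:\calA\to G$ (with $G$ viewed as a graded unitary associative algebra under $\wedge$). Its hypothesis is the relation $\phi(v_1)\wedge\phi(v_2)=(-1)^{\deg v_1\deg v_2+1}\phi(v_2)\wedge\phi(v_1)$. This follows from \eqref{eq_supersymmetry} in $G$, since $\phi$ preserves both gradings and $\deg_w\phi(v_i)=1$. The proposition then gives a $\GUAA$-morphism $\tilde\phi:\largewedge\calA\to G$ extending $\phi$. It preserves $\deg$, because $\deg$ is additive under $\wedge$ on both sides and $\phi$ preserves it on generators. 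It preserves $\deg_w$ for the same reason.

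Next I check compatibility with $d$. Both $\tilde\phi\circ d$ and $d\circ\tilde\phi$ are $\tilde\phi$-derivations, in the sense that $D(X\wedge Y)=D(X)\wedge\tilde\phi(Y)+(-1)^{\deg X}\tilde\phi(X)\wedge D(Y)$; this uses \eqref{eq_compatibility_d_wedge} in both algebras. The two maps agree on $\calA$, because $\phi$ is a $\DBGLA$-morphism. By induction on wedge degree they therefore agree everywhere.

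Compatibility with the bracket is the step needing the most care with signs, and I expect it to be the main obstacle. Fix $X\in\largewedge\calA$. For the maps $Z\mapsto\tilde\phi[X,Z]$ and $Z\mapsto[\tilde\phi X,\tilde\phi Z]$, I will use \eqref{eq_derivation_adX} in $\largewedge\calA$ and in $G$. The point to verify is that \eqref{eq_derivation_adX} also holds in an arbitrary DGGA $G$; this follows from \eqref{eq_compatibility_bracket_wedge2}, \eqref{eq_supersymmetry} and bigraded skew-symmetry. It shows that both maps satisfy the same twisted derivation rule in $Z$, with identical signs since $\tilde\phi$ preserves bidegrees. Hence it suffices to check them on $Z\in\calA$.

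By bigraded skew-symmetry, the case $Z\in\calA$ reduces to comparing $\tilde\phi[Z,X]$ with $[\tilde\phi Z,\tilde\phi X]$ for $Z\in\calA$, as functions of $X$. The same derivation argument, now in the variable $X$, reduces this to $X\in\calA$. There the identity holds because $\phi$ is a $\DBGLA$-morphism. Inducting on wedge degree in each variable completes the proof that $\tilde\phi$ is a $\DGGA$-morphism.
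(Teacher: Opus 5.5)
Your proposal is correct and follows the same route as the paper: obtain $\tilde\phi$ as the $\GUAA$-morphism given by the universal property of $\largewedge\calA$, then deduce compatibility with $d$ and with the bracket from the fact that both algebras satisfy the same Leibniz and derivation rules, with the two sides agreeing on $\calA$. Your version makes explicit what the paper's short proof leaves implicit: the check of the commutation hypothesis via supersymmetry of $\wedge$ in $G$, and the two-step induction for the bracket (first in $Z$, then, after using skew-symmetry, in $X$).
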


\proof 
By the universal property of the graded exterior power (Proposition \ref{prop_univ_prop_graded_ext_power}), there is a unique $\GUAA$-morphism $\tilde \phi$ extending $\phi$. The Lie bracket on $\largewedge \calA$ and the Lie bracket on $G$ both satisfy eqs. \eqref{eq_supersymmetry} and \eqref{eq_compatibility_bracket_wedge}, hence $\tilde \phi$ is compatible with the Lie brackets. Similarly, the differentials on $\largewedge \calA$ and $G$ both satisfy eq. \eqref{eq_compatibility_d_wedge}, hence $\tilde \phi$ is compatible with $d$. We conclude that $\tilde \phi$ is a $\DGGA$-morphism.
\endproof

Let us finally summarize the previous universal properties in one theorem. 

\begin{Theorem} \label{thm_universal_property_dgga}
	Let $V$ be a graded vector space. Let $W$ be the free differential graded vector space generated by $V$ (Proposition \ref{prop_free_dgv}), $\calA$ the free differential graded Lie algebra generated by $W$ (Proposition \ref{prop:free_dgla}) and $\mathcal G=\largewedge \calA$ be the free differential graded Gerstenhaber algebra generated by $\calA$ (Proposition \ref{prop_free_dgga}). Then $\mathcal G$ has the following universal property: If $\phi:V \to G$ is a $\GV$-morphism and $G$ a differential graded Gerstenhaber algebra, then there exists a unique $\DGGA$-morphism $\tilde \phi: \mathcal G \to G$ extending $\phi$.
	\begin{displaymath}
		\xymatrix{\mathcal G=\largewedge \calA \ar[dr]^{\tilde \phi} & \\V \ar[u] \ar[r]^\phi & G}
	\end{displaymath}
\end{Theorem}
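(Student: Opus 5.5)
The plan is to compose the three universal properties already established, one per layer: free differential graded vector space, free DGLA, free DGGA.

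Let $\phi:V\to G$ be a $\GV$-morphism into a differential graded Gerstenhaber algebra $(G,d,[\bullet,\bullet],\wedge)$. We regard $V$ as bigraded with $\deg_w\equiv 1$ and take the image inside $G^1$, i.e. the elements of wedge degree $1$. This is the natural convention, since the DBGLA-morphisms in Proposition \ref{prop_univ_prop_dgga} are only defined on wedge degree $1$.

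\textbf{Step 1.} The wedge-degree-one part $G^1$ of $G$ is closed under $d$, which preserves $\deg_w$. By bihomogeneity it is also closed under the bracket, since $\deg_w[X,Y]=1+1-1=1$. On $G^1$ the bigraded skew-symmetry, Jacobi identity and Leibniz rule reduce to the graded ones: the extra sign $(-1)^{(\deg_wX-1)(\deg_wY-1)}$ equals $1$ there. Hence $G^1$ is a DGLA, and in particular a differential graded vector space.

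\textbf{Step 2.} By Proposition \ref{prop_free_dgv}, $\phi:V\to G^1$ extends uniquely to a $\DGV$-morphism $\phi_1:W\to G^1$, namely $\phi_1(v,w)=\phi(v)+d\phi(w)$. By Proposition \ref{prop:free_dgla}, $\phi_1$ extends uniquely to a $\DGLA$-morphism $\phi_2:\calA\to G^1$. Composed with the inclusion $G^1\subset G$, this is a $\DBGLA$-morphism $\calA\to G$ in the sense required by Proposition \ref{prop_univ_prop_dgga}. That proposition then gives a unique $\DGGA$-morphism $\tilde\phi:\largewedge\calA\to G$ extending $\phi_2$, and hence extending $\phi$.

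\textbf{Step 3 (uniqueness).} Let $\psi:\mathcal G\to G$ be any $\DGGA$-morphism extending $\phi$. It preserves $\deg_w$, so its restriction to $\calA=\mathcal G^1$ lands in $G^1$ and is a DGLA-morphism. Restricting further to $W\subset\calA$ gives a $\DGV$-morphism extending $\phi$. By the uniqueness parts of Propositions \ref{prop_free_dgv} and \ref{prop:free_dgla} we get $\psi|_W=\phi_1$ and $\psi|_\calA=\phi_2$. By the uniqueness part of Proposition \ref{prop_univ_prop_dgga} we get $\psi=\tilde\phi$.

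The only point needing care is Step 1: checking that the sign conventions of the bigraded structure on $G$ really restrict to those of a DGLA on $G^1$, so that the earlier propositions apply to the intermediate target. This is a direct inspection of the definitions. Everything else is formal composition of universal properties.
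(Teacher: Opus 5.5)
Your proposal is correct and takes the same approach as the paper, which also obtains the universal property by composing Propositions \ref{prop_free_dgv}, \ref{prop:free_dgla} and \ref{prop_univ_prop_dgga}. Restricting the intermediate targets to the DGLA $G^1$ and spelling out uniqueness makes precise what the paper leaves implicit when it speaks of $\DBGV$- and $\DBGLA$-morphisms into $G$.
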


\proof 
By Proposition \ref{prop_free_dgv}, we can extend $\phi$ uniquely to a $\DBGV$-morphism $\phi_1:W \to G$. By Proposition \ref{prop:free_dgla}, we can extend $\phi_1$ uniquely to a $\DBGLA$-morphism $\phi_2:\calA \to G$. 
Finally by Proposition \ref{prop_univ_prop_dgga} we can extend $\phi_2$ uniquely to a $\DGGA$-morphism $\phi_3:\largewedge \calA \to G$. We thus set $\tilde \phi:=\phi_3$. 
\endproof

\subsection{The Koszul boundary operator}
\begin{Definition}
	Given a DGLA $\calA$ and the associated DGGA $\calG=\largewedge \calA$, the \emph{Koszul boundary operator} $\partial: \calG^\bullet\to \calG^{\bullet-1}$  is defined inductively on the wedge degree as follows: For $v\in \calA$ and $X\in \calG$ we set $\partial (v)=0$ and
	\begin{equation}\label{eq:koszul_exterior}\partial (v\wedge X)=[v, X]-v\wedge\partial X.\end{equation}
\end{Definition}

Note that $\partial$ decreases $\deg_w$ by $1$ while preserving $\deg$. Moreover,
\begin{displaymath}
	\partial( \largewedge^m \calA_1) \subset \largewedge^{m-2} \calA_1\otimes \calA_2.
\end{displaymath}

\begin{Proposition}\label{prop:partial_wedge}
	It holds for $X, Y\in \calG$ that 
	\[\partial (X\wedge Y)= \partial X\wedge Y+(-1)^{\deg_w X-1}[X, Y]+(-1)^{\deg_w X}X\wedge \partial Y.\]
\end{Proposition}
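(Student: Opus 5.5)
We argue by induction on the wedge degree of $X$. By bilinearity we may take $X$ to be a wedge product of homogeneous elements of $\calA$. If $\deg_w X=0$, then $X$ is a scalar, $\partial X=0$, $[X,Y]=0$, and the formula reduces to $\partial Y=\partial Y$. If $\deg_w X=1$, so $X=v\in\calA$, then $\partial v=0$. The claimed identity becomes $\partial(v\wedge Y)=[v,Y]-v\wedge\partial Y$, which is exactly the defining relation \eqref{eq:koszul_exterior}.

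For the inductive step, write $X=v\wedge X'$ with $v\in\calA$ and $\deg_w X'=\deg_w X-1$, and assume the formula for $X'$ and all $Y$. Associativity gives $X\wedge Y=v\wedge(X'\wedge Y)$, so by \eqref{eq:koszul_exterior}
\[\partial(X\wedge Y)=[v,X'\wedge Y]-v\wedge\partial(X'\wedge Y).\]
We expand the first term with \eqref{eq_compatibility_bracket_wedge2}:
\[[v,X'\wedge Y]=[v,X']\wedge Y+(-1)^{\deg X'\deg Y+\deg_w X'\deg_w Y}[v,Y]\wedge X'.\]
We expand the second term with the induction hypothesis, which produces $v\wedge\partial X'\wedge Y$, $v\wedge[X',Y]$ and $v\wedge X'\wedge\partial Y$.

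On the target side, \eqref{eq:koszul_exterior} gives $\partial X\wedge Y=[v,X']\wedge Y-v\wedge\partial X'\wedge Y$. We compute $[X,Y]=[v\wedge X',Y]$ with \eqref{eq_compatibility_bracket_wedge}:
\[[v\wedge X',Y]=v\wedge[X',Y]+(-1)^{\deg v\deg X'+\deg_w X'}X'\wedge[v,Y].\]
The term $X\wedge\partial Y$ is literally $v\wedge X'\wedge\partial Y$.

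Matching terms, the $[v,X']\wedge Y$, $v\wedge\partial X'\wedge Y$, $v\wedge[X',Y]$ and $v\wedge X'\wedge\partial Y$ pieces pair off once the signs $(-1)^{\deg_w X'-1}$ versus $(-1)^{\deg_w X-1}$ are reconciled using $\deg_w X=\deg_w X'+1$. What remains is to identify $\pm[v,Y]\wedge X'$ with $\pm X'\wedge[v,Y]$. For this we use the supersymmetry \eqref{eq_supersymmetry} with $\deg[v,Y]=\deg v+\deg Y$ and $\deg_w[v,Y]=\deg_w Y$.

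The main obstacle is this sign bookkeeping in the last identification. The sign attached to $[v,Y]\wedge X'$ involves $\deg X'\deg Y$. Commuting $[v,Y]$ past $X'$ brings in $(-1)^{(\deg v+\deg Y)\deg X'+\deg_w Y\deg_w X'}$, and the product of the two signs must reproduce $(-1)^{\deg v\deg X'+\deg_w X'}$ times the prefactor $(-1)^{\deg_w X-1}$. I expect this to close up, possibly after minor adjustments of the sign conventions in \eqref{eq_compatibility_bracket_wedge}. If it does not, the fallback is to check the identity on generators, taking $X$ and $Y$ to be products of elements of $\calA$ of wedge degree at most $2$, and then extend using the derivation property \eqref{eq_derivation_adX}.
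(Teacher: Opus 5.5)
Your argument is essentially the paper's proof: you peel a factor $v\in\calA$ off $X$, expand $\partial(v\wedge X'\wedge Y)$ via \eqref{eq:koszul_exterior}, \eqref{eq_compatibility_bracket_wedge2} and the induction hypothesis, and match the result against the right-hand side expanded via \eqref{eq_compatibility_bracket_wedge}; your induction on $\deg_w X$ alone even spares the paper's supersymmetry reduction. The final sign check closes exactly, with no adjustment of conventions: $(-1)^{\deg X'\deg Y+\deg_w X'\deg_w Y}\cdot(-1)^{(\deg v+\deg Y)\deg X'+\deg_w Y\deg_w X'}=(-1)^{\deg v\deg X'}=(-1)^{\deg_w X-1}(-1)^{\deg v\deg X'+\deg_w X'}$, so the fallback is unnecessary.
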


\proof
We prove by induction on $\deg_w X+\deg_wY$. The statement is clear for $\deg_w X=\deg_wY=1$. We may assume by linearity that $X, Y$ are simple wedge products. As both sides of the identity are easily seen to be supersymmetric in $X, Y$, it suffices to consider the case $X=u\wedge X'$ with $u\in\calA$. 

The verification is now straightforward. We have by the induction hypothesis
\begin{align*}
	\partial(X\wedge Y)&=\partial (u\wedge X'\wedge Y)
	\\&=[u, X'\wedge Y]-u\wedge\partial (X'\wedge Y)
	\\&=[u,X']\wedge Y+(-1)^{\overline {X'}\overline Y+\widehat {X'}\widehat Y}[u, Y]\wedge X'
	\\&-u\wedge \partial X'\wedge Y+(-1)^{\widehat {X'}}u\wedge [X', Y]+(-1)^{\widehat {X'}+1}u\wedge X'\wedge \partial Y.
\end{align*}
This should be shown to equal
\begin{align*}
	&\partial (u\wedge X')\wedge Y+(-1)^{\widehat X-1}[u\wedge X',Y]+(-1)^{\widehat X} u\wedge X'\wedge\partial Y\\
	&=[u,X']\wedge Y-u\wedge \partial X'\wedge Y
	\\&+(-1)^{\widehat {X'}}u\wedge [X',Y]+(-1)^{\overline {X'}\overline u}X'\wedge[u,Y]
	\\&+(-1)^{\widehat X}u\wedge X'\wedge\partial Y,
\end{align*}
which is easily seen to hold. 
\endproof

\begin{Proposition}\label{prop:partial_lie_derivative}
	The Koszul boundary operator is a derivation for the Lie bracket on $\calG$:
	\begin{displaymath}
		\partial [X, Y]=[\partial X, Y]+(-1)^{\deg_w X+1}[X, \partial Y], \quad X, Y\in \calG.
	\end{displaymath}
\end{Proposition}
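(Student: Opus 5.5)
We prove the identity by induction on $\deg_w X$, keeping $Y$ arbitrary, and reduce everything to Proposition \ref{prop:partial_wedge}, the Jacobi identity and the derivation property \eqref{eq_derivation_adX}. By bilinearity we may assume $X$ and $Y$ are simple wedge products of homogeneous elements of $\calA$.

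\textbf{Base case.} Let $\deg_w X=1$, i.e.\ $X=v\in\calA$, so that $\partial X=0$ and the claim reads $\partial[v,Y]=-[v,\partial Y]$, with sign $(-1)^{1+1}$ in front of $[X,\partial Y]$ giving $+[v,\partial Y]$. Let us check the sign convention carefully. The statement gives $\partial[v,Y]=(-1)^{2}[v,\partial Y]=[v,\partial Y]$. We prove this by a secondary induction on $\deg_w Y$.
\begin{itemize}
\item If $Y\in\calA$, both sides vanish, since $[v,Y]\in\calA$.
\item Otherwise write $Y=u\wedge Y'$. Then $\partial Y=[u,Y']-u\wedge\partial Y'$. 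We expand $[v,u\wedge Y']$ with \eqref{eq_compatibility_bracket_wedge2} as $[v,u]\wedge Y'\pm u\wedge[v,Y']$, up to the supersymmetry reordering. We then apply $\partial$ termwise using \eqref{eq:koszul_exterior} and Proposition \ref{prop:partial_wedge}, and use the inner induction hypothesis $\partial[v,Y']=[v,\partial Y']$.
\end{itemize}
The terms $[[v,u],Y']$ and $[u,[v,Y']]$ combine via the bigraded Jacobi identity into $[v,[u,Y']]$. The remaining terms assemble into $[v,-u\wedge\partial Y']$ by \eqref{eq_compatibility_bracket_wedge2}. Summing gives $[v,\partial Y]$.

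\textbf{Inductive step.} For $\deg_w X\ge 2$ write $X=u\wedge X'$ with $u\in\calA$. Using \eqref{eq_compatibility_bracket_wedge},
\[[u\wedge X',Y]=u\wedge[X',Y]\pm X'\wedge[u,Y].\]
We apply $\partial$ to each summand via Proposition \ref{prop:partial_wedge}. This produces:
\begin{itemize}
\item terms $[u,[X',Y]]$ and $[X',[u,Y]]$;
\item wedge terms $u\wedge\partial[X',Y]$ and $\partial X'\wedge[u,Y]$, $X'\wedge\partial[u,Y]$.
\end{itemize}
For $\partial[X',Y]$ we use the induction hypothesis, and for $\partial[u,Y]$ the base case.

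On the other side we expand $\partial X=[u,X']-u\wedge\partial X'$ and compute $[\partial X,Y]$ and $[X,\partial Y]$, again with \eqref{eq_compatibility_bracket_wedge}. The matching then goes as follows:
\begin{itemize}
\item The double brackets $[u,[X',Y]]$, $[X',[u,Y]]$ combine by bigraded Jacobi into $[[u,X'],Y]$, which is the corresponding piece of $[\partial X,Y]$.
\item The wedge terms match pairwise.
\end{itemize}

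The main obstacle is sign bookkeeping. The signs mix $\deg$ and $\deg_w$ parities in the Jacobi identity, in \eqref{eq_compatibility_bracket_wedge} and in Proposition \ref{prop:partial_wedge}. To control this, we would track everything with the combined parity $\deg+\deg_w-1$, under which the bracket is an ordinary graded Lie bracket. Alternatively, one could shortcut the whole computation by noting that the operator
\[\partial\circ\ad_X-\ad_{\partial X}-(-1)^{\deg_w X+1}\ad_X\circ\partial\]
satisfies a Leibniz-type rule in $Y$, by \eqref{eq_derivation_adX} and Proposition \ref{prop:partial_wedge}. It then suffices to check it on generators, mirroring the argument for $A_X$ in Proposition \ref{prop_free_dgga}. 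We would try this approach first if the direct signs become unwieldy.
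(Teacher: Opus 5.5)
Your proof is correct and is essentially the paper's argument: induct on wedge degree, peel off a degree-one wedge factor, expand with the bracket--wedge compatibility and Proposition \ref{prop:partial_wedge}, and match the double brackets via the bigraded Jacobi identity. I checked that the signs all come out right, e.g.\ $[u,[X',Y]]-(-1)^{\deg u\deg X'}[X',[u,Y]]=[[u,X'],Y]$, and the four wedge terms agree pairwise. The only difference is organizational: the paper inducts on $\deg_w X+\deg_w Y$ and uses the supersymmetry of the identity to always split off a factor of $Y$, whereas you split off a factor of $X$ and handle $X\in\calA$ separately by an inner induction on $\deg_w Y$, which avoids having to check that symmetry.
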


\proof
We do induction on $\deg_w X+\deg_wY$. The assertion is clear for $\deg_w X=\deg_wY=1$. We may assume by linearity that $X, Y$ are simple wedge products. As the identity is easily seen to be supersymmetric in $X, Y$, we may assume $\deg_w X\leq \deg_wY$ and $Y=v\wedge Y'$ with $v\in\calA$.

The verification is now straightforward. We have by Proposition \ref{prop:partial_wedge} and the induction hypothesis 

\begin{align*}
	\partial [X,Y]&=\partial([X, v]\wedge Y'+(-1) ^{\overline v\overline {Y'}+\widehat {Y'}}[X, Y']\wedge v) \\
	&=[\partial X, v]\wedge Y' +(-1)^{\widehat X+1}[[X, v], Y']+(-1)^{\widehat X}[X, v]\wedge \partial Y'\\&+(-1)^{\overline v\overline X+\widehat X+1}[v, [X, Y']]+(-1)^{\overline v\overline X+\widehat X}
	v\wedge \partial[X, Y'] \\&=[\partial X, v]\wedge Y' +(-1)^{\widehat X+1}[[X, v], Y']+(-1)^{\widehat X}[X, v]\wedge \partial Y'\\&+(-1)^{\overline v\overline X+\widehat X+1}[v, [X, Y']]+(-1)^{\overline v\overline X+\widehat X}
	v\wedge [\partial X, Y']+(-1)^{\overline v\overline X+1}v\wedge [X, \partial Y'].
\end{align*}
We ought to show this equals
\begin{align*}
	&[\partial X, v\wedge Y']+(-1)^{\widehat X+1}[X, \partial (v\wedge Y')]\\
	&=[\partial X, v\wedge Y']+(-1)^{\widehat X+1}[X, [v,Y']-v\wedge\partial Y']
	\\&=[\partial X, v]\wedge Y'+(-1)^{\overline v\overline{Y'}+\widehat {Y'}}[\partial X, Y']\wedge v
	\\&+(-1)^{\widehat X+1}[X, [v, Y']]+(-1)^{\widehat X}[X, v]\wedge\partial Y'+(-1)^{\widehat X+\overline v\overline {Y'}+\widehat {Y'}+1}[X, \partial Y']\wedge v,
\end{align*}
which easily follows from the supersymmetry relations and the Jacobi identity.
This completes the induction and the proof.
\endproof

\begin{Proposition}
	It holds that $\partial^2=0$ on $\calG$.
\end{Proposition}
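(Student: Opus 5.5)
We prove $\partial^2 X=0$ by induction on the wedge degree $\deg_w X$. By linearity it suffices to treat simple wedge products. For $\deg_w X\le 2$ the claim is immediate: $\partial$ decreases $\deg_w$ by one, and $\partial$ vanishes on $\calG^1=\calA$ and on $\calG^0$. Indeed, $\partial(u\wedge v)=[u,v]\in\calA$, so $\partial^2(u\wedge v)=0$. (Here we take $\partial$ to be zero on scalars, consistent with $\partial v=0$ for $v\in\calA$.)

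For the inductive step we write $X=v\wedge X'$ with $v\in\calA$ and assume $\partial^2 X'=0$. By the definition \eqref{eq:koszul_exterior},
\begin{displaymath}
\partial X=[v,X']-v\wedge \partial X'.
\end{displaymath}
Applying $\partial$ again, we treat the two terms separately.

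For the second term we use \eqref{eq:koszul_exterior} once more:
\begin{displaymath}
\partial(v\wedge \partial X')=[v,\partial X']-v\wedge\partial^2X'=[v,\partial X'],
\end{displaymath}
where the last equality is the induction hypothesis.

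For the first term we invoke Proposition \ref{prop:partial_lie_derivative} with $\deg_w v=1$:
\begin{displaymath}
\partial[v,X']=[\partial v,X']+(-1)^{2}[v,\partial X']=[v,\partial X'],
\end{displaymath}
since $\partial v=0$.

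Combining the two computations gives $\partial^2X=[v,\partial X']-[v,\partial X']=0$, which closes the induction.

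The only point requiring care is the sign in Proposition \ref{prop:partial_lie_derivative} when $X$ is the wedge-degree-one element $v$: the exponent is $\deg_w v+1=2$, so the sign is $+1$ and the two terms cancel exactly. Since that proposition is already established, I expect no real obstacle. Alternatively, one could apply Proposition \ref{prop:partial_wedge} to $v\wedge X'$ and check that the resulting sign $(-1)^{\deg_w v-1}=+1$ on the bracket term is consistent with the definition.
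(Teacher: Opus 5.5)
Your proof is correct and is essentially the paper's argument. The paper uses Propositions \ref{prop:partial_wedge} and \ref{prop:partial_lie_derivative} to show that $\partial^2X=\partial^2Y=0$ implies $\partial^2(X\wedge Y)=0$, and then inducts on wedge degree. You carry out exactly this computation in the special case $X=v\in\calA$, where Proposition \ref{prop:partial_wedge} reduces to the defining recursion \eqref{eq:koszul_exterior}.
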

\proof
If $\partial^2 X=\partial ^2 Y=0$, it follows by Propositions \ref{prop:partial_lie_derivative} and \ref{prop:partial_wedge} that also $\partial^2(X\wedge Y)=0$. The claim follows by induction on wedge degree.
\endproof

\subsection{Maurer--Cartan elements and the gauge action}

\begin{Definition}
	Let $\calA$ be a differential graded Lie algebra. An element $a \in \calA_1$ such that 
	\begin{equation}\label{eq:maurer_cartan}
		da+\frac12 [a,a]=0
	\end{equation}
	is called a \emph{Maurer--Cartan element}, and $\mathrm{MC}(\calA)$ denotes the cone of such elements. 
\end{Definition}

If $\calA$ is nilpotent, there is an action, called {\it gauge action}, of $\exp(\calA_0)$ on $\calA_1$ that leaves $\mathrm{MC}(\calA)$ invariant \cite[Lemma 6.3.4]{manetti}. Since we will be dealing with a differential graded Lie algebra that is not nilpotent, we work with a slight variation of the gauge action as follows.

Let $\calA[[\epsilon]]$ be the space of formal power series with coefficients in $\calA$. This is again a differential graded Lie algebra, where $\epsilon$ is of degree $0$ and where 
\begin{displaymath}
	d\sum_{i=0}^\infty a_i \epsilon^i:=\sum_{i=0}^\infty da_i \epsilon^i, \quad\left[\sum_{i=0}^\infty a_i\epsilon^i,\sum_{j=0}^\infty b_j \epsilon^j\right]:=\sum_{i,j=0}^\infty [a_i,b_j]\epsilon^{i+j}.
\end{displaymath}
A sequence of $\sum_i a_i^{(k)}\epsilon^i$ of elements in $\calA[[\epsilon]]$ converges to $\sum_i a_i\epsilon^i$ if for each $i$, $a_i^{(k)}$ converges to $a_i$ in the discrete topology. We write $\calA[[\epsilon]]_{>0} \subset \calA[[\epsilon]]$ for the elements without absolute term.

\begin{Definition}
	The \emph{gauge action} of $\exp(\calA_0[[\epsilon]]_{>0})$ on $\calA_1[[\epsilon]]$ is defined by the converging series
	\begin{displaymath}
		e^a * x :=x+ \sum_{n=0}^\infty \frac{\ad(a)^n}{(n+1)!}([a,x]-da), \quad a \in \calA_0[[\epsilon]]_{>0}, x \in \calA_1[[\epsilon]].
	\end{displaymath}
\end{Definition} 

Observe that this is well defined, as for each fixed power of $\epsilon$ there are only finitely many summands.

\begin{Proposition} \label{prop_gauge_action}
	The set $\mathrm{MC}(\calA[[\epsilon]])$ of Maurer--Cartan elements is invariant under the gauge action. In particular, since
	 $0\in\MC(\calA[[\epsilon]])$, we have $e^a * 0 \in \mathrm{MC}(\calA[[\epsilon]])$ for every $a \in \calA_0[[\epsilon]]_{>0}$.
\end{Proposition}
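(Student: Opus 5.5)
We need to prove that the gauge action preserves Maurer--Cartan elements in $\calA[[\epsilon]]$. The plan is to avoid manipulating the infinite series directly and instead use a one-parameter flow argument, reducing everything to a statement about each fixed power of $\epsilon$. Fix $a\in\calA_0[[\epsilon]]_{>0}$ and $x\in\MC(\calA[[\epsilon]])$. Introduce an auxiliary formal parameter $t$ and set
\[
x(t):=e^{ta}*x = x+\sum_{n\ge0}\frac{t^{n+1}\ad(a)^n}{(n+1)!}([a,x]-da).
\]
Because $a$ has no absolute term, each coefficient of $\epsilon^i$ in $x(t)$ is a polynomial in $t$. Hence all the manipulations below, such as differentiating in $t$ and evaluating at $t=1$, are legitimate coefficientwise. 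The same remark shows that the defining series converges in the stated topology.

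First I will show that $x(t)$ solves the ODE
\[
\frac{d}{dt}x(t)=[a,x(t)]-da, \qquad x(0)=x.
\]
Differentiating termwise gives $\sum_{n\ge0}\frac{t^n\ad(a)^n}{n!}([a,x]-da)=e^{t\ad a}([a,x]-da)$. On the other hand, $[a,x(t)]-da=[a,x]-da+\sum_n\frac{t^{n+1}\ad(a)^{n+1}}{(n+1)!}([a,x]-da)$, which is the same series. So the ODE holds.

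Next consider the curvature $F(t):=dx(t)+\frac12[x(t),x(t)]$. Using the ODE, the graded Leibniz rule, graded skew-symmetry and the graded Jacobi identity (with $\deg a=0$, $\deg x(t)=1$), I compute
\[
F'(t)=d[a,x(t)]-d^2a+[x(t),[a,x(t)]-da].
\]
Now $d[a,x]=[da,x]+[a,dx]$ and $[x,da]=-(-1)^{1\cdot1}[da,x]=[da,x]$. Since $d^2a=0$, the $[da,x]$ terms cancel. For the remaining bracket, $[x,[a,x]]=-[[a,x],x]$ by skew-symmetry in degrees $1,1$, and Jacobi gives $[a,[x,x]]=2[[a,x],x]$. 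Therefore
\[
F'(t)=[a,dx(t)]+\tfrac12[a,[x(t),x(t)]]=[a,F(t)].
\]
The sign bookkeeping in this step is the main place where care is needed, but it is routine.

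Finally, $F$ satisfies the linear ODE $F'=\ad(a)F$ with $F(0)=0$, since $x$ is Maurer--Cartan. Expand $F(t)=\sum_k F_k t^k$ coefficientwise in $\epsilon$. The equation gives $(k+1)F_{k+1}=[a,F_k]$, so inductively $F_k=0$ for all $k$ and thus $F\equiv0$. Evaluating at $t=1$ shows $e^a*x\in\MC(\calA[[\epsilon]])$. Setting $x=0$, which is trivially Maurer--Cartan, yields the second claim.
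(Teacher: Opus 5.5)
Your flow argument is correct in substance and follows a different route from the paper. However, one line contains a sign error that breaks the deduction as written. For two elements of degree $1$, graded skew-symmetry reads $[X,Y]=-(-1)^{1\cdot 1}[Y,X]=[Y,X]$, so $[x,[a,x]]=+[[a,x],x]$. This is the same rule you applied correctly to get $[x,da]=[da,x]$ one line earlier. With your minus sign, the derivation form of Jacobi, $[a,[x,x]]=[[a,x],x]+[x,[a,x]]$, would force $[a,[x,x]]=0$. Your two stated facts would then give $F'=[a,dx]-\tfrac12[a,[x,x]]$ rather than $[a,F]$. With the correct sign you get $[x,[a,x]]=\tfrac12[a,[x,x]]$, and hence $F'(t)=[a,F(t)]$ as claimed. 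The remaining steps are fine: polynomiality in $t$ of each $\epsilon$-coefficient, the ODE for $x(t)$, and the recursion $(k+1)F_{k+1}=[a,F_k]$ with $F_0=0$.

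The paper instead follows Manetti. It adjoins a formal degree-$1$ symbol $d$ to $\calA[[\epsilon]]$ so that the differential becomes inner, $d'=[d,\cdot\,]'$. It then rewrites the Maurer--Cartan equation as $[x+d,x+d]'=0$ and the gauge action as $e^a*x=e^{\ad a}(x+d)-d$. The conclusion follows because $e^{\ad a}$ is a bracket automorphism, $\ad a$ being a degree-$0$ derivation. That route is purely algebraic, needs no auxiliary parameter or differentiation, and exhibits the gauge action as an affine action by automorphisms. Its cost is checking that the enlarged space is a DGLA and expanding $\ad(a)^n$ on brackets binomially. Your route stays inside $\calA[[\epsilon]]$ and needs only one infinitesimal Jacobi computation. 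Its cost is the extra parameter $t$ and the coefficientwise justification for differentiating and setting $t=1$. Both give the same stronger equivariance. Writing $F(y)=dy+\tfrac12[y,y]$, solving $F'=\ad(a)F$ yields $F(e^a*x)=e^{\ad a}F(x)$ for every $x\in\calA_1[[\epsilon]]$. This is exactly the paper's identity $[e^a*x+d,e^a*x+d]'=e^{\ad a}[x+d,x+d]'$, since $[x+d,x+d]'=2\bigl(dx+\tfrac12[x,x]\bigr)$.
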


\proof
Following \cite[Section 6.3]{manetti}, we construct a new DGLA $\calA[[\epsilon]]':=\calA[[\epsilon]] \oplus \R \cdot d$, where $d$ is a formal symbol of degree $1$. The Lie bracket is defined by 
\begin{displaymath}
	[\tau+v d,\zeta+w d]':=[\tau,\zeta]+v d\zeta-(-1)^{\deg \tau} w d\tau, \quad \tau,\zeta \in \calA[[\epsilon]], v,w \in \R.
\end{displaymath}
The differential is defined by 
\begin{displaymath}
	d'(\tau+v d)=[d,\tau+v d]=d\tau.
\end{displaymath}
One easily checks that $(\calA[[\epsilon]]',[\bullet,\bullet]',d')$ is a DGLA. The gauge action can be rewritten in $\calA[[\epsilon]]'$ as 
\begin{displaymath}
	e^a * x=e^{\ad a}(x+d)-d.
\end{displaymath}

Let $\tau',\zeta'\in \calA[[\epsilon]]'$. By the graded Jacobi identity, we have for each $a \in\calA_0[[\epsilon]]_{>0}$  
\begin{displaymath}
	\ad(a) [\tau',\zeta']'=[\ad(a)\tau',\zeta']'+[\tau',\ad(a)\zeta']'.
\end{displaymath}
It follows inductively that 
\begin{displaymath}
	\ad(a)^n [\tau',\zeta']'=\sum_{k+l=n} \binom{n}{k} [\ad(a)^k \tau',\ad(a)^l \zeta']' 
\end{displaymath}
and hence 
\begin{displaymath}
	e^{\ad a}[\tau',\zeta']'=[e^{\ad a}\tau',e^{\ad a}\zeta']'.
\end{displaymath}

Let $\tau \in \calA_1[[\epsilon]]$. Then $\tau$ is a Maurer--Cartan element if and only if $[\tau+d,\tau+d]'=0$. In this case, 
\begin{displaymath}
	[e^a* \tau+d,e^a* \tau+d]'=[e^{\ad(a)}(\tau+d),e^{\ad(a)}(\tau+d)]'= e^{\ad(a)}[\tau+d,\tau+d]'=0,
\end{displaymath}
which means that $e^a* \tau$ is a Maurer--Cartan element.
\endproof

Let $\calA$ be a DGLA and $\calG$ the DGGA generated by $\calA$. All operations on $\calG$ extend to the space of formal power series $\calG[[\epsilon]]$ by (multi)-linearity in $\epsilon$, making the latter into a DGGA. Convergence in $\calG[[\epsilon]]$ is defined as for $\calA[[\epsilon]]$. 

Let $\calA_1[[\epsilon]]_{>0} \subset \calA[[\epsilon]]$ denote the space of elements of degree $1$ with no absolute term. Note that elements in this space pairwise commute. We define the exponential map by the converging series
\begin{displaymath}
	\widehat{\exp}:\calA_1[[\epsilon]]_{>0} \to \calG[[\epsilon]], \zeta \mapsto \sum_{n=0}^\infty \frac{\zeta^{\wedge n}}{n!}.
\end{displaymath}

\begin{Proposition} \label{prop_mc_and_vals}
	Let $\zeta \in \calA_1[[\epsilon]]_{>0}$. Then $-\zeta \in \mathrm{MC}(\calA[[\epsilon]])$ if and only if $\widehat\exp(\zeta) \in \ker(d-\partial)$.
\end{Proposition}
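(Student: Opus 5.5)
The plan is to compute $d\widehat\exp(\zeta)$ and $\partial\widehat\exp(\zeta)$ separately and compare them. Since $\zeta$ has degree $1$ and wedge degree $1$, relation \eqref{eq_supersymmetry} gives $\zeta\wedge\zeta=(-1)^{1+1}\zeta\wedge\zeta$. So $\zeta$ commutes with itself and the powers $\zeta^{\wedge n}$ are not forced to vanish. Everything is $\epsilon$-adically convergent because $\zeta$ has no constant term, so all identities may be checked termwise on $\zeta^{\wedge n}$.

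\textbf{Step 1: the differential.} By the Leibniz rule \eqref{eq_compatibility_d_wedge} and the supercommutativity (note that $d\zeta$ has bidegree $(2,1)$, so it commutes with $\zeta$), we get $d(\zeta^{\wedge n})=n\, d\zeta\wedge\zeta^{\wedge(n-1)}$. Summing over $n$ gives
\[ d\,\widehat\exp(\zeta)=d\zeta\wedge\widehat\exp(\zeta). \]

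\textbf{Step 2: the Koszul boundary.} We prove by induction on $n$, using Proposition \ref{prop:partial_wedge} with $X=\zeta$ and $Y=\zeta^{\wedge(n-1)}$, the formula
\[ \partial(\zeta^{\wedge n})=\binom n2[\zeta,\zeta]\wedge\zeta^{\wedge(n-2)}. \]
The base cases are $\partial\zeta=0$ and $\partial(\zeta\wedge\zeta)=[\zeta,\zeta]$. For the inductive step, Proposition \ref{prop:partial_wedge} gives $\partial(\zeta\wedge Y)=[\zeta,Y]-\zeta\wedge\partial Y$. By the derivation property \eqref{eq_compatibility_bracket_wedge2} (with the signs trivial here, since all factors have bidegree $(1,1)$ or $(2,1)$), $[\zeta,\zeta^{\wedge(n-1)}]=(n-1)[\zeta,\zeta]\wedge\zeta^{\wedge(n-2)}$. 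We must then check the sign in front of $\zeta\wedge\partial Y$, which should come out so that the coefficients add: $(n-1)+\binom{n-1}2=\binom n2$. This sign bookkeeping is the main obstacle. I will fix it by testing the case $n=3$ directly from the definition \eqref{eq:koszul_exterior}. Summing over $n$ gives
\[ \partial\,\widehat\exp(\zeta)=\tfrac12[\zeta,\zeta]\wedge\widehat\exp(\zeta). \]

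\textbf{Step 3: conclusion.} Combining Steps 1 and 2,
\[ (d-\partial)\widehat\exp(\zeta)=\bigl(d\zeta-\tfrac12[\zeta,\zeta]\bigr)\wedge\widehat\exp(\zeta). \]
This expression vanishes exactly when $-\zeta$ is Maurer--Cartan, since $d(-\zeta)+\tfrac12[-\zeta,-\zeta]=-(d\zeta-\tfrac12[\zeta,\zeta])$. The direction "Maurer--Cartan implies closed" is then immediate. For the converse, look at the wedge-degree-one component of $(d-\partial)\widehat\exp(\zeta)$. It equals $d\zeta-\tfrac12[\zeta,\zeta]$, because $\widehat\exp(\zeta)=1+(\text{wedge degree}\geq1)$ and the other terms have wedge degree $\geq2$. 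Hence vanishing of $(d-\partial)\widehat\exp(\zeta)$ forces the Maurer--Cartan equation for $-\zeta$. Note that $[\zeta,\zeta]\in\calA$ has wedge degree $1$, by the bihomogeneity of the bracket.
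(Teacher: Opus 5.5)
Your route is the paper's: compute $d\widehat\exp(\zeta)=d\zeta\wedge\widehat\exp(\zeta)$ and $\partial\widehat\exp(\zeta)=\tfrac12[\zeta,\zeta]\wedge\widehat\exp(\zeta)$, then compare. Your wedge-degree-one argument for the converse spells out what the paper leaves implicit. The one real defect is the sign bookkeeping.

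In Step 1, $d\zeta$ does \emph{not} commute with $\zeta$. By \eqref{eq_supersymmetry}, bidegrees $(2,1)$ and $(1,1)$ give the sign $(-1)^{2\cdot 1+1\cdot 1}=-1$, so $\zeta\wedge d\zeta=-d\zeta\wedge\zeta$. This anticommutation is exactly what cancels the sign $(-1)^{\deg\zeta}=-1$ in the Leibniz rule \eqref{eq_compatibility_d_wedge}: $d(\zeta\wedge\zeta^{\wedge(n-1)})=d\zeta\wedge\zeta^{\wedge(n-1)}-\zeta\wedge(n-1)\,d\zeta\wedge\zeta^{\wedge(n-2)}=n\,d\zeta\wedge\zeta^{\wedge(n-1)}$. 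With your stated reason (``commutes''), the same computation gives coefficient $2-n$. So your formula is right but your justification is not.

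The same fact, applied to $[\zeta,\zeta]$ (also of bidegree $(2,1)$), settles the sign you left open in Step 2, uniformly in $n$: $-\zeta\wedge\binom{n-1}{2}[\zeta,\zeta]\wedge\zeta^{\wedge(n-3)}=+\binom{n-1}{2}[\zeta,\zeta]\wedge\zeta^{\wedge(n-2)}$. The coefficients then add to $(n-1)+\binom{n-1}{2}=\binom n2$. Testing $n=3$ does not sidestep this, because that computation needs the very same commutation sign. Had you carried over your Step 1 belief, Step 2 would have produced $(n-1)-\binom{n-1}{2}$. With this sign corrected, your proof is complete.
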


\begin{proof}
	We have $d\widehat\exp(\zeta)= d\zeta \wedge \widehat\exp(\zeta)$ and 
	\begin{displaymath}
		\partial \widehat\exp(\zeta)=\sum_{j=0}^\infty\frac{1}{j!}\partial (\zeta^j)=\sum_{j=0}^\infty\frac{1}{j!}{j\choose 2}[\zeta, \zeta]\wedge\zeta^{j-2}=\frac12 [\zeta, \zeta]\wedge \widehat\exp(\zeta).
	\end{displaymath} 
	Thus  $\widehat\exp(\zeta) \in \ker(d-\partial)$ if and only if $d \zeta=\frac 12[ \zeta, \zeta]$, that is $-\zeta$ is Maurer--Cartan.
\end{proof}

\section{The free differential graded Gerstenhaber algebra on one generator}
\label{sec_free_dgga_one_generator}

In the previous section we considered the free differential graded Gerstenhaber algebra generated by a graded vector space. We will now apply this general construction to a vector space generated by one element. In the resulting Gerstenhaber algebra we identify a certain subspace which we denote $\DTC$. Its importance will become clear in Section \ref{sec:tube_coefficients}, where its elements will yield closed and vertical $n$-forms on the sphere bundle of an $n$-dimensional Lie group endowed with a bi-invariant riemannian metric and hence, via the Rumin differential, smooth valuations on the group.

The main result of this section is an explicit description of the space $\DTC$. Namely, using the gauge-action construction of Maurer--Cartan elements, we identify a sequence of elements and using some combinatorial arguments we show that it in fact spans the whole space.

\subsection{The tube forms}

Let $V_0$ denote the graded vector space over $\R$ spanned by one element $\xi$ of degree $0$. Let $V$ be the free differential graded vector space generated by $V_0$, i.e. $V=\R \cdot \xi \oplus \R \cdot \gamma$, where $\gamma=d\xi$, see Proposition \ref{prop_free_dgv}. We set $V_1=\R \cdot \gamma$.

The free DGLA on one generator, denoted $\mathcal A$, is the free DGLA generated by $V_0$, which exists by Proposition \ref{prop:free_dgla}. Denote by $\calA_k$ the homogeneous elements in $\calA$ of degree $k$. Note that $\xi \in \calA_0, \gamma \in \calA_1$.

Since $0$ is a Maurer--Cartan element in $\calA[[\epsilon]]$, the gauge action from Proposition \ref{prop_gauge_action} allows us to produce more Maurer--Cartan elements. Since $\epsilon \xi \in \calA_0[[\epsilon]]_{>0}$, we have 
\begin{align}
	\zeta_\epsilon & :=e^{\epsilon \xi} * 0 =  - \sum_{i=0}^\infty \frac{\epsilon^{i+1}}{(i+1)!} (\ad \xi)^i \gamma  \nonumber \\
	& = -\epsilon \gamma-\frac{\epsilon^2}{2} [\xi \gamma ]-\frac{\epsilon^3}{6} [\xi^2\gamma]-\cdots \in \mathrm{MC}(\mathcal A[[\epsilon]]).
	\label{eq_def_zeta}
\end{align}

Here and in the following, we use the right-stacked iterated Lie bracket notation, e.g. $[\xi^2\gamma]=[\xi\xi\gamma]$ stands for  $[\xi,[\xi,\gamma]]$.  

We let $\calG:=\largewedge \calA$ be the free differential graded Gerstenhaber algebra generated by $\R \cdot \xi$, see Section \ref{sec:dgla}. Recall that $\calG^i=\largewedge^i \calA$.

We will use another grading on $\mathcal G$, called symbol degree, which is given by $\deg_s\xi=\deg_s\gamma=1$ and $\deg_s [X,Y]=\deg_sX+\deg_sY$ and $\deg_s X \wedge Y=\deg_sX +\deg_sY$. To see that it is well defined, recall from Proposition \ref{prop_univ_prop_GLA} that the DGLA $\calA$ is given by $L(V)=\bigoplus_{n=1}^\infty L_n(V) \subset T(V)$, where $L_1(V)=V$ and $L_n(V)=[V,L_{n-1}(V)]$. The symbol degree on $L_n(V)$ is just $n$. To see that it extends to the graded exterior power is the same argument as in Proposition \ref{prop_grading_on_exterior_power}.

\begin{Definition} \label{def_varrho_m}
	For $m \geq 0$, we define the \emph{tube forms} $\varrho_m \in \bigoplus_{i \leq m} \calG^i_i$ by 
	\begin{align}
		\varrho_\epsilon=\sum_{m=0}^\infty \varrho_m \epsilon^m:=\widehat\exp(-\zeta_\epsilon)\in \calG[[\epsilon]],
	\end{align}
	where $\zeta_\epsilon \in \mathrm{MC}(\mathcal A[[\epsilon]])$ is defined in \eqref{eq_def_zeta}. 
\end{Definition}

 Observe that $\deg_s\varrho_m=m$. By Proposition \ref{prop_mc_and_vals}, we have 
\begin{equation} \label{eq:tau_closed}
	d\varrho_m=\partial \varrho_m.
\end{equation}
By \eqref{eq_def_zeta} we find that 
\begin{equation} \label{eq_main_term_rhot}
	\varrho_\epsilon=1+\sum_{m=1}^\infty  \frac{\gamma^m}{m!}\epsilon^m+\upsilon,
\end{equation}
where $\upsilon$ consists of all summands containing Lie brackets. 

Denoting $\gamma^k_m:=\frac{1}{k!(m-k)!}\gamma^k$, the few first and the last summands of $\varrho_m$ can be written as 
\begin{align*}
	\varrho_m&=\gamma^{m}_m\\
	&+[\xi\gamma]\wedge \gamma^{m-2}_m\\
	& +[\xi^2\gamma]\wedge \gamma^{m-3}_m+([\xi^3\gamma]+3[\xi \gamma ]^2) \wedge \gamma^{m-4}_m+\\
	&+(10[\xi^2\gamma] \wedge [\xi \gamma] + [\xi^4\gamma]) \wedge \gamma^{m-5}_m \\
	& +([\xi^5 \gamma]+15[\xi \gamma]^3+15 [\xi^3\gamma][\xi\gamma]+10[\xi^2\gamma]^2) \wedge \gamma^{m-6}_m\\
	& +\dots\\
	&+\frac{1}{m!}[\xi^{m-1}\gamma].
\end{align*}

Observe that all the wedge factors in the summands comprising $\varrho_m$ have degree $1$, and thus they commute. In particular,
\begin{align*}
	\varrho_0&=1\\
	\varrho_1&= \gamma\\
	\varrho_2&=\frac 12(\gamma^2+[\xi\gamma])\\
	\varrho_3&=\frac 1{6}(\gamma^3+3[\xi\gamma ]\gamma+[\xi^2\gamma])\\
	\varrho_4&=\frac1{24}(\gamma^4+6[\xi\gamma]\gamma^2+4[\xi^2\gamma]\gamma +3[\xi\gamma]^2+[\xi^3\gamma]).
\end{align*}

\begin{Definition}\label{def:omega}
	Define  for $m\geq 1$
	\begin{displaymath}
		\omega_m:=\frac 1m\xi\wedge\varrho_{m-1}.
	\end{displaymath}
\end{Definition}

\begin{Proposition}\label{prop:primitive}
	For each $m\geq1$  
	\begin{displaymath}
		\varrho_m=d\omega_m+\partial \omega_m.
	\end{displaymath}	 
\end{Proposition}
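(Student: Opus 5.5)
We plan to work with generating functions in $\epsilon$ and prove the single identity
\[
\frac{d}{d\epsilon}\bigl(\epsilon\,\Omega_\epsilon\bigr)\ \text{-type relation}\qquad\text{namely}\qquad \sum_{m\ge1} m\,\epsilon^{m-1}(d+\partial)\omega_m=\sum_{m\ge1} m\,\epsilon^{m-1}\varrho_m .
\]
Since $m\omega_m=\xi\wedge\varrho_{m-1}$, the left side equals $(d+\partial)(\xi\wedge\varrho_\epsilon)$, and the right side equals $\frac{d}{d\epsilon}\varrho_\epsilon$. It therefore suffices to show
\[
d(\xi\wedge\varrho_\epsilon)+\partial(\xi\wedge\varrho_\epsilon)=\frac{d}{d\epsilon}\varrho_\epsilon .
\]

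\emph{Computing the left side.} Since $\deg\xi=0$, we have
\[
d(\xi\wedge\varrho_\epsilon)=\gamma\wedge\varrho_\epsilon+\xi\wedge d\varrho_\epsilon .
\]
By the definition of $\partial$ (with $\xi\in\calA$),
\[
\partial(\xi\wedge\varrho_\epsilon)=[\xi,\varrho_\epsilon]-\xi\wedge\partial\varrho_\epsilon .
\]
By \eqref{eq:tau_closed}, $d\varrho_\epsilon=\partial\varrho_\epsilon$, so the two $\xi$-terms cancel. The left side is therefore $\gamma\wedge\varrho_\epsilon+[\xi,\varrho_\epsilon]$.

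\emph{Computing the right side.} Write $\eta_\epsilon:=-\zeta_\epsilon=\sum_{i\ge0}\frac{\epsilon^{i+1}}{(i+1)!}(\ad\xi)^i\gamma\in\calA_1[[\epsilon]]$. The wedge factors of degree $1$ commute, so
\[
\frac{d}{d\epsilon}\widehat\exp(\eta_\epsilon)=\dot\eta_\epsilon\wedge\varrho_\epsilon .
\]
Since $\ad\xi$ is a derivation of the wedge product (by \eqref{eq_derivation_adX}, with $\xi$ of bidegree $(0,1)$, so all signs are $+$), we also get
\[
[\xi,\varrho_\epsilon]=[\xi,\eta_\epsilon]\wedge\varrho_\epsilon .
\]
The claim then reduces to the identity in $\calA[[\epsilon]]$
\[
\dot\eta_\epsilon=\gamma+[\xi,\eta_\epsilon],
\]
which is immediate termwise: $\dot\eta_\epsilon=\sum_{i\ge0}\frac{\epsilon^i}{i!}(\ad\xi)^i\gamma$, while $[\xi,\eta_\epsilon]=\sum_{i\ge0}\frac{\epsilon^{i+1}}{(i+1)!}(\ad\xi)^{i+1}\gamma$. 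This is just the ODE satisfied by the gauge orbit $e^{\epsilon\xi}*0$.

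\emph{Conclusion and main obstacle.} Comparing coefficients of $\epsilon^{m-1}$ gives $m(d+\partial)\omega_m=m\varrho_m$, which proves the statement. The main points requiring care are the signs. First, the sign in $\partial(v\wedge X)$ when $v=\xi$. Second, the derivation property of $\ad\xi$ on wedge products with the bigraded signs. Third, the justification that $\epsilon$-differentiation commutes with $\widehat\exp$ in $\calG[[\epsilon]]$, which holds because the degree-$1$ elements commute. I expect all of these to be routine given $\deg\xi=0$ and $\deg_w\xi=1$.
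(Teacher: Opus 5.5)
Your proposal is correct and follows essentially the same route as the paper. You cancel the $\xi\wedge d\varrho$ and $\xi\wedge\partial\varrho$ terms using $d\varrho_m=\partial\varrho_m$, pass to generating functions in $\epsilon$, and conclude from the ODE $\frac{d}{d\epsilon}\zeta_\epsilon=-\gamma+[\xi,\zeta_\epsilon]$ together with the fact that $\ad_\xi$ acts as an ordinary derivation on wedge products of degree-one elements; writing $\eta_\epsilon=-\zeta_\epsilon$ and summing over $m$ before cancelling rather than after are only cosmetic differences.
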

In particular, this yields a recursive formula for $\varrho_m$:
\begin{displaymath}
	\varrho_m=\frac1m(d+\partial)(\xi\wedge\varrho_{m-1}).
\end{displaymath}
Later, in Proposition \ref{prop:hardlefschetz_convolution}, we will  see another recursive formula using the convolution operation.

\proof
Using Prop. \ref{prop:partial_wedge} we have 
\begin{align*}
	d\omega_m & =\frac1m \gamma\wedge \varrho_{m-1}+\frac1m \xi\wedge d\varrho_{m-1}\\
	\partial \omega_m & =-\frac1m \xi\wedge \partial \varrho_{m-1}+\frac 1m [\xi, \varrho_{m-1}].
\end{align*}

Since $\partial \varrho_{m-1}=d\varrho_{m-1}$ by eq. \eqref{eq:tau_closed}, it remains to verify that
\begin{displaymath}
	\gamma\wedge\varrho_{m-1}+[\xi,\varrho_{m-1}]=m\varrho_m.
\end{displaymath}
Multiplying by $\epsilon^{m-1}$ and summing over $m\geq 1$, we should check
\begin{equation}\label{eq:omega}
\gamma \wedge \widehat\exp(-\zeta_\epsilon)+[\xi,\widehat\exp(-\zeta_\epsilon)]= \frac{d}{d\epsilon} \widehat\exp(-\zeta_\epsilon).
\end{equation}
We have \begin{equation}\label{eq:zeta_dt}
	\frac{d}{d\epsilon} \zeta_\epsilon=- \sum_{i=0}^\infty \frac{[\xi^i \gamma]}{i!}\epsilon^{i}=-\gamma+[\xi,\zeta_\epsilon],
\end{equation}
and so the right hand side of the desired equality \eqref{eq:omega} is \begin{align*}
	 \frac{d}{d\epsilon} \widehat\exp(-\zeta_\epsilon) &= -\frac{d}{d\epsilon}\zeta_\epsilon \wedge \widehat\exp(-\zeta_\epsilon) \\
	& =\gamma \wedge \widehat\exp(-\zeta_\epsilon) - [\xi,\zeta_\epsilon] \wedge \widehat\exp(-\zeta_\epsilon).\end{align*}

Now 
\begin{align*}
	[\xi,\widehat\exp(-\zeta_\epsilon)] & =\sum_{i=0}^\infty(-1)^i \frac{1}{i!}[\xi,\zeta_\epsilon^i]=\sum_{i=1}^\infty(-1)^i\frac1{(i-1)!}[\xi, \zeta_\epsilon]\wedge \zeta_\epsilon^{i-1}\\&=- [\xi, \zeta_\epsilon]\wedge\widehat\exp(-\zeta_\epsilon),
\end{align*}
concluding the proof.
\endproof

The first values of $\omega_m$ are
\begin{align*}
	\omega_1 &=\xi\\
	\omega_2 &=\frac 12 \xi \wedge \gamma\\
	\omega_3 &=\frac 16 \xi\wedge (\gamma^2+[\xi\gamma])\\
	\omega_4 &=\frac 1{24}\xi\wedge(\gamma^3+3[\xi\gamma ]\gamma+[\xi^2\gamma])\\
	\omega_5 & =\frac1{120}\xi\wedge(\gamma^4+6[\xi\gamma]\gamma^2+4[\xi^2\gamma]\gamma +3[\xi\gamma]^2+[\xi^3\gamma]).
\end{align*}

\subsection{Abstract valuations}

\begin{Definition} \label{def_operator_D}
	We write $\Deg X=\deg X-\deg_wX$, $\calG(k)=\{\tau\in \calG: \Deg\tau=k\}$. Define the operator $D:\calG(\bullet)\to \calG(\bullet+1)$ by
	\begin{displaymath}
		DX=dX+(-1)^{\Deg X+1}\partial X,
	\end{displaymath}
	and denote $\calZ:=\ker(D)$.
\end{Definition}

Since $d$ and $\partial$ commute, we find that
\begin{displaymath}
	Dd+dD=0, \quad D\partial =\partial D, \quad D^2=0.
\end{displaymath}
In particular, $\calZ$ is closed under both $d$ and $\partial$, and $\mathrm{Image}(D)\subset \calZ$.

\begin{Definition}
	The space of \emph{horizontal} elements $\calH\subset \calG$ is the span of all wedge products of elements from $\calA$ that are in $\oplus_{j=1}^\infty\calA_j$. In other words, none of the wedge factors is proportional to $\xi$. 
\end{Definition}

We now define a subspace of $\calG$ closely related to the space of abstract tube coefficients that will be defined in Section \ref{sec:abstract_convolution}.

\begin{Definition}
	\begin{displaymath}
		\calD\calT\calC  :=\calZ\cap\calH\cap\calG(0). 
	\end{displaymath}
\end{Definition}

It is easily checked that the tube forms $\varrho_m$ from Definition \ref{def_varrho_m} belong to $\calD\calT\calC $. The main theorem of this section is that they already generate $\calD\calT\calC $. 
  
\begin{Theorem} \label{thm_abstract_valuations}
	The space $\calD\calT\calC $ is spanned by the tube forms $\varrho_m$, $m\geq 0$. 
\end{Theorem}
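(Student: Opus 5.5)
The plan is to reduce the statement to a linear-algebra problem about polynomials in the elements $[\xi^i\gamma]$, and then show that the $D$-closed ones form a one-dimensional space in each symbol degree.

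\emph{Step 1: reduction to polynomials in $A_1$.} An element of $\calH$ is a combination of wedge products of factors from $\bigoplus_{j\ge1}\calA_j$. Each such factor contributes at least $1$ to $\deg$ and exactly $1$ to $\deg_w$. So $\Deg=0$ forces every factor to lie in $\calA_1$, and $\calH\cap\calG(0)=\largewedge\calA_1$. In the free Lie algebra $L(V)\subset T(V)$, the degree-one part consists of Lie words containing exactly one $\gamma$. The right-normed brackets $x_i:=[\xi^i\gamma]$, $i\ge0$, span it, and they are linearly independent: the coefficient of the tensor $\xi^{\otimes i}\otimes\gamma$ in $x_j$ is $\delta_{ij}$. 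Since $x_i\wedge x_j=x_j\wedge x_i$ by \eqref{eq_supersymmetry}, $\largewedge\calA_1$ is the polynomial algebra $\R[x_0,x_1,\dots]$, with $x_i$ of symbol degree $i+1$. On $\calG(0)$ we have $D=d-\partial$, and both $d$ and $\partial$ preserve $\deg_s$. Hence $\calD\calT\calC=\bigoplus_m \ker(d-\partial)\cap\R[x]_m$, where $\R[x]_m$ is the part of weight $m$. Since $\varrho_m$ lies in this kernel by \eqref{eq:tau_closed} and $\deg_s\varrho_m=m$, it suffices to show $\dim\ker(d-\partial)\cap\R[x]_m\le1$.

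\emph{Step 2: explicit formulas for $d$ and $\partial$.} By Proposition \ref{prop:partial_wedge} and induction, $\partial$ acts on a monomial as a second-order operator:
\[
\partial(x_{i_1}\cdots x_{i_k})=\pm\sum_{a<b}[x_{i_a},x_{i_b}]\wedge\prod_{c\ne a,b}x_{i_c}.
\]
In contrast, $d$ acts as a first-order derivation, with
\[
dx_i=\sum_{k=0}^{i-1}[\xi^k\gamma\,\xi^{i-1-k}\gamma].
\]
Thus $(d-\partial)P\in\calA_2\wedge\R[x]$. I will fix a basis of $\calA_2$, namely the Lie words with two $\gamma$'s, for instance a Hall or Lyndon basis containing the elements $[x_j,x_l]$ for $j<l$. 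Writing every $[\xi^k\gamma\,\xi^{i-1-k}\gamma]$ in this basis turns the condition $(d-\partial)P=0$ into a linear system on the coefficients of $P$.

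\emph{Step 3: triangularity, the main obstacle.} The key observation is that the $k=0$ term of $dx_i$ is $[\gamma,x_{i-1}]=[x_0,x_{i-1}]$, which is also what $\partial$ produces from the pair $x_0x_{i-1}$. Reading off the coefficient of $[x_0,x_{i-1}]\wedge Q$ in $(d-\partial)P$ should therefore relate the coefficient of $x_iQ$ in $P$ to that of $x_0x_{i-1}Q$, plus contributions from monomials that come earlier in a suitable order. I would order monomials of weight $m$ by (number of $x_0$ factors, then lexicographically on the remaining indices), with $x_0^m$ as the top monomial. The goal is to show that if $P$ is closed and its $x_0^m$ coefficient vanishes, then $P=0$: one trades each factor $x_i$, $i\ge1$, for $x_0x_{i-1}$ step by step, and at each step uses the vanishing of the basis coefficient $[x_0,x_{i-1}]$ in $(d-\partial)P$. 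The hard part is controlling the other contributions to that coefficient. These are the $k\ge1$ terms of $dx_i$ re-expanded in the Hall basis, and the brackets $[x_j,x_l]$ with $j,l\ge1$ that happen to expand with a nonzero $[x_0,\cdot]$ component. The ordering must be chosen so that all such terms come from strictly lower monomials; I expect this bookkeeping in the free Lie algebra to be the main difficulty.

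\emph{Conclusion and fallback.} Granting Step 3, any closed $P$ of weight $m$ equals $c\,m!\,\varrho_m$, where $c$ is its $x_0^m=\gamma^m$ coefficient, since $\varrho_m=\gamma^m/m!+\cdots$ by \eqref{eq_main_term_rhot}. If the triangularity argument gets stuck, I would instead try a contracting homotopy. Let $\theta$ be the derivation of $\calA$ of degree $-1$ with $\theta\gamma=\xi$ and $\theta\xi=0$, extended to $\calG$ as a derivation of both the wedge product and the bracket. Then $d\theta+\theta d$ is the symbol-degree Euler operator and $\theta$ anticommutes with $\partial$. For closed $P$ of weight $m$ this gives $P=\frac1m(d-\partial)\theta P$. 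One would then compare this with the identity $\varrho_m=\frac1m(d+\partial)(\xi\wedge\varrho_{m-1})$ of Proposition \ref{prop:primitive} and argue by induction on $m$.
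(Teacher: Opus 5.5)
Your Steps 1 and 2 are correct and match the paper's opening move. Every element of $\DTC$ lies in $\largewedge\calA_1=\R[x_0,x_1,\dots]$, and it suffices to show that in each symbol degree the $D$-closed elements form a space of dimension at most one. But the whole content of the theorem sits in Step 3, and there you only describe what you expect to happen. As set up, it cannot be carried out. To read off ``the coefficient of $[x_0,x_{i-1}]\wedge Q$'' you must know that $[x_0,x_{i-1}]$ is linearly independent of the other brackets occurring in $(d-\partial)P$. That is, you need an actual basis of $\calA_2$, and you do not have one. Such a basis must also contain the diagonal brackets $[x_j,x_j]$, which are nonzero because $x_j$ is odd; for example, $dx_1=[x_0,x_0]$ is precisely what links $x_1Q$ to $x_0^2Q$. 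The bookkeeping you are worried about comes from your ordering, which mixes wedge degrees and so forces you to control $d$ and $\partial$ at the same time.

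The missing idea, and the paper's route, is to filter by wedge degree instead. Since $d$ preserves $\deg_w$ and $\partial$ lowers it by one, the top wedge-degree component $\rho_k$ of $\rho\in\DTC$ satisfies $d\rho_k=0$, and $\partial$ plays no role. One then needs only $\ker d\cap\largewedge^k\calA_1=\R\gamma^k$ (Lemma~\ref{lem:unique_closed}). Subtracting $c\,k!\,\varrho_k$ lowers the top wedge degree, and one inducts. In your notation that lemma is immediate:
\begin{itemize}
\item $dP=\sum_{i\ge1}dx_i\wedge\frac{\partial P}{\partial x_i}$, with formal partial derivatives in the polynomial ring;
\item the $dx_i$ are nonzero by Lemma~\ref{lemma_sum_unequal0} and have distinct symbol degrees, hence are linearly independent;
\item the part of $\calG$ with one factor in $\calA_2$ and all others in $\calA_1$ is $\calA_2\otimes\R[x_0,x_1,\dots]$.
\end{itemize}
So $dP=0$ forces $P\in\R[x_0]$.

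Alternatively, Step 3 can be repaired directly. The brackets $\{[x_j,x_l]\}_{j\le l}$ form a basis of $\calA_2$, by Lazard elimination or by comparing with the dimension count in Proposition~\ref{prop:higher_jacobi}(ii). In this basis, $dx_i=\sum_{p=0}^{i-1}\binom{i}{p+1}[x_p,x_{i-1-p}]$. The coefficient of $[x_0,x_{i-1}]\wedge Q$ in $(d-\partial)P$ then involves only two monomials: $x_iQ$, with nonzero coefficient coming from $dx_i$, and $x_0x_{i-1}Q$. There are no further contributions, so induction on $\lambda_1+\dots+\lambda_k$ for monomials $x_{\lambda_1}\cdots x_{\lambda_k}$ finishes.

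Your fallback can also be completed, but it too needs steps you did not state:
\begin{itemize}
\item With the paper's sign conventions $\theta$ commutes with $\partial$, so the identity reads $mP=D(\theta P)$, where $D=d+\partial$ on $\calG(-1)$.
\item Since $\theta x_i=\ad_\xi^i\xi=0$ for $i\ge1$, one has $\theta P=\xi\wedge\frac{\partial P}{\partial x_0}$.
\item For $\tau\in\calH\cap\calG(0)$, $D(\xi\wedge\tau)=\gamma\wedge\tau+[\xi,\tau]+\xi\wedge D\tau$. Together with $\calG=\calH\oplus\xi\wedge\calH$, this forces $\frac{\partial P}{\partial x_0}\in\DTC$.
\end{itemize}
Induction and Proposition~\ref{prop:primitive} then give $P\in\R\varrho_m$.
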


We will need some technical statements before we can prove the theorem.

\subsection{Higher Jacobi identities}

As a consequence of the graded Jacobi identity satisfied in $\calA$, there are higher Jacobi identities among iterated Lie brackets. A systematic study of such identities is contained in \cite{alekseev_ivanov}. We only need a special case that we prove directly here. 

Recall that we constructed $\calA$ as a graded Lie subalgebra of the tensor algebra $T(V)$, where $V=\R \cdot \xi \oplus \R \cdot \gamma$, see the proof of Proposition \ref{prop_univ_prop_GLA}. 
\begin{Lemma}\label{lem:universal_commutators}
	It holds in $T(V)$  that 

\begin{displaymath}
		[\xi^b \gamma \xi^a \gamma]=\sum_{0\leq i+j\leq a+b} c_{i,j}(a,b)\xi^i\gamma\xi^j \gamma\xi^{a+b-i-j},
	\end{displaymath}	
	where
	\begin{displaymath}
		c_{i,j}(a,b)= (-1)^{i+a+b}{a\choose j}\left((-1)^j{b\choose i}+{b\choose i+j-a}\right).
	\end{displaymath}

In terms of generating functions, we have
\begin{equation} \label{eq_genfunction_c}
		\sum_{i,j}c_{i,j}(a,b)x^iy^j=(x-1)^b \big((y-1)^a+(x-y)^a\big).
\end{equation}
\end{Lemma}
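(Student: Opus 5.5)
The plan is to compute both sides directly in $T(V)$, where $V=\R\cdot\xi\oplus\R\cdot\gamma$ with $\deg\xi=0$ and $\deg\gamma=1$, and to track the resulting tensors through a commutative generating polynomial. The basic observation is that, since $\deg\xi=0$, the bracket $[\xi,Y]=\xi\otimes Y-Y\otimes\xi$ is an ordinary commutator. Hence $\ad\xi=L_\xi-R_\xi$, where $L_\xi$ and $R_\xi$ denote left and right multiplication by $\xi$ in $T(V)$. These two operators commute, so the binomial theorem applies:
\begin{displaymath}
(\ad\xi)^n Y=\sum_{k}\binom nk(-1)^{n-k}\,\xi^k\otimes Y\otimes\xi^{n-k}.
\end{displaymath}

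The steps are as follows.

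First, apply this with $Y=\gamma$ to get $[\xi^a\gamma]=\sum_k\binom ak(-1)^{a-k}\xi^k\gamma\xi^{a-k}$.

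Second, compute $[[\xi^a\gamma],\gamma]$. Both entries have degree $1$, so the sign in the bracket gives $[A,\gamma]=A\otimes\gamma+\gamma\otimes A$.

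Third, apply $(\ad\xi)^b$ to the result.

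To organize the bookkeeping, encode a word $\xi^p\gamma\xi^q\gamma\xi^r$ by the monomial $X^pY^qZ^r$. Under this encoding:
\begin{itemize}
\item $L_\xi$ becomes multiplication by $X$ and $R_\xi$ becomes multiplication by $Z$, so $\ad\xi$ is multiplication by $X-Z$;
\item the term $\xi^k\gamma\xi^{a-k}\otimes\gamma$ becomes $X^kY^{a-k}$;
\item the term $\gamma\otimes\xi^k\gamma\xi^{a-k}$ becomes $Y^kZ^{a-k}$.
\end{itemize}
Consequently $[\xi^a\gamma\gamma]$ is encoded by $(X-Y)^a+(Y-Z)^a$, and $[\xi^b\gamma\xi^a\gamma]$ is encoded by
\begin{displaymath}
(X-Z)^b\big((X-Y)^a+(Y-Z)^a\big).
\end{displaymath}
This polynomial is homogeneous of degree $a+b$, matching the fact that every word has exactly $a+b$ letters $\xi$. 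We may therefore set $Z=1$ without losing information: the exponent of $Z$ is recovered as $a+b-i-j$. Writing $x=X$ and $y=Y$, this is exactly the generating function \eqref{eq_genfunction_c}, and it also confirms the range $0\le i+j\le a+b$.

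Finally, extract the coefficient of $x^iy^j$ from each of the two products.
\begin{itemize}
\item The product $(x-1)^b(y-1)^a$ contributes $\binom bi(-1)^{b-i}\binom aj(-1)^{a-j}$, which equals $(-1)^{i+a+b}(-1)^j\binom ai\binom bi$ after writing it as $(-1)^{i+a+b}(-1)^j\binom aj\binom bi$.
\item The product $(x-1)^b(x-y)^a$ contributes the term $\binom aj x^{a-j}(-y)^j$ of $(x-y)^a$ times the coefficient of $x^{i+j-a}$ in $(x-1)^b$, namely $\binom b{i+j-a}(-1)^{b-i-j+a}$. The total sign is $(-1)^{j+b-i-j+a}=(-1)^{i+a+b}$.
\end{itemize}
The sum of the two contributions is $c_{i,j}(a,b)$.

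There is no real obstacle in this argument. The only place requiring care is the sign in $[A,\gamma]$ for two odd elements: it must be $+$, so that both summands $(X-Y)^a$ and $(Y-Z)^a$ appear with coefficient $+1$. One should also justify that the encoding by monomials is injective on words containing exactly two letters $\gamma$. This holds because such a word is determined by the triple $(p,q,r)$.
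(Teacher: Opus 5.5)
Your proof is correct and follows essentially the paper's route: the closed formula for $[\xi^a\gamma]$, then the bracket with $\gamma$, then $b$ applications of $\ad\xi$, with the binomial theorem for the commuting operators $L_\xi,R_\xi$ and the monomial encoding doing the work of the paper's inductions on $a$ and $b$. Two cosmetic points: the right-stacked notation actually produces $[\gamma,[\xi^a\gamma]]$ rather than $[[\xi^a\gamma],\gamma]$, which is harmless because the bracket is symmetric on odd elements, and $\binom ai\binom bi$ in your first coefficient extraction is a typo for $\binom aj\binom bi$.
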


\proof
By induction on $a$, one proves 
\begin{displaymath}
		[\xi^a \gamma]=\sum_{i=0}^a (-1)^{i+a} \binom{a}{i} \xi^i \gamma \xi^{a-i},
\end{displaymath}
from which the case $b=0$ of the lemma follows. The general case follows by induction over $b$.
\endproof

Let $B_n$ denote the Bernoulli numbers, that is
\[\frac{x}{e^x-1}=\sum_{n=0}^\infty \frac{B_n}{n!}x^n.\]
Thus $B_0=1$, $B_1=-1/2$, $B_2=1/6$, and $B_{2k+1}=0$ for $k\geq 1$.

We set 
\begin{displaymath}
	\beta_b(m)=-\frac{2(2^{b+1}-1)}{b+1}{m\choose b}B_{b+1}.
\end{displaymath}

A generating function for $\beta_b(m)$ is 
\begin{equation} \label{eq_gen_funct_beta}
	\sum_{b \leq m} \beta_b(m) x^b \frac{y^{m-b}}{m!}=\frac{2e^y}{e^x+1}.
\end{equation}

\begin{Proposition}\label{prop:higher_jacobi}
	\begin{enumerate}
		\item For all odd $m\geq 1$ we have the relation 
		\begin{equation}\label{eq:bernoulli_relation}
			\sum_{b=0}^{m}\beta_b(m) [\xi^b\gamma\xi^{m-b}\gamma]=0,
		\end{equation}
		or equivalently
		\begin{equation} \label{eq:delta_bernoulli2}
			[\gamma \xi^m\gamma]=\sum_{j=1}^{\frac{m+1}{2}}{m\choose 2j-1}\frac{4^j-1}{j}B_{2j} [\xi^{2j-1}\gamma\xi^{m-(2j-1)}\gamma].
		\end{equation}
	\item The iterated Lie brackets $[\xi^{m-2k}\gamma  \xi^{2k}\gamma]$,  $0\leq k\leq \frac m 2$, form a basis of
	$\mathrm{Span} \{[\xi^{m-j}\gamma \xi^{j}\gamma]\}_{j\geq 0}$.
	\end{enumerate}

\end{Proposition}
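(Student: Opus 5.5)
The plan is to reduce everything to polynomial identities, using Lemma \ref{lem:universal_commutators}. Fix $m$ and set $a=m-b$. The words $\xi^i\gamma\xi^j\gamma\xi^{m-i-j}$ are linearly independent in $T(V)$. Hence the map sending $[\xi^b\gamma\xi^{a}\gamma]$ to its generating polynomial $\sum_{i,j}c_{i,j}(a,b)x^iy^j$ is injective on the span of these brackets. By \eqref{eq_genfunction_c} this polynomial is
\[
P_b(x,y)=(x-1)^b\big((y-1)^{m-b}+(x-y)^{m-b}\big).
\]
Substituting $u=x-1$, $w=y-1$ (so $x-y=u-w$), we get $P_b=u^b\big(w^{m-b}+(u-w)^{m-b}\big)$. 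This is the image of the monomial $u^bw^{m-b}$ under the symmetrizer $f\mapsto f+\sigma f$, where $\sigma$ is the involution $w\mapsto u-w$ acting on homogeneous polynomials of degree $m$ in $u,w$. So linear relations among the brackets are exactly linear relations among the $P_b$, and both parts become statements about $\sigma$-invariant polynomials.

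For part (1), I will use exponential generating functions in $m$, combined with \eqref{eq_gen_funct_beta}. Substituting $x\mapsto ut$, $y\mapsto vt$ there gives
\[
\sum_m \frac{t^m}{m!}\sum_b\beta_b(m)u^bv^{m-b}=\frac{2e^{vt}}{e^{ut}+1}.
\]
Applying this with $v=w$ and with $v=u-w$ yields
\[
F(t):=\sum_m\frac{t^m}{m!}\sum_b\beta_b(m)P_b=\frac{2e^{wt}+2e^{(u-w)t}}{e^{ut}+1}.
\]
Multiplying numerator and denominator of $F(-t)$ by $e^{ut}$ shows $F(-t)=F(t)$. Therefore the coefficients of odd powers of $t$ vanish, which is \eqref{eq:bernoulli_relation} for odd $m$.

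To get \eqref{eq:delta_bernoulli2}, I will compute $\beta_0(m)=-2B_1=1$ and note that $B_{b+1}=0$ for even $b\geq 2$. Solving \eqref{eq:bernoulli_relation} for the $b=0$ term, and writing $b=2j-1$ so that
\[
\beta_{2j-1}(m)=-\frac{4^j-1}{j}\binom{m}{2j-1}B_{2j},
\]
gives the stated formula. The term $b=m$ is included in this sum, since $m$ is odd.

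For part (2), the span of the $P_b$ over $0\le b\le m$ is the image of the symmetrizer, i.e.\ the space of $\sigma$-invariant homogeneous polynomials of degree $m$. In the coordinates $u$ and $s=w-u/2$, the involution becomes $s\mapsto -s$. So the invariants are spanned by $u^{m-2k}s^{2k}$ with $0\le k\le m/2$, and they have dimension $\lfloor m/2\rfloor+1$. It then suffices to show that the $\lfloor m/2\rfloor+1$ polynomials $P_{m-2k}$ are linearly independent. Write $w=u/2+s$ and $u-w=u/2-s$. Then
\[
P_{m-2k}=u^{m-2k}\big((u/2+s)^{2k}+(u/2-s)^{2k}\big)=2u^{m-2k}s^{2k}+(\text{terms of lower degree in } s).
\]
This is a triangular system with nonzero diagonal, so the $P_{m-2k}$ are independent. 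Transporting back through the injective generating-function map proves (2).

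The only step needing care is the bookkeeping in the first paragraph: checking that the coefficient extraction via $(i,j)$ really is injective on the span of the brackets. It is, because the total length $m+2$ is fixed, so the third exponent $m-i-j$ is determined by $i$ and $j$. I also need to verify that the substitution into \eqref{eq_gen_funct_beta} matches the indexing of $\beta_b(m)$. The evenness of $F$ is the conceptual heart of the argument, but it is a one-line check once set up.
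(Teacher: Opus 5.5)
Your proof is correct. Part (1) is the paper's argument: reduce via Lemma \ref{lem:universal_commutators} to showing that the generating function $2\,(e^{(y-1)t}+e^{(x-y)t})/(e^{(x-1)t}+1)$ is even in $t$.

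Part (2), however, follows a genuinely different route. The paper gets the spanning property from \eqref{eq:delta_bernoulli2}. Strictly speaking, that step also requires applying $\ad_\xi^{b}$ to the relations for smaller odd indices, to eliminate every bracket $[\xi^b\gamma\xi^a\gamma]$ with $a$ odd and $b>0$. The paper then proves linear independence by induction on $m$, differentiating the polynomial identity twice in $y$ to pass from $m$ to $m-2$.

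You instead identify the span of the $P_b$ with the image of the symmetrizer $1+\sigma$, that is, the $\sigma$-invariant homogeneous polynomials of degree $m$. Their dimension $\lfloor m/2\rfloor+1$ can be read off in the coordinates $(u,s)$. A single triangularity check then gives independence of the $P_{m-2k}$, and spanning follows by counting dimensions.

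What your route buys is self-containedness: part (2) no longer depends on part (1) or on the lower-index relations. It also explains structurally why there are exactly $\lceil m/2\rceil$ independent linear relations among the $m+1$ brackets: they are the $\sigma$-anti-invariant polynomials. The paper's route is more computational but ties part (2) directly to the explicit Bernoulli relation.
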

\proof

Using Lemma \ref{lem:universal_commutators} and \eqref{eq_genfunction_c}, \eqref{eq:bernoulli_relation} is equivalent to 
\begin{displaymath}
		\sum_{b=0}^m \beta_b(m) (x-1)^b \big((y-1)^{m-b}+(x-y)^{m-b}\big)=0
\end{displaymath}	
	for all odd $m$, which is equivalent to saying that the formal power series
\begin{displaymath}
		\sum_{b \leq m} \frac{1}{m!} \beta_b(m) (x-1)^b \big((y-1)^{m-b}+(x-y)^{m-b}\big) t^m \in \R[x,y][[t]]
\end{displaymath}
	is even. By \eqref{eq_gen_funct_beta}, this power series is given by 
\begin{displaymath}
		2 \frac{e^{(y-1)t}+e^{(x-y)t}}{e^{(x-1)t}+1},
\end{displaymath}
	and it is easy to check that it is indeed even. This proves \eqref{eq:bernoulli_relation}. The non-zero terms in \eqref{eq:bernoulli_relation} are for $b=0$ and $b=2j-1$ odd, and writing it out gives \eqref{eq:delta_bernoulli2}.

Let us now prove (ii). From \eqref{eq:delta_bernoulli2} it follows that the displayed elements form a generating system. It remains to show that they are linearly independent. We prove this by induction over $m$, the cases $m=0,1$ being trivial. Suppose that $m \geq 2$ and that we have a linear relation 
\begin{displaymath}
		\sum_{k=0}^{\left\lfloor \frac{m}{2} \right\rfloor} c_k [\xi^{m-2k}\gamma \xi^{2k}\gamma]=0.
\end{displaymath}

This is equivalent to 
\begin{align}
\label{eq:all_c_k}
		\sum_{k=0}^{\left\lfloor \frac{m}{2} \right\rfloor} c_k(x-1)^{m-2k}((y-1)^{2k}+(x-y)^{2k})=0.
\end{align}

Taking two partial derivatives with respect to $y$ gives 
\begin{displaymath}
		\sum_{k=0}^{\left\lfloor \frac{m-2}{2} \right\rfloor} c_{k+1}(2k+2)(2k+1)(x-1)^{(m-2)-2k}((y-1)^{2k}+(x-y)^{2k})=0.
\end{displaymath}

By induction, $c_{k+1}=0$ for $k=0,\dots,\lfloor \tfrac{m-2}{2} \rfloor$, and so it follows from \eqref{eq:all_c_k} that also $c_0=0$.  
\endproof

\begin{Lemma} \label{lemma_sum_unequal0}
	For each $m \geq0$, we have 
\begin{displaymath}
		\sum_{k=0}^m [ \xi^{m-k} \gamma \xi^k \gamma] \neq 0.
\end{displaymath}
\end{Lemma}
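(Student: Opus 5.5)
We work inside $T(V)$ and use Lemma \ref{lem:universal_commutators}, under which the element $[\xi^b\gamma\xi^a\gamma]$ corresponds to the polynomial $(x-1)^b\big((y-1)^a+(x-y)^a\big)$. The monomials $\xi^i\gamma\xi^j\gamma\xi^{a+b-i-j}$ with $i+j\le a+b$ are linearly independent in $T(V)$, so the correspondence $[\xi^b\gamma\xi^a\gamma]\mapsto$ generating polynomial is injective on this span. It therefore suffices to show that the polynomial
\begin{displaymath}
	P_m(x,y):=\sum_{k=0}^m (x-1)^{m-k}\big((y-1)^k+(x-y)^k\big)
\end{displaymath}
is not identically zero.

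To show $P_m\neq 0$, we evaluate at a convenient point.

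\emph{Evaluation at $x=1$.} Only the term $k=m$ survives, giving
\begin{displaymath}
	P_m(1,y)=(y-1)^m+(1-y)^m=(y-1)^m\big(1+(-1)^m\big).
\end{displaymath}
This is nonzero for even $m$ (for instance at $y=0$ it equals $2$), so the even case is done.

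\emph{Odd $m$.} Here $P_m(1,y)=0$, so we choose a different specialization. Setting $x=2$ gives $x-1=1$, so
\begin{displaymath}
	P_m(2,y)=\sum_{k=0}^m\big((y-1)^k+(2-y)^k\big).
\end{displaymath}
Now put $y=1$: the first sum contributes $\sum_k 0^k=1$ (only $k=0$), and the second contributes $\sum_{k=0}^m 1=m+1$. Hence $P_m(2,1)=m+2\neq 0$.

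In fact this single evaluation works for every $m$, so the parity split is unnecessary: $P_m(2,1)=m+2\neq 0$ for all $m\ge 0$.

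The only step needing care is justifying that vanishing of the bracket sum in $T(V)$ is equivalent to vanishing of the generating polynomial. This follows from Lemma \ref{lem:universal_commutators} together with the fact that the words $\xi^i\gamma\xi^j\gamma\xi^{l}$ form part of a basis of $T(V)$. It is exactly the translation already used in the proof of Proposition \ref{prop:higher_jacobi}. I expect no real obstacle beyond this bookkeeping.
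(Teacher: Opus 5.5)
Your proposal is correct and is essentially the paper's proof. Both arguments use Lemma~\ref{lem:universal_commutators} to reduce the claim to the nonvanishing of $\sum_{b=0}^m (x-1)^b\big((y-1)^{m-b}+(x-y)^{m-b}\big)$; the paper reads off the coefficient of $x^m y^0$, while you evaluate at $(x,y)=(2,1)$, and both give $m+2$, so the difference is only cosmetic.
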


\proof
Otherwise we would obtain as above that $\sum_{b=0}^m (x-1)^b((y-1)^{m-b}+(x-y)^{m-b})=0$, but the coefficient of $x^m y^0$ is $m+2 > 0$. 
\endproof

\subsection{Cohomology}

\begin{Lemma}\label{lem:unique_closed}
	Let $\omega \in \largewedge^m\mathcal A_1$ such that $d\omega=0$. Then there exist $c \in \C$ with $\omega=c \gamma^m$.
\end{Lemma}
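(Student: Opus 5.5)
Since $\calA_1$ consists of elements of degree $1$, all of its elements commute in $\calG$, so $\largewedge^\bullet\calA_1$ is a polynomial algebra. The plan is to realize $d$ on it as a first-order differential operator in the polynomial variables and show that it kills only polynomials in $\gamma$.

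\emph{Step 1: a basis of $\calA_1$.} Put $e_a:=[\xi^a\gamma]=(\ad\xi)^a\gamma$ for $a\geq 0$, so $e_0=\gamma$. By the construction $\calA=L(V)=\bigoplus_n L_n(V)$, every element of degree $1$ is a bracket expression containing $\gamma$ exactly once and otherwise only $\xi$'s. Using the Jacobi identity together with $[\xi,\xi]=0$, such an element reduces to a multiple of $(\ad\xi)^a\gamma$. The elements $e_a$ are nonzero, which follows, e.g., from the expansion $[\xi^a\gamma]=\sum_i(-1)^{i+a}\binom ai\xi^i\gamma\xi^{a-i}$ in $T(V)$ from the proof of Lemma \ref{lem:universal_commutators}. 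They are linearly independent because they have distinct symbol degrees $a+1$. Hence $\{e_a\}_{a\geq0}$ is a basis of $\calA_1$.

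\emph{Step 2: the structure of $\largewedge\calA_1$ and of $d$.} By \eqref{eq_supersymmetry}, for elements of $\calA_1$ the sign is $(-1)^{1\cdot1+1\cdot1}=1$, so $\largewedge^m\calA_1\cong S^m(\calA_1)$, i.e.\ homogeneous polynomials of degree $m$ in the variables $e_a$. Elements of $\calA_2$ have odd total parity $\deg+\deg_w$, so $\calA_2\wedge\largewedge^{m-1}\calA_1\cong\calA_2\otimes S^{m-1}(\calA_1)$ with no further relations. By \eqref{eq_compatibility_d_wedge}, $d$ acts on $\largewedge^m\calA_1$ as a derivation (no signs, since all $e_a$ have degree $1$). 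Therefore
\begin{displaymath}
d\omega=\sum_{a\geq1} d e_a\otimes\frac{\partial\omega}{\partial e_a},
\end{displaymath}
where the sum omits $a=0$ because $de_0=d\gamma=0$.

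\emph{Step 3: the key independence.} Iterating the Leibniz rule $d[\xi,x]=[\gamma,x]+[\xi,dx]$ gives
\begin{displaymath}
de_a=\sum_{k=0}^{a-1}[\xi^k\gamma\xi^{a-1-k}\gamma].
\end{displaymath}
This element is nonzero by Lemma \ref{lemma_sum_unequal0} applied with $m=a-1$. Since $de_a$ has symbol degree $a+1$, the family $\{de_a\}_{a\geq1}$ is linearly independent in $\calA_2$. Consequently $d\omega=0$ forces $\partial\omega/\partial e_a=0$ for every $a\geq1$. Thus $\omega$ is a polynomial in $e_0=\gamma$ alone, and being homogeneous of degree $m$ it equals $c\gamma^m$.

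The main obstacle is Step 1. One has to be sure that $\calA_1$ is spanned by the $e_a$, i.e.\ that nested brackets with $\gamma$ in an inner position reduce to right-normed ones. This reduction is standard for a free Lie algebra in which only one generator is ``odd-placed'' and appears exactly once, since $\ad\xi$ acts as a derivation. Once Step 1 is in place, the nonvanishing needed in Step 3 is exactly Lemma \ref{lemma_sum_unequal0}, and the remaining steps are formal.
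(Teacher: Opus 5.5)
Your proof is correct and is essentially the paper's argument: expand $\omega$ in monomials in the $[\xi^a\gamma]$, apply the Leibniz rule with $d[\xi^a\gamma]=\sum_{k}[\xi^k\gamma\xi^{a-1-k}\gamma]\neq 0$ (Lemma \ref{lemma_sum_unequal0}), and extract coefficients. Your partial-derivative bookkeeping replaces the paper's partition combinatorics (removing the largest part of $\hat\lambda$ and counting its multiplicity $r$) and makes explicit the spanning and linear-independence facts the paper uses without comment; your Step 1 also follows at once from $L_n(V)=[V,L_{n-1}(V)]$, because a right-normed bracket whose single $\gamma$ is not in one of the last two slots contains $[\xi,\xi]=0$.
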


\proof
Denote $\Lambda_j(k):=\{\lambda_1\geq\dots\geq\lambda_j\geq 0,  \lambda_1+\dots+\lambda_j=k\}$.
Since $d$ preserves the symbol degree, we may assume $\deg_s\omega=n\geq m$. 
If $n=m$ then, since $\xi\wedge\xi=0$, it must hold that $\omega=c\gamma^m+ c_1\xi \wedge\gamma^{m-1}$, but then $d\omega=0$ implies $c_1=0$ and we are done.

Next assume $n>m$. We may write
\begin{displaymath}
	\omega= \sum_{\lambda\in\Lambda_{m}(n-m)} c_\lambda [ \xi^{\lambda_1} \gamma  ]\wedge\dots\wedge [ \xi^{\lambda_m} \gamma  ].
\end{displaymath}
Then
\begin{align*}
	d\omega&=\sum_{j=1}^m(-1)^{j+1}\sum_{\lambda\in\Lambda_{m}(n-m): \lambda_j>0}c_\lambda\bigwedge_{t=1}^{j-1}[ \xi^{\lambda_t}\gamma] \wedge \sum_{k=0}^{\lambda_j-1}[\xi^{\lambda_j-1-k}\gamma\xi^k\gamma]\wedge\bigwedge_{t=j+1}^m[\xi^{\lambda_t}\gamma] 
	\\&=\sum_{j=1}^m \quad\sum_{\lambda\in\Lambda_{m}(n-m): \lambda_j>0} c_\lambda\sum_{k=0}^{\lambda_j-1}[\xi^{\lambda_j-1-k}\gamma\xi^k\gamma]\wedge \bigwedge_{1\leq t\leq m ,t\neq j}[\xi^{\lambda_t}\gamma] .
	\end{align*}

For $\mu=i_j(\lambda):=(\lambda_1,\dots, \lambda_{j-1}, \lambda_{j+1}, \dots, \lambda_m)$ we have $|\mu|\leq n-m$ and $\lambda_j=n-m-|\mu|$. For any fixed $\mu$ with $|\mu|<n-m$ we find, by Lemma \ref{lemma_sum_unequal0}, that
\begin{equation} \label{eq_sum_coeffs}
\sum_{j=1}^m \sum_{\substack{\lambda \in \Lambda_m(n-m)\\ i_j(\lambda)=\mu}} c_\lambda=0.	
\end{equation}

Consider any $\widehat \lambda \in \Lambda_m(n-m)$. Since $n>m$, $|\widehat \lambda|=n-m>0$ and so $\widehat \lambda\neq (0,\dots, 0)$. Let $r$ be such that $\widehat\lambda_1=\dots=\widehat\lambda_r>\widehat\lambda_{r+1}$. Put $\mu:=(\widehat\lambda_2,\dots,\widehat \lambda_m)$. 

Assume that $i_j(\lambda)=\mu$ for some $j$ and $\lambda \in \Lambda_m(n-m)$. We claim that $1\leq j\leq r$.

Indeed, assume by contradiction that $j>r$. Then $\lambda_j=n-m-|\mu|=\widehat\lambda_1  \geq \mu_1=\lambda_1$, and so $\lambda_1=\dots=\lambda_j=\hat \lambda_1$. It follows that $\sum_{i=1}^j \lambda_i>\sum_{i=1}^j \hat \lambda_i$. For $i>j$ we have $i-1\geq j>r$ and so $\lambda_i=\mu_{i-1}=\widehat \lambda_i$. We thus get $|\lambda|>|\hat \lambda|$, while both are equal $n-m$ by assumption. 

Thus $j\leq r$. Since $i_j(\widehat \lambda)=\mu=i_j(\lambda)$ and  $\widehat \lambda_j=n-m-|\mu|=\lambda_j$, it holds that $\lambda=\widehat \lambda$. As $|\mu|<|\hat \lambda|=n-m$, eq. \eqref{eq_sum_coeffs} gives us $0=\sum_{j=1}^m \sum_{\lambda: i_j(\lambda)=\mu} c_\lambda=rc_{\widehat \lambda}$ and hence $c_{\widehat \lambda}=0$. Consequently, $\omega=0$.
\endproof

\proof[Proof of Theorem \ref{thm_abstract_valuations}]
Any $\rho \in  \calD\calT\calC$ must in fact lie in $\largewedge \mathcal A_1$. Indeed, since $\rho$ is horizontal, the homogeneous components $\rho_j \in \largewedge ^j \mathcal A$ must be linear combinations of wedges of elements of degree $1$ or greater. But the degree of $\rho_j$ equals the wedge degree $j$, and so $\rho_j \in \largewedge^j \mathcal A_1$.

Since $\rho \in \calD\calT\calC $, we have $d\rho_j=\partial \rho_{j+1}$ for all $j$. In particular, if $k$ is the degree of $\rho$, then  $d\rho_k=0$. By Lemma \ref{lem:unique_closed}, $\rho_k=c \gamma^k$ for some constant $c$. Then, by \eqref{eq_main_term_rhot}, $\rho-c k! \varrho_k \in \calD\calT\calC $ has degree strictly less than $k$, and so the statement follows by induction on the degree. 
\endproof

\section{Realization map}
\label{sec_realization}

Given a finite-dimensional Lie algebra $\g$, the space of differential forms $\Omega_\g:=\Omega(\g,\largewedge \g)$ is a DGGA. The universal property of the free DGGA $\calG$ gives rise to a map $\Re_\g:\calG \to \Omega_\g$ which we call the realization map. The main result of this section is the following injectivity property: The intersection of the kernels of the different realization maps $\Re_\g$, as $\g$ runs over all finite-dimensional Lie algebras, is trivial. In Sections \ref{sec_convolution_double_forms_g} and \ref{sec:abstract_convolution}, we will use this result to establish some statements about the structure of $\calG$ that appear hard to prove by direct arguments. 

The proof proceeds in two steps. In the first step, we use a realization map to some space $\Omega_U$ of differential forms on a vector space $U$ with values in the exterior algebra of the free Lie algebra it generates, and show its injectivity. To do so, we use the explicit constructions of the free DGLA and the graded wedge product in terms of tensor algebras and Dynkin projectors from Section \ref{sec:dgla}. The second step is a standard truncation argument to reduce from the infinite-dimensional free Lie algebra to a finite-dimensional Lie algebra.  

\subsection{Double forms on $\g$}
All differential forms will be considered with complex coefficients. Thus we will write $\Omega(X, V)$ for the differential forms on $X$ with values in $\C\otimes V$.

\begin{Definition}\label{def:double_forms}
	Let $\mathfrak g$ be a finite-dimensional Lie algebra. Define
	\begin{displaymath}
	  \Omega_\g:=\Omega(\g, \largewedge\g)=\Gamma(\mathfrak g, \C\otimes\largewedge\g^*\otimes \largewedge\g).
	\end{displaymath} 
\end{Definition}

We write $\Omega_\g^{\mathrm{eqv}}$ for the subspace consisting of all forms invariant under the group of automorphisms of $\g$. 

Since $\g$ acts on itself by the adjoint action, an element $x \in \g$ induces a vector field $x^\sharp$ on $\g$. Explicitly we have 
\begin{displaymath}
	x^\sharp|_\xi =[x,\xi], \quad \xi \in \mathfrak g. 
\end{displaymath}

We define  the space of $\ad$-invariant elements  
\begin{displaymath}
\Omega_\g^{\ad}=\{\tau\in\Omega_\g: \mathfrak L_{x^\sharp} \tau = \ad_x \tau, \quad x \in \mathfrak g\}.
\end{displaymath}

If $\g$ is the Lie algebra of a Lie group $G$, we also define the subspace $\Omega_\g^G$ of $\Ad G$-equivariant elements, i.e. elements $\tau$ for which  
\begin{equation} \label{eq_invariance_concrete_lie_group}
	\Ad_{g^{-1}}^* \otimes \Ad_g \tau=\tau
\end{equation}
for all $g \in G$, where $\Ad_{g^{-1}}^*$ acts on the form part of $\tau$ and $\Ad_g$ acts on $\largewedge \g$. Evidently $\Omega_\g^{\mathrm{eqv}}\subset\Omega_\g^G\subset \Omega_\g^{\ad}$. If $G$ is connected and simply connected and $\g$ is the Lie algebra of $G$, then the last inclusion is an equality.

\begin{Lemma}\label{lem:gerstenhaber_structure}
	 $\Omega_\g$ is in a natural way a differential graded Gerstenhaber algebra and $\Omega_\g^{\mathrm{eqv}},\Omega_\g^{G},\Omega_\g^{\ad}$ are differential graded Gerstenhaber subalgebras.
\end{Lemma}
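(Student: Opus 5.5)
The plan is to exhibit the three structures on $\Omega_\g=\Omega(\g,\largewedge\g)=\Gamma(\g,\C\otimes\largewedge\g^*\otimes\largewedge\g)$ and then check the DGGA axioms by reducing them to identities we already have. Define the bigrading by $\deg(\alpha\otimes X)=\deg\alpha$, the form degree, and $\deg_w(\alpha\otimes X)=k$ for $X\in\largewedge^k\g$. The wedge product is the tensor product of the wedge on forms and the wedge on $\largewedge\g$, with the Koszul sign
\[(\alpha\otimes X)\wedge(\beta\otimes Y)=(-1)^{\deg_wX\,\deg\beta}(\alpha\wedge\beta)\otimes(X\wedge Y).\]
With this sign, \eqref{eq_supersymmetry} holds, since the sign is $(-1)^{\deg\alpha\deg\beta+\deg_wX\deg_wY}$. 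The differential $d$ is the de Rham differential acting on the form factor, $d(\alpha\otimes X)=d\alpha\otimes X$ for constant $X$. The bracket is
\[[\alpha\otimes X,\beta\otimes Y]=(-1)^{(\deg_wX-1)\deg\beta}(\alpha\wedge\beta)\otimes[X,Y]_{SN},\]
where $[\cdot,\cdot]_{SN}$ is the Schouten--Nijenhuis bracket on $\largewedge\g$. All of these extend $C^\infty(\g)$-linearly, with $d$ obeying Leibniz.

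Rather than verifying the axioms by hand, I would identify $\Omega_\g$ with a tensor product of simpler structures. Consider the DGLA $\Omega(\g)\otimes\g$: the bracket is $[\alpha\otimes x,\beta\otimes y]=\alpha\wedge\beta\otimes[x,y]$ and $d$ is de Rham. This is a DGLA because it is the tensor product of a graded commutative DGA with a Lie algebra. There is a standard fact that whenever $C$ is a graded commutative algebra and $L$ is a Lie algebra, $C\otimes\largewedge L$ is a Gerstenhaber algebra over $C$. It is the exterior algebra over $C$ of the $C$-module $C\otimes L$ with the $C$-bilinear Schouten extension. I would verify compatibility with the signs above by running the argument of Proposition~\ref{prop_free_dgga} in a $C$-linear version. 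That argument only used a derivation property like \eqref{eq_derivation_adX} and the Jacobi and Leibniz identities on the wedge-degree-$1$ generators, which here are elements of $\Omega(\g)\otimes\g$. The induction on wedge degree therefore goes through verbatim, and it yields the Jacobi identity and Leibniz rule on all of $\Omega_\g$. The $C$-linearity of the bracket is what makes the extension from generators legitimate, and this is where the Koszul sign conventions must be checked carefully. I expect the sign bookkeeping to be the main obstacle, meaning confirming that the chosen signs make \eqref{eq_compatibility_bracket_wedge} and \eqref{eq_compatibility_d_wedge} hold exactly as stated, rather than any conceptual issue.

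For the subalgebras I would argue naturality. An automorphism $\phi$ of $\g$ acts on $\Omega_\g$ by pullback on forms and $\largewedge\phi$ on values. This action commutes with $d$, with wedge, and with the bracket, because $\largewedge\phi$ preserves the Schouten bracket. The fixed points $\Omega_\g^{\mathrm{eqv}}$ therefore form a sub-DGGA, and the same argument applied to the $\Ad G$-action gives $\Omega_\g^G$. For $\Omega_\g^{\ad}$, the operator $\mathfrak L_{x^\sharp}-\ad_x$ is the infinitesimal version of this action. Being a derivation of degree $(0,0)$ that commutes with $d$, it is a derivation of both products. Its kernel is therefore closed under $d$, $\wedge$ and $[\cdot,\cdot]$, which completes the proof.
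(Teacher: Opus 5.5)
Your overall route is the paper's: tensor the de Rham algebra $\Omega(\g)$ with the Schouten--Nijenhuis algebra $\largewedge\g$. Then obtain $\Omega_\g^{\mathrm{eqv}},\Omega_\g^G$ as fixed points of automorphisms, and $\Omega_\g^{\ad}$ as the kernel of the even derivation $\mathfrak L_{x^\sharp}-\ad_x$. That last part is fine, and for $\Omega_\g^{\ad}$ it is more explicit than the paper. The genuine problem is the Koszul signs you insert.

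In this paper the two gradings never interact in the axioms:
\begin{itemize}
\item \eqref{eq_supersymmetry} has sign $(-1)^{\deg X\deg Y+\deg_wX\deg_wY}$;
\item bigraded skew-symmetry has $(-1)^{\deg X\deg Y+(\deg_wX-1)(\deg_wY-1)}$;
\item \eqref{eq_compatibility_bracket_wedge} has the same sign as \eqref{eq_supersymmetry}.
\end{itemize}
With your product one computes
\[
(\alpha\otimes X)\wedge(\beta\otimes Y)=(-1)^{(\deg\alpha+\deg_wX)(\deg\beta+\deg_wY)}\,(\beta\otimes Y)\wedge(\alpha\otimes X).
\]
So your claim that the sign is $(-1)^{\deg\alpha\deg\beta+\deg_wX\deg_wY}$ is an arithmetic slip: the factor $(-1)^{\deg_wX\deg\beta+\deg_wY\deg\alpha}$ does not cancel.

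The bracket has the same defect. Your bracket is skew-symmetric with respect to the shifted total degree $\deg+\deg_w-1$, not the paper's bigraded rule. Also, in \eqref{eq_compatibility_bracket_wedge} with $X=\alpha\otimes A$, $Y=\beta\otimes B$, the second term picks up a spurious factor $(-1)^{\deg\alpha\deg_wB+\deg\beta\deg_wA}$.

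Here is a concrete failure. Take $u=\id$ and $v=d(\id)$. Your product gives $u\wedge v=v\wedge u$, and at the point $X$ evaluated on $Y$ this equals $-X\wedge Y$, which is nonzero once $\dim\g\ge 2$. But any algebra morphism $\calG\to\Omega_\g$ commuting with $d$ and sending $\xi\mapsto\id$ must send $\gamma\mapsto d(\id)$. Since $\xi\wedge\gamma=-\gamma\wedge\xi$ in $\calG$ (cf. Proposition \ref{prop_univ_prop_graded_ext_power}), it would force $u\wedge v=-v\wedge u$.

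So what you construct is a Gerstenhaber algebra in the total-degree convention, not a DGGA in the sense of this paper. The realization map $\Re_\g$, which is what the lemma is for, would not exist. Your proposed $C$-linear rerun of Proposition \ref{prop_free_dgga} cannot succeed with these signs.

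The repair is to drop the signs entirely, as the paper does:
\begin{itemize}
\item $(\tau_1\otimes X_1)\wedge(\tau_2\otimes X_2)=(\tau_1\wedge\tau_2)\otimes(X_1\wedge X_2)$;
\item $[\tau_1\otimes X_1,\tau_2\otimes X_2]=(\tau_1\wedge\tau_2)\otimes[X_1,X_2]$;
\item $d$ acts on the form factor.
\end{itemize}
Every sign in the paper's axioms is a form-degree sign times a wedge-degree sign. Hence each axiom (supersymmetry, skew-symmetry, Jacobi, \eqref{eq_compatibility_d_wedge}, \eqref{eq_compatibility_bracket_wedge}, and the Leibniz rule for $d$) splits directly into the corresponding identity in $\Omega(\g)$ and in $\largewedge\g$ with its Schouten--Nijenhuis bracket. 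This is why the paper calls the verification straightforward; no induction on wedge degree is needed.
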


\proof 
There is the obvious bigrading $\Omega_\g=\bigoplus_{d,k} \Omega^d(\g, \largewedge^k\g)$. For a homogeneous element $\rho \in  \Omega^d(\g, \largewedge^k\g)$ we set $\deg \rho=d$ and $\deg_w \rho=k$.

The product is the usual double wedge product, namely $(\tau_1 \otimes X_1) \wedge (\tau_2 \otimes X_2)=(\tau_1 \wedge \tau_2) \otimes (X_1 \wedge X_2)$, where $\tau_i$ are forms on $\g$ and $X_i \in\largewedge \g$. The differential is defined by $d(\tau \otimes X)=(d\tau) \otimes X$. The Lie bracket is defined by $[\tau_1 \otimes X_1,\tau_2 \otimes X_2]=(\tau_1 \wedge \tau_2) \otimes [X_1,X_2]$, where the bracket on the right hand side means the Schouten--Nijenhuis bracket in $\largewedge \g$. The verification of the axioms of a differential graded Gerstenhaber algebra is then straightforward. It is clear that the wedge product and the Lie bracket commute with automorphisms of the Lie algebra, rendering the subspaces in the statement subalgebras. 
\endproof 

\begin{Definition} 
The realization map $\Re_\g: \mathcal G \to \Omega_\g$ is the unique extension as a $\DGGA$-morphism of the $\GV$-morphism $\R \cdot \xi \to \Omega_\g, \xi \mapsto \id\in C^\infty(\g, \g)=\Omega^0(\g,\largewedge^1 \g) \subset \Omega_\g$. Its existence follows from Theorem \ref{thm_universal_property_dgga}. 
\end{Definition}
Since $\Omega_\g^{\mathrm{eqv}}$ is a $\DGGA$-subalgebra that contains the image of $\xi$, we see that the image of $\Re_\g$ is contained in $\Omega_\g^{\mathrm{eqv}}$.

\subsection{Injectivity of the realization map}

 Let $\g:=\g_f(U)$ be the free (ungraded) Lie algebra generated by a vector space $U$. 
 
This is an infinite-dimensional Lie algebra whenever $\dim U>1$. Since it is the direct sum of the finite-dimensional spaces $L_n(U)$, see Section \ref{sec:dgla}, we can use the colimit topology on $\g$. It induces a topology on each space $\largewedge^k \g$. A function $f:U \to \largewedge^k \g$ is called weakly smooth if for every $\eta$ in the topological dual space to $\largewedge^k \g$, the scalar-valued function $\langle \eta,f\rangle:U \to \R$ is smooth. Similarly we can define weakly smooth differential forms on $U$ with values in $\largewedge^k \g$.

 We let 
\begin{displaymath}
	\Omega_U:=\Omega(U,\largewedge \g)=\bigoplus_{d,k} \Omega^d(U,\largewedge^k \g)
\end{displaymath}
be the space of weakly smooth $\largewedge^{\mathrm{fin}} g$-valued forms on $U$.  

The wedge product, the Lie bracket and the differential are defined in the natural way, turning $\Omega_U$ into a differential graded Gerstenhaber algebra. 

By Theorem \ref{thm_universal_property_dgga} we can extend the map $\xi \mapsto \mathrm{id} \in C^\infty(U,U) \subset \Omega^0(U,\largewedge \g)$ to a $\DGGA$-morphism $\Re_U:\calG \to \Omega_U$.

Recall that $\calG^k=\largewedge^k \calA$, and $\calG_d \subset \calG$ is the space of degree $d$ elements. Denote $\calG^k_d=\calG_d \cap \calG^k$.

When no confusion can arise, we will write simply $\tau(X;Y_1,\ldots,Y_d)$ instead of $\Re_U \tau|_X(Y_1,\ldots,Y_d)$.

For instance,
\begin{displaymath}
 ([\xi,\gamma] \wedge \gamma \wedge \xi)(X;Y_1,Y_2)=[X,Y_1] \wedge Y_2 \wedge X-[X,Y_2] \wedge Y_1 \wedge X \in \largewedge^3 \g
\end{displaymath}   
and 
\begin{displaymath}
	([\gamma,\gamma] \wedge \gamma)(X;Y_1,Y_2,Y_3)= \sum_{\pi \in \mathcal S_3} \sgn(\pi) [Y_{\pi_1},Y_{\pi_2}] \wedge Y_{\pi_3} \in \largewedge^2 \g.
\end{displaymath}   

We now proceed to establish the claimed injectivity of the realization map, starting with a special case.

\begin{Lemma} \label{lemma_evaluation_k1}
The map $\Re_U: \calA_d \to \Omega^d(U,\g)$ is injective whenever $\dim U \geq d+1$.
\end{Lemma}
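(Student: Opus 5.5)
The plan is to make $\Re_U$ explicit on $\calA_d$ and then compare with the tensor-algebra model of $\calA$ from Section \ref{sec:dgla}. Recall that $\calA=L(V)\subset T(V)$ with $V=\R\xi\oplus\R\gamma$. An element of $\calA_d$ of symbol degree $n$ is a linear combination of Dynkin projections $\rho(w)$ of words $w$ of length $n$ in $\xi,\gamma$ containing exactly $d$ letters $\gamma$. By Proposition \ref{prop_dynkin_projector}, applied to the Lie algebra $\Omega(U,\g)$ (with bracket of forms $\wedge$ combined with the bracket of $\g$), the realization is
\[
\Re_U\big(\rho(v_1\otimes\cdots\otimes v_n)\big)=\tfrac1n[\Re v_1,[\Re v_2,\ldots]],
\]
where $\Re\xi=\id$ and $\Re\gamma=d(\id)$. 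Evaluating at a point $X\in U$ on vectors $Y_1,\dots,Y_d\in U$ gives
\[
\sum_{\pi\in\mathcal S_d}\sgn(\pi)\,\tfrac1n\big[\,\cdots\,\big],
\]
a nested bracket in the free Lie algebra $\g=\g_f(U)\subset T(U)$. In it, the $\xi$-slots are filled with $X$, and the $i$-th $\gamma$-slot is filled with $Y_{\pi(i)}$. I will also keep track of signs: $\gamma$ is odd in $\calA$, while the forms anticommute, and the two sign conventions should match so that the formula is exactly the antisymmetrization.

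The key step is to choose $X,Y_1,\dots,Y_d$ linearly independent in $U$, which is possible since $\dim U\ge d+1$. Let $e_0=X$ and $e_i=Y_i$. Then consider the linear map
\[
\Phi:T(V)_{(d)}\to T(U),
\]
defined on words with exactly $d$ letters $\gamma$ by sending a word to the signed sum over $\pi$ of the words obtained by replacing $\xi$ with $e_0$ and the $i$-th $\gamma$ with $e_{\pi(i)}$. Since $e_0,\dots,e_d$ are independent in $U$, the tensor algebra $T(U)$ contains the free monoid on these letters. Distinct pairs (word, $\pi$) therefore give distinct monomials, and $\Phi$ is injective on words; hence $\Phi$ is injective on the span of words of this type. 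Moreover, $\Phi$ is compatible with the bracket: $\Phi$ commutes with the graded commutator in $T(V)$ versus the ordinary commutator in $T(U)$. This is the polarization of the supercommutator, because the antisymmetrization over $\gamma$-positions converts graded signs into ordinary signs. It follows that $\Phi$ restricted to $L(V)$ coincides with the realization map composed with the inclusion $\g_f(U)\subset T(U)$. So $\Re_U(\tau)|_X(Y_1,\dots,Y_d)=\Phi(\tau)$, and this is nonzero when $\tau\ne0$.

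I expect the main obstacle to be precisely this sign/compatibility check: showing that the evaluation of the nested bracket under $\Re_U$ equals $\Phi$ applied to the element of $T(V)$, including the graded signs. I would prove it by induction on word length via $[v_1,w]$, using that $\Phi$ is a homomorphism in the appropriate signed sense. Concretely, $\Phi(a\otimes b)$ is the shuffle-type antisymmetrization of $\Phi(a)\Phi(b)$ over how the $Y$'s are distributed, which matches the wedge product of forms. An alternative route to the same check is to observe that $\Re_U(\tau)|_X(Y_1,\dots,Y_d)$ is the full multilinear polarization of the element. Once this is in place, injectivity of $\Phi$ on multilinear words gives the lemma.
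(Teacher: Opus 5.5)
Your proposal is correct and is essentially the paper's proof: the paper defines the same signed substitution map $\ev_{\mathcal B}$ on $T(W)_d$, notes that it is injective when $X,Y_1,\dots,Y_d$ are linearly independent, and identifies it with $\Re_U$ on $\calA_d$. The only cosmetic difference is that the paper phrases the compatibility as $\rho_U\circ\ev_{\mathcal B}=\ev_{\mathcal B}\circ\rho_W$ with the Dynkin projectors, proved by induction on symbol degree by splitting off the first letter ($\xi\otimes\tau'$ or $\gamma\otimes\tau'$), which is exactly your induction via $[v_1,w]$.
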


\proof 
Let $W:=\R\cdot \xi \oplus \R\cdot \gamma$ and let $T(W)$ be the tensor algebra generated by $W$. An element of $T(W)$ is a noncommutative polynomial in $\xi$ and $\gamma$. The degree of a monomial is the number of  $\gamma$ factors. Let $T(W)_d$ denote the subspace of degree $d$. We will also make use of the symbol degree on $T(W)$, which assigns degree $1$ to both $\xi$ and $\gamma$. The Lie bracket on $T(W)$ is defined by $[a,b]=a \otimes b - (-1)^{\deg a \deg b} b \otimes a$. 
Recall that $\calA$ is the free differential graded Lie algebra generated by $\xi$. It is the image of the Dynkin projector $\rho_W:T(W) \to T(W)$ that is inductively defined by $\rho_W(a):=a, \rho_W(a_1 \otimes \ldots \otimes a_m) :=\frac{m-1}{m} [a_1,\rho_W(a_2 \otimes \ldots \otimes a_m)]$  for $a,a_1,\ldots,a_m \in W$. 

Let $T(U)$ be the free tensor algebra generated by $U$, and $T(U)$ the tensor algebra of $U$ with the usual Lie bracket $[a,b]=a \otimes b-b \otimes a$. The image of the Dynkin projector $\rho_U:T(U) \to T(U)$ is the free Lie algebra $\g=\g_f(U)$. 

Let $\tau=\xi^{i_1} \gamma^{j_1} \xi^{i_2} \gamma^{j_2} \cdots \xi^{i_l}\gamma^{j_l} \in T(W)$ be a monomial of degree $d$. Given elements $\mathcal B=(X,Y_1,\ldots,Y_d)$ in $U$, we set 
\begin{align*}
	\ev_{\mathcal B}\tau=&\tau(X;Y_1,\ldots,Y_d)  :=	\sum_{\pi \in \mathcal S_d} \sgn(\pi) X^{i_1} Y_{\pi_1} Y_{\pi_2} \ldots  Y_{\pi_{j_1}}\\
	& \quad  X^{i_2} Y_{\pi_{j_1+1}} \ldots Y_{\pi_{j_1+j_2}} \ldots X^{i_l} Y_{\pi_{j_1+\ldots+j_{l-1}+1}} \ldots Y_{\pi_{j_1+\ldots+j_l}} \\&\in T(U),
\end{align*}
and extend this by linearity to $T(W)_d$. Note that for $\tau \in \calA_d \subset T(W)$, $\ev_{\mathcal B}\tau$ coincides with the previously defined $\tau(X;Y_1,\ldots,Y_d)=\Re_U \tau|_X(Y_1,\ldots,Y_d)$.

Obviously, $\tau(X;Y_1,\ldots,Y_k)$ is a polynomial in $X$, linear in each $Y_i$ and antisymmetric in the $Y_i$'s. The image of the map from $T(W)_d$ to $T(U)$ given by $\tau \mapsto \tau(X;Y_1,\ldots,Y_d)$ consists of all noncommutative polynomials in $X,Y_1,\ldots,Y_d$ that are multilinear and antisymmetric in the $Y_i$'s. When $X,Y_1,\ldots,Y_d$ are linearily independent, the map is obviously injective.

We claim that the following diagram commutes
\begin{equation*}
	\begin{tikzcd}
		\mathcal{A}_d \arrow[r, "\ev_{\mathcal{B}}"] & \g \\
		T(W)_d \arrow[r, "\ev_{\mathcal{B}}"'] \arrow[u, two heads, "\rho_W"] & T(U) \arrow[u, two heads, "\rho_U"']
	\end{tikzcd}
\end{equation*}

It suffices to prove this for noncommutative monomials $\tau$ in $\xi$ and $\gamma$ by induction over the symbol degree $m$. If $m=1$, then $\rho_U$ and $\rho_W$ are the identity, so there is nothing to prove. Otherwise, we can write $\tau=\xi \otimes \tau'$ or $\tau=\gamma \otimes \tau'$ with $\tau'$ of symbol degree $m-1$.   

In the first case we have 
\begin{align*}
	\rho_U (\tau(X;Y_1,\ldots,Y_d)) & =\rho_U(X \otimes \tau'(X;Y_1,\ldots,Y_d))\\
	& = \frac{m-1}{m} [X,\rho_U(\tau'(X;Y_1,\ldots,Y_d))]
\end{align*}
and 
\begin{align*}
	(\rho_W(\tau))&(X;Y_1,\ldots,Y_d) = \frac{m-1}{m}[\xi, \rho_W(\tau')] (X;Y_1,\ldots,Y_d) \\
	& = \frac{m-1}{m} (\xi \otimes \rho_W(\tau')-\rho_W(\tau') \otimes \xi) (X;Y_1,\ldots,Y_d)\\
	& = \frac{m-1}{m} \left(X \otimes \rho_W(\tau')(X;Y_1,\ldots,Y_d)-\rho_W(\tau')(X;Y_1,\ldots,Y_d) \otimes X\right)\\
	& = \frac{m-1}{m} [X,\rho_W(\tau')(X;Y_1,\ldots,Y_d)].
\end{align*}
By induction, these terms are equal.

In the second case, we have 
\begin{align*}
  \rho_U&(\tau(X;Y_1,\ldots,Y_d)) = \frac{1}{(d-1)!} \rho_U\left(\sum_\pi \sgn(\pi) Y_{\pi_1} \otimes \tau'(X;Y_{\pi_2},\ldots,Y_{\pi_d})\right)\\
  & = \frac{1}{(d-1)!} \frac{m-1}{m} \sum_\pi \sgn(\pi) [Y_{\pi_1},  \rho_U(\tau'(X;Y_{\pi_2},\ldots,Y_{\pi_d}))
\end{align*}
and 
\begin{align*}
	(\rho_W(\tau))&(X;Y_1,\ldots,Y_d)  = \frac{m-1}{m}[\gamma, \rho_W(\tau')] (X;Y_1,\ldots,Y_d) \\
	& = \frac{m-1}{m} (\gamma \otimes \rho_W(\tau')-(-1)^{d-1}\rho_W(\tau') \otimes \gamma) (X;Y_1,\ldots,Y_d)\\
	& = \frac{1}{(d-1)!} \frac{m-1}{m} \sum_\pi \sgn(\pi) \Large(Y_{\pi_1} \otimes \rho_W(\tau')(X;Y_{\pi_2},\ldots,Y_{\pi_d})\\
	& \quad -(-1)^{d-1} \rho_W(\tau')(X;Y_{\pi_1},\ldots,Y_{\pi_{d-1}}) \otimes Y_{\pi_d}\Large)\\
		& = \frac{1}{(d-1)!} \frac{m-1}{m} \sum_\pi \sgn(\pi) \Large(Y_{\pi_1} \otimes \rho_W(\tau')(X;Y_{\pi_2},\ldots,Y_{\pi_d})\\
	& \quad - \rho_W(\tau')(X;Y_{\pi_2},\ldots,Y_{\pi_{d}}) \otimes Y_{\pi_1}\Large)\\
	 & = \frac{1}{(d-1)!} \frac{m-1}{m} \sum_\pi \sgn(\pi) [Y_{\pi_1},  \rho_W(\tau')(X;Y_{\pi_2},\ldots,Y_{\pi_d}),
\end{align*} 
and again we have equality by the induction hypothesis. Thus the diagram commutes.

If the elements in $\mathcal B$ are linearly independent, then $\ev_{\mathcal B}:\calA_d \to \g$ is the restriction of the injective map $\ev_{\mathcal B}:T(W)_d \to T(U)$, and so it is injective itself.
\endproof

\begin{Proposition} \label{prop_injectivity_realization_dgga}
The map $\Re_U: \calG^k_d \to \Omega_U$ is injective whenever if $\dim U \geq d+1$.
\end{Proposition}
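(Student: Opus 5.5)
We extend the tensor-algebra argument of Lemma \ref{lemma_evaluation_k1} from $\calA_d$ to $\calG^k_d=\largewedge^k\calA\cap\calG_d$. The strategy is to realize $\largewedge^k\calA$ inside $T(\calA)\subset T(T(W))$ via the graded antisymmetrization $p_k$ of \eqref{eq_graded_antisymmetrization}. We then compare with the analogous antisymmetrization in $\largewedge^k\g\subset T(\g)\subset T(T(U))$.

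Concretely, fix linearly independent $X,Y_1,\dots,Y_d\in U$ (possible since $\dim U\ge d+1$). Extend $\ev_{\mathcal B}$ to a map
\[
\ev_{\mathcal B}: (T(W)^{\otimes k})_d \to T(U)^{\otimes k},
\]
where total degree $d$ means the total number of $\gamma$'s is $d$. On a tensor $\tau_1\otimes\cdots\otimes\tau_k$ of monomials with degrees $d_1+\dots+d_k=d$, set
\[
\ev_{\mathcal B}(\tau_1\otimes\cdots\otimes\tau_k)=\sum_{\pi\in\mathcal S_d}\sgn(\pi)\,\tau_1(X;Y_{\pi_1},\dots,Y_{\pi_{d_1}})\otimes\cdots\otimes\tau_k(X;\dots,Y_{\pi_d}),
\]
suitably normalized. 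This is exactly the formula for evaluating a double wedge product of $\g$-valued forms. The same reasoning as in the lemma shows this map is injective. The image of a monomial is a tensor of noncommutative words in $X,Y_1,\dots,Y_d$, each $Y_i$ occurring exactly once overall. Distinct monomial tensors, with the $Y$'s placed in a fixed order, give distinct words that cannot cancel. Multilinearity and antisymmetry in the $Y$'s then yield injectivity on $(T(W)^{\otimes k})_d$.

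Next we check compatibility with the structure maps. The first is $\rho_W^{\otimes k}$ versus $\rho_U^{\otimes k}$: componentwise this is the commutative diagram from the lemma, after distributing the $Y$'s among the factors. The second is the graded antisymmetrization $p_k$ on $\calA^{\otimes k}$ versus the ordinary antisymmetrization on $\g^{\otimes k}$. The key sign check is that the Koszul sign $\chi(\pi)$ for permuting factors of degrees $d_i$ (with wedge degree one each) matches the sign obtained on the $U$-side. There, reordering the factors both introduces $\sgn$ of the permutation of the $\g$-factors and reshuffles the $Y$-blocks, costing $(-1)^{d_id_j}$ per transposition. This is exactly how the double wedge product $(\tau_1\otimes X_1)\wedge(\tau_2\otimes X_2)$ in $\Omega_U$ behaves, and it is consistent with \eqref{eq_supersymmetry}. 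With both compatibilities, the square
\[
\calG^k_d\hookrightarrow \calA^{\otimes k}\ \xrightarrow{\ev_{\mathcal B}}\ \g^{\otimes k}
\]
commutes with the realization $\Re_U$ followed by the inclusion $\largewedge^k\g\hookrightarrow\g^{\otimes k}$. Hence $\Re_U\tau|_X(Y_1,\dots,Y_d)$ equals the restriction of the injective map $\ev_{\mathcal B}$ applied to $\tau$ viewed in $(T(W)^{\otimes k})_d$.

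We must also check that $\Re_U$, as a $\DGGA$-morphism, really agrees with this explicit evaluation on wedge products. This holds because $\Re_U$ is a $\GUAA$-morphism for $\wedge$. On wedge-decomposable elements it is therefore given by the double wedge of the values of the factors, and those values were identified in Lemma \ref{lemma_evaluation_k1}.

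Injectivity then follows. If $\Re_U\tau=0$, then $\ev_{\mathcal B}\tau=0$ for independent $\mathcal B$, so $\tau=0$ as an element of $T(W)^{\otimes k}$, hence in $\calG^k_d$. The main obstacle I expect is the sign bookkeeping in the antisymmetrization compatibility, where the Koszul signs on $\calA^{\otimes k}$ must be matched with the signs coming from redistributing the $Y_i$ among factors. I would first verify this on a transposition of two adjacent factors and then extend to general permutations by composition.
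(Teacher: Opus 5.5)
Your proposal is correct and follows essentially the paper's route: embed $\calG^k_d$ into a tensor power via the graded antisymmetrization, show that the evaluation map $\ev_{\mathcal B}$ is injective there for linearly independent $X,Y_1,\dots,Y_d$, and check that it intertwines $\pi_\calG$ with $\pi_\g$, the Koszul sign being matched by the block-swap sign $(-1)^{d_id_j}$. The only variation is where injectivity is proved: you prove it directly on $(T(W)^{\otimes k})_d$ by the identity-order word argument, whereas the paper works on $T(\calA)^k_d$, reduces to a single shuffle by multidegree considerations, and applies Lemma \ref{lemma_evaluation_k1} factorwise (so your $\rho_W^{\otimes k}$ compatibility step is not actually needed).
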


\proof 
We let $T(\calA)$ be the tensor algebra of $\calA$, $T(\calA)_d^k$ the subspace of elements of degree $d$ containing $k$ tensor factors. We let  $T(\g)$ be the tensor algebra of $\g$. For $\mathcal B=(X, Y_1, \dots, Y_d)$ we define a map $\ev_{\mathcal B}: T(\calA)^k_d \to T(\g)$ on multihomogeneous elements $\tau_1 \otimes \ldots \otimes \tau_k$ with $\deg \tau_i=d_i, \sum_i d_i=d$ by 
\begin{multline*}
	\ev_{\mathcal B}(\tau_1 \otimes \ldots \otimes \tau_k)= (\tau_1 \otimes \ldots \otimes \tau_d)(X;Y_1,\ldots,Y_d):= \frac{1}{d_1! \cdots d_k!} \sum_{\pi \in \mathcal S_d} \sgn(\pi)\\
	 \tau_1(X;Y_{\pi_1},\ldots,Y_{\pi_{d_1}}) \otimes \ldots \otimes \tau_k(X;Y_{\pi_{d_1+\ldots+d_{k-1}+1}},\ldots Y_{\pi_d}).
\end{multline*} 
Note that this equals the map from Lemma \ref{lemma_evaluation_k1} if $k=1$. Moreover, if $\tau \in \calG^k_d \subset T(\calA)^k_d$, then $\ev_{\mathcal B}\tau=\tau(X;Y_1,\ldots,Y_d)$.

We claim that this map is injective when the elements of $\mathcal B$ are linearly independent. Since $\ev_{\mathcal B}(\tau_i)$ is antisymmetric in the $Y$'s, we can remove the numerical factor and instead of summing over all permutations, we only sum over shuffles, i.e. those permutations that satisfy $\pi_1<\ldots<\pi_{d_1}, \pi_{d_1+1} < \ldots < \pi_{d_1+d_2}$ and so on. 

Note that the multidegrees are respected: the degree of the $i$-th factor of $\ev_{\mathcal B}(\tau_1 \otimes \ldots \otimes \tau_k)$, which is an element of $\g$, equals $d_i$. Here we use a grading of $\g$ such that $\deg X=0, \deg Y_j=1$ for all $j$. We may thus restrict our attention to multihomogeneous elements. 

Now for every multihomogeneous $\tau_1 \otimes \ldots \otimes \tau_k$ with the given degrees, the summands in the formula all live in different subspaces of $T(\g)$, hence to check injectivity of the map it is enough to check injectivity for a particular shuffle $\pi$, and we take $\pi$ to be the identity. But then our map is just given by 
\begin{displaymath}
	\tau_1 \otimes \ldots \otimes \tau_k \mapsto \tau_1(X;Y_1,\ldots,Y_{d_1}) \otimes \ldots \otimes  \tau_k(X;Y_{d_1+\ldots+d_{k-1}+1},\ldots,Y_d),
\end{displaymath}
and it is injective when the elements of $\mathcal B$ are linearly independent by Lemma \ref{lemma_evaluation_k1}. 

Recall the antisymmetrization $\pi_\calG:\calA^{\otimes k} \to \calA^{\otimes k}$ from Section \ref{subsec_graded_ext_power}:
\begin{displaymath}
	\pi_\calG(\tau_1 \otimes \ldots \otimes \tau_k):=\frac{1}{k!}\sum_{\pi \in \mathcal S_k} \chi(\pi) \tau_{\pi_1} \otimes \ldots \otimes \tau_{\pi_k}, \quad \tau_i \in \calA,
\end{displaymath}  
which is a projection with image $\largewedge^k \calA=\calG^k$. Similarly, let $\pi_\g:\g^{\otimes k} \to \g^{\otimes k}$ the usual antisymmetrization, i.e. 
\begin{displaymath}
	\pi_\g(X_1 \otimes \ldots \otimes X_k):=\frac{1}{k!} \sum_{\pi \in \mathcal S_k} \sgn(\pi) X_{\pi_1} \otimes \ldots \otimes X_{\pi_k}, \quad X_i \in \g.
\end{displaymath}   
It is a projection onto $\largewedge^k \g$.

We claim that the following diagram commutes.

\begin{equation*}
	\begin{tikzcd}
		\calG_d^k \arrow[r, "\ev_{\mathcal{B}}"] & \largewedge^k \g \\
		T(\calA)_d^k \arrow[r, "\ev_{\mathcal{B}}"'] \arrow[u, two heads, "\pi_{\calG}"] & T(\g)^k \arrow[u, two heads, "\pi_\g"']
	\end{tikzcd}
\end{equation*}

Let us show the claim in the case $d_1=3,d_2=1, k=2$. The ideas in the general case are the same. Note first that
\begin{align*}
	\pi_\g \circ \ev_{\mathcal B}(\tau_1 \otimes \tau_2) & = 
	\frac{1}{12} \sum_{\pi \in \mathcal S_4} \sgn(\pi) \large(\tau_1(X;Y_{\pi_1},Y_{\pi_2},Y_{\pi_3}) \otimes \tau_2(X;Y_{\pi_4})
	\\
	& \quad -\tau_2(X;Y_{\pi_4}) \otimes \tau_1(X;Y_{\pi_1},Y_{\pi_2},Y_{\pi_3})\large).
\end{align*}
The first term is $\frac12 \ev_{\mathcal B}(\tau_1 \otimes \tau_2)$. For the second term, we let $\sigma$ be the permutation $\sigma_1=4,\sigma_2=1,\sigma_3=2,\sigma_4=3$, i.e. the permutation that interchanges the block $(1)$ and the block $(234)$, and $\pi'=\pi \circ \sigma$. Then the second term can be written as 
\begin{displaymath}
	- \frac{1}{12} \sum_{\pi' \in \mathcal S_4} \sgn(\pi') \sgn(\sigma) \tau_2(X;Y_{\pi'_1}) \otimes  \tau_1(X;Y_{\pi'_2},Y_{\pi'_3},Y_{\pi'_4}).
\end{displaymath}
Since $\sgn(\sigma)=-1$, this term equals $\frac12 \ev_{\mathcal B}(\tau_2 \otimes \tau_1)$. Since $\pi_\calG(\tau_1 \otimes \tau_2)=\frac12(\tau_1 \otimes \tau_2+\tau_2 \otimes \tau_1)$, the claim in this case follows. In general, we use the fact that interchanging two blocks is an odd permutation if and only if both lengths are odd, and the length of a block corresponds to the degree of the corresponding $\tau_i$.  

From the claim and the fact that $\pi_\calG$ and $\pi_\g$ are projections it follows that if the elements of $\mathcal B$ are linearly independent, then the map $\ev_{\mathcal B} : \calG^k_d \to \largewedge^k \g$ is the restriction of the injective map $\ev_{\mathcal B}: T(\calA)_d^k  \to T(\g)^k$, and thus in itself an injective map. 
\endproof

\begin{Lemma} \label{lemma_extension_phi}
	Let $\pi:\g\to\h$ be a morphism of Lie algebras. Let $\phi \in \h^*$ and write also $\phi:\largewedge^k \h \to \largewedge^{k-1}\h$ for the induced map given by 
	\begin{displaymath}
		\phi(X_1 \wedge \ldots \wedge X_k)=\sum_{i=1}^k (-1)^{i+1} \phi(X_i) X_1 \wedge \ldots \wedge \hat X_i \wedge \ldots \wedge X_k.
	\end{displaymath}
	Then there is a map $\tilde \phi:\largewedge^k \g \to \largewedge^{k-1} \g$ such that the following diagram commutes:
	\begin{displaymath}
		\xymatrix{\largewedge^k \g \ar[r]^{\tilde \phi} \ar[d]^{\pi} & \largewedge^{k-1} \g \ar[d]^{\pi} \\ \largewedge^k \h \ar[r]^{\phi} & \largewedge^{k-1} \h}
	\end{displaymath}
\end{Lemma}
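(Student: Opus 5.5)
The plan is to take for $\tilde\phi$ the contraction with the pulled-back functional $\pi^*\phi:=\phi\circ\pi\in\g^*$. Concretely, I will define
\begin{displaymath}
	\tilde\phi(X_1\wedge\ldots\wedge X_k):=\sum_{i=1}^k(-1)^{i+1}\phi(\pi X_i)\,X_1\wedge\ldots\wedge\hat X_i\wedge\ldots\wedge X_k,\quad X_i\in\g.
\end{displaymath}
Here the vertical arrows in the diagram are the induced maps $\pi=\pi\wedge\cdots\wedge\pi$ on exterior powers.

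First I will check that $\tilde\phi$ is well defined on $\largewedge^k\g$. The right-hand side is multilinear in $(X_1,\ldots,X_k)$. It is also alternating: if $X_j=X_l$ for some $j<l$, all terms with $i\neq j,l$ vanish. The two terms with $i=j$ and $i=l$ have wedge parts differing by moving one factor across $l-j-1$ positions, so they cancel against the signs $(-1)^{j+1}$ and $(-1)^{l+1}$. Hence the formula factors through $\largewedge^k\g$. This is the standard interior product $\iota_{\pi^*\phi}$, and the same reasoning shows that the map $\phi$ on $\largewedge^k\h$ in the statement is well defined.

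Next I will verify commutativity on decomposable elements, which suffices by linearity. For $X_1,\ldots,X_k\in\g$, applying $\pi$ to $\tilde\phi(X_1\wedge\ldots\wedge X_k)$ gives
\begin{displaymath}
	\sum_{i=1}^k(-1)^{i+1}\phi(\pi X_i)\,\pi X_1\wedge\ldots\wedge\widehat{\pi X_i}\wedge\ldots\wedge\pi X_k,
\end{displaymath}
which is precisely $\phi(\pi X_1\wedge\ldots\wedge\pi X_k)=\phi(\pi(X_1\wedge\ldots\wedge X_k))$.

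I expect no real obstacle here: the only point needing care is the sign bookkeeping in the well-definedness check. It is worth noting that the argument uses only linearity of $\pi$; the hypothesis that $\pi$ is a morphism of Lie algebras is not needed for this lemma itself, and presumably matters only in the later truncation argument where it is applied.
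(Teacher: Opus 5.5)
Your proposal is correct and follows the paper's proof exactly: the paper likewise defines $\tilde\phi$ as contraction with $\phi\circ\pi$ and treats commutativity of the diagram as immediate. Your additional checks (that the formula is alternating, so it descends to $\largewedge^k\g$) and your remark that only linearity of $\pi$ is used are both accurate.
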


\proof
It suffices to put for $p_1,\ldots,p_k \in \g$
\begin{displaymath}
	\tilde \phi(p_1 \wedge \ldots \wedge p_k):=\sum_{i=1}^k (-1)^{i+1} \phi({\pi} p_i) p_1 \wedge \ldots \wedge \hat p_i \wedge \ldots \wedge p_k.
\end{displaymath}
\endproof

The free Lie algebra generated by a vector space $U$, denoted $\g_f(U)$, was constructed in Section \ref{sec:dgla} as the subalgebra of $T(U)$ given by $L(U)=\bigoplus_{n=1}^\infty L_n(U)$ where $L_1(U)=U, L_n(U)=[U,L_{n-1}(U)]$. In the rest of this section, $n$ will be referred to as the Lie degree. The Lie degree extends to $\largewedge^k \g$ in the natural way. 

The lower central series of a Lie algebra $\g$ is defined in the usual way by $\g_0:=\g, \g_n=[\g,\g_{n-1}]$. Then $\g_0 \supset \g_1 \supset \g_2 \supset \ldots$ and each $\g_n$ is a Lie ideal. If $\g_n=0$ for some $n$, the Lie algebra is called nilpotent of order $n$. 

If $\g=\g_f(U)$ then the quotient $\g/\g_n$ is called the free nilpotent Lie algebra of order $n$ generated by $U$. Note that this is a finite-dimensional Lie algebra.

Let $\h$ be the free nilpotent Lie algebra of some order generated by $U$. Then there are obvious quotient maps $\pi_U:\largewedge^k \g \to \largewedge^k \h$. 

\begin{Proposition} \label{prop_from_free_to_nilpotent}
	Let $\g$ be the free algebra generated by a vector space $U$. Let $Q \in \largewedge^k \g$ be of Lie degree $\leq N$. Let $\h$ be a free nilpotent Lie algebra of order at least $N+1$ generated by $U$. If ${\pi_U} Q \in \largewedge^k \h$ vanishes, then $Q=0$.
\end{Proposition}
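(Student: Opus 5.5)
The plan is to use the Lie degree grading to see that $\g_{N+1}$ consists precisely of the elements of Lie degree at least $N+2$. Then the kernel of $\pi_U:\largewedge^k\g\to\largewedge^k\h$ meets the low-degree part trivially.

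\emph{Step 1: the lower central series of $\g$.} Set $\g=\g_f(U)=\bigoplus_{n\ge1}L_n(U)$. We have $[L_a(U),L_b(U)]\subset L_{a+b}(U)$, and $L_n(U)=[U,L_{n-1}(U)]$. By induction on $n$ this gives
\begin{displaymath}
	\g_n=\bigoplus_{j\geq n+1}L_j(U).
\end{displaymath}
The inclusion $\supseteq$ follows because $L_{j}(U)=[U,L_{j-1}(U)]\subset[\g,\g_{n-1}]$ for $j\ge n+1$. The inclusion $\subseteq$ follows from homogeneity of the bracket. In particular, if $\h=\g/\g_M$ with $M\ge N+1$, then $\g_M$ is spanned by elements of Lie degree $\ge M+1\ge N+2$. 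The quotient map $\g\to\h$ therefore restricts to an isomorphism on $\bigoplus_{j\le M}L_j(U)$, and hence on $\g_{\le N+1}:=\bigoplus_{j\le N+1}L_j(U)$.

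\emph{Step 2: passing to exterior powers.} Write $\g=\g_{\le M}\oplus\g_M$ as graded vector spaces, with $\g_{\le M}=\bigoplus_{j\le M}L_j(U)$. Then
\begin{displaymath}
	\largewedge^k\g=\largewedge^k\g_{\le M}\oplus\big(\g_M\wedge\largewedge^{k-1}\g\big),
\end{displaymath}
and $\ker(\pi_U)=\g_M\wedge\largewedge^{k-1}\g$, since the quotient map on $\largewedge^k$ is $\largewedge^k$ of a surjection with kernel $\g_M$. Equivalently, $\pi_U$ restricted to $\largewedge^k\g_{\le M}$ is an isomorphism onto $\largewedge^k\h$. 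Now interpret ``$Q$ of Lie degree $\le N$'' to mean that $Q$ is a combination of wedges $p_1\wedge\dots\wedge p_k$ of Lie-homogeneous elements whose Lie degrees satisfy $\sum\deg p_i\le N$, so that each individual $\deg p_i\le N<M+1$. Then every factor lies in $\g_{\le M}$, so $Q\in\largewedge^k\g_{\le M}$. Consequently $\pi_UQ=0$ forces $Q=0$. The same conclusion holds under the weaker reading in which only each factor has Lie degree $\le N$.

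\emph{Main obstacle.} The only delicate point is the bookkeeping in Step 1, namely that the lower central series of the free Lie algebra coincides exactly with the Lie-degree filtration. This requires $L_n(U)=[U,L_{n-1}(U)]$, which holds by construction, together with the fact that brackets add Lie degree. Step 2 is then the standard fact that $\largewedge^k$ of a split surjection of vector spaces is injective on $\largewedge^k$ of a complement of the kernel. Finite-dimensionality of $\h$ is not needed for this statement.
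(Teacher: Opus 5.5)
Your proof is correct, but it takes a different and more direct route than the paper's.

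\textbf{The paper's route.} It treats the case $k=1$ as clear (``the only relations of degree $\le N+1$ are those that also hold in $\g$'') and then inducts on $k$. It writes $Q=\sum_i p_i\wedge R_i$ with linearly independent $p_i$ and chooses $\phi_j\in\h^*$ dual to the images $\pi_U p_i$. It then uses the lifted contractions $\tilde\phi_j$ of Lemma~\ref{lemma_extension_phi} to rewrite $Q$ as a scalar combination of wedges of the $p_i$, and finally kills those coefficients. In effect this is a hands-on proof that the exterior power of a linear map which is injective on the span of the relevant factors is again injective.

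\textbf{Your route.} You make the $k=1$ input explicit by identifying the lower central series with the Lie-degree filtration, $\g_n=\bigoplus_{j\ge n+1}L_j(U)$, consistent with the paper's indexing $\g_0=\g$. This yields a graded complement $\g_{\le M}$ of $\g_M$ on which the quotient map is a linear isomorphism. Functoriality of $\largewedge^k$ then finishes the argument in one line, so neither the contraction lemma nor the induction on $k$ is needed.

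\textbf{Comparison.} The paper's argument does not invoke the grading beyond $k=1$. Your argument loses nothing by using it, because $\largewedge^k$ of any injective linear map is injective, so the same generality is available to you. Your version is shorter, fully rigorous, and it also justifies the step the paper only calls clear. The kernel description $\ker\pi_U=\g_M\wedge\largewedge^{k-1}\g$ you state is correct but not needed.
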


\proof
The proof is by induction on $k$. The induction base $k=1$ is equivalent to saying that in $\h$, the only relations of degree $\leq N+1$ are those that also hold in $\g$, which is clear.  

Let us assume that $k>1$. We can write 
\begin{equation} \label{eq_q_pi}
	Q=\sum_{i=1}^n p_i \wedge R_i
\end{equation}
with $R_i \in \largewedge^{k-1}\g$ and linearly independent $p_i \in \g$.  By the induction base, the images ${\pi_U} p_i \in \h$ are still linearly independent. We thus find $\phi_j \in \h^*$ with $\phi_j({\pi_U} p_i)=\delta_{ij}, i=1,\ldots,n$.

We prove by induction that for every $m$ we can write 
\begin{equation} \label{eq_Q_with_several_p}
	Q=\sum_{i_1,\ldots,i_m=1}^n p_{i_1} \wedge \ldots \wedge p_{i_m} \wedge R_{i_1,\ldots,i_m},
\end{equation}
where $R_{i_1,\ldots,i_m} \in \largewedge^{k-m} \g$ is antisymmetric in its entries. For $m=1$ this is \eqref{eq_q_pi}. Suppose that we have shown \eqref{eq_Q_with_several_p} for some $m \geq 1$.  
We apply $\phi_j \circ {\pi_U}$ to this equation. Using the antisymmetry of the $R$'s and Lemma \ref{lemma_extension_phi}, this gives 
\begin{multline*}
	m \sum_{i_2,\ldots,i_m=1}^n {\pi_U} p_{i_2} \wedge \ldots \wedge {\pi_U} p_{i_m} \wedge {\pi_U} R_{j,i_2,\ldots,i_m} \\
	=  (-1)^{m+1}\sum_{i_1,\ldots,i_m=1}^n {\pi_U} p_{i_1} \wedge \ldots \wedge {\pi_U} p_{i_m} \wedge {\pi_U} \tilde \phi_j(R_{i_1,\ldots,i_m}),
\end{multline*}
which implies by induction hypothesis that
\begin{displaymath}
	m \sum_{i_2,\ldots,i_m=1}^n p_{i_2} \wedge \ldots \wedge p_{i_m} \wedge R_{j,i_2,\ldots,i_m}=  (-1)^{m+1}\sum_{i_1,\ldots,i_m=1}^n p_{i_1} \wedge \ldots \wedge p_{i_m} \wedge \tilde \phi_j(R_{i_1,\ldots,i_m}).
\end{displaymath} 
Plugging this into \eqref{eq_Q_with_several_p} we find
\begin{displaymath}
	Q=\frac{ (-1)^{m+1}}{m} \sum_{i_1,\ldots,i_{m+1}=1}^n p_{i_1} \wedge \ldots \wedge p_{i_{m+1}} \wedge \tilde \phi_{i_1}(R_{i_2,\ldots,i_{m+1}}).
\end{displaymath}
Writing $R_{i_1,\ldots,i_{m+1}}$ for the antisymmetrization of $\frac{ (-1)^{m+1}}{m} \tilde \phi_{i_1}(R_{i_2,\ldots,i_{m+1}})$ finishes the induction over $m$.

For $m=k$, we find that 
\begin{equation} \label{eq_Q_with_only_p}
	Q=\sum_{i_1,\ldots,i_k=1}^n \alpha_{i_1,\ldots,i_k} p_{i_1} \wedge \ldots \wedge p_{i_k}
\end{equation}
with $\alpha_{i_1,\ldots,i_m}$ a family of scalars that is antisymmetric in its indices. Applying $\phi_j \circ {\pi_U}$ gives 
\begin{displaymath}
	0=k \sum_{i_2,\ldots,i_k=1}^n \alpha_{j,i_2,\ldots,i_k}  {\pi_U} p_{i_2} \wedge \ldots \wedge {\pi_U} p_{i_k}.
\end{displaymath}
By our induction hypothesis we conclude that 
\begin{displaymath}
	0=\sum_{i_2,\ldots,i_k=1}^n \alpha_{j,i_2,\ldots,i_k}  p_{i_2} \wedge \ldots \wedge p_{i_k}
\end{displaymath}
for every $j$. Plugging this into \eqref{eq_Q_with_only_p} shows that $Q=0$.
\endproof

\begin{Theorem} \label{thm_injectivity_realization}
	Let $\tau \in \calG$. If $\Re_\h \tau=0$ for all real finite dimensional Lie algebras $\h$, then $\tau=0$.
\end{Theorem}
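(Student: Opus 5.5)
The plan is to reduce to the infinite-dimensional free model $\Omega_U$, where injectivity is already known by Proposition \ref{prop_injectivity_realization_dgga}, and then truncate to a finite-dimensional free nilpotent Lie algebra using Proposition \ref{prop_from_free_to_nilpotent}.

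First reduce to a homogeneous $\tau$. Both $\Re_\h$ and $\Re_U$ preserve the bidegree $(\deg,\deg_w)$, so the realization of $\tau$ vanishes if and only if the realization of each bihomogeneous component vanishes. Hence we may assume $\tau\in\calG^k_d$. The realization also respects the symbol degree: $\deg_s$ corresponds to the total polynomial degree in $X$ plus the form degree in $Y$. Since $\tau$ is a finite sum, we may fix a bound $N$ on the symbol degree of $\tau$.

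Next choose a vector space $U$ with $\dim U\ge d+1$ and pick linearly independent $X,Y_1,\dots,Y_d\in U$. Assume $\tau\ne 0$. Tracing through the proof of Proposition \ref{prop_injectivity_realization_dgga}, the element
\[
Q:=\Re_U\tau|_X(Y_1,\dots,Y_d)=\ev_{\mathcal B}\tau\in\largewedge^k\g_f(U)
\]
is nonzero. Every Lie bracket in $Q$ comes from a bracket in $\tau$, and each wedge factor of $Q$ has Lie degree equal to the symbol degree of the corresponding factor of $\tau$. So $Q$ has Lie degree at most $N$.

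Now let $\h=\g_f(U)/\g_f(U)_{N}$ be the free nilpotent Lie algebra generated by $U$, of order at least $N+1$; it is finite-dimensional. By Proposition \ref{prop_from_free_to_nilpotent}, $\pi_U Q\ne 0$ in $\largewedge^k\h$.

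It remains to compare $\pi_U Q$ with $\Re_\h\tau$. The inclusion $U\hookrightarrow \g_f(U)\to\h$ gives a point $\bar X=\pi_U X\in\h$ and tangent vectors $\bar Y_i=\pi_U Y_i\in\h=T_{\bar X}\h$. The claim is
\[
\pi_U\bigl(\Re_U\tau|_X(Y_1,\dots,Y_d)\bigr)=\Re_\h\tau|_{\bar X}(\bar Y_1,\dots,\bar Y_d).
\]
To prove it, consider the map $\Phi:\Omega_U\to\Omega(U,\largewedge\h)$ that postcomposes the values with $\pi_U$. This is a DGGA morphism, since $\pi_U$ is a Lie algebra morphism and hence compatible with the Schouten--Nijenhuis bracket. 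Consider also the pullback along the linear map $\iota:U\to\h$, namely
\[
\iota^*:\Omega_\h\to\Omega(U,\largewedge\h).
\]
This is a DGGA morphism because the bracket and wedge act only on the value part together with the wedge of forms, and pullback commutes with $d$. Both $\Phi\circ\Re_U$ and $\iota^*\circ\Re_\h$ are DGGA morphisms $\calG\to\Omega(U,\largewedge\h)$, and both send $\xi$ to the function $u\mapsto\iota(u)$. By the uniqueness in Theorem \ref{thm_universal_property_dgga} they coincide. Evaluating at $X$ on $(Y_1,\dots,Y_d)$ gives the claim. Therefore $\Re_\h\tau\ne0$, which contradicts the hypothesis, so $\tau=0$.

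The main obstacle is bookkeeping: making sure that the Lie degree of $Q$ is really bounded by the symbol degree of $\tau$, so that Proposition \ref{prop_from_free_to_nilpotent} applies, and that weakly smooth $\g_f(U)$-valued forms are handled correctly by the comparison morphism. Both follow because each graded piece is finite-dimensional and $\pi_U$ is continuous in the colimit topology.
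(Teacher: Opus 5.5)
Your proof is correct and follows the paper's own argument. You reduce to bihomogeneous $\tau$, obtain $\Re_U\tau|_X(Y_1,\ldots,Y_d)\neq 0$ from (the proof of) Proposition \ref{prop_injectivity_realization_dgga}, and pass to a finite-dimensional free nilpotent quotient via Proposition \ref{prop_from_free_to_nilpotent}; your universal-property argument also justifies the identity $\Re_\h\tau|_{\bar X}(\bar Y_1,\ldots,\bar Y_d)=\pi_U(\Re_U\tau|_X(Y_1,\ldots,Y_d))$, which the paper only asserts. One cosmetic slip: under the paper's convention $\g_0=\g$, the quotient $\g_f(U)/\g_f(U)_N$ is free nilpotent of order $N$, not $N+1$, so take $\g_f(U)/\g_f(U)_{N+1}$ as the paper does; nothing else changes.
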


\proof
We may assume that $\tau$ is homogeneous for both gradings, with $\deg\tau=d$ and $\deg_w\tau=k$. Let $N$ be the symbol degree of $\tau$.
Let  $\g=\g_f(U)$ be the free Lie algebra generated by a vector space $U$ of dimension $d+1$. Let $\h$ be the corresponding free nilpotent Lie algebra of order $ N+1$. For a basis $(X,Y_1,\ldots,Y_d)$ of $U$ we have 

\begin{displaymath}
	\Re_\h \tau|_X(Y_1,\ldots,Y_d)=\pi_U(\Re_U \tau|_X(Y_1,\ldots,Y_d)).
\end{displaymath}
Since the Lie degree of $\Re_U \tau|_X(Y_1,\ldots,Y_d)$ is not larger than $N$, the claim follows from Propositions \ref{prop_injectivity_realization_dgga} and \ref{prop_from_free_to_nilpotent}.
\endproof

 \section{Convolution of double forms on a Lie algebra}
 \label{sec_convolution_double_forms_g}
 
  We mimic the construction from \cite{bernig_faifman_kotrbaty_part1} to construct a convolution product on $\Omega_\g$. It can be thought of as a deformation of the usual wedge product. The main technical result of the section is that this product satisfies a Leibniz rule with respect to the operator $D$.

\subsection{Defining convolution.} 

For a real finite dimensional Lie algebra $\g$, we consider $\Omega_\g=\Omega(\g, \largewedge\g)$ as in Definition \ref{def:double_forms}, and the subspace $\Omega_\g^{\ad}$ of $\ad$-invariant elements.

The following construction is taken, with some adaptions, from \cite[Section 4]{bernig_faifman_kotrbaty_part1}. 
 Let $e_1,\dots,e_n$ a basis of $\mathfrak g$ and $e_1^*,\ldots,e_n^*\in\g^*$ the dual basis. Recall that for an element $x \in \mathfrak g$, $x^\sharp$ denotes the induced vector field on $\mathfrak g$. Explicitly, $x^\sharp|_\xi=[x,\xi]$.
  
 \begin{Definition} \label{def_convolution_product_omega_g}
 	For $\tau,\zeta \in \Omega_\g^{\ad}$ and $r\geq0$ we set
 	\begin{displaymath}
 		\hat S_r(\tau \otimes \zeta)= \sum_{|K|=r} \iota_{e_K^\sharp} \tau \wedge \iota_{e_K^*} \zeta,
 	\end{displaymath}
 	where the sum runs over all sets $K=\{k_1,\dots,k_r\}\subset\{1,\dots,n\}$ and we define $\iota_{e_K^\sharp}=\iota_{e^\sharp_{k_1}}\circ\cdots\circ\iota_{e^\sharp_{i_r}}$ and similarly for $\iota_{e^*_K}$.
 \end{Definition}
 
 The definition of $\hat S_r$ depends neither on the ordering of the elements in $K$ nor on the choice of a basis. Note that $\deg \hat S_r(\tau \wedge \zeta)=\deg \tau+\deg \zeta-r,\deg_w \hat S_r(\tau \wedge \zeta)=\deg_w \tau+\deg_w \zeta-r$. Clearly $\hat S_r(\tau \otimes \zeta)=0$ if $r>\deg \tau$ or $r>\deg_w\zeta$.
 
 \begin{Proposition} \label{def_convolution_of_forms}
 	The \emph{convolution product} $\tau * \zeta \in \Omega_\g^{\ad}$ defined on homogeneous elements $\tau,\zeta \in \Omega_\g^{\ad}$ with $k_1=\deg \tau, l_1=\deg_w \tau, k_2=\deg \zeta, l_2=\deg_w\zeta$ by 
 	\begin{displaymath}
 		\tau * \zeta=\sum_r  \epsilon^r_{k_1,l_1,k_2,l_2} \hat S_r(\tau \otimes \zeta),
 	\end{displaymath}
 	where 
 	\begin{align*}
 		\epsilon^r_{k_1,l_1,k_2,l_2}=(-1)^{(k_2+l_2)(l_1+r)+r k_1},
 	\end{align*}
 	is well defined.
 \end{Proposition}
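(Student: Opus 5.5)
The statement asserts two things about the convolution product. First, the expression does not depend on the choices made in Definition \ref{def_convolution_product_omega_g}, namely the basis and the ordering of each $K$. Second, $\tau * \zeta$ again lies in $\Omega_\g^{\ad}$. The coefficients $\epsilon^r_{k_1,l_1,k_2,l_2}$ are constants depending only on bidegrees, so it suffices to prove both properties for each $\hat S_r(\tau\otimes\zeta)$ separately.

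\emph{Independence of choices.} I will rewrite $\hat S_r$ intrinsically. Consider the canonical element
\begin{displaymath}
c_r=\sum_{|K|=r} e_K\otimes e_K^* \in \largewedge^r\g\otimes\largewedge^r\g^*,
\end{displaymath}
where $e_K=e_{k_1}\wedge\cdots\wedge e_{k_r}$. It corresponds to the identity of $\largewedge^r\g$, so it is basis independent. Reordering $K$ multiplies $e_K$ and $e_K^*$ by the same sign, so each summand of $c_r$ is unchanged. Next define a bilinear operation which, for $u\in\largewedge^r\g$ and $\alpha\in\largewedge^r\g^*$, forms $\iota_{u^\sharp}\tau\wedge\iota_\alpha\zeta$. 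Here $u\mapsto u^\sharp$ is the linear extension of $x\mapsto x^\sharp$ to multivector fields, and $\iota_\alpha$ contracts the $\largewedge\g$-factor. Then $\hat S_r(\tau\otimes\zeta)$ is this operation applied to $c_r$, so it is independent of the basis and of the orderings.

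\emph{Ad-invariance.} Fix $x\in\g$. I must show
\begin{displaymath}
\mathfrak L_{x^\sharp}\hat S_r(\tau\otimes\zeta)=\ad_x\hat S_r(\tau\otimes\zeta).
\end{displaymath}
\begin{enumerate}
\item Expand the left side. Use that $\mathfrak L_{x^\sharp}$ is a derivation of the wedge product and that $\mathfrak L_{x^\sharp}\iota_{y^\sharp}=\iota_{[x^\sharp,y^\sharp]}+\iota_{y^\sharp}\mathfrak L_{x^\sharp}$. Since $y\mapsto y^\sharp$ comes from the adjoint action, $[x^\sharp,y^\sharp]=\pm[x,y]^\sharp$ with a fixed convention sign.
\item Expand the right side. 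Use that $\ad_x$ is a derivation of $\largewedge\g$, and that $\ad_x\iota_\alpha\zeta=\iota_\alpha\ad_x\zeta+\iota_{\ad_x^*\alpha}\zeta$ up to the corresponding sign, where $\ad_x^*$ is the coadjoint action.
\item Compare. By the hypotheses $\mathfrak L_{x^\sharp}\tau=\ad_x\tau$ and $\mathfrak L_{x^\sharp}\zeta=\ad_x\zeta$, the terms in which the derivative falls on $\tau$ or on $\zeta$ match on both sides. What remains is the sum of the extra contraction terms from steps 1 and 2. This sum is precisely the operation applied to $(\ad_x\otimes 1+1\otimes\ad_x^*)c_r$. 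That vanishes because $c_r$, being the identity map of $\largewedge^r\g$, is invariant under the adjoint action.
\end{enumerate}

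The main obstacle is the sign bookkeeping in step 3, specifically checking that the sign in $[x^\sharp,y^\sharp]=\pm[x,y]^\sharp$ and the sign convention of the coadjoint action combine to give exactly $(\ad_x\otimes1+1\otimes\ad_x^*)c_r$ rather than a combination with a relative minus sign. I would settle this first in the case $r=1$, where $c_1=\sum_i e_i\otimes e_i^*$ and the identity is the $\ad$-invariance of the canonical pairing. The general case then follows because $e_K^\sharp$ and $e_K^*$ are built multiplicatively, and the derivation property propagates the $r=1$ computation to all $r$.
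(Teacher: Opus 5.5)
Your proposal is correct, but it argues infinitesimally where the paper integrates to the group.

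\textbf{The paper's route.} The paper takes the simply connected Lie group $G$ with Lie algebra $\g$ and uses $\Omega_\g^G=\Omega_\g^{\ad}$. It then checks invariance under $\Ad_g$ directly. Conjugating the contractions $\iota_{e_K^\sharp}$ and $\iota_{e_K^*}$ by the action of $g$ just replaces $e_K$ by $\Ad_g e_K$. So transforming $\hat S_r(\tau\otimes\zeta)$ by $g$ gives $\hat S_r$ of the (unchanged) transformed forms, computed in the basis $\Ad_g e_1,\dots,\Ad_g e_n$. Basis independence, which the paper asserts without proof just before the proposition, finishes the argument.

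\textbf{Your route.} Your argument is the derivative of this one. Group naturality becomes the commutators $[\mathfrak L_{x^\sharp},\iota_{y^\sharp}]=\iota_{[x^\sharp,y^\sharp]}$ and $[\ad_x,\iota_\alpha]=\iota_{\ad_x^*\alpha}$. Basis independence becomes the $\ad$-invariance of $c_r$.

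\textbf{Comparison.} The paper's version is shorter and needs no sign bookkeeping. Yours stays inside the Lie algebra: it needs neither Lie's third theorem nor the passage between infinitesimal and global invariance. Through $c_r$ it also supplies the basis independence the paper takes for granted.

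\textbf{The sign you flagged.} It comes out right. For the linear vector fields $x^\sharp$ one has $[x^\sharp,y^\sharp]=-[x,y]^\sharp$, and the coadjoint action is $\ad_x^*\alpha=-\alpha\circ\ad_x$. Write $\Phi(u\otimes\alpha)=\iota_{u^\sharp}\tau\wedge\iota_\alpha\zeta$. Then the leftover terms in $\mathfrak L_{x^\sharp}\hat S_r(\tau\otimes\zeta)-\ad_x\hat S_r(\tau\otimes\zeta)$ are exactly $-\Phi\bigl((\ad_x\otimes 1+1\otimes\ad_x^*)c_r\bigr)$. This vanishes because $\sum_k e_k\otimes\ad_x^*e_k^*=-\sum_k[x,e_k]\otimes e_k^*$, and the case $r>1$ follows multiplicatively.

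\textbf{One point to make explicit.} $\mathfrak L_{x^\sharp}$ commutes with $\iota_{e_K^*}$, and $\ad_x$ commutes with $\iota_{e_K^\sharp}$, because in each pair one operator acts on the form part and the other on the values. This is what makes the terms where the derivative falls on $\tau$ or on $\zeta$ match on both sides.
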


 \begin{proof}
 	
 We have to check that the definition makes sense. Letting $G$ be the simply connected Lie group with Lie algebra $\g$, we have $\Omega_\g^G=\Omega_\g^{\ad}$ so it remains to check that $\tau*\zeta \in \Omega_\g^{G}$ whenever $\tau,\zeta \in \Omega_\g^G$.

 	By \eqref{eq_invariance_concrete_lie_group} we have to show
 	\begin{displaymath}
 		\Ad_{g^{-1}}^* \otimes \Ad_g \hat S_r(\tau \otimes \zeta)=\hat S_r(\tilde\tau\otimes\tilde\zeta), \quad r\geq 0.
 	\end{displaymath}
 	To this end, observe first that for any index subset $K$ we have
 	\begin{align*}
 		\Ad_{g^{-1}}^* \circ\iota_{e_K^\sharp}=\iota_{(\Ad_g e_K)^\sharp}\circ(\Ad_{g^{-1}}^*)^*,
 	\end{align*}
 		\begin{align*}
 		\Ad_g\circ\iota_{e_K^*}=\iota_{(\Ad_g e_K)^*}\circ\Ad_g.
 	\end{align*}
 	Then the claim follows easily from the invariance of $\tau$ and $\zeta$ and the fact that the definition of $\hat S_r$ is independent of the basis of $\g$. 
\end{proof}
 
 \begin{Proposition}
 	\label{prop:associativity}
 	The convolution product on $\Omega_\g^{\ad}$ is associative.
 \end{Proposition}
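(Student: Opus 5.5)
We prove associativity by a direct computation in a fixed basis, reducing it to combinatorial identities between contraction operators and signs. Fix a basis $e_1,\dots,e_n$ of $\g$ with dual basis $e_1^*,\dots,e_n^*$. For homogeneous $\tau,\zeta,\eta\in\Omega_\g^{\ad}$ we expand both $(\tau*\zeta)*\eta$ and $\tau*(\zeta*\eta)$ as sums over pairs of index sets $(K,L)$ of terms
\begin{displaymath}
\pm\,\iota_{\cdot}\tau\wedge\iota_{\cdot}\zeta\wedge\iota_{\cdot}\eta ,
\end{displaymath}
where each of $\tau,\zeta,\eta$ carries some contractions. Two kinds of contraction occur: contractions $\iota_{e_k^\sharp}$ of the form part by the fundamental vector fields, and contractions $\iota_{e_k^*}$ of the $\largewedge\g$ part. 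Both are graded derivations of the double wedge product. On the form factor, $\iota_{e_k^\sharp}$ is an antiderivation. On the $\largewedge\g$ factor, $\iota_{e_k^*}$ is the antiderivation of Lemma \ref{lemma_extension_phi}. Moreover all these operators pairwise (anti)commute, because they act on different tensor factors or are contractions of the same kind.

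On the left side, $\hat S_L(\hat S_K(\tau\otimes\zeta)\otimes\eta)$ splits via the Leibniz rule for $\iota_{e_L^\sharp}$. It becomes a sum over decompositions $L=L_1\sqcup L_2$, in which $\iota_{e_{L_1}^\sharp}$ hits $\iota_{e_K^\sharp}\tau$ and $\iota_{e_{L_2}^\sharp}$ hits $\iota_{e_K^*}\zeta$. This produces two types of terms. In the first type, $\tau$ is contracted against both $\zeta$ and $\eta$, via $\iota_{e^\sharp_{K\cup L_1}}\tau$ paired with $\iota_{e_K^*}\zeta$ and $\iota_{e^*_{L_1}}\eta$. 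In the second type, $\zeta$ is contracted against $\eta$, via $\iota_{e^\sharp_{L_2}}\iota_{e_K^*}\zeta$ paired with $\iota_{e^*_{L_2}}\eta$. Symmetrically, $\hat S_K(\tau\otimes\hat S_L(\zeta\otimes\eta))$ splits via the Leibniz rule for $\iota_{e_K^*}$ acting on $\iota_{e_L^\sharp}\zeta\wedge\iota_{e_L^*}\eta$.

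Matching the terms requires an extra ingredient. On the left, $\zeta$ receives $e^\sharp$-contractions after $e^*$-contractions, whereas on the right, terms in which $\tau$ contracts both factors correspond to $\tau$'s $e^\sharp$-slots paired with $\eta$'s $e^*$-slots. So we use $\ad$-invariance, $\mathfrak L_{x^\sharp}\tau=\ad_x\tau$, in the form
\begin{displaymath}
[\iota_{x^\sharp},\iota_{y^*}]=\iota_{[x,\cdot]^*\text{-type}}
\end{displaymath}
together with the commutation relations $[\iota_{e_k^\sharp},\iota_{e_l^*}]$ and the identity $\iota_{e_k^\sharp}$ of a $\sharp$-contraction. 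A cleaner alternative, and the route I would actually try first, follows the model of \cite{bernig_faifman_kotrbaty_part1}. There the convolution on $\Omega_\ori(SG)^{G\times G}\otimes\Dens(\g^*)$ is shown to be associative via its geometric interpretation, as the pushforward under the group multiplication $G\times G\to G$, which is associative. We identify $\Omega_\g^{\ad}=\Omega_\g^G$ for the simply connected group $G$, and interpret $\hat S_r$ as the fiber-integration or contraction arising from the multiplication map. Associativity of $m\circ(m\times\id)=m\circ(\id\times m)$ then gives the result, with the signs $\epsilon^r$ being exactly the Koszul signs of that pushforward.

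The main obstacle is the sign bookkeeping. We must show that for each triple of index sets the signs $\epsilon^{r}\epsilon^{s}$ arising on the two sides agree, after accounting for the Koszul signs from moving contractions past forms of various bidegrees. I would reduce this to checking the exponent modulo $2$ as a polynomial in the $k_i,l_i,r,s$. Any residual discrepancy must be absorbed by the $\ad$-invariance identity; verifying this is where the real content lies.
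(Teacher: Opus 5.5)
Your first route has the right skeleton, and it is the only kind of argument the paper can be relying on. The paper gives no new proof; it says the proof of \cite[Proposition 4.4]{bernig_faifman_kotrbaty_part1} carries over verbatim, which for forms on $\g$ must be a formal computation with the operators $\hat S_r$. However, you misplace where the content lies, and the ingredient you add is false.

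The operator $\iota_{e_k^\sharp}$ acts pointwise on the $\largewedge\g^*$ factor and $\iota_{e_l^*}$ acts on the $\largewedge\g$ factor, so they commute exactly. In particular $\iota_{e_T^\sharp}\iota_{e_P^*}\zeta=\iota_{e_P^*}\iota_{e_T^\sharp}\zeta$, which is precisely the point you worried about, and there is no correction term $[\iota_{x^\sharp},\iota_{y^*}]$. More fundamentally, $(\tau*\zeta)(\xi)$ depends only on $\tau(\xi)$ and $\zeta(\xi)$, whereas $\mathfrak L_{x^\sharp}\tau=\ad_x\tau$ is a first-order condition. So $\ad$-invariance is needed only to ensure $\tau*\zeta\in\Omega_\g^{\ad}$ (Proposition \ref{def_convolution_of_forms}), and associativity in fact holds on all of $\Omega_\g$.

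Your Leibniz expansion already gives an exact bijection of terms. Both sides are sums over triples $(P,Q,T)$ of index sets with $P\cap Q=\emptyset=Q\cap T$: these are your $(K,L_1,L_2)$ on the left and $(K_1,K_2,L)$ on the right. Each term involves all three pairings at once, not ``two types''. So the whole proof is the sign check, which you postpone and hedge with ``any residual discrepancy must be absorbed by the $\ad$-invariance identity''. If there were a discrepancy, nothing could absorb it.

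The check does close. Write $p,q,t$ for the cardinalities and $m_i=k_i+l_i$, and order $\iota_{e_L^\sharp}=\iota_{e_Q^\sharp}\iota_{e_T^\sharp}$ and $\iota_{e_K^*}=\iota_{e_P^*}\iota_{e_Q^*}$. Both sides then produce the term
\begin{displaymath}
\iota_{e_P^\sharp}\iota_{e_Q^\sharp}\tau\wedge\iota_{e_P^*}\iota_{e_T^\sharp}\zeta\wedge\iota_{e_Q^*}\iota_{e_T^*}\eta .
\end{displaymath}
On the left its coefficient is $\epsilon^{p}_{k_1,l_1,k_2,l_2}\,\epsilon^{q+t}_{k_1+k_2-p,l_1+l_2-p,k_3,l_3}(-1)^{(k_1-p)t+pq}$. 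The factor $(-1)^{(k_1-p)t}$ comes from $\iota_{e_T^\sharp}$ passing $\iota_{e_P^\sharp}\tau$, and $(-1)^{pq}$ from reordering $\iota_{e_Q^\sharp}\iota_{e_P^\sharp}$. On the right the coefficient is $\epsilon^{t}_{k_2,l_2,k_3,l_3}\,\epsilon^{p+q}_{k_1,l_1,k_2+k_3-t,l_2+l_3-t}(-1)^{l_2q}$, where $(-1)^{l_2q}$ comes from $\iota_{e_Q^*}$ passing $\iota_{e_T^\sharp}\zeta$. Modulo $2$ both exponents reduce to
\begin{displaymath}
m_2(l_1+p)+m_3(l_1+l_2+p+q+t)+k_1(p+q)+k_2(q+t).
\end{displaymath}

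Your preferred ``cleaner alternative'' does not work. Elements of $\Omega_\g^{\ad}$ are forms on the vector space $\g$. The passage to valuations on $G$ (restriction to $S(\g)$, the identification \eqref{eq:identification}, integration over normal cycles) has a large kernel, so associativity of valuation convolution cannot be pulled back to $\Omega_\g$. Moreover, for general $\g$ the simply connected group need not admit a bi-invariant metric. On non-compact groups the convolution of valuations in \cite{bernig_faifman_kotrbaty_part1} is itself defined through this product of forms (cf.\ the proof of Proposition \ref{prop:convolution_n-1}), so the argument would be circular. Your assertion that the $\epsilon^r$ are ``the Koszul signs of that pushforward'' is likewise unsupported.
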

 
 \proof
 The proof is verbatim the same as in \cite[Proposition 4.4]{bernig_faifman_kotrbaty_part1}.
 \endproof	
 
 \subsection{Leibniz rule}
  
Now we will show that the convolution in $\Omega_\g^{\ad}$ satisfies a version of the Leibniz rule. As in Section \ref{sec_free_dgga_one_generator} we set $\Deg X=\deg X-\deg_wX$ and define the operator $D:\Omega_\g^{\ad}\to \Omega_\g^{\ad}$ by
 \begin{displaymath}
 	D\tau=d\tau+(-1)^{\Deg \tau+1}\partial \tau.
 \end{displaymath}
 
 \begin{Proposition} 
 	\label{prop:Leibniz}
 	For any $\tau, \zeta \in \Omega_\g^{\ad}$ one has 
 	\begin{displaymath}
 		D(\tau * \zeta)= D\tau * \zeta + (-1)^{\Deg \tau} \tau * D\zeta.
 	\end{displaymath}
 \end{Proposition}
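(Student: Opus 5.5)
Since $D = d + (-1)^{\Deg+1}\partial$, I would prove the Leibniz rule separately for $d$ and for $\partial$, each with the appropriate sign, and then combine. Concretely, I would establish two identities. The first is a graded Leibniz rule for the de Rham differential,
\begin{displaymath}
	d(\tau*\zeta)=d\tau*\zeta+(-1)^{\Deg\tau}\tau*d\zeta+E_1(\tau,\zeta).
\end{displaymath}
The second is an analogous rule for $\partial$ with error term $E_2(\tau,\zeta)$. The correction terms $E_1,E_2$ come from the non-commutation of $d$ with $\iota_{e_K^\sharp}$, and of $\partial$ with $\iota_{e_K^*}$. The claim then reduces to showing that $E_1$ and $E_2$ cancel once the signs from $D$ are inserted.

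\textbf{Step 1: the operator $d$.} By Cartan's formula, $d\iota_{x^\sharp}=\mathfrak L_{x^\sharp}-\iota_{x^\sharp}d$. On $\Omega_\g^{\ad}$ the Lie derivative $\mathfrak L_{x^\sharp}$ equals $\ad_x$ acting on the $\largewedge\g$ factor. Commuting $d$ through $\iota_{e_K^\sharp}$ one vector at a time, each step therefore produces a term in which one $\iota_{e_k^\sharp}$ is replaced by $\ad_{e_k}$ on the $\tau$-side. Since $d$ does not touch $\iota_{e_K^*}$, the $\zeta$-side causes no trouble. The sum over $K$ then reorganises into terms of the schematic form
\begin{displaymath}
	\sum_{|K'|=r-1}\sum_k \iota_{e_{K'}^\sharp}\bigl(\ad_{e_k}\tau\bigr)\wedge \iota_{e_k^*}\iota_{e_{K'}^*}\zeta,
\end{displaymath}
up to signs. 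I have to be careful here: $\iota_{e_k^\sharp}$ at different $k$ do not commute with $\ad_{e_j}$. Their commutator is $\iota_{[e_j,e_k]^\sharp}$, and this must be tracked; by basis independence, or by summing, it should organise into a trace term.

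\textbf{Step 2: the operator $\partial$.} On $\largewedge\g$, $\partial$ is the Chevalley--Eilenberg/Koszul boundary, a second-order operator with respect to the wedge product (Proposition~\ref{prop:partial_wedge}). Its failure to commute with $\iota_{e_k^*}$ is $\partial\iota_{e^*_k}+\iota_{e^*_k}\partial=\pm\sum_j \iota_{[e_k,\cdot]^*}$-type terms, i.e.\ contraction with $\ad^*_{e_k}$ applied to a dual vector. Similarly, $\partial(\alpha\wedge\beta)$ yields a bracket term $[\alpha,\beta]$ between the $\tau$ and $\zeta$ factors. The bracket between the $\largewedge\g$-parts of $\iota_{e_K^\sharp}\tau$ and $\iota_{e_K^*}\zeta$ should be rewritten as $\sum_k \ad_{e_k}(\cdot)\wedge\iota_{e_k^*}(\cdot)$. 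This identity holds because the Schouten bracket of $X$ and $Y$ equals $\sum_k \ad_{e_k}X\wedge \iota_{e_k^*}Y$ corrected by $\partial$ terms; equivalently, it follows from Proposition~\ref{prop:partial_wedge} applied with $\partial$ expressed via $\sum_k e_k\wedge\ad$-type operators. This produces exactly the same shape of terms as in Step 1, and these should cancel against $E_1$ after the signs $(-1)^{\Deg+1}$ and the $\epsilon^r$ are taken into account.

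\textbf{Main obstacle and fallback.} I expect the sign bookkeeping to be the main obstacle: matching $\epsilon^r_{k_1,l_1,k_2,l_2}$ for $r$ and $r-1$ with the Koszul signs from $D$, together with checking that the trace terms $\sum_k \iota_{[e_k,\cdot]}$ cancel. For unimodular $\g$ these vanish, but in general they must cancel between Steps 1 and 2. To control this, I would first verify the identity when $\tau,\zeta$ are decomposable, of the form $\alpha\otimes X$ with constant coefficients, and then extend by $C^\infty(\g)$-linearity. The extension uses that both sides are derivations-compatible in the function coefficients: $d$ of a function coefficient contributes consistently on both sides, since $\iota_{e^\sharp}df$ is simply a directional derivative. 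As an alternative fallback, one can mimic \cite{bernig_faifman_kotrbaty_part1}. There, convolution corresponds under a Fourier-type or Hodge-type transform to a wedge-type product on forms on the sphere bundle, where $D$ corresponds to the de Rham differential, and the Leibniz rule is then inherited from that of $d$. I would try the direct computation first and use this transfer argument only if the cancellation of trace terms becomes unwieldy.
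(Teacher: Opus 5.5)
Your route is the one the paper takes. You expand $d\hat S_r(\tau\otimes\zeta)$ using $d\iota_X=\mathfrak L_X-\iota_X d$, $[\mathfrak L_X,\iota_Y]=\iota_{[X,Y]}$ and $\mathfrak L_{x^\sharp}=\ad_x$ on $\Omega_\g^{\ad}$. You expand $\partial\hat S_r(\tau\otimes\zeta)$ using that $\partial=\frac12\sum_{a,b}[e_a,e_b]\wedge\iota_{e_b^*}\iota_{e_a^*}$ is second order. Then you cancel the defects of $d\hat S_r$ against those of $\partial\hat S_{r-1}$ and sum against $\epsilon^r$.

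\textbf{The gap.} The step you leave as ``should cancel'' is the whole content of the proof, and you mispredict one of the two defect terms. The commutators $\iota_{[e_a^\sharp,e_b^\sharp]}$ from your Step~1 do not assemble into a trace, and unimodularity plays no role. Put
\[
[\alpha\otimes\beta]=\sum_{a,b}[e_a,e_b]\wedge\iota_{e_a^*}\alpha\wedge\iota_{e_b^*}\beta,\qquad \{\alpha\otimes\beta\}=\sum_{a,b}\iota_{[e_a,e_b]^\sharp}\alpha\wedge\iota_{e_a^*}\iota_{e_b^*}\beta,
\]
and apply these to $S_r(\tau\otimes\zeta)=\sum_{|K|=r}\iota_{e_K^\sharp}\tau\otimes\iota_{e_K^*}\zeta$.

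In $d\hat S_r$, the $\mathfrak L$-terms give $-[S_{r-1}(\tau\otimes\zeta)]$, since $\sum_b\ad_{e_b}A\wedge\iota_{e_b^*}B=-[A\otimes B]$. The commutator terms give $-\frac12\{S_{r-2}(\tau\otimes\zeta)\}$.

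In $\partial\hat S_{r-1}$, the cross term of the second-order operator gives $(-1)^{\deg_w\tau+1}[S_{r-1}(\tau\otimes\zeta)]$. This is the Schouten bracket up to sign, with no $\partial$-corrections. Commuting $\partial$ past $\iota_{e_K^*}$ on the $\zeta$-side produces coefficients $e_c^*([e_a,e_b])$ in front of $\iota_{e_b^*}\iota_{e_a^*}$. Paired with $\iota_{e_c^\sharp}$ on the $\tau$-side via $\sum_c e_c^*([e_a,e_b])\,\iota_{e_c^\sharp}=\iota_{[e_a,e_b]^\sharp}$, they give $\frac{(-1)^{\deg_w\tau+1}}{2}\{S_{r-2}(\tau\otimes\zeta)\}$. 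It is this term, not a trace, that matches your Step~1 commutators.

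What makes the proof work is that the \emph{same combination} $[S_{r-1}]+\frac12\{S_{r-2}\}$ occurs in both places. It appears with coefficient $-1$ in $d\hat S_r$ and with coefficient $(-1)^{\deg_w\tau+1}$ in $\partial\hat S_{r-1}$. Eliminating it gives \eqref{eq_relation_d_partial}. After that the $\epsilon^r$ bookkeeping is mechanical; for instance $(-1)^{\deg_w\tau}\epsilon^r+(-1)^{\Deg\tau+\Deg\zeta+1}\epsilon^{r+1}=0$ kills the $\partial\hat S_r$ terms.

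\textbf{The fallbacks.} Neither would work.

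First, constant-coefficient forms $\alpha\otimes X$ are generally not in $\Omega_\g^{\ad}$, and off $\Omega_\g^{\ad}$ the rule is false, because the $d$-defect is then $\mathfrak L_{x^\sharp}\tau$ rather than $\ad_x\tau$. Already for constant $\tau=\alpha\otimes1$ with $\alpha\in\g^*$, and $\zeta=x\in\g$, the left side is $\pm\alpha([x,\cdot\,])$ while the right side vanishes. The identity is also not $C^\infty(\g)$-linear, so there is nothing to extend.

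Second, the sphere-bundle transfer runs in the wrong direction: the paper \emph{deduces} the Leibniz rule there (Corollary~\ref{cor_leibniz_part1}) from this proposition. It would, moreover, only see restrictions to $S(\g)$, and only for Lie algebras admitting an invariant metric.
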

 
First we will prove a technical lemma.
\begin{Lemma}
 	For $\tau,\zeta \in \Omega_\g^{\ad}$ we have  
 	\begin{align} \label{eq_relation_d_partial}
 		\begin{split}
 			d \hat S_r(\tau \otimes \zeta) & =(-1)^r  \hat S_r(d\tau \otimes \zeta)+(-1)^{\deg \tau+r}  \hat S_r(\tau \otimes d\zeta) \\
 			& \quad +(-1)^{\deg_w \tau+1} \hat S_{r-1}(\partial \tau \otimes \zeta)+(-1)^r  \hat S_{r-1}(\tau \otimes \partial \zeta)\\
 			&\quad+(-1)^{\deg_w \tau}  \partial \hat S_{r-1}(\tau \otimes \zeta).
 		\end{split}
 	\end{align}
 \end{Lemma}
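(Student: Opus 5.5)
Since both sides of \eqref{eq_relation_d_partial} are bilinear in $(\tau,\zeta)$ and all operators involved are local, I would first reduce to decomposable elements. Locally $\tau = \alpha\otimes X$ and $\zeta=\beta\otimes Y$ with $\alpha,\beta$ scalar forms on $\g$ and $X\in\largewedge^{l_1}\g$, $Y\in\largewedge^{l_2}\g$ constant. The $\ad$-invariance of $\tau$ will be used only at one point, described below. Otherwise everything is computed pointwise in a fixed basis $e_1,\dots,e_n$.

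On such elements,
\[
\hat S_r(\tau\otimes\zeta)=\sum_{|K|=r}\iota_{e_K^\sharp}\alpha\wedge\beta\otimes X\wedge\iota_{e_K^*}Y,
\]
up to the Koszul sign from moving $X$ past $\beta$, which I will track once and for all. I apply $d$ to this and use two facts.

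\begin{itemize}
\item The Cartan formula gives $d\iota_{e_k^\sharp}=-\iota_{e_k^\sharp}d+\mathfrak L_{e_k^\sharp}$, applied repeatedly to $\iota_{e_K^\sharp}\alpha$.
\item The vector fields $e_k^\sharp$ are linear, with $[e_i^\sharp,e_j^\sharp]=-[e_i,e_j]^\sharp$ up to convention, so the Lie derivatives can be commuted past the remaining contractions at the cost of commutator contractions $\iota_{[e_i,e_j]^\sharp}$.
\end{itemize}

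Pushing $d$ all the way through produces $(-1)^r\hat S_r(d\tau\otimes\zeta)$ and $(-1)^{\deg\tau+r}\hat S_r(\tau\otimes d\zeta)$ (the $d$ on $\beta$). It also produces a sum of terms of the form $\iota\cdots\mathfrak L_{e_k^\sharp}\cdots\iota\,\alpha$, together with commutator terms.

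The key step is to rewrite the Lie derivative terms. Here the invariance condition $\mathfrak L_{x^\sharp}\tau=\ad_x\tau$ converts $\mathfrak L_{e_k^\sharp}\alpha\otimes X$ into $-\alpha\otimes[e_k,X]$ (after moving the Lie derivative to act on $\tau$). The result is a sum $\sum_k e_k$-bracket terms paired with $\iota_{e_k^*}$ on $Y$, with one fewer contraction.

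I then identify these contributions with the three $\partial$-terms, using the explicit formula for the Koszul boundary on $\largewedge\g$,
\[
\partial(x_1\wedge\dots\wedge x_m)=\sum_{i<j}\pm[x_i,x_j]\wedge\dots.
\]

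\begin{itemize}
\item The classical identity $\partial(X\wedge\iota_{e_K^*}Y)$ gives $\partial X\wedge\iota Y \pm X\wedge\partial\iota Y \pm [X,\iota Y]$. This is Proposition~\ref{prop:partial_wedge} in $\largewedge\g$.
\item The commutation relation between $\partial$ and contraction gives $\iota_{e^*_K}\partial Y-\partial\iota_{e^*_K}Y=\sum\pm \iota_{e^*_{K'}}\,\mathrm{ad}^*$-type terms. Concretely, $\partial\iota_\phi-\iota_\phi\partial$ equals contraction with the image of $\phi$ under the dual of the bracket.
\end{itemize}

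The $[X,\iota Y]$ terms should cancel against the Lie derivative terms produced from invariance. The commutator-contraction terms $\iota_{[e_i,e_j]^\sharp}$ should match the $\partial\iota$ versus $\iota\partial$ discrepancy: by basis independence,
\[
\sum_{i,j}\iota_{[e_i,e_j]^\sharp}\otimes\iota_{e_i^*}\iota_{e_j^*}=\sum_k\iota_{e_k^\sharp}\otimes\iota_{\delta e_k^*},
\]
where $\delta$ is the Chevalley--Eilenberg differential. What remains should organise exactly into $\hat S_{r-1}(\partial\tau\otimes\zeta)$, $\hat S_{r-1}(\tau\otimes\partial\zeta)$ and $\partial\hat S_{r-1}(\tau\otimes\zeta)$.

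\textbf{Main obstacle.} I expect the sign bookkeeping, and the precise cancellation of these bracket terms, to be the hardest part. To control it, I would first verify the lemma for $r=1$ with $\deg_w\zeta=1$, where the formula is short, and fix the signs there. I would then argue inductively in $r$, using that $\hat S_r$ is essentially $\frac1r\sum_k\hat S_{r-1}(\iota_{e_k^\sharp}\cdot\otimes\iota_{e_k^*}\cdot)$.
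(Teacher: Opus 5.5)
Your proposal is essentially the paper's proof: the Lie-derivative terms that $\ad$-invariance turns into brackets are the paper's $[S_{r-1}(\tau\otimes\zeta)]$, the commutator contractions $\iota_{[e_i,e_j]^\sharp}$ become its $\{S_{r-2}(\tau\otimes\zeta)\}$, and the paper makes your matching explicit by computing $d\hat S_r(\tau\otimes\zeta)$ and $\partial\hat S_{r-1}(\tau\otimes\zeta)$ separately in terms of these two auxiliary expressions and then eliminating them. Do the sign bookkeeping directly rather than by your proposed induction on $r$: $\iota_{e_k^\sharp}\tau$ and $\iota_{e_k^*}\zeta$ are no longer $\ad$-invariant, so the lemma at level $r-1$ cannot be invoked for them.
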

 
 \proof
 We set $k_1:=\deg \tau$, $l_1:=\deg_w\tau$, $k_2:=\deg \zeta$, and  $l_2:=\deg_w \zeta$. Define maps $[\bullet], \{\bullet\}: \Omega_\g^{\ad}  \otimes \Omega_\g^{\ad} \to \Omega_\g^{\ad}$	
 by 
 \begin{align*}
 	[\tau \otimes \zeta] & :=\sum_{a,b=1}^n [e_a,e_b] \wedge\iota_{e_a^*} \tau \wedge\iota_{e_b^*} \zeta,\\
 	\{\tau \otimes \zeta\} & :=\sum_{a,b=1}^n\iota_{[e_a,e_b]^\sharp} \tau \wedge\iota_{e_a^*}\iota_{e_b^*} \zeta.	
 \end{align*}
  
We claim that  
 \begin{align}
 	d \hat S_r(\tau \otimes \zeta) & =(-1)^r \hat S_r(d\tau \otimes \zeta)+(-1)^{k_1+r} \hat S_r(\tau \otimes d\zeta) \notag \\
 	& \quad - [S_{r-1}(\tau \otimes \zeta)] -\frac12  \{S_{r-2}(\tau \otimes \zeta)\}, \label{eq_dS}\\
 	\partial \hat S_r(\tau \otimes \zeta) & =  \hat S_r(\partial \tau \otimes \zeta)+ (-1)^{l_1+r}  \hat S_r(\tau \otimes \partial \zeta) \notag \\
 	& \quad  +(-1)^{l_1+1}  [{S_r}(\tau \otimes \zeta)]+ \frac{(-1)^{l_1+1}}{2} \{S_{r-1}(\tau \otimes \zeta)\}. \label{eq_partialS}
 \end{align}
 
 To prove the claim, we rewrite $\hat S_r$ as  
 \begin{displaymath}
 	\hat S_r(\tau \otimes \zeta)=\frac{1}{r!} \sum_{|I|=r}\iota_{e_I^\sharp} \tau \wedge\iota_{e_I^*} \zeta,
 \end{displaymath}
 where $I$ runs over all $r$-tuples. 
 
 To prove the first equation, we compute 
 \begin{displaymath}
 	d \hat S_r(\tau \otimes \zeta)= \frac{1}{r!} \sum_{|I|=r} d\iota_{e_I^\sharp} \tau \wedge\iota_{e_I^*} \zeta +(-1)^{k_1+r}\iota_{e_I^\sharp} \tau \wedge\iota_{e_I^*}d\zeta.
 \end{displaymath}
 The second summand is $(-1)^{k_2+r} \hat S_r(\tau \otimes d\zeta)$. 
 
 Consider now the first summand. Using the formula $[\mathcal L_X,\iota_Y]=\iota_{[X,Y]}$ and induction on $r$, we find that 
 \begin{align*}
 	d\iota_{e_{i_1}^\sharp} \cdots\iota_{e_{i_r}^\sharp} & = (-1)^r\iota_{e_{i_1}^\sharp} \cdots\iota_{e_{i_r}^\sharp} d\\
 	& \quad +\sum_{j=1}^r (-1)^{j+1}\iota_{e_{i_1}^\sharp}  \cdots \widehat{\iota_{e_{i_j}^\sharp}} \cdots\iota_{e_{i_r}^\sharp} \mathcal{L}_{e_{i_j}^\sharp} \\
 	& \quad + \sum_{1 \leq j_1<j_2 \leq r} (-1)^{r+j_1+j_2}\iota_{e_{i_1}^\sharp}  \cdots \widehat{\iota_{e_{i_{j_1}}^\sharp}} \cdots \widehat{\iota_{e_{i_{j_2}}^\sharp}} \cdots\iota_{e_{i_r}^\sharp}\iota_{[e_{i_{j_1}}^\sharp,e_{i_{j_2}}^\sharp]}.
 \end{align*}
 
 Replacing this into our formula for $d \hat S_r(\tau \otimes\zeta)$, we get three more terms, the first of which is $(-1)^r \hat S_r(d\tau \otimes \zeta)$. 
 
 For the second term, we use that $\tau$ is ${\ad}$-invariant, i.e.  $\mathcal L_{e_j^\sharp} \tau=\ad_{e_j} \tau$. 
 
 We write $I'=(i_2,\ldots,i_r)$ and $I''=(i_3,\ldots,i_r)$. For $j=1$ we obtain the term 
 \begin{displaymath}
 	\frac{1}{r!} \sum_{i_1=1}^n \sum_{I'} \ad_{e_{i_1}}\iota_{e_{I'}^\sharp} \tau \wedge\iota_{e_{i_1}^*}\iota_{e_{I'}^*} \zeta = - \frac{1}{r!} \sum_{I'} [\iota_{e_{I'}^\sharp} \tau \otimes\iota_{e_{I'}^*} \zeta]=- \frac{1}{r} [S_{r-1}(\tau \otimes \zeta)].
 \end{displaymath}
 For other values of $j$, we obtain the same sum, after some change of indices. 
 
 The last term only appears if $r \geq 2$. We first take $j_1=1,j_2=2$. Then we obtain a term 
 \begin{displaymath}
 	- \frac{1}{r!} \sum_{i_1,i_2=1}^n \sum_{I''} \iota_{[e_{i_1}^\sharp,e_{i_2}^\sharp]}\iota_{e_{I''}^\sharp} \tau \wedge\iota_{e_{i_1}^*}\iota_{e_{i_2}^*}\iota_{e_{I''}^*} \zeta=-\frac{1}{r(r-1)} \{S_{r-2}(\tau \otimes \zeta)\}.
 \end{displaymath}
 For other values of $j_1<j_2$, we obtain the same term, after some change of indices. Since there are $\frac{r(r-1)}{2}$ such pairs, we obtain \eqref{eq_dS}.
 
 For the boundary operator we compute 
 \begin{align*}
 	r! \partial \hat S_r(\tau \otimes \zeta) & = \frac12 \sum_{a,b} \sum_{|I|=r} [e_a,e_b] \wedge\iota_{e_b^*}\iota_{e_a^*} (\iota_{e_I^\sharp} \tau \wedge\iota_{e_I^*}\zeta) \\
 	& = \sum_{|I|=r} \iota_{e_I^\sharp} \left( \frac12 \sum_{a,b}[e_a,e_b] \wedge\iota_{e_b^*}\iota_{e_a^*} \tau\right) \wedge\iota_{e_I^*}\zeta \\
 	& \quad + \frac12 \sum_{a,b,|I|=r} [e_a,e_b] \wedge (-1)^{l_1+1}\iota_{e_a^*}\iota_{e_I^\sharp} \tau \wedge\iota_{e_b^*}i_{e_I^*} \zeta \\
 	& \quad +  \frac12 \sum_{a,b,|I|=r} [e_a,e_b] (-1)^{l_1}\iota_{e_b^*}\iota_{e_I^\sharp} \tau \wedge\iota_{e_a^*}\iota_{e_I^*}\zeta\\
 	& \quad + (-1)^{l_1} \frac12 \sum_{a,b,|I|=r}\iota_{e_I^\sharp}\tau \wedge [e_a,e_b] \wedge\iota_{e_I^*}\iota_{e_b^*}\iota_{e_a^*}\zeta\\
 	& = r! \hat S_r(\partial \tau \otimes \zeta)+(-1)^{l_1+1} r! [S_r(\tau \otimes \zeta)] \\
 	& \quad + (-1)^{l_1+r} \frac12 \sum_{a,b,|I|=r}\iota_{e_I^\sharp} \tau \wedge\iota_{e_I^*}([e_a,e_b] \wedge \iota_{e_b^*}\iota_{e_a^*}\zeta)\\
 	& \quad + \frac{(-1)^{l_1+r+1} r}{2} \sum_{a,b,i_r,|I'|=r-1}\iota_{e_{I'}^\sharp}\iota_{e_{i_r}^\sharp} \tau \wedge\iota_{e_{i_r}^*} [e_a,e_b] \cdot\iota_{e_{I'}^*}\iota_{e_b^*}\iota_{e_a^*}\zeta\\  
 	& = r! \hat S_r(\partial \tilde\tau \otimes \zeta)+(-1)^{l_1+1} r! [S_r(\tau \otimes \zeta)]+ (-1)^{l_1+r} r! \hat S_r(\tau \otimes \partial \zeta)\\
 	& \quad + \frac{(-1)^{l_1+r+1} r}{2} \sum_{a,b,|I'|=r-1}\iota_{e_{I'}^\sharp}\iota_{[e_a,e_b]^\sharp}\tau \wedge\iota_{e_{I'}^*}\iota_{e_b^*}\iota_{e_a^*}\zeta\\  
 	& = r! \hat S_r(\partial \tau \otimes \zeta)+(-1)^{l_1+1} r! [S_r(\tau \otimes \zeta)] + (-1)^{l_1+r} r! \hat S_r(\tau \otimes \partial \zeta)\\
 	& \quad +\frac{(-1)^{l_1+1}r}{2} (r-1)! \{S_{r-1}(\tau \otimes \zeta)\}.  
 \end{align*}
 This proves \eqref{eq_partialS}. Combining \eqref{eq_dS} and \eqref{eq_partialS} we obtain \eqref{eq_relation_d_partial}.
 \endproof
 
 \begin{proof}[Proof of Proposition \ref{prop:Leibniz}]
 	From \eqref{eq_relation_d_partial} we infer that 
 	\begin{align*}
 		D(\tau * \zeta) & = d(\tau * \zeta)+(-1)^{k_1+l_1+k_2+l_2+1} \partial(\tau * \zeta)\\
 		&  =	\sum_r \epsilon^r_{k_1,l_1,k_2,l_2} (-1)^r \hat S_r(d\tau \otimes \zeta) \\
 		&\quad+\sum_{r} (-1)^{k_1+r} \epsilon^r_{k_1,l_1,k_2,l_2} \hat S_r(\tau \otimes d\zeta) \\
 		& \quad + \sum_{r} (-1)^{l_1} \epsilon^r_{k_1,l_1,k_2,l_2} \partial \hat S_{r-1}(\tau \otimes \zeta)\\
 		& \quad +\sum_{r} (-1)^{l_1+1} \epsilon^r_{k_1,l_1,k_2,l_2} \hat S_{r-1}(\partial \tau \otimes \zeta)\\
 		& \quad +\sum_{r}(-1)^r \epsilon^r_{k_1,l_1,k_2,l_2} \hat S_{r-1}(\tau \otimes \partial \zeta)\\
 		& \quad +\sum_r (-1)^{k_1+l_1+k_2+l_2+1}\epsilon^r_{k_1,l_1,k_2,l_2} \partial \hat S_r(\tau \otimes \zeta)\\
 		& = \sum_r ((-1)^{l_1}\epsilon^r_{k_1,l_1,k_2,l_2} +(-1)^{k_1+l_1+k_2+l_2+1} \epsilon^{r+1}_{k_1,l_1,k_2,l_2}) \partial \hat S_r(\tau \otimes \zeta)\\
 		& \quad +\sum_r \epsilon^r_{k_1+1,l_1,k_2,l_2} \hat S_r(d\tau \otimes \zeta) \\ 
 		& \quad + (-1)^{k_1+l_1+1} \sum_r \epsilon^r_{k_1,l_1-1,k_2,l_2} \hat S_r(\partial \tau \otimes \zeta) \\
 		& \quad + (-1)^{k_1+l_1} \sum_r \epsilon^r_{k_1,l_1,k_2+1,l_2} \hat S_r(\tau \otimes d\zeta) \\
 		& \quad + (-1)^{k_1+l_1+k_2+l_2+1} \sum_r \epsilon^r_{k_1,l_1,k_2,l_2-1} \hat S_r(\tau \otimes \partial \zeta) \\
 		& = \left(d\tau +(-1)^{\Deg \tau+1}\partial \tau\right) * \zeta+(-1)^{\Deg \tau} \tau * \left(d\zeta+(-1)^{\Deg \zeta+1}\partial \zeta \right)\\
 		& = D\tau * \zeta+(-1)^{\Deg \tau} * D\zeta.
 	\end{align*}
 \end{proof}

 Let us finish this section with a remark. If we put a factor $t^r$ in front of $\hat S_r$ in Definition \ref{def_convolution_product_omega_g}, we obtain a $1$-parameter family of products $*_t, t \in \R$ satisfying a Leibniz rule with respect to the operators $D_t=d+ t \partial$. The case $t=0$ corresponds, up to a sign, to the usual wedge product. We can thus think of $(\Omega_\g^{\mathrm{ad}},*_t,D_t)$ as a deformation of the differential graded Gerstenhaber algebra $(\Omega_\g^{\mathrm{ad}},\wedge,d)$. 

\section{Abstract convolution}\label{sec:abstract_convolution}

We construct a convolution product on the free differential graded Gerstenhaber algebra generated by one element. The approach from Section \ref{sec_convolution_double_forms_g} does not work directly, since we no longer have a finite basis. Instead we will rewrite the operations $\hat S_r$ in a form that does not use any basis. The idea is that we choose $r$ factors from the second form and plug them into some $\gamma$-terms of the first form. To make this work requires some technical care. It is not obvious, and appears hard to prove directly, that the convolution product obtained in this way satisfies associativity and the Leibniz rule. Instead, we show that the operators $\hat S_r$ and hence the convolution are compatible with the realization map and use the injectivity result from Section \ref{sec_realization} and the associativity and Leibniz rule on the space $(\Omega_\g,*)$ from Section \ref{sec_convolution_double_forms_g} to deduce the corresponding statements for $(\calG,*)$. 

We then show that the space $\DTC$ is closed under convolution and, using the explicit description of this space from Section \ref{sec_free_dgga_one_generator}, compute its algebra structure. Finally we define the space $\TC\subset \calG$ of abstract tube coefficients, and introduce a slight modification of the convolution product on $\calG$, which we call valuation convolution and denote by $\vee$, turning $\TC$ into an algebra. 

\subsection{Statement of the theorem}	
	  
Recall the DGV $V=\R \cdot \xi \oplus \R \cdot \gamma$, generating the DGLA $\calA$ and DGGA $\calG=\largewedge \calA$.

The differential graded Gerstenhaber algebra $\calG$ is a universal model for $\largewedge\g$-valued differential forms on $\g$, in the sense that for every Lie algebra $\g$ we have the realization map $\Re_\g:\calG \to \Omega_\g$. This map is a morphism of DGGAs. However, on $\Omega_\g$  there is also the convolution product, defined in Section \ref{sec_convolution_double_forms_g}. Our next theorem shows that the convolution product can be defined on the universal model as well. 
 
\begin{Theorem} \label{thm_existence_convolution_DGGA}
	There exists a unique convolution product $*$ on $\calG$ such that for every Lie algebra $\g$, the realization map $\Re:(\calG,\ast)\to (\Omega_\g,\ast)$ is a morphism of algebras. It is associative, satisfies $\Deg(\tau\ast \zeta)=\Deg(\tau)+\Deg(\zeta)$ and the Leibniz rule:
	\begin{displaymath}
		D(\tau * \zeta)=D\tau * \zeta+(-1)^{\Deg \tau} \tau * D\zeta.
	\end{displaymath}
\end{Theorem}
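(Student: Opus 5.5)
Uniqueness is immediate from Theorem \ref{thm_injectivity_realization}. If $*$ and $*'$ both make every $\Re_\g$ an algebra morphism, then $\Re_\g(\tau*\zeta-\tau*'\zeta)=0$ for all finite-dimensional $\g$, so $\tau*\zeta=\tau*'\zeta$. The same injectivity argument will give associativity and the Leibniz rule once existence is known. Both identities hold on $\Omega_\g^{\ad}$ by Propositions \ref{prop:associativity} and \ref{prop:Leibniz}. The image of $\Re_\g$ lies in $\Omega_\g^{\mathrm{eqv}}\subset\Omega_\g^{\ad}$. Moreover $\Re_\g$ commutes with $d$, with $\partial$ (since $\partial$ is defined via brackets and wedges, which $\Re_\g$ preserves, using the recursive definition \eqref{eq:koszul_exterior}), and with the bigrading, hence with $D$. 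So the difference of the two sides of each identity is killed by every $\Re_\g$, and therefore vanishes. The degree statement follows from the bidegree formulas for $\hat S_r$ in Section \ref{sec_convolution_double_forms_g}: $\hat S_r$ lowers both $\deg$ and $\deg_w$ by $r$, so $\Deg$ is additive.

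The real work is existence. My plan is to define, for each $r\ge0$, a basis-free operator $S_r:\calG\otimes\calG\to\calG$ such that $\Re_\g S_r(\tau\otimes\zeta)=\hat S_r(\Re_\g\tau\otimes\Re_\g\zeta)$ for every $\g$. Then I set $\tau*\zeta=\sum_r\epsilon^r_{k_1,l_1,k_2,l_2}S_r(\tau\otimes\zeta)$.

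To find $S_r$, I will analyze how $\iota_{e^\sharp}$ and $\iota_{e^*}$ act on realizations of generators. For $a\in\calA$, the realization $\Re_\g a$ is a $\g$-valued form: a polynomial in $X=\id$ and the $d$ tangent slots $Y_i$, built from brackets. Contracting with $e^\sharp$ means substituting $[e,X]$ into one $\gamma$-slot. Contracting the $\largewedge\g$ part with $e^*$ and summing over the dual basis amounts to taking a factor $b\in\calA$ of $\zeta$ and inserting it, as $[b,\xi]$, in place of a $\gamma$ of $\tau$. I therefore expect the following concrete definition. For $\tau=a_1\wedge\dots\wedge a_k$, $\zeta=b_1\wedge\dots\wedge b_l$ and $r=1$, define a substitution operator $\sigma_\gamma^{[b,\xi]}$ on $\calA$. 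In the tensor realization $\calA\subset T(V)$, it replaces each occurrence of $\gamma$ in turn by $[b,\xi]$, with Koszul signs, and the result is again in $\calA$ because it is a derivation of the free Lie algebra determined on generators. Then $S_1(\tau\otimes\zeta)$ is the signed sum over $i,j$ of the wedge products in which $a_i$ is replaced by $\sigma_\gamma^{[b_j,\xi]}a_i$ and $b_j$ is removed. For general $r$, I choose $r$ distinct factors of $\zeta$ and perform $r$ successive such substitutions (possibly into the same $a_i$), dividing by $r!$ as in the tuple form of $\hat S_r$ used in the proof of \eqref{eq_relation_d_partial}.

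The compatibility $\Re_\g S_r=\hat S_r(\Re_\g\otimes\Re_\g)$ should be checked first in wedge degree one, where $\iota_{e^\sharp}$ acts as a derivation on the form part and $\sum_e e^*\otimes\iota_{e^\sharp}$ reproduces the substitution. It then extends to higher wedge degree by the derivation properties of $\iota$ with respect to the double wedge product.

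The main obstacle is getting the signs and combinatorial factors right so that this identity holds exactly. The bigraded Koszul signs from the Gerstenhaber structure interact with the signs in $\iota_{e_K^*}$ acting on $\largewedge\g$, and the contraction must be well defined on the quotient $\largewedge\calA$, not just on $T(\calA)$. I would handle well-definedness by defining $S_r$ on $T(\calA)\otimes T(\calA)$, checking that it maps the ideal $I$ into $I$, or equivalently that it commutes with the projection $p$. I would fix the signs by testing the formula against explicit realizations, for instance $\Re_\g$ on small $\g$, before proving it in general.
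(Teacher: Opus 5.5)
Your overall architecture matches the paper's. Uniqueness, associativity and the Leibniz rule come from Theorem~\ref{thm_injectivity_realization} together with Propositions~\ref{prop:associativity} and~\ref{prop:Leibniz}. Existence comes from a basis-free $S_r$ that is compared with $\hat S_r$ under $\Re_\g$: split off $r$ factors $b_j$ of $\zeta$, turn them into $[b_j,\xi]$, and insert them into $\gamma$-slots of $\tau$. These are exactly the paper's $f_r$, $\largewedge^r\ad_{-\xi}$, $\beta_r$ and $\iota$. For $r=1$ your derivation $\sigma^{[b,\xi]}_\gamma$ is correct.

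\textbf{The gap.} For $r\ge 2$ your recipe is wrong as stated. If you perform $r$ \emph{successive} derivation-substitutions, the later ones also act on the $\gamma$'s inside the already inserted $[b_1,\xi]$. This creates nested terms with no counterpart in $\hat S_r$. There, the contractions $\iota_{e_K^\sharp}$ hit only the form slots of $\Re_\g\tau$, never those of $\Re_\g\zeta$.

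Concretely, take $\tau=\gamma$, $\zeta=\gamma\wedge\gamma$, $r=2$. Since $\Re_\g\gamma$ is a $1$-form, $\hat S_2(\Re_\g\gamma\otimes\Re_\g(\gamma\wedge\gamma))=0$. Your recipe instead returns a multiple of $\sigma^{[\gamma,\xi]}_\gamma\sigma^{[\gamma,\xi]}_\gamma\gamma=[[\gamma,\xi],\xi]$. That multiple is nonzero: the two orderings add rather than cancel, because the factors of $\gamma\wedge\gamma$ graded-commute and $\sigma^{[\gamma,\xi]}_\gamma$ has degree $0$. Its realization $Y\mapsto[[Y,X],X]$ is nonzero, e.g.\ for $\mathfrak{so}(3)$.

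\textbf{The fix.} The substitution must be \emph{simultaneous}, into $r$ distinct $\gamma$-occurrences of the original $\tau$. This is exactly what the paper's map $r$ does: it replaces the $i$-th $\gamma$ by $a_i$ positionally in $T(T(V))$, then applies the antisymmetrization $p$ and the Dynkin projector $\bar\rho$.

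If you want to keep your derivations and stay inside $\calA$, there is another way:
\begin{enumerate}
\item write the $b_j$ in a fresh degree-one letter $\gamma'$, giving $b_j'$;
\item apply the derivations $\gamma\mapsto[b_j',\xi]$ successively, which now never touch inserted letters;
\item substitute $\gamma'=\gamma$ at the end.
\end{enumerate}
With this correction the rest of your plan goes through as in the paper.
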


The uniqueness of the convolution product follows immediately from the injectivity of the realization maps, see Theorem \ref{thm_injectivity_realization}. The construction is more involved and is the topic of this section.

\subsection{Contraction}

Let $T(V)$ be the tensor algebra of $V$, and $T(V)_k$ be the component of degree $k$ (i.e. containing precisely $k$ copies of $V_1=\Span\{\gamma\}$). Recalling that $\calA\subset T(V)$, we have $\calA_k=T(V)_k\cap \calA$. The Dynkin projector 
\begin{align*}
	\rho: T(V) & \to \calA\\
	a_1 \otimes \cdots \otimes a_n & \mapsto \frac{1}{n} [a_1,[a_2,\ldots,[a_{n-1},a_n]]]
\end{align*}
induces a map 
\begin{displaymath}
	\largewedge^k\rho: \largewedge^k T(V)\to\largewedge^k \calA,
\end{displaymath}
and thus also a map
\begin{displaymath}
	\overline\rho:=\bigoplus_{k=0}^\infty \largewedge^k\rho: \largewedge T(V) \to \largewedge\calA=\calG.
\end{displaymath}

Define the linear map 

\begin{displaymath}
	r:\calA^{\otimes k} \otimes T(T(V))_k \to T(T(V))
\end{displaymath}
by 
\begin{displaymath}
	r:\left(a_1 \otimes \cdots \otimes a_k\right) \otimes  P(\gamma[k], \xi[n-k]) \mapsto P(a_1,\dots, a_k, \xi[n-k]).
\end{displaymath}
Here $P(\gamma[k], \xi[n-k])$ is an arbitrary element in $T(T(V))$ which has degree $k$, that is $\gamma$ appears $k$ times in each monomial summand of $P$, and we replace the $i$-th $\gamma$ in each monomial with $a_i$, for all $i=1,\dots, k$. 

Let $p: T(T(V))\to\largewedge T(V)\subset T(T(V))$ be the antisymmetrization, see \eqref{eq_graded_antisymmetrization}. We then define the map $\iota$ as the following composition:

\begin{displaymath}
	\iota: \largewedge^k \calA \otimes \calG_k \hookrightarrow \calA^{\otimes k} \otimes T(T(V))_k \stackrel{r}{\to}  T(T(V)) \stackrel{p}{\to} \largewedge T(V) \stackrel{\bar \rho}{\to} \calG.
\end{displaymath}

We extend this to a map  
\begin{displaymath}
	\iota:\largewedge^k \calA \otimes \calG_l \to \calG
\end{displaymath}
for all $l$ by putting for $b \in \calG_l$
\begin{displaymath}
	\iota_{a_1 \wedge \ldots \wedge a_k}b:=
	\begin{cases} 
		0 & l<k \\ \iota_{a_1 \wedge \ldots a_k \wedge \gamma \wedge \ldots \wedge \gamma}b & l \geq k.
	\end{cases}
\end{displaymath}
Extending by linearity, we obtain the \emph{contraction map} $\iota:\largewedge^k\calA\otimes \calG\to \calG$, for all $k$.

\subsection{The maps $S_r$}
For $r\geq 0$, define the disintegration function $f_r:\calG\to \largewedge^r \mathcal A \otimes \calG$ as the unique linear map $f_r:\largewedge^\bullet\mathcal A\to \largewedge^r\mathcal A\otimes \largewedge^{\bullet-r}\mathcal A$ satisfying
\begin{align*}&f_r(x_1\wedge\dots\wedge x_n)=\frac{1}{r!(n-r)!}\cdot\\
	&\cdot\sum_{\sigma\in  \mathcal S_n} (-1)^{\sgn\sigma+\chi(\sigma)} x_{\sigma 1}\wedge\dots \wedge x_{\sigma r}\otimes x_{\sigma (r+1)}\wedge\dots\wedge x_{\sigma n},
\end{align*}
where \[\chi(\sigma)=\sum_{\scriptsize\begin{array}{cc}i<j\\\sigma i>\sigma j\end{array}}\deg x_i\deg x_j.\] Note that $f_r$ is well-defined by eq. \eqref{eq_supersymmetry}. We will need the factor switching map 
\[\beta_r: \mathcal G \otimes \largewedge^r\mathcal A \to \largewedge^r\mathcal A\otimes\mathcal G\]
which on pairs of $\deg$-homogeneous elements is given by
\[\beta_r(\tau\otimes\zeta)=(-1)^{(\deg\tau-r)\deg \zeta} \zeta\otimes\tau.\]
The reason behind the sign in $\beta_r$ will become evident in the course of the proof of Proposition \ref{prop:realization_morphism}. Essentially, both factors will correspond to differential forms, with $\tau$ corresponding to a form of degree $\deg\tau-r$ following the substitution of $r$ vectors into the realization of $\tau$, while $\zeta$ corresponds to a form of degree $\deg\zeta $.

For $r \geq 0$ we define the operators $S_r:\calG \otimes  \calG\to \calG \otimes  \calG$ by 
\begin{align*}\label{eq:shift_abstract}
	&\xymatrix@=2cm{\calG\otimes \calG\ar[r]^{\mathrm{Id}\otimes f_r} & \calG\otimes \largewedge^r\mathcal A\otimes \calG \ar[r]^{\mathrm{Id}\otimes\largewedge^r\ad_{-\xi}\otimes\mathrm{Id}}&  \calG\otimes \largewedge^r\mathcal A\otimes \calG}
	\\&	\xymatrix@=2cm{\ar[r]^{\beta_r\otimes \mathrm{Id}}& \largewedge^r\mathcal A\otimes \calG\otimes \calG \ar[r]^{\iota\otimes\mathrm{Id}}& \calG\otimes\calG}.
\end{align*}

We observe that $S_r$ decreases by $r$ the wedge degree of the right factor while leaving the wedge degree of the left factor invariant; and decreases by $r$ the form degree of the tensor product. 

We write $\widehat S_r:\calG \otimes \calG \to \calG$ for the composition of $S_r$ with the wedge product.

\subsection{Convolution}

\begin{Definition}
The convolution of two homogeneous elements $\tau,\zeta \in \mathcal G$ with $\deg \tau=k_1,\deg_w\tau=l_1,\deg \zeta=k_2,\deg_w\zeta=l_2$ is defined by
\begin{displaymath}
 \tau \ast \zeta:=\sum_{r=0}^\infty \epsilon^r_{k_1,l_1,k_2,l_2} \hat S_r(\tau\otimes\zeta),
\end{displaymath}
where 
\begin{displaymath}
	\epsilon^r_{k_1,l_1,k_2,l_2}=(-1)^{(k_2+l_2)(l_1+r)+r k_1}.
\end{displaymath}
\end{Definition}

We now verify that our two definitions of convolution are compatible through the realization map.
\begin{Proposition}\label{prop:realization_morphism}
	Let $\tau,\zeta \in \mathcal G$ and $\mathfrak g$ a finite dimensional Lie algebra. Then 
	\begin{displaymath}
		\Re_\g(\tau * \zeta)=\Re_\g \tau * \Re_\g \zeta.
	\end{displaymath}
\end{Proposition}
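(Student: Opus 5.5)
Since both convolutions are built from the same signs $\epsilon^r_{k_1,l_1,k_2,l_2}$, and $\Re_\g$ preserves $\deg$ and $\deg_w$ (it is a $\DGGA$-morphism), it suffices to prove, for every $r\geq 0$ and bihomogeneous $\tau,\zeta\in\calG$,
\begin{displaymath}
\Re_\g\big(\hat S_r(\tau\otimes\zeta)\big)=\hat S_r(\Re_\g\tau\otimes\Re_\g\zeta),
\end{displaymath}
where on the right $\hat S_r$ is the operator of Definition \ref{def_convolution_product_omega_g}. Since $\Re_\g$ is multiplicative for $\wedge$, this reduces to comparing the two factors before wedging. I would evaluate both sides pointwise at $X\in\g$ on vectors $Y_1,\dots,Y_{k_1+k_2-r}$, using the explicit description of $\Re_\g$ on monomials: $\xi\mapsto X$, each $\gamma$ is filled by some $Y_j$ with antisymmetrization, and brackets and wedges are realized in $\largewedge\g$.

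The first step is to understand the right-hand side in these terms. Write $\hat S_r(\Re\tau\otimes\Re\zeta)=\frac1{r!}\sum_I \iota_{e_I^\sharp}\Re\tau\wedge\iota_{e_I^*}\Re\zeta$. Contracting $\Re\zeta$ with $e_I^*$ picks out $r$ wedge factors of $\Re\zeta$, leaving the remaining ones; this matches the disintegration $f_r$, with the signs $(-1)^{\sgn\sigma+\chi(\sigma)}$ exactly accounting for moving chosen factors to the front. Then $\sum_I (\cdots)\otimes e_I$ reassembles these $r$ chosen factors $x_1\wedge\dots\wedge x_r$ (realized as $\largewedge^r\g$-valued forms at $X$) into vectors inserted into $\Re\tau$ via $e^\sharp$, i.e. $\iota_{x^\sharp}\Re\tau$ with $x^\sharp|_X=[x,X]=\ad_{-X}x$. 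This is precisely the role of $\largewedge^r\ad_{-\xi}$ in $S_r$. Finally, inserting a vector field into the realization of a monomial of $\calG$ means replacing, in all possible antisymmetrized ways, one $\gamma$-slot by that vector; this is the substitution map $r$ followed by $p$ and the Dynkin projector $\bar\rho$, i.e. the contraction $\iota$. So the key lemma is: for $a_1\wedge\dots\wedge a_k\in\largewedge^k\calA$ and $b\in\calG$, $\Re(\iota_{a_1\wedge\cdots\wedge a_k}b)$ equals the interior product of $\Re b$ with the realized $a_i$'s (as forms, after wedging their form parts). I would prove it first for $b\in\calA$ by the Dynkin-projector compatibility of evaluation established in Lemma \ref{lemma_evaluation_k1} (substituting into $\gamma$ slots commutes with $\rho$, since $\rho$ is defined via iterated brackets, which realization respects), and then for wedge products by the antisymmetrization diagram of Proposition \ref{prop_injectivity_realization_dgga}; the padding by $\gamma$'s when $l>k$ in the definition of $\iota$ corresponds to leaving the remaining form slots free.

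The main obstacle is bookkeeping of signs: the Koszul signs in $f_r$ and $p$, the switch $\beta_r$ with sign $(-1)^{(\deg\tau-r)\deg\zeta}$ reflecting that moving the realized $a_i$'s (forms of total degree $\deg$ of the chosen factors) past the contracted $\Re\tau$ (a form of degree $\deg\tau-r$), and the ordering conventions $\iota_{e_K^\sharp}$ vs. the order of substitution into $\gamma$'s. I would handle this by first checking on the simplest cases ($\tau=\gamma$ or $\xi\wedge\gamma$, $\zeta$ a single wedge of two elements of $\calA$), fixing conventions, and then arguing the general case by the same block-interchange parity argument as in Proposition \ref{prop_injectivity_realization_dgga}. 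Independence of the basis and the factor $1/r!$ versus sets $K$ are routine.
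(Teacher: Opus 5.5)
Your outline is the paper's own argument. You reduce to $\Re_\g\hat S_r(\tau\otimes\zeta)=\hat S_r(\Re_\g\tau\otimes\Re_\g\zeta)$ and let $\iota_{e_K^*}$ pick $r$ wedge factors of $\Re_\g\zeta$, which is $f_r$. You absorb $\sum_k e_k^*(\cdot)\,e_k^\sharp$ into $[\,\cdot\,,X]$, which is $\largewedge^r\ad_{-\xi}$, and let $\beta_r$ record the reordering sign. You then identify insertion into $\Re_\g\tau$ with the contraction $\iota$, and finish with the permutation-sign argument. The paper settles that last identification by calling it ``transparent''. You propose to actually prove it, which is the right instinct, but as formulated that step fails, and not because of signs.

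The issue is normalization. On $\Omega_\g$, $\iota_v$ acts on the form part of $\Re_\g b$ as an antiderivation, and that form part has one $1$-form per occurrence of $\gamma$. So inserting $v$ substitutes it into \emph{each} $\gamma$-slot with weight one. Inserting $k$ vectors into $b\in\calG_l$ therefore gives $l!/(l-k)!$ times the normalized symmetric substitution that the padded recipe for $\iota$ produces.

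Concretely, one computes $\hat S_1(\Re_\g(\gamma\wedge\gamma)\otimes\Re_\g\gamma)=2\,\Re_\g([\xi\gamma]\wedge\gamma)$. This agrees with the derivation rule used in the proof of Proposition \ref{prop:hardlefschetz_convolution}. By contrast, padding $a=\ad_{-\xi}\gamma$ by $\gamma$ and symmetrizing, with $\largewedge\calA\cong\mathrm{Image}(p)$, gives $\iota_a(\gamma\wedge\gamma)=a\wedge\gamma$. Hence $\hat S_1(\gamma\wedge\gamma\otimes\gamma)=-a\wedge\gamma=[\xi\gamma]\wedge\gamma$, which is half the required value. The same factor $2$ appears for $\hat S_2(\gamma\wedge\gamma\otimes\gamma\wedge\gamma)$, where there is no padding at all: the recipe gives $a\wedge a$, while the form side gives $2\,\Re_\g a\wedge\Re_\g a$.

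So two of your statements are off by $l!/(l-k)!$: that padding ``corresponds to leaving the remaining form slots free'', and your key lemma equating $\Re_\g(\iota_{a_1\wedge\cdots\wedge a_k}b)$ with the interior product. You must build this factor into the contraction, equivalently make $\iota_a$ a derivation over the $\gamma$-slots, before the rest goes through. Your planned test cases ($\tau=\gamma$, $\xi\wedge\gamma$) all have $\deg\tau=1$, where the factor is $1$, so they would not expose this. Include $\tau=\gamma\wedge\gamma$.

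A smaller point concerns your justification via Lemma \ref{lemma_evaluation_k1}. Substituting elements of symbol degree $>1$ into $\gamma$-slots does not commute with the Dynkin projector. For example, $\rho(\xi\otimes\gamma)=\tfrac12[\xi,\gamma]$, whereas $\rho(\xi\otimes[\xi,\gamma])=\tfrac23[\xi,[\xi,\gamma]]$. What you actually need is weaker: the graded-polarized substitution into a Lie element is again a Lie element, being the image of a Lie polynomial under an algebra morphism. Hence $\bar\rho$ acts trivially, and compatibility with $\Re_\g$ then follows from the universal property.
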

For the proof we will need the following general construction. Assume $\alpha$ is a $k$-form on $\g$, and $\beta_1,\dots, \beta_k$ are $\g$-valued forms on $\g$, of degrees $b_1,\dots, b_k$, respectively. Setting  $b:=b_1+\dots+b_k$, we define the composition form 
\[ \omega=\alpha(\beta_1,\dots, \beta_k)\] to be the $b$-form on $\g$ given by

\begin{align*}&\omega(v_1, \dots, v_{b}):=
	\\&  \frac{1}{b!}\sum_{\sigma\in \Sigma_{b}} (-1)^{\sgn \sigma}\alpha(\beta_1(v_{\sigma1}, \dots, v_{\sigma b_1}),\dots, \beta_k(v_{\sigma (b-b_k+1)}, \dots, v_{\sigma b}) ). \end{align*}

\proof 
It suffices to prove that for $\tau,\zeta \in \mathcal G$ we have 
\begin{equation}\label{eq:claimed_Sr}
	\Re_\g\hat S_r(\tau \otimes \zeta)=\hat S_r(\Re_\g \tau \otimes \Re_\g \zeta).
\end{equation}
We may further assume $\zeta=\zeta_1\wedge\dots\wedge \zeta_m$, $\zeta_j\in\calA$. Put $\widetilde\tau=\Re_\g \tau$, $\widetilde\zeta=\Re_\g\zeta$. Consider the summand of $ \widehat S_r(\widetilde\tau\otimes\widetilde\zeta)$ given by 
\begin{displaymath}
\iota_{e_1^\sharp \wedge \dots \wedge e_r^\sharp}\widetilde \tau \wedge \iota_{e_1^*\wedge\dots\wedge e_r^*}\widetilde \zeta,
\end{displaymath}
where $e_i^\sharp=[e_i,\xi]$. 

Applying $ \iota_{e_1^*\wedge\dots\wedge e_r^*}$ to $\widetilde\zeta$, one has summands corresponding to all choices of $r$ wedge factors of $\widetilde \zeta$. Consider the summands corresponding to $\widetilde\zeta_1,\dots,\widetilde\zeta_r$. Those are

\begin{align*} S=&\frac{1}{r!} \sum_{\sigma\in \Sigma_r}(-1)^{\sgn \sigma}\iota_{e_1^\sharp \wedge \dots \wedge e_r^\sharp}\widetilde \tau\wedge e_1^*(\widetilde\zeta_{\sigma 1})\wedge\dots\wedge e_r^*(\widetilde\zeta_{\sigma r})\wedge \widetilde\zeta_{r+1}\wedge\dots\wedge \widetilde\zeta_n 
\\  &=\epsilon\frac{1}{r!} \sum_{\sigma\in \Sigma_r}(-1)^{\sgn \sigma}  e_1^*(\widetilde\zeta_{\sigma 1})\wedge\dots\wedge e_r^*(\widetilde\zeta_{\sigma r})\wedge \iota_{e_1^\sharp \wedge\dots\wedge e_r^\sharp}\widetilde \tau\wedge \widetilde\zeta_{r+1}\wedge\dots\wedge \widetilde\zeta_m,
\end{align*}

where $\epsilon=(-1)^{(\deg\widetilde \tau -r) \sum_{j=1}^r \deg\widetilde\zeta_j}$. By definition,
\begin{displaymath}
	\iota_{e_1^\sharp \wedge\dots\wedge e_r^\sharp}\widetilde \tau= \widetilde \tau( [e_1,\xi], \dots, [e_r, \xi], \bullet[\deg \widetilde\tau -r] ).
\end{displaymath}

Now $e_j^*(\widetilde\zeta_{\sigma j})$  is a $\C$-valued form, and so $e_j^*(\widetilde\zeta_{\sigma j})[e_j,\xi]= [e_j^*(\widetilde\zeta_{\sigma j})e_j, \xi]$ is a $\g$-valued form on $\g$. We may thus present $S$ as a sum of composition forms wedged by the remaining $\widetilde\zeta_j$:

\begin{align*} S=&\epsilon\frac{1}{r!} \sum_{\sigma\in \Sigma_r}(-1)^{\sgn \sigma}\widetilde \tau( [e_1^*(\widetilde\zeta_{\sigma 1})e_1,\xi], \dots, [e_r^*(\widetilde\zeta_{\sigma r})e_r, \xi], \bullet[\deg \widetilde\tau -r])\wedge \widetilde\zeta_{r+1}\wedge\dots\wedge \widetilde\zeta_m.
\end{align*}

	Now summing over all $r$-tuples of basis elements, by the anti-symmetry of $\tau$ we may  also sum over $\sigma$ and thus obtain the form  
\begin{displaymath}
S=	\epsilon\widetilde \tau([\widetilde \zeta_1,\xi],\dots, [\widetilde \zeta_r,\xi], \bullet [\deg \widetilde\tau-r] )\wedge \widetilde\zeta_{r+1}\wedge\dots\wedge \widetilde\zeta_m,
\end{displaymath}
which transparently corresponds under the realization map to the abstract convolution, namely to the summands with the disintegration function splitting off the first $k$ wedge factors of $\zeta$. 

As both sides of eq. \eqref{eq:claimed_Sr} transform by the same sign under permutations of the $\zeta_j$ factors,  this completes the proof.
\endproof

\proof[Proof of Theorem \ref{thm_existence_convolution_DGGA}] 
Associativity of the convolution follows at once from Proposition \ref{prop:associativity} and Theorem \ref{thm_injectivity_realization}. That $\Deg$ is additive is immediate from definitions. Since the realization map is a $\DGGA$-morphism, it commutes with $D$ and then the Leibniz rule follows from Proposition \ref{prop:Leibniz} and Theorem \ref{thm_injectivity_realization}.
\endproof

\begin{Corollary}\label{cor:valuations_convolution_algebra}
	The subspaces $\mathcal Z$, $\mathcal H$ and $\calD\calT\calC $ are closed under convolution.
\end{Corollary}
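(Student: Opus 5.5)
We need to show closure of $\calZ=\ker D$, of $\calH$, and of $\calD\calT\calC=\calZ\cap\calH\cap\calG(0)$ under $*$.

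For $\calZ$ we use the Leibniz rule of Theorem \ref{thm_existence_convolution_DGGA}. If $D\tau=0$ and $D\zeta=0$, then
\[
D(\tau*\zeta)=D\tau*\zeta\pm\tau*D\zeta=0,
\]
so $\tau*\zeta\in\calZ$. For $\calG(0)$, additivity of $\Deg$ under convolution (same theorem) gives $\Deg(\tau*\zeta)=0$ whenever $\Deg\tau=\Deg\zeta=0$. Hence closure of $\calD\calT\calC$ follows once $\calH$ is shown to be closed. This is the only step needing real work.

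For $\calH$, we use the explicit description of $\tau*\zeta$ as a signed sum of the terms $\hat S_r(\tau\otimes\zeta)$. It suffices to take $\tau,\zeta$ to be wedge products of elements of $\calA_{\ge1}$ and to show that every $\hat S_r(\tau\otimes\zeta)$ is again such a wedge product. Following the construction, $\hat S_r$ proceeds in four steps:
\begin{itemize}
\item $f_r$ splits off $r$ wedge factors $a_1,\dots,a_r$ of $\zeta$, each of degree $\ge1$, leaving a horizontal remainder.
\item Each split-off factor is replaced by $[-\xi,a_i]$, whose degree equals $\deg a_i\ge1$.
\item The contraction $\iota$ substitutes these elements into $r$ occurrences of $\gamma$ inside $\tau$, then applies the Dynkin projector and antisymmetrization.
\item The result is wedged with the remainder of $\zeta$.
\end{itemize}

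The key observation concerns the third step. Substituting into a tensor monomial of a wedge factor $t\in\calA_j$ of $\tau$ replaces some $\gamma$'s by elements of degree $\ge1$ and leaves all remaining letters untouched. The degree of each resulting wedge factor is therefore at least its number of substituted or surviving $\gamma$'s. A factor can fall to degree $0$ only if all of its $\gamma$'s were replaced by degree-$0$ elements, which never happens here.

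One point needs care: a factor with $j\ge1$ keeps degree $\ge j\ge1$ after substitution, by degree additivity. The Dynkin projector $\rho$ preserves degree, and the projection $p$ and $\bar\rho$ map wedge products of degree-$\ge1$ tensors to wedge products of elements of $\calA_{\ge1}$. This is because $\rho$ is applied factorwise (via $\largewedge^k\rho$) and $\calA_0=\R\xi$, so no factor proportional to $\xi$ can be produced. Hence $\hat S_r(\tau\otimes\zeta)\in\calH$, and so $\tau*\zeta\in\calH$.

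An alternative, more conceptual route is to check horizontality via realizations. One would characterize $\calH$ through $\Re_\g$ and Theorem \ref{thm_injectivity_realization}, as forms with vanishing contraction against a suitable dual of $\id$, and then verify that $\hat S_r$ on $\Omega_\g$ preserves this property. The direct degree count above is simpler, so we adopt it.
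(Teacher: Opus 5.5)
Your proposal is correct and follows essentially the paper's argument. Closure of $\mathcal Z$ comes from the Leibniz rule, closure of $\mathcal H$ from inspecting the contraction map, and closure of $\DTC$ from additivity of $\Deg$. Your degree count, in which substituted $\gamma$'s are replaced by elements of $\calA_{\ge 1}$ and $\rho$ preserves degree, just spells out the inclusion $\iota\big((\largewedge^k\calA\cap\mathcal H)\otimes\mathcal H\big)\subset\mathcal H$ that the paper states without detail.
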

\begin{proof}
	For $\mathcal Z$, this follows at once from the Leibniz law for convolution. Inspecting the contraction map, it is clear that $\iota( (\largewedge ^k\mathcal A\cap \mathcal H)\otimes  \mathcal H)\subset \mathcal H$, and consequently $\mathcal H$ is a convolution subalgebra. For $\calD\calT\calC$ this now follows by the additivity of $\Deg$. 
\end{proof}

\begin{Proposition}\label{prop:hardlefschetz_convolution}
	It holds that 
	\begin{displaymath}
	  \varrho_m * \gamma =(m+1)\varrho_{m+1}.
	\end{displaymath}
\end{Proposition}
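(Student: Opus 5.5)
Since $\varrho_m$ and $\gamma$ both lie in $\calD\calT\calC$, Corollary \ref{cor:valuations_convolution_algebra} gives $\varrho_m*\gamma\in\calD\calT\calC$. By Theorem \ref{thm_abstract_valuations} it is a linear combination of the $\varrho_j$. The wedge grading, the degree and the symbol degree then pin it down to a multiple of $\varrho_{m+1}$. The remaining work is to identify that multiple.

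\emph{Homogeneity.} The operator $\hat S_r$ lowers both $\deg$ and $\deg_w$ by $r$. Every summand of $\varrho_m$ lies in $\largewedge^i\calA_1$ for some $i\le m$, and $\gamma\in\calA_1$ has wedge degree $1$. Hence only $r=0$ and $r=1$ contribute. The maps $f_r$, $\ad_{-\xi}$ and $\iota$ each preserve or add symbol degree in a controlled way: $\ad_{-\xi}$ adds $1$, while the substitution in $\iota$ replaces a $\gamma$ (symbol degree $1$) by the substituted element. Carrying this bookkeeping through, I expect $\varrho_m*\gamma$ to be homogeneous of symbol degree $m+1$. Among the $\varrho_j$ only $\varrho_{m+1}$ has symbol degree $m+1$, so $\varrho_m*\gamma=c\,\varrho_{m+1}$ for some constant $c$.

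\emph{Computing $c$.} By \eqref{eq_main_term_rhot}, the coefficient of the pure wedge power $\gamma^{m+1}$ in $\varrho_{m+1}$ is $\frac{1}{(m+1)!}$. So it suffices to compute the coefficient of $\gamma^{m+1}$ in $\varrho_m*\gamma$.
\begin{itemize}
\item The $r=0$ term is $\epsilon^0\,\varrho_m\wedge\gamma$. The sign is $\epsilon^0_{k_1,l_1,1,1}=(-1)^{2l_1}=1$. Its $\gamma^{m+1}$-coefficient is $\frac1{m!}$, coming from $\frac{\gamma^m}{m!}\wedge\gamma$.
\item The $r=1$ term is the one that produces Lie brackets. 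Here $\gamma$ is fed through $\ad_{-\xi}$, giving $[\gamma,\xi]$-type terms, and these are substituted into the $\gamma$ slots of $\varrho_m$. Such terms always contain a bracket, so they cannot contribute to the pure power $\gamma^{m+1}$.
\end{itemize}
Comparing coefficients gives $c\cdot\frac{1}{(m+1)!}=\frac{1}{m!}$, hence $c=m+1$.

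\emph{Main obstacle.} The step I expect to be delicate is making sure that no $\gamma^{m+1}$ term can arise from the $r=1$ summand, and that the linear independence used in Theorem \ref{thm_abstract_valuations} legitimately allows us to read off $c$ from this single coefficient. The independence part is fine: the induction in that proof shows that the top wedge component determines the element. The vanishing of the bracket contributions needs a short check of the contraction $\iota$. I would argue it via the symbol-degree and bracket-count filtration, since substitution replaces a $\gamma$ by an element of Lie degree at least $2$.

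If that check becomes messy, there is an alternative. Theorem \ref{thm_injectivity_realization} reduces the problem to realizations, and one could test the identity on an abelian Lie algebra $\g$. There all brackets vanish, $\hat S_1$ is zero because $e^\sharp=0$, and $\Re_\g\varrho_{m+1}=\frac{1}{(m+1)!}\Re_\g\gamma^{m+1}\neq 0$ when $\dim\g\ge m+1$. This again yields $c=m+1$.
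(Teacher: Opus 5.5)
Your proof is correct, but it takes a different route from the paper's.

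\textbf{The paper's route.} The paper computes directly with generating functions. It shows $\hat S_1(\zeta_\epsilon\otimes\gamma)=-[\xi,\zeta_\epsilon]$ and observes that $\hat S_1(\cdot\otimes\gamma)$ is a signed derivation for $\wedge$. It then obtains $\widehat\exp(-\zeta_\epsilon)*\gamma=\widehat\exp(-\zeta_\epsilon)\wedge(\gamma-[\xi,\zeta_\epsilon])=\frac{d}{d\epsilon}\widehat\exp(-\zeta_\epsilon)$ from \eqref{eq:zeta_dt}.

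\textbf{Your route.} You argue structurally. Closure of $\calD\calT\calC$ under $*$ (Corollary \ref{cor:valuations_convolution_algebra}), Theorem \ref{thm_abstract_valuations}, and additivity of the symbol degree together reduce everything to one constant, which the $r=0$ term alone determines. Two points you left open are in fact fine.
\begin{itemize}
\item The symbol-degree additivity you stated as an expectation does hold. The maps $f_r$, $p$ and $\bar\rho$ preserve $\deg_s$, $\largewedge^r\ad_{-\xi}$ raises it by $r$, and $\iota$ removes $r$ copies of $\gamma$, each of symbol degree $1$.
\item The step you flagged as delicate is immediate, with no bracket counting needed. $\hat S_1$ lowers total wedge degree by one, so $\hat S_1(\varrho_m\otimes\gamma)$ lies in wedge degree at most $m$ and cannot contribute to $\gamma^{m+1}$. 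Only $\varrho_m\wedge\gamma$ contributes, with sign $\epsilon^0=+1$.
\end{itemize}

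\textbf{Comparison.} Your route avoids all the sign and Dynkin-projector bookkeeping inside $\hat S_1$. The cost is that it rests on the Leibniz rule for $*$ on $\calG$, and hence on Theorem \ref{thm_injectivity_realization} and Proposition \ref{prop:Leibniz}, as well as on the classification Theorem \ref{thm_abstract_valuations}. These are all proved earlier and independently, so there is no circularity. The paper's computation is self-contained and shows the mechanism: convolution with $\gamma$ reproduces the flow equation \eqref{eq:zeta_dt} of the gauge orbit, so the identity holds for the whole series $\varrho_\epsilon$ at once.

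\textbf{Your abelian fallback.} It is fine for fixing $c$ once $\varrho_m*\gamma=c\,\varrho_{m+1}$ is known. That needs only one realization with $\Re_\g\varrho_{m+1}\neq0$, not injectivity. It could not replace the first step, however, since abelian realizations kill every bracket term.
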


\proof

We have by definition 
\begin{align*}
	\hat S_1(\zeta_\epsilon \otimes \gamma) &=- \sum_{i=0}^\infty\frac{1}{(i+1)!} \hat S_1([\xi^i\gamma] \otimes  \gamma)\epsilon^{i+1}\\
	&=\sum_{i=0}^\infty\frac{1}{(i+1)!}[\xi^{i+1}\gamma]\epsilon^{i+1}\\
	&=-[\xi,\zeta_\epsilon].
\end{align*}

For $\tau_1,\tau_2 \in \calG$ one checks that 
\begin{displaymath}
	\hat S_1((\tau_1 \wedge \tau_2) \otimes \gamma)=(-1)^{\deg \tau_2} \hat S_1(\tau_1 \otimes \gamma) \wedge \tau_2+(-1)^{\deg \tau_1} \tau_1 \wedge \hat S_1(\tau_2 \otimes \gamma).
\end{displaymath}

By induction we deduce that 
\begin{displaymath}
	\hat S_1(\zeta_\epsilon^i \otimes \gamma)=(-1)^{i-1}i \zeta_\epsilon^{i-1} \wedge \hat S_1(\zeta_\epsilon \otimes \gamma).
\end{displaymath}

The definition of the convolution product gives, using $\deg \zeta_\epsilon^i=i$, 
\begin{align*}
	\widehat\exp(-\zeta_\epsilon) * \gamma & = \widehat\exp(-\zeta_\epsilon) \wedge \gamma+\sum_{i=0}^\infty \frac{(-1)^i}{i!} \hat S_1((-\zeta_\epsilon)^i \otimes \gamma)\\
	& =\widehat \exp(-\zeta_\epsilon) \wedge \gamma+\sum_{i=1}^\infty \frac{(-1)^{i-1}}{(i-1)!} \zeta_\epsilon^{i-1} \wedge \hat S_1(\zeta_\epsilon \otimes \gamma)\\
	& = \widehat\exp(-\zeta_\epsilon) \wedge \gamma-\widehat\exp(-\zeta_\epsilon) \wedge [\xi,\zeta_\epsilon]\\
	& = \frac{d}{d\epsilon} \widehat\exp(-\zeta_\epsilon),
\end{align*}
where in the last line we used eq. \eqref{eq:zeta_dt}. Comparing the coefficient of $\epsilon^m$ on both sides gives the claimed equality $\varrho_m * \gamma=(m+1)\varrho_{m+1}$. 
\endproof

\begin{Corollary}\label{cor:dgla_convolution_table}
	It holds that  $\varrho_m=\frac{1}{m!}\varrho_1^{\ast m}$ and $\varrho_m\ast \varrho_n={m+n \choose n} \varrho_{m+n}$. 
\end{Corollary}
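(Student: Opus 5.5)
The first identity follows from Proposition \ref{prop:hardlefschetz_convolution} by induction on $m$. Since $\varrho_0=1$ and $\varrho_1=\gamma$, Proposition \ref{prop:hardlefschetz_convolution} gives $\varrho_{m+1}=\frac{1}{m+1}\varrho_m*\gamma=\frac{1}{m+1}\varrho_m*\varrho_1$. Assuming $\varrho_m=\frac1{m!}\varrho_1^{*m}$, this yields $\varrho_{m+1}=\frac{1}{(m+1)!}\varrho_1^{*m}*\varrho_1$. By associativity (Theorem \ref{thm_existence_convolution_DGGA}), the right-hand side is unambiguously $\frac{1}{(m+1)!}\varrho_1^{*(m+1)}$.

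Before starting the induction I need to check that $\varrho_0=1$ is a two-sided unit for $*$. For $\tau*1$, every $\hat S_r$ with $r\ge1$ vanishes because $\deg_w 1=0$. The $r=0$ term is $\epsilon^0\,\tau\wedge 1$ with sign $(-1)^{(0+0)(l_1)}=1$. For $1*\zeta$, we have $\hat S_r(1\otimes\zeta)=0$ for $r>\deg 1=0$, and the sign is $(-1)^{(k_2+l_2)\cdot 0}=1$. So $1$ is a unit, and the convention $\varrho_1^{*0}=1$ is consistent. The base case $m=0$ is then trivial, and $m=1$ is the identity $\varrho_1=\gamma$.

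For the product formula, I will use associativity again:
\[
\varrho_m*\varrho_n=\frac{1}{m!\,n!}\varrho_1^{*m}*\varrho_1^{*n}=\frac{1}{m!\,n!}\varrho_1^{*(m+n)}=\frac{(m+n)!}{m!\,n!}\varrho_{m+n}={m+n\choose n}\varrho_{m+n}.
\]

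The only points requiring care are the unit property and the fact that Proposition \ref{prop:hardlefschetz_convolution} has $\gamma$ on the right. This means the induction naturally builds right-multiplied powers, and associativity then removes any dependence on bracketing. I do not expect a genuine obstacle here: all the substantive work, namely the recursion $\varrho_m*\gamma=(m+1)\varrho_{m+1}$ and associativity, is already established.
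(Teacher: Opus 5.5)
Your proposal is correct and follows the paper's proof essentially verbatim: associativity makes $\varrho_1^{\ast m}$ well defined, Proposition~\ref{prop:hardlefschetz_convolution} gives $\varrho_m\ast\varrho_1=(m+1)\varrho_{m+1}$, induction yields the first identity, and the second is then immediate. You also check explicitly that $1$ is a two-sided unit for $\ast$, which is needed only for the cases $m=0$ or $n=0$; the paper leaves this implicit, and your verification is correct.
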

\proof
Since the convolution product on $\calG$ is associative, $\varrho_1^{\ast m}$ is well defined.
From Proposition \ref{prop:hardlefschetz_convolution} we deduce that $\varrho_m\ast \varrho_1=(m+1)\varrho_{m+1}$ for all $m$. 
It follows by induction on $m$ that $\varrho_m=\frac{1}{m!}\varrho_1^{\ast m}$. The second equality is an immediate consequence. The last statement follows from Theorem \ref{thm_abstract_valuations}.
\endproof

\subsection{The abstract valuation convolution}

\begin{Definition}
	The valuation convolution on $\mathcal G$ is given by
	\[\kappa_1\vee\kappa_2= \kappa_1\ast D\kappa_2.\]
\end{Definition} 
\begin{Lemma}
	The valuation convolution is associative. 
\end{Lemma}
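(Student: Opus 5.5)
We have to show $(\kappa_1\vee\kappa_2)\vee\kappa_3=\kappa_1\vee(\kappa_2\vee\kappa_3)$, i.e.
\begin{displaymath}
(\kappa_1\ast D\kappa_2)\ast D\kappa_3=\kappa_1\ast D(\kappa_2\ast D\kappa_3).
\end{displaymath}
The plan is to reduce both sides, using only Theorem \ref{thm_existence_convolution_DGGA}, to the common expression $\kappa_1\ast D\kappa_2\ast D\kappa_3$. No knowledge of the explicit form of $\ast$ should be needed.

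For the left-hand side, associativity of $\ast$ (Theorem \ref{thm_existence_convolution_DGGA}) immediately gives $(\kappa_1\ast D\kappa_2)\ast D\kappa_3=\kappa_1\ast(D\kappa_2\ast D\kappa_3)$.

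For the right-hand side, assume first that $\kappa_2$ is $\Deg$-homogeneous; by linearity this is no loss of generality. Apply the Leibniz rule to $D(\kappa_2\ast D\kappa_3)$:
\begin{displaymath}
D(\kappa_2\ast D\kappa_3)=D\kappa_2\ast D\kappa_3+(-1)^{\Deg\kappa_2}\kappa_2\ast D^2\kappa_3 .
\end{displaymath}
Since $D^2=0$ (noted right after Definition \ref{def_operator_D}), the second term vanishes. Hence the right-hand side equals $\kappa_1\ast(D\kappa_2\ast D\kappa_3)$, which is the same as the left-hand side.

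The only points that need care are these. The Leibniz rule requires $\Deg$-homogeneity, and both sides are bilinear, so this is handled by the reduction above. We also need $D^2=0$ on all of $\calG$, which follows from $d^2=\partial^2=0$ together with $d\partial=\partial d$. Nothing here is a genuine obstacle: the lemma is a formal consequence of associativity of $\ast$, the Leibniz rule, and $D^2=0$.
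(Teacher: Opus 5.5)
Your proposal is correct and follows the paper's proof: Leibniz rule plus $D^2=0$ turns $\kappa_1\ast D(\kappa_2\ast D\kappa_3)$ into $\kappa_1\ast(D\kappa_2\ast D\kappa_3)$, and associativity of $\ast$ identifies this with $(\kappa_1\ast D\kappa_2)\ast D\kappa_3$. You also spell out two points the paper leaves implicit, namely the reduction to $\Deg$-homogeneous $\kappa_2$ and the use of $D^2=0$.
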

\begin{proof}
 We use Theorem \ref{thm_existence_convolution_DGGA}. By the Leibniz rule for convolution, 
	 \begin{displaymath}
	 	\kappa_1\vee (\kappa_2\vee\kappa_3)=\kappa_1\ast D(\kappa_2\ast D\kappa_3)=\kappa_1\ast (D\kappa_2\ast D\kappa_3),
	\end{displaymath}
	 while
	 \begin{displaymath}
	 	(\kappa_1\vee \kappa_2) \vee \kappa_3=(\kappa_1 \ast D\kappa_2) \ast D\kappa_3
	 \end{displaymath}
	 by definition. Equality now follows from associativity of convolution.
\end{proof}

\begin{Lemma}\label{lem:D_horizontal}
	For all $\tau\in\mathcal H$, $D\tau \in \mathcal H$.
\end{Lemma}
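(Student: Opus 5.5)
By linearity, it suffices to treat a single wedge product $\tau=a_1\wedge\dots\wedge a_k$ with each $a_i\in\calA_{j_i}$, $j_i\geq 1$. Since $D\tau=d\tau\pm\partial\tau$, I will show separately that $d\tau\in\calH$ and $\partial\tau\in\calH$.

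For $d$, the Leibniz rule \eqref{eq_compatibility_d_wedge} gives
\begin{displaymath}
d\tau=\sum_i \pm\, a_1\wedge\dots\wedge da_i\wedge\dots\wedge a_k .
\end{displaymath}
Each $da_i$ lies in $\calA_{j_i+1}$, so it has degree at least $2$ and is not proportional to $\xi$. Every summand is therefore again a wedge product of elements of positive degree, and $d\tau\in\calH$.

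For $\partial$, I will use Proposition \ref{prop:partial_wedge} and argue by induction on $k$, or equivalently apply the defining recursion \eqref{eq:koszul_exterior} repeatedly. This expresses $\partial\tau$ as a signed sum of terms in which two factors $a_i,a_j$ are replaced by the single factor $[a_i,a_j]$ and the other factors are kept:
\begin{displaymath}
\partial\tau=\sum_{i<j}\pm\,[a_i,a_j]\wedge a_1\wedge\dots\wedge\widecheck a_i\wedge\dots\wedge\widecheck a_j\wedge\dots\wedge a_k .
\end{displaymath}
The base case is $\partial(v)=0$. In the inductive step, $\partial(a_1\wedge X)=[a_1,X]-a_1\wedge\partial X$. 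The term $a_1\wedge\partial X$ lies in $\calH$ by induction, because $a_1$ has positive degree. For $[a_1,X]$, the Schouten--Nijenhuis formula, or \eqref{eq_compatibility_bracket_wedge2} applied iteratively, writes it as $\sum_j\pm[a_1,a_j]\wedge(\text{remaining } a\text{'s})$. Since $\deg[a_1,a_j]=j_1+j_j\geq 2$, the bracket is not a multiple of $\xi$, and every summand lies in $\calH$.

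The only point needing care is that $\calH$ is defined as a span of wedge products, so a priori a factor could be a combination involving $\xi$. I will note that $\calA=\bigoplus_j\calA_j$ with $\calA_0\ni\xi$, and that every bracket or differential of elements of positive degree stays in $\bigoplus_{j\geq1}\calA_j$. Hence all factors produced above lie in $\bigoplus_{j\ge1}\calA_j$, which is exactly the defining condition for $\calH$. I do not expect any real obstacle: the proof is degree bookkeeping, and the sign conventions are irrelevant because membership in $\calH$ does not depend on signs.
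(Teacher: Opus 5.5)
Your proof is correct and takes the same route as the paper: you show separately that $d$ and $\partial$ preserve $\calH$, because differentials and brackets of positive-degree elements of $\calA$ again have positive degree, and $\calA_0=\R\xi$. Your induction via \eqref{eq:koszul_exterior} and \eqref{eq_compatibility_bracket_wedge2} spells out in detail what the paper states in one line.
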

\proof
For $x,y\in\calA$ it holds that $\deg [x,y]\geq 1$ when $\deg x, \deg y\geq1$, in particular $\partial (\mathcal H)\subset \mathcal H$. It is also clear that $d(\mathcal H)\subset \mathcal H$, implying the statement.
\endproof

We identify a subspace of $\calG$ which under the realization map  $\Re^S_\g$ from Section \ref{sec:tube_coefficients} below gives rise to $(n-1)$-forms over the sphere bundle of a Lie group equipped with a bi-invariant metric, with the property that their differential is a vertical form. 
\begin{Definition}
	The space of \emph{abstract tube coefficients} is 
	$$\calT\calC=\{\kappa \in \calG(-1): D\kappa\in\calH\}.$$ 
\end{Definition}

There is a simple relation between the spaces $\TC$ and $\DTC$ as follows. 
\begin{Proposition}\label{prop:wedge_with_xi}
	It holds that $\DTC=D(\TC) \oplus \C$, while $\TC= \xi \wedge \DTC$.
\end{Proposition}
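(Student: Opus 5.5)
The plan is to use the explicit description of $\calD\calT\calC$ from Theorem \ref{thm_abstract_valuations} together with Proposition \ref{prop:primitive}, plus a structural decomposition of $\calG$ into horizontal and non-horizontal parts. The basic observation is the following. In the free DGLA $\calA$, the degree-$0$ part $\calA_0$ is spanned by $\xi$ alone, since any bracket involving $\gamma$ has degree $\geq 1$ and $[\xi,\xi]=0$. Hence $\calA=\R\xi\oplus\bigoplus_{j\geq1}\calA_j$. Since $\xi\wedge\xi=0$, this gives the vector space decomposition
\[
\calG=\calH\oplus(\xi\wedge\calH),
\]
and $\xi\wedge\bullet$ is injective on $\calH$. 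Moreover, a simple wedge of factors $a_i\in\calA$ has $\Deg=\sum_i(\deg a_i-1)$. Each summand is $\geq -1$, with equality only for $a_i\propto\xi$. So $\calG(-1)=\xi\wedge(\calH\cap\calG(0))=\xi\wedge\largewedge\calA_1$, and $\calG(-1)$ contains no constants.

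\textbf{First claim.} $D$ maps $\calG(-1)$ to $\calG(0)$, satisfies $D^2=0$, and maps $\calT\calC$ into $\calH$. Therefore $D(\calT\calC)\subset\calZ\cap\calH\cap\calG(0)=\calD\calT\calC$. Conversely, I check that $\omega_m=\frac1m\xi\wedge\varrho_{m-1}$ lies in $\calT\calC$. It has $\deg=m-1$ and $\deg_w=m$, so $\Deg\omega_m=-1$, and then $D\omega_m=d\omega_m+\partial\omega_m=\varrho_m\in\calH$ by Proposition \ref{prop:primitive}. So $\varrho_m\in D(\calT\calC)$ for all $m\geq1$. Since $\varrho_0=1$, Theorem \ref{thm_abstract_valuations} gives $\calD\calT\calC=D(\calT\calC)+\C$. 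For directness of the sum, note that $d$ and $\partial$ both preserve the symbol degree $\deg_s$. An element of $\calG(-1)$ is a combination of $\xi\wedge h$, each of which has $\deg_s\geq1$. Its image under $D$ therefore has no component of symbol degree $0$, so $1\notin D(\calT\calC)$.

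\textbf{Second claim.} The inclusion $\xi\wedge\calD\calT\calC\subset\calT\calC$ follows because $\xi\wedge\varrho_{m-1}=m\,\omega_m$ together with the first claim. For the reverse inclusion, let $\kappa\in\calT\calC$. By the structural observation, $\kappa=\xi\wedge h$ with $h\in\largewedge\calA_1\subset\calH\cap\calG(0)$. Using Proposition \ref{prop:partial_wedge} with $\deg_w\xi=1$ and $\partial\xi=0$, I compute
\[
D\kappa=d(\xi\wedge h)+\partial(\xi\wedge h)=\gamma\wedge h+[\xi,h]+\xi\wedge(dh-\partial h)=\gamma\wedge h+[\xi,h]+\xi\wedge Dh,
\]
where the last step uses $\Deg h=0$, so that $Dh=dh-\partial h$. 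Here $\gamma\wedge h\in\calH$ and $[\xi,h]\in\largewedge\calA_1\subset\calH$, while $Dh\in\calH$ by Lemma \ref{lem:D_horizontal}. Since $D\kappa\in\calH$, the decomposition $\calG=\calH\oplus\xi\wedge\calH$ together with injectivity of $\xi\wedge$ on $\calH$ forces $Dh=0$. Hence $h\in\calZ\cap\calH\cap\calG(0)=\calD\calT\calC$.

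The main point requiring care is the structural step: showing $\calA_0=\R\xi$ and hence the direct sum decomposition of $\calG$. This rests on the explicit model $\calA=L(V)\subset T(V)$, in which $\deg$ counts occurrences of $\gamma$. The other delicate point is keeping the signs in $D$ and in Proposition \ref{prop:partial_wedge} consistent, so that the $\xi$-component of $D\kappa$ is exactly $\xi\wedge Dh$ and not some other combination of $dh$ and $\partial h$.
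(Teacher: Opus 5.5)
Your proof is correct, and its skeleton matches the paper's. The first statement comes from Theorem \ref{thm_abstract_valuations} together with Proposition \ref{prop:primitive}. The second comes from the splitting $\calG=\calH\oplus\xi\wedge\calH$ together with Lemma \ref{lem:D_horizontal}; the paper also uses this splitting, though only implicitly.

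The real difference is how $D(\xi\wedge\tau)$ is computed. The paper goes through the convolution: $\xi*\tau=(-1)^{\Deg\tau}\xi\wedge\tau$ and $D\xi=\gamma$, so the Leibniz rule of Theorem \ref{thm_existence_convolution_DGGA} gives $D(\xi\wedge\tau)=(-1)^{\Deg\tau}\gamma*\tau+\xi\wedge D\tau$. It then gets $\xi\wedge\DTC\subset\TC$ from the closure of $\DTC$ under $*$ (Corollary \ref{cor:valuations_convolution_algebra}). You instead expand $d$ and $\partial$ directly via Proposition \ref{prop:partial_wedge}, obtaining $D(\xi\wedge h)=\gamma\wedge h+[\xi,h]+\xi\wedge Dh$. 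Here your $\gamma\wedge h+[\xi,h]$ plays the role of the paper's $\gamma*h$.

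Your route is more elementary and self-contained. It does not need the convolution on $\calG$, whose Leibniz rule the paper obtains only indirectly, through the injectivity of the realization maps. The paper's route buys the conceptual fact that $D(\xi\wedge\tau)$ is, up to sign, $\gamma*\tau$, which ties $\TC$ to the convolution structure used later for $\vee$. You also give an actual argument, via symbol degree, that the sum $D(\TC)\oplus\C$ is direct; the paper treats this as evident.

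Two small remarks. First, your own formula already gives $\xi\wedge\DTC\subset\TC$ without the explicit spanning set. If $h\in\DTC\subset\largewedge\calA_1$, then $Dh=0$ and $D(\xi\wedge h)=\gamma\wedge h+[\xi,h]\in\calH$. So the second statement need not depend on Theorem \ref{thm_abstract_valuations}, and the paper's argument for it does not depend on that theorem either. Second, $\omega_m$ is not bihomogeneous. Since $\varrho_{m-1}$ lies in $\bigoplus_{i\le m-1}\calG^i_i$ rather than in a single bidegree, $\omega_m$ has components of bidegree $(i,i+1)$ for several $i$. What does hold is $\Deg\omega_m=-1$ (and $\deg_s\omega_m=m$), which is all your argument uses.
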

	
\proof
 
Evidently $D(\TC ) \oplus \C \subset \DTC$. 	The reverse inclusion  $ \DTC \subset D(\TC) \oplus \C $ follows from Theorem \ref{thm_abstract_valuations} and Proposition \ref{prop:primitive}. 

If $\tau\in \DTC$ then by the Leibniz rule for the convolution and Corollary \ref{cor:valuations_convolution_algebra},
	\begin{displaymath}
	  D(\xi\wedge \tau)=(-1)^{\Deg \tau}  D(\xi\ast\tau)= (-1)^{\Deg \tau} \gamma \ast\tau\in \DTC \subset \calH,
	\end{displaymath}
	and $\Deg(\xi \wedge \tau)=\Deg \tau-1=-1$, hence $\xi \wedge \DTC  \subset \TC$.
	
On the other hand, let $\kappa \in \calT\calC$. Since $\Deg\kappa=-1$ we can write it as $\kappa=\xi \wedge \tau$ for some $\tau$. Since $\xi\wedge \xi=0$, we may assume $\tau\in\mathcal H$. Then 
	\begin{displaymath}
		\xi \wedge D\tau=(-1)^{\Deg D\tau} \xi\ast D\tau =  D(\xi\wedge \tau) +(-1)^{\Deg \tau+1}\gamma\ast \tau \in\mathcal H,
	\end{displaymath} 
	while by Lemma \ref{lem:D_horizontal}, $D\tau \in \mathcal H$. It  follows that $ D\tau =0$, i.e. $\tau \in \mathcal Z$. Since $0=\Deg D(\xi\wedge \tau)= 1+\Deg(\xi\wedge \tau )=\Deg\tau$, we conclude that $\tau \in \DTC$. Thus $\TC  \subset \xi \wedge \DTC$.
\endproof

\begin{Corollary}
	 The elements $\omega_j=\frac{1}{j}\xi\wedge\varrho_{j-1}$, $j\geq 1$, form a basis of $\TC$.
\end{Corollary}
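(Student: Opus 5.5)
The corollary should follow by combining Proposition \ref{prop:wedge_with_xi} with Theorem \ref{thm_abstract_valuations}. The latter says that $\DTC$ is spanned by the $\varrho_m$, $m\geq 0$. The former gives $\TC=\xi\wedge\DTC$. Hence $\TC$ is spanned by $\xi\wedge\varrho_m$ for $m\ge 0$, that is, by the $\omega_j$ for $j\geq1$, since the nonzero factor $1/j$ does not affect the span. So spanning is immediate, and what remains is linear independence.

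For linear independence I use the symbol degree. We have $\deg_s\varrho_m=m$, so $\deg_s\omega_j=j$. The $\omega_j$ therefore lie in distinct homogeneous components of the symbol grading. It suffices to show that each $\omega_j$ is nonzero, i.e.\ that $\xi\wedge\varrho_{j-1}\neq0$.

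To see this, recall from \eqref{eq_main_term_rhot} that $\varrho_{m}$ contains the summand $\frac{1}{m!}\gamma^m$. All other summands of $\varrho_m$ are wedge products in which at least one factor is an iterated bracket $[\xi^i\gamma]$ with $i\geq1$. Hence $\xi\wedge\varrho_m$ contains the term $\frac1{m!}\xi\wedge\gamma^{m}$.

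The step needing care is to check that this term is not cancelled. I would argue via a basis of $\calG=\largewedge\calA$. Choose a homogeneous basis of $\calA$ containing $\xi$, $\gamma$, and the independent brackets. Then wedge monomials in distinct basis elements are independent; this holds because $\gamma$ has odd degree and $\gamma\wedge\gamma\neq0$ by the supersymmetry \eqref{eq_supersymmetry}, since $\deg\gamma=\deg_w\gamma=1$. The monomial $\xi\wedge\gamma^m$ differs from every other monomial appearing in $\xi\wedge\varrho_m$, because those all contain a bracket factor. So its coefficient is $\frac1{m!}\neq0$, and thus $\omega_{m+1}\neq0$.

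Alternatively, one can avoid the basis argument. Note that $D\omega_{m+1}=\varrho_{m+1}$ up to sign, by Proposition \ref{prop:primitive}, and $\varrho_{m+1}\neq 0$ because it contains $\gamma^{m+1}/(m+1)!$ in the component $\largewedge^{m+1}\calA_1$ of top wedge degree. This gives nonvanishing of $\omega_{m+1}$ directly.
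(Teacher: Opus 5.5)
Your proof is correct and follows the paper's route: spanning comes from $\TC=\xi\wedge\DTC$ (Proposition \ref{prop:wedge_with_xi}) combined with Theorem \ref{thm_abstract_valuations}, which is all the paper's one-line proof invokes. Your linear-independence argument makes explicit what the paper leaves tacit. It uses the distinct symbol degrees $\deg_s\omega_j=j$ together with $\omega_j\neq 0$. The nonvanishing is cleanest via $D\omega_j=\varrho_j\neq 0$, which holds exactly rather than up to sign because $\Deg\omega_j=-1$.
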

\proof
This follows at once from Theorem \ref{thm_abstract_valuations}.
\endproof

\begin{Proposition}\label{prop:curvature_closed_convolution}
 The space $\TC $ is closed under the valuation convolution.
\end{Proposition}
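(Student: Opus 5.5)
We must show: if $\kappa_1,\kappa_2\in\TC$ then $\kappa_1\vee\kappa_2=\kappa_1\ast D\kappa_2\in\TC$, i.e. that it has $\Deg=-1$ and that its image under $D$ lies in $\calH$.

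The degree condition is immediate. Since $D$ raises $\Deg$ by one, we have $\Deg D\kappa_2=0$. By the additivity of $\Deg$ under convolution (Theorem \ref{thm_existence_convolution_DGGA}),
\begin{displaymath}
\Deg(\kappa_1\ast D\kappa_2)=\Deg\kappa_1+0=-1 .
\end{displaymath}

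For the horizontality condition I apply the Leibniz rule of Theorem \ref{thm_existence_convolution_DGGA} together with $D^2=0$:
\begin{displaymath}
D(\kappa_1\ast D\kappa_2)=D\kappa_1\ast D\kappa_2+(-1)^{\Deg\kappa_1}\kappa_1\ast D^2\kappa_2=D\kappa_1\ast D\kappa_2 .
\end{displaymath}
Both factors lie in $\calH$ by the definition of $\TC$. Corollary \ref{cor:valuations_convolution_algebra} says that $\calH$ is closed under convolution, so $D\kappa_1\ast D\kappa_2\in\calH$, and therefore $\kappa_1\vee\kappa_2\in\TC$.

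Alternatively, one can use the structure result directly. By Proposition \ref{prop:wedge_with_xi}, $D\kappa_i\in D(\TC)\subset\DTC$, and $\DTC$ is a convolution subalgebra, so $D\kappa_1\ast D\kappa_2\in\DTC\subset\calH$. This second route also gives closure of the image of $D$.

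The only step that needs care is the claim that $\calH$ is closed under convolution. This is exactly Corollary \ref{cor:valuations_convolution_algebra}, which we may assume, so I expect no real obstacle. If I wanted more, I would compute the structure constants explicitly: use $\omega_m=\frac1m\xi\wedge\varrho_{m-1}$, $D\omega_m=\varrho_m$ (up to the sign from Proposition \ref{prop:primitive}), and $\varrho_m\ast\varrho_n=\binom{m+n}{n}\varrho_{m+n}$ from Corollary \ref{cor:dgla_convolution_table}. Writing $\omega_m\ast\varrho_n=\frac1m\xi\wedge\varrho_{m-1}\ast\varrho_n$ then reduces to $\xi\ast\tau=\pm\xi\wedge\tau$ for horizontal $\tau$, which is used in the proof of Proposition \ref{prop:wedge_with_xi}. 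That computation, however, is not needed for the closure statement itself.
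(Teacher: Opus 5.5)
Your proposal is correct and matches the paper's proof. You get $\Deg(\kappa_1\vee\kappa_2)=-1$ from additivity of $\Deg$, and you get $D(\kappa_1\vee\kappa_2)=D\kappa_1\ast D\kappa_2\in\calH$ from the Leibniz rule, $D^2=0$ and Corollary \ref{cor:valuations_convolution_algebra}, exactly as the paper does.
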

	
\proof
 Take $\kappa_1, \kappa_2\in \TC $. By the Leibniz rule for convolution and Corollary \ref{cor:valuations_convolution_algebra}, we have $D(\kappa_1\vee\kappa_2)=D\kappa_1\ast D\kappa_2\in\calH$, while $\Deg(\kappa_1\vee\kappa_2)=\Deg(\kappa_1\ast D\kappa_2)=-1+0=-1$. 
				
\endproof

\section{The tube coefficient subalgebra}
\label{sec:tube_coefficients}

In this section we verify that the abstract tube coefficients $\calT\calC$ correspond to the tube coefficient valuations over any Lie group equipped with a bi-invariant metric, thus proving Theorem \ref{mthm:tube}. We will provide two proofs of this theorem. The first one is indirect and uses the abstract machinery developed so far. It shows that the tube coefficients are very natural objects from the Lie algebraic point of view. The second proof is more geometric and direct.

\subsection{Smooth valuations}

We begin with a rapid overview of smooth valuations, recalling key definitions and notation and referring to \cite[Section 2]{bernig_faifman_kotrbaty_part1} for details. 

To an $n$-dimensional vector space $V$, we associate three different one-dimensional spaces: the determinant $\det V=\largewedge^n V$, the space of densities (Lebesgue measures) $\Dens(V)$, and the orientation line $\ori(V)$. They are related by 
\begin{displaymath}
	\Dens(V) \cong \det V^* \otimes \ori(V). 
\end{displaymath}

If $M$ is a smooth manifold, we let $\pi:\p_M\to M$ be its cosphere bundle  with fiber $\mathbb P_+(T^*M)$. If $M$ is riemannian, we let $SM$ be the unit sphere bundle. Obviously $SM \cong \p_M$.
We write $\Omega_{\ori}(X)$ for the complex-valued  differential forms on $X$ twisted by $\ori(M)$, where $X$ could be $\p_M$ or $M$.  A form on $\p_M$ is vertical if its restriction to any contact hyperplane vanishes.

The space of smooth valuations on a smooth $n$-dimensional manifold $M$ is denoted by $\mathcal V^\infty(M)$. There is a surjection $\Omega_{\ori}^n(M) \times \Omega_{\ori}^{n-1}(\p_M) \twoheadrightarrow \mathcal V^\infty(M), (\mu,\omega) \mapsto [[\mu,\omega]]$, 
Explicitly,  $[[\mu,\omega]]$ denotes the valuation
\begin{displaymath}
	Z\mapsto \int_Z \mu +\int _{\nc(Z)}\omega,
\end{displaymath} 
where $\nc(Z)$ is the normal cycle of $Z$.

There is a also an injection $\mathcal V^\infty(M) \hookrightarrow C^\infty(M) \times \Omega^n_{\ori}(\p_M)$, which to the valuation $\phi=[[\mu,\omega]]$ puts in correspondence the function $f(x)=\phi(\{x\})$ and the $n$-form $\tau=D\omega +\pi^*\mu$, where $D$ is the Rumin operator (not to be confused with the operator $D$ on $\calG$ that is central to the present paper). The unique valuation that is mapped to the pair $(f,\tau)$ is denoted by $\{f,\tau\}$.

\subsection{Valuations on Lie groups}
The convolution of valuations on a compact Lie group is defined as 
\begin{displaymath}
	\mathcal V^\infty(G) \times \mathcal V^\infty(G) \to \mathcal V^\infty(G), (\phi_1, \phi_2) \mapsto \phi_1*\phi_2:=m_*(\phi_1 \boxtimes \phi_2),
\end{displaymath}
where $m:G \times G \to G$ is the multiplication map and $\boxtimes$ denotes the exterior product. Since the exterior product of two smooth valuations is a generalized valuation, one has to check that the push-forward under $m$ is defined, and that the result is smooth again. This was shown in \cite{alesker_bernig_convolution}.

In \cite{bernig_faifman_kotrbaty_part1}, we have defined the convolution of bi-invariant valuations on an arbitrary Lie group. 
More precisely, there is a convolution product
\[ \ast: \mathcal V^\infty(G)^{G\times G}\otimes\Dens(\g^*) \times \mathcal V^\infty(G)^{G\times G}\otimes\Dens(\g^*) \to \mathcal V^\infty(G)^{G\times G}\otimes\Dens(\g^*),\]
which coincides with the compact case once a probability Haar measure is used to trivialize $\Dens(\g^*)$.

\subsection{Forms on Lie groups}
We henceforth let $G$ be an $n$-dimensional Lie group equipped with a bi-invariant riemannian metric.	As in \cite[Section 3]{bernig_faifman_kotrbaty_part1}, we identify the left-invariant forms on $SG$ with $\Omega(S(\g), \largewedge\g^*)$ using left translations. 

	The Hodge star on $\largewedge \g$ induces an isomorphism \begin{equation*} \ast:\Omega(S(\mathfrak g),\largewedge\g)\to \Omega(S(\mathfrak g), \largewedge\g^*)\otimes\largewedge^n\g=\Omega(SG)^{L_G}\otimes\largewedge^n\g=\Omega_{\ori}(SG)^{L_G}\otimes \Dens(\g^*). \end{equation*}
	
	We arrive at the identification
	\begin{equation}\label{eq:identification}
		\Omega_{\ori}(SG)^{L_G}\otimes \Dens(\g^*)=\Omega(S(\g), \largewedge\g).
	\end{equation}

	For $\tau\in \Omega(S(\g), \largewedge\g)$, let $\deg\tau$ denote its degree as a differential form, while its wedge degree $\deg_w\tau$ corresponds to the grading on the exterior algebra $\largewedge\g$. Define also $D\tau=d \tau +(-1)^{\deg \tau-\deg_w\tau+1}\partial\tau$.
	The following fact appears in \cite[Corollary 3.3]{bernig_faifman_kotrbaty_part1}.
		
	\begin{Proposition}\label{prop:d=D}
		For $\zeta\in 	\Omega_{\ori}(SG)^{L_G}\otimes \Dens(\g^*)$, let $\tau\in \Omega(S(\g), \largewedge\g)$ be the corresponding form under eq. \eqref{eq:identification}.  
		Then $d\zeta$ corresponds to $D\tau$.
	\end{Proposition}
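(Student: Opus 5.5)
The plan is a direct computation in the left trivialization, separating the two parts of the exterior derivative on $SG\cong G\times S(\g)$: the derivative along the $G$-directions and the derivative along the fiber $S(\g)$. A left-invariant form $\zeta$ on $SG$, twisted by $\Dens(\g^*)$, is written as $\zeta=\sum_I \alpha_I\wedge \theta^I$. Here $\theta^I$ are wedge products of the left-invariant Maurer--Cartan $1$-forms $\theta^i$ on $G$, dual to an orthonormal basis $e_i$ of $\g$. The coefficients $\alpha_I$ are forms on $S(\g)$, pulled back along the projection to the fiber. By left invariance these coefficients are independent of the $G$-variable.

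The exterior derivative then splits as $d\zeta=\sum_I d\alpha_I\wedge\theta^I\pm\sum_I\alpha_I\wedge d\theta^I$. In the first sum $d$ acts only on the fiber $S(\g)$, and this part passes unchanged through the Hodge star. It therefore corresponds to $d\tau$, where $\tau$ is the $\largewedge\g$-valued form on $S(\g)$.

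For the second sum I will use the Maurer--Cartan equation $d\theta^k=-\frac12\sum_{i,j}c^k_{ij}\theta^i\wedge\theta^j$. This replaces the $G$-exterior derivative on $\largewedge\g^*$ by the Chevalley--Eilenberg differential $\delta$ on $\largewedge\g^*$. The key step is to transport $\delta$ through the Hodge star $\ast:\largewedge^k\g\to\largewedge^{n-k}\g^*\otimes\largewedge^n\g$. Because the metric is bi-invariant, the structure constants $c_{ijk}=\langle[e_i,e_j],e_k\rangle$ are totally antisymmetric. From this it follows that $\ast^{-1}\delta\ast=\pm\partial$, where $\partial$ is the Koszul boundary $X_1\wedge\dots\wedge X_k\mapsto\sum_{i<j}(-1)^{i+j}[X_i,X_j]\wedge\dots$. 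This is the classical statement that the Chevalley--Eilenberg differential is Poincaré dual to the Lie algebra homology differential for a unimodular Lie algebra, and unimodularity holds here since $G$ carries a bi-invariant metric. I would verify it on decomposable elements of an orthonormal basis. I would also check that it is compatible with the $\largewedge^n\g$ twist: $\ad$ acts trivially on $\largewedge^n\g$ by unimodularity, so the density factor contributes nothing.

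The main obstacle is the sign bookkeeping. One has to commute the fiber form $\alpha_I$, of degree $\deg\tau$, past $\theta^I$. One has to use the convention that $\deg_w\tau=k$ corresponds to a form of $G$-degree $n-k$. And the relation $\ast^{-1}\delta\ast=\pm\partial$ itself carries a sign depending on $k$. I expect these signs to combine to exactly $(-1)^{\deg\tau-\deg_w\tau+1}$, which gives $d\zeta\leftrightarrow d\tau+(-1)^{\Deg\tau+1}\partial\tau=D\tau$.

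To pin down the signs without a full general computation, I would check two low-degree cases. The first is $\tau$ a $0$-form with $\deg_w\tau=2$, say $\tau=X\wedge Y$ constant on $S(\g)$. The second is $\tau=\alpha\otimes 1$. Then I would invoke the derivation properties of both sides. Indeed, $d$ is a derivation of the wedge product of forms on $SG$, and Proposition \ref{prop:partial_wedge} shows that $\partial$ is a derivation up to the bracket correction term; on the Hodge-dual side this bracket term corresponds exactly to the failure of $\ast$ to be multiplicative.

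Alternatively, I would simply cite \cite[Corollary 3.3]{bernig_faifman_kotrbaty_part1}, from which the statement is quoted.
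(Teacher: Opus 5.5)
\emph{Comparison with the paper.} The paper gives no argument for this statement; it quotes it from \cite[Corollary 3.3]{bernig_faifman_kotrbaty_part1}, so your final sentence is exactly the paper's proof. Your direct computation is a genuinely self-contained alternative, and its skeleton is right. In the left trivialization $SG\cong G\times S(\g)$ a left-invariant form is $\sum_I\alpha_I\wedge\theta^I$. The Maurer--Cartan equation turns the $G$-part of $d$ into the Chevalley--Eilenberg differential $\delta$. For unimodular $\g$, the contraction $X\mapsto\iota_X\vol$ intertwines $\delta$ with the Koszul boundary up to a sign. What this buys over the citation is transparency: the only thing used from the bi-invariant metric is $\operatorname{tr}\ad_x=0$, and total antisymmetry of the $c_{ijk}$ is more than you need.

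\emph{The sign step.} This is the one step you leave open, and the shortcut you propose for it does not work. The correspondence $\tau\leftrightarrow\zeta$ does not carry the wedge product of $\largewedge\g$ to the wedge product on $SG$. Since $\iota_{X\wedge Y}\vol=\iota_Y\iota_X\vol$, wedging on the $\largewedge\g$ side becomes contraction on the dual side. Hence neither Proposition \ref{prop:partial_wedge} nor the Leibniz rule for $d$ on $SG$ relates the two sides, and checks at $\deg_w\tau\in\{0,2\}$ cannot pin down a sign that depends on $\deg_w\tau$.

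A short induction on $k=\deg_w X$ fixes the sign. Use the paper's conventions $\alpha\otimes X\leftrightarrow\alpha\wedge\iota_X\vol$: fiber factor first, $X$ inserted into the first slots, as one reads off from the geometric proof of Theorem \ref{mthm:tube}. Write $X=Y\wedge x$ and let $\mathcal L_x$ be the Lie derivative along the left-invariant field $x$. Then use:
\begin{itemize}
\item $\mathcal L_x=\delta\iota_x+\iota_x\delta$ and $[\mathcal L_x,\iota_Y]=\iota_{[x,Y]}$;
\item $\mathcal L_x\vol=-\operatorname{tr}(\ad_x)\vol=0$;
\item $\partial(Y\wedge x)=\partial Y\wedge x+(-1)^{k+1}[x,Y]$, from Proposition \ref{prop:partial_wedge}.
\end{itemize}
This gives $\delta(\iota_X\vol)=(-1)^{k+1}\iota_{\partial X}\vol$. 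Moving $d$ past $\alpha\in\Omega^p(S(\g))$ contributes a further $(-1)^p$, so $d\zeta\leftrightarrow d\tau+(-1)^{p-k+1}\partial\tau=D\tau$, confirming your expectation.

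Note that the sign is convention-sensitive. Inserting $X$ into the last slots instead multiplies the $\partial$-term by $(-1)^{n+1}$. So any low-degree check has to be done in exactly these conventions, and it still cannot replace the induction.
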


 \begin{Corollary} \label{cor_leibniz_part1}
The convolution product on $\Omega_{\ori}(\p_G)^{G\times G} \otimes\Dens(\g^*)$ constructed in \cite[Definition 4.2]{bernig_faifman_kotrbaty_part1} satisfies the Leibniz rule 
\begin{displaymath}
	d(\tau * \zeta)=d\tau * \zeta+(-1)^{\deg \tau-n} \tau * d \zeta.
\end{displaymath}		
\end{Corollary}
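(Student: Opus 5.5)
The plan is to transport the Leibniz rule of Proposition \ref{prop:Leibniz} through the identification \eqref{eq:identification}, using Proposition \ref{prop:d=D} to turn the operator $D$ into the de Rham differential. First I recall from \cite[Definition 4.2]{bernig_faifman_kotrbaty_part1} how the convolution on $\Omega_{\ori}(\p_G)^{G\times G}\otimes\Dens(\g^*)$ is defined. Under \eqref{eq:identification}, left-invariant forms correspond to elements of $\Omega(S(\g),\largewedge\g)$. Bi-invariance corresponds to the $\Ad$-equivariance condition \eqref{eq_invariance_concrete_lie_group}, after restricting to the sphere. The convolution there is given by the same operators $\hat S_r$ and the same signs $\epsilon^r$ as in Proposition \ref{def_convolution_of_forms}, with the vector fields $e^\sharp$ tangent to the sphere because $\ad$ is skew for a bi-invariant metric. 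So the convolution of \cite{bernig_faifman_kotrbaty_part1} is the convolution of Section \ref{sec_convolution_double_forms_g}, with $\g$ replaced by $S(\g)\subset\g$.

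Second, I check that Proposition \ref{prop:Leibniz} holds verbatim on $\Omega(S(\g),\largewedge\g)^{G}$. The proof of \eqref{eq_relation_d_partial} used only three facts. The first is the Cartan identities $[\mathcal L_X,\iota_Y]=\iota_{[X,Y]}$ and $d\iota+\iota d=\mathcal L$. The second is the ad-invariance $\mathcal L_{x^\sharp}\tau=\ad_x\tau$, which follows from $\Ad$-equivariance. The third is the purely algebraic behaviour of $\partial$ and of $\iota_{e^*}$ on $\largewedge\g$. All three hold for forms on the invariant submanifold $S(\g)$, since the $x^\sharp$ are tangent to it. Alternatively, one can extend forms $0$-homogeneously to $\g\setminus\{0\}$ and apply Proposition \ref{prop:Leibniz} there; restriction to $S(\g)$ commutes with $d$, $\partial$, $\iota_{x^\sharp}$ and the wedge product. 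This yields $D(\tau*\zeta)=D\tau*\zeta+(-1)^{\Deg\tau}\tau*D\zeta$ on the $\largewedge\g$-valued side.

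Third, I translate the result back. By Proposition \ref{prop:d=D}, $D$ corresponds to $d$. It remains to relate the sign $(-1)^{\Deg\tau}$ to $(-1)^{\deg\tau-n}$, where now $\deg$ is the form degree on $\p_G$. The Hodge star sends $\largewedge^{l}\g$ to $\largewedge^{n-l}\g^*$. A form with $\deg=k$ and $\deg_w=l$ on the $S(\g)$ side therefore becomes a form on $SG$ of total degree $k+n-l$. Hence $\deg-n=k-l=\Deg$, and the signs match.

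The main obstacle is this bookkeeping step: confirming that the identification \eqref{eq:identification} carries the sign conventions of \cite[Definition 4.2]{bernig_faifman_kotrbaty_part1} exactly onto $\epsilon^r_{k_1,l_1,k_2,l_2}$. One must also confirm that no extra Koszul sign arises from moving the $\Dens(\g^*)$ factor or the Hodge star past forms. I would check this first on low-degree examples. If a discrepancy appears, it will be a global sign of the form $(-1)^{f(\deg\tau,\deg\zeta)}$, and it can be absorbed by showing that it is compatible with the two terms of the Leibniz rule.
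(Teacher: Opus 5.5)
Your proposal is correct and is essentially the paper's proof. The paper pushes Proposition \ref{prop:Leibniz} through the surjective map $\Omega_\g^G\to\Omega_{\ori}(\p_G)^{G\times G}\otimes\Dens(\g^*)$ (restriction to $S(\g)$ followed by \eqref{eq:identification}); this map is an algebra morphism for the two convolutions, carries $D$ to $d$ by Proposition \ref{prop:d=D}, and carries $\Deg$ to $\deg-n$, while your $0$-homogeneous extension to $\g\setminus\{0\}$ is just a harmless variant of using this surjectivity. The sign compatibility you flag as the main obstacle is taken in the paper to hold by construction of the two convolution products, so your fallback global-sign argument is not needed.
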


\proof
We have a surjective map from $\Omega_\g^G$ to $\Omega_{\ori}(\p_G)^{G\times G} \otimes\Dens(\g^*)$, given by restriction to the sphere, followed by the  identifications \eqref{eq:identification} and $SG=\p_G$. Under this map, $\Deg$ corresponds to the degree on $\Omega_{\ori}(\p_G)^{G\times G} \otimes\Dens(\g^*)$ minus $n$. By construction of the two convolution products, this map is an algebra morphism. The displayed equation is then a direct consequence of the Leibniz rule Proposition \ref{prop:Leibniz}.   
\endproof

\subsection{The tube formula}

Let
\begin{displaymath}
  \Re^S_\g:\mathcal G\to \Omega(SG)^{G\times G}\otimes\largewedge^n\g=\Omega_{\ori}(SG)^{G\times G}\otimes \Dens(\g^*)
\end{displaymath}	
be the composition of $\Re_\g:\mathcal G\to \Omega(\mathfrak g, \largewedge\g)$ with the restriction $\mathrm{res}$ to $S(\mathfrak g)$, followed by identification eq. \eqref{eq:identification}. It is clear from the constructions that for all $\tau,\zeta \in \calG$
	\begin{equation} \label{eq_compatibility_rgs}
		\Re^S_\g(\tau * \zeta)=\Re^S_\g \tau * \Re^S_\g \zeta.
	\end{equation}

We define another realization map by 
\begin{displaymath}
	\Re_\g^V: \calG(-1)  \to \mathcal V^\infty(G)^{G \times G} \otimes\Dens(\mathfrak g^*), \omega \mapsto [[0, \Re^S_\g\omega]]=\int_{\nc(\bullet)} \Re_\g^S\omega.
\end{displaymath}

The different realization maps fit into the following commutative diagram. 
\begin{equation*}
	\begin{tikzcd}
	&\Omega_\g^G \arrow{r}{\mathrm{res}}& \Omega(S(\g),\largewedge\g)^G \arrow{dd}{\cong}  \\
	 \\
	&\calG  \arrow[uu,"\Re_\g"] \arrow[r,"\Re_\g^S"] & \Omega_{\ori}(SG)^{G \times G} \otimes \Dens(\g^*)\\
	\\
	\TC \arrow[hookrightarrow, r] &\calG(-1) \arrow[hookrightarrow,uu]          \arrow[r,"\Re_\g^S"]  \arrow[ddr,"\Re_\g^V"] & \Omega_{\ori}^{n-1}(SG)^{G \times G} \otimes \Dens(\g^*) \arrow[dd,"\nc"]  \arrow[hookrightarrow,uu]\\
	\\
	&& \calV^\infty(G)^{G \times G} \otimes \Dens(\g^*)
	\end{tikzcd}
\end{equation*}

\begin{Definition}
	The tube coefficient algebra $\mathcal {TC}(G)\subset \mathcal V^\infty(G)^{G \times G} \otimes\Dens(\mathfrak g^*)$ is spanned by all valuations of the form $\Re_\g^V(\omega)$, with $\omega\in \calT\calC$, together with $\vol_G\otimes \vol_G^*$. 
\end{Definition}

\begin{Proposition}\label{prop:convolution_n-1}
The convolution of valuations $\phi\in \mathcal V^\infty(G)^{G \times G}\otimes \Dens(\mathfrak g^*)$ and $\psi\in \mathcal V^\infty(G)^{L_G}$ is given by 
	\begin{equation} 
		\label{eq_def_formula_convolution}
		\phi * \psi=[[ \mu(\phi)\mu_\psi, \omega_\phi\ast \tau_\psi+\mu(\phi)\omega_\psi]], 
	\end{equation}
	where $\phi=[[\mu_\phi, \omega_\phi]]$ and $\psi=\{c_\psi,\tau_\psi\}=[[\mu_\psi, \omega_\psi]]$,  with $\omega_\phi$, $\omega_\psi$ left-invariant and $d\omega_\phi, d\omega_\psi$ vertical. 
\end{Proposition}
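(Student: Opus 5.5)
The plan is to compare both sides of \eqref{eq_def_formula_convolution} through the injection $\mathcal V^\infty(G)\hookrightarrow C^\infty(G)\times\Omega^n_{\ori}(\p_G)$, $\phi\mapsto (f,\tau)$. It suffices to show two things: the right-hand side has the same $n$-form $\tau_{\phi*\psi}$ as $\phi*\psi$, and it has the same function $x\mapsto(\phi*\psi)(\{x\})$. For the left-hand side I would use the description of convolution from \cite{bernig_faifman_kotrbaty_part1}. I expect it to give $\tau_{\phi*\psi}=\tau_\phi*\tau_\psi$, with $*$ the convolution of \cite[Definition 4.2]{bernig_faifman_kotrbaty_part1} (extended to a left-invariant second factor). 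Here
\[
\tau_\phi=d\omega_\phi+\pi^*\mu_\phi,\qquad \tau_\psi=d\omega_\psi+\pi^*\mu_\psi .
\]
The Rumin operator reduces to $d$ because $d\omega_\phi$ and $d\omega_\psi$ are vertical by assumption. I would also read off from \cite{bernig_faifman_kotrbaty_part1} a formula for the point values of the convolution.

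\emph{Step 1 (the $n$-forms).} Expand by bilinearity:
\[
\tau_\phi*\tau_\psi=d\omega_\phi*\tau_\psi+\pi^*\mu_\phi*\tau_\psi .
\]
Since $\mu_\phi$ is a bi-invariant top form on $G$, hence a multiple of the Haar measure, the second term should be $\mu(\phi)\,\tau_\psi$. This is a direct check from the definition of the form convolution: only the $r=0$ term survives, up to the normalisation by $\Dens(\g^*)$. For the first term, apply the Leibniz rule of Corollary \ref{cor_leibniz_part1}:
\[
d(\omega_\phi*\tau_\psi)=d\omega_\phi*\tau_\psi\pm\omega_\phi*d\tau_\psi=d\omega_\phi*\tau_\psi ,
\]
because $\tau_\psi$ is closed, being the differential of a valuation. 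Hence
\[
\tau_\phi*\tau_\psi=d(\omega_\phi*\tau_\psi)+\mu(\phi)\,d\omega_\psi+\pi^*\bigl(\mu(\phi)\mu_\psi\bigr).
\]
This is exactly $d\omega+\pi^*\mu$ for the pair $\mu=\mu(\phi)\mu_\psi$, $\omega=\omega_\phi*\tau_\psi+\mu(\phi)\omega_\psi$. It remains to know that $d\omega$ is vertical, so that the Rumin differential of $\omega$ equals $d\omega$. This holds because $d\omega$ equals $\tau_{\phi*\psi}-\pi^*\mu$, and $\tau_{\phi*\psi}$ is vertical.

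\emph{Step 2 (point values).} Evaluate the right-hand side on $\{x\}$. The normal cycle of a point is the fiber $S_xG$, so the value is
\[
\int_{S_xG}\bigl(\omega_\phi*\tau_\psi+\mu(\phi)\,\omega_\psi\bigr).
\]
The second term gives $\mu(\phi)\bigl(\psi(\{x\})-\text{(measure part, which vanishes on points)}\bigr)=\mu(\phi)c_\psi$. For the first term, I would compute the fiber integral of $\omega_\phi*\tau_\psi$. In the convolution only components of $\tau_\psi$ with full fiber degree contribute, and those pair with $\omega_\phi$ to give $\phi(\{e\})\cdot(\text{fiber integral of }\tau_\psi)$, or its analogue. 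I would then match this with the point-value formula for $\phi*\psi$ from \cite{bernig_faifman_kotrbaty_part1}. Presumably that formula is $c_{\phi*\psi}=\mu(\phi)c_\psi+c_\phi\cdot(\cdots)$, where the bracket is the $\psi$-term that is an Euler-characteristic multiple.

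I expect Step 2 to be the main obstacle. It requires unwinding the contraction $\hat S_r$ on fibers and keeping track of the $\Dens(\g^*)$ normalisation and the signs. If a direct fiber computation becomes messy, a fallback would avoid it. By left invariance, both sides are left-invariant valuations with the same $\tau$, so they differ by a multiple of $\chi$. That multiple can then be determined by testing on a single small geodesic ball, or on a point, where both sides can be computed via the Gauss--Bonnet-type normalisation.
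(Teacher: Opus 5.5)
Your overall strategy, comparing the point-value function and the $n$-form under the injection $\mathcal V^\infty(G)\hookrightarrow C^\infty(G)\times\Omega^n_{\ori}(\p_G)$, is the paper's, and your Step 1 is exactly the paper's argument for the $n$-forms. Step 2, however, is left open, and this is a genuine gap.

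The heuristic you propose for a direct fiber computation is not justified. For $r\geq 1$, $\hat S_r$ contracts the vectors $e_K^\sharp$ into the fiber directions of $\omega_\phi$ while lowering the wedge degree of $\tau_\psi$. So components of $\tau_\psi$ of lower fiber degree also reach the fiber-top part of $\omega_\phi*\tau_\psi$. Moreover, the fiber integral $\pi_*\tau_\psi$ is the $1$-form $\pm dc_\psi$, which vanishes for left-invariant $\psi$, so it cannot produce the answer.

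Your fallback is circular. Once you know the two sides differ by a constant multiple of $\chi$, you still have to evaluate $\phi*\psi$ on a point (or a ball). That is precisely the computation you have not done.

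The missing ingredient is that the point-value function of the convolution is part of its definition in \cite[Definition 1.1]{bernig_faifman_kotrbaty_part1}:
\[
c_{\phi*\psi}=c_\phi\mu(\psi)+\pi_*(\tau_\phi*\omega_\psi).
\]
Here the roles of $\tau$ and $\omega$ are reversed compared with the right-hand side of \eqref{eq_def_formula_convolution}, whose value at $\{x\}$ is $\pi_*(\omega_\phi*\tau_\psi)+\mu(\phi)c_\psi$. To compare the two:
\begin{itemize}
\item expand $\tau_\phi=d\omega_\phi+\pi^*\mu_\phi$ and $\tau_\psi=d\omega_\psi+\pi^*\mu_\psi$;
\item use $\pi_*(\omega_\phi*\pi^*\mu_\psi)=\mu(\psi)c_\phi$ and $\pi_*(\pi^*\mu_\phi*\omega_\psi)=\mu(\phi)c_\psi$, since only the $r=0$ term survives in these convolutions;
\item apply the same Leibniz rule (Corollary \ref{cor_leibniz_part1}) you used in Step 1, now to $\omega_\phi*\omega_\psi$:
\[
\omega_\phi*d\omega_\psi-d\omega_\phi*\omega_\psi=-d(\omega_\phi*\omega_\psi).
\]
\end{itemize}
The integral of this exact form over the closed fiber $S_xG$ vanishes by Stokes, so the two point-value functions agree. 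This is how the paper closes the argument, and no fiber-by-fiber unwinding of $\hat S_r$ is needed.
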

Here  $\mu_\phi=\mu(\phi)\vol\otimes\vol^*$ (resp. $\mu_\psi$) is a twisted Haar measure, with $\mu(\phi),\mu(\psi)\in\C$. Note that the existence of such representations for $\phi$ and $\psi$ is non-trivial, but was shown in \cite[Proposition 7.3]{bernig_faifman_kotrbaty_part1}.
\proof
Denote $\eta=[[ \mu(\phi)\mu_\psi, \omega_\phi\ast \tau_\psi+\mu(\phi)\omega_\psi]]$. The $0$-form of $\eta$ is 
\begin{displaymath}
	\pi_*(\omega_\phi\ast \tau_\psi)+\mu(\phi)c_\psi,
\end{displaymath}
while by \cite[Definition 1.1]{bernig_faifman_kotrbaty_part1}, the $0$-form of $\phi * \psi$ is 
\begin{displaymath}
	c_\phi\mu(\psi)+\pi_*(\tau_\phi\ast \omega_\psi).
\end{displaymath}
 Recall that $\tau_\phi=d\omega_\phi+\pi^*\mu_\phi$,  $\tau_\psi=d\omega_\psi+\pi^*\mu_\psi$.  By Corollary \ref{cor_leibniz_part1} 
\begin{displaymath}
  d(\omega_\phi\ast\omega_\psi)=d\omega_\phi\ast \omega_\psi-\omega_\phi\ast d\omega_\psi.
\end{displaymath}

It follows that
\begin{align*} 
	\pi_*(\omega_\phi\ast \tau_\psi)-\pi_*(\tau_\phi\ast \omega_\psi)&= -\pi_*d(\omega_\phi\ast \omega_\psi) +\pi_*(\omega_\phi\ast \pi^*\mu_\psi)-\pi_*(\pi^*\mu_\phi\ast \omega_\psi)
\\
&=-d\pi_*(\omega_\phi\ast \omega_\psi)+\mu(\psi)c_\phi-\mu(\phi)c_\psi\\
&=\mu(\psi)c_\phi-\mu(\phi)c_\psi,
\end{align*}
and so the $0$-forms of $\eta$ and $\phi\ast \psi$ are equal.

Next, again using Corollary \ref{cor_leibniz_part1},
\begin{displaymath} d(\omega_\phi\ast \tau_\psi+\mu(\phi)\omega_\psi) +\mu(\phi) \pi^*\mu_\psi=\tau_\phi\ast \tau_\psi
\end{displaymath}
and hence the $n$-forms agree as well.
\endproof

\begin{Corollary}\label{cor:valuation_convolution_realization_morphism}
For $\omega_1,\omega_2 \in \TC$ it holds that	$\Re_\g^V(\omega_1\vee\omega_2)=\Re_\g^V(\omega_1)\ast \Re_\g^V(\omega_2)$.
\end{Corollary}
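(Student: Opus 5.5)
The plan is to apply Proposition \ref{prop:convolution_n-1} with $\phi=\Re_\g^V(\omega_1)$ and $\psi=\Re_\g^V(\omega_2)$, and then compare the resulting representation with that of $\Re_\g^V(\omega_1\vee\omega_2)$. Both valuations have vanishing measure part, so $\mu(\phi)=\mu_\psi=0$. We may take $\omega_\phi=\Re^S_\g\omega_1$ and $\omega_\psi=\Re^S_\g\omega_2$, provided these satisfy the hypotheses of Proposition \ref{prop:convolution_n-1}: they must be left-invariant (indeed bi-invariant), with vertical differentials. Bi-invariance is clear from the construction, since the image of $\Re_\g$ lies in $\Omega_\g^{\mathrm{eqv}}\subset \Omega_\g^G$.

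Verticality is where $\TC$ enters. By Proposition \ref{prop:d=D}, $d\Re^S_\g\omega_i$ corresponds to $\Re^S_\g D\omega_i$, because $\Re_\g$ is a DGGA morphism and restriction to $S(\g)$ commutes with $d$ and $\partial$. Since $\omega_i\in\TC$, we have $D\omega_i\in\calH$, and the realization of a horizontal element is a vertical form on $SG$. To justify this, note that a generator $\gamma$ realizes as $d(\id)$, and brackets involving $\gamma$ give forms in which every wedge factor of $\largewedge\g$ is a form of positive degree. After the Hodge star, such factors should kill the contact direction, which is given by the tautological point $\xi$. This identification of $\calH$ with vertical forms is what the paper's preamble to $\TC$ asserts, and I would check it on generators using the identification \eqref{eq:identification}. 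This step is the most delicate one.

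With these choices, Proposition \ref{prop:convolution_n-1} gives $\phi*\psi=[[0,\omega_\phi*\tau_\psi]]$, where $\tau_\psi=d\omega_\psi+\pi^*\mu_\psi=d\Re^S_\g\omega_2$. By Proposition \ref{prop:d=D}, this equals $\Re^S_\g(D\omega_2)$. Using \eqref{eq_compatibility_rgs},
\[
\omega_\phi*\tau_\psi=\Re^S_\g\omega_1*\Re^S_\g D\omega_2=\Re^S_\g(\omega_1*D\omega_2)=\Re^S_\g(\omega_1\vee\omega_2).
\]
By Proposition \ref{prop:curvature_closed_convolution}, $\omega_1\vee\omega_2\in\TC\subset\calG(-1)$, so $\Re_\g^V(\omega_1\vee\omega_2)$ is defined and equals $[[0,\Re^S_\g(\omega_1\vee\omega_2)]]=\phi*\psi$.

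The remaining work is bookkeeping. One point is that the sign conventions in Proposition \ref{prop:d=D} for $D$ on $\Omega(S(\g),\largewedge\g)$ match those on $\calG$. Both use the form $d+(-1)^{\Deg+1}\partial$, so $\Re^S_\g\circ D=d\circ\Re^S_\g$. The other point is that \eqref{eq_compatibility_rgs} holds for the restriction to the sphere; it does, because restriction commutes with the operators $\hat S_r$, as the fundamental vector fields $e^\sharp$ are tangent to the spheres.
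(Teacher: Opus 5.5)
Your argument is the paper's proof. You take vanishing measure parts with $\omega_\phi=\Re^S_\g\omega_1$ and $\omega_\psi=\Re^S_\g\omega_2$, so Proposition \ref{prop:convolution_n-1} gives $[[0,\Re^S_\g\omega_1*d\Re^S_\g\omega_2]]$. Proposition \ref{prop:d=D} and \eqref{eq_compatibility_rgs} then turn this into $\Re^V_\g(\omega_1*D\omega_2)$, exactly as in the paper.

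The one thing to fix is your justification of verticality, which the paper simply takes from the definition of $\TC$. It is false that every element of $\calH$ realizes to a vertical form. For example, $[\gamma,\gamma]\in\calA_2\subset\calH$ realizes at $\xi\in S(\g)$ to a multiple of $(Y_1,Y_2)\mapsto[Y_1,Y_2]$. Here $\langle[Y_1,Y_2],\xi\rangle=\langle Y_1,[Y_2,\xi]\rangle$, which is generally nonzero (take $\mathfrak{so}(3)$ with $\xi=e_3$, $Y_i=e_i$). So positivity of the degrees of the wedge factors is not the reason, and checking on generators as you propose would not go through.

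What works is the degree restriction. Since $\Deg D\omega_i=0$, you get $D\omega_i\in\calH\cap\calG(0)=\largewedge\calA_1$, whose factors are the $\ad_\xi^k\gamma$. These realize on $S(\g)$ with values in $\xi^\perp$, because $T_\xi S(\g)=\xi^\perp$ and $\ad_\xi$ is skew for the bi-invariant metric with $\ad_\xi\xi=0$. Hence, after the Hodge star, every term is divisible by the contact form $\xi^\flat$.
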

\proof
Denote $\phi_j=\Re_\g^V(\omega_j)$, $\zeta_j=\Re_\g^S(\omega_j)$. It holds by assumption and by Proposition \ref{prop:d=D} that the Rumin differential of $\zeta_j$ is $d\zeta_j=\Re^S_\g D\omega_j$. It follows by Propositions \ref{prop:convolution_n-1} and eq. \eqref{eq_compatibility_rgs} that \[\phi_1\ast \phi_2=[[0, \zeta_1\ast d\zeta_2]]=[[0, \Re^S_\g(\omega_1\ast D\omega_2)]]=\Re_\g^V(\omega_1\vee\omega_2).\]
\endproof

 Denote $\upsilon_m = \Re^V_\g(\omega_m)\in\mathcal V^\infty(G)^{G \times G}\otimes \Dens(\g^*)$. In particular, $\upsilon_1$ is the surface area valuation $\mu_{n-1}$. We write $\upsilon_0=\vol_G\otimes\vol_G^*$. 
\begin{Corollary}
	$\mathcal {TC}(G)$ is a convolution subalgebra of $\mathcal V^\infty(G)\otimes\Dens(\mathfrak g^*)$ which is spanned by $\upsilon_m$, $m\geq 0$, and generated by $\upsilon_1$.
\end{Corollary}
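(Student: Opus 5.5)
We have to show three things: $\mathcal{TC}(G)$ is closed under convolution, it is spanned by the $\upsilon_m$, and it is generated by $\upsilon_1$. All three will be transported from the abstract algebra $(\TC,\vee)$ through $\Re^V_\g$, using Corollary \ref{cor:valuation_convolution_realization_morphism}.

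Spanning is immediate from the definitions. By the Corollary following Proposition \ref{prop:wedge_with_xi}, the elements $\omega_m$, $m\ge1$, form a basis of $\TC$. Hence $\Re^V_\g(\TC)$ is spanned by the $\upsilon_m=\Re^V_\g(\omega_m)$, and adding $\upsilon_0=\vol_G\otimes\vol_G^*$ gives all of $\mathcal{TC}(G)$.

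For the convolution table we compute in $\calG$. First, $D\omega_m=\varrho_m$ for every $m\ge1$. Indeed, $\Deg\omega_m=-1$, so $D\omega_m=d\omega_m+(-1)^{0}\partial\omega_m=(d+\partial)\omega_m$, and this equals $\varrho_m$ by Proposition \ref{prop:primitive}. Next, $\omega_m=\frac1m\xi\wedge\varrho_{m-1}$ and $\Deg\xi=-1$; the wedge product with $\xi$ coincides with convolution up to sign, as used in the proof of Proposition \ref{prop:wedge_with_xi}, since no $\hat S_r$ with $r\ge1$ survives when the left factor has form degree $0$. Thus $\omega_m=\pm\frac1m\,\xi\ast\varrho_{m-1}$. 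Associativity (Theorem \ref{thm_existence_convolution_DGGA}) and Corollary \ref{cor:dgla_convolution_table} then give
\[
\omega_m\vee\omega_k=\omega_m\ast\varrho_k=\pm\tfrac1m\,\xi\ast(\varrho_{m-1}\ast\varrho_k)=\pm\tfrac1m\tbinom{m-1+k}{k}\,\xi\ast\varrho_{m+k-1}=c_{m,k}\,\omega_{m+k},
\]
with $c_{m,k}=\pm\frac{m+k}{m}\binom{m+k-1}{k}\neq0$. Applying Corollary \ref{cor:valuation_convolution_realization_morphism} yields $\upsilon_m\ast\upsilon_k=c_{m,k}\upsilon_{m+k}$ for all $m,k\ge1$.

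It remains to handle $\upsilon_0$. Since $\upsilon_0=\vol_G\otimes\vol_G^*$ is the unit of the convolution algebra (as stated in the introduction), convolving with it changes nothing. Therefore $\mathcal{TC}(G)$ is a subalgebra. Moreover, induction with the nonzero constants $c_{m,1}$ shows that each $\upsilon_{m+1}$ is a nonzero multiple of $\upsilon_1^{\ast(m+1)}$, so $\upsilon_1$ generates $\mathcal{TC}(G)$ together with the unit.

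The main obstacle is the sign bookkeeping. I need to confirm that $\xi\wedge\tau=\pm\xi\ast\tau$ with no lower-order $\hat S_r$ terms, which holds because $\hat S_r(\tau\otimes\zeta)=0$ whenever $r>\deg\tau$ and $\deg\xi=0$. I also need to check that the resulting constants $c_{m,k}$ are nonzero. Their exact values are not needed, only that they do not vanish.
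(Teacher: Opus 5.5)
Your proof is correct and is essentially the paper's argument. Spanning comes from the basis $\omega_m$ of $\TC$. For the products you compute $\omega_m\vee\omega_k=\frac1m\,\xi\ast(\varrho_{m-1}\ast\varrho_k)$ via Proposition \ref{prop:primitive}, associativity and Corollary \ref{cor:dgla_convolution_table}, and push it through Corollary \ref{cor:valuation_convolution_realization_morphism}; this is the same computation the paper uses for $\upsilon_1^{\ast m}=m!\,\upsilon_m$. The only difference is that you use the full table to get closure, whereas the paper cites Proposition \ref{prop:curvature_closed_convolution}. The signs are in fact all $+$, since $\epsilon^0_{0,1,k,k}=1$, so $\upsilon_m\ast\upsilon_k=\binom{m+k}{k}\upsilon_{m+k}$.
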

\proof
The first assertion follows at once from Propositions  \ref{prop:curvature_closed_convolution} and Corollary \ref{cor:valuation_convolution_realization_morphism}. 
Next, Theorem \ref{thm_abstract_valuations} and Proposition \ref{prop:wedge_with_xi} show that $\mathcal {TC}(G)$ is spanned by $\upsilon_m$.
Finally, using Corollaries \ref{cor:valuation_convolution_realization_morphism} and \ref{cor:dgla_convolution_table}, Definition \ref{def:omega} and Proposition \ref{prop:primitive} we find that  
\begin{displaymath}
	\upsilon_1^{\ast m}=\Re_\g^V(\xi\ast\gamma^{\ast(m-1)})=(m-1)!\Re_\g^V(\xi\wedge \varrho_{m-1})=m!\Re_\g^V(\omega_m)=m!\upsilon_m.
\end{displaymath}

\endproof

\proof[Proof of Theorem \ref{mthm:tube}.]

	First note that the statement makes formal sense. The left hand side depends on the choice of some volume $\vol$ on $G$. The right hand side is an element of $\Dens(\g^*)$, and we can use the fixed volume $\vol$ to identify it with a scalar.  
	
By $B_\epsilon\subset G$ we denote the $\epsilon$-ball centered at the neutral element. 
Let $T_\epsilon:\mathcal V^\infty(G) \to \mathcal V^\infty(G)$ be the tubular operator from \cite{solanes_trillo}. By definition of $T_\epsilon$ we have $T_\epsilon(\phi)(Z)=\phi(B_\epsilon Z)$ for $\epsilon \geq 0$ sufficiently small. Let
\begin{displaymath}
	\partial:=\left.\frac{d}{d\epsilon}\right|_{\epsilon=0} T_\epsilon: \mathcal V^\infty(G) \to \mathcal V^\infty(G)
\end{displaymath}
be the derivation operator. 

Write $R$ for the Reeb vector field on the sphere bundle $SG$ of $G$. Then by \cite[Proposition 4.2]{solanes_trillo} and Proposition \ref{prop:convolution_n-1}, 
\begin{displaymath}
	\partial \phi=[[0,i_R\tau_\phi]]=\mu_{n-1}\ast\phi, \quad \phi \in \mathcal V^\infty(G).
\end{displaymath}

We deduce that
\begin{displaymath}
	\left.\frac{d^k}{d\epsilon^k}\right|_{\epsilon=0} \mu_{n-1}(B_\epsilon Z)=(\mu_{n-1}^{\ast k+1})(Z) =(k+1)! \int_{\nc(Z)} \Re_\g^S \omega_{k+1}.
\end{displaymath}

Note that $T_\epsilon(\mu_{n-1})$ is represented by the $(n-1)$-form $\Phi_\epsilon^*({ \Re_\g^S \xi})$, where $\Phi_\epsilon$ is the Reeb flow.  Since $\Phi_\epsilon(g, v)=(g\exp(\epsilon v), v)$, $T_\epsilon(\mu_{n-1})$ is analytic in $\epsilon \in\mathbb R$. It follows that for small $\epsilon \geq 0$,
\begin{displaymath}
	\mu_{n-1}(B_\epsilon Z)=\sum_{k=0}^\infty \frac{1}{k!}\left.\frac{d^k}{ds^k}\right|_{s=0}\mu_{n-1}(B_sZ)\epsilon^k=\sum_{k=0}^\infty (k+1) \int_{\nc(Z)} \Re_\g^S \omega_{k+1} \epsilon^k.
\end{displaymath}
 Since $\frac{d}{d\epsilon} \vol(B_\epsilon Z)=\mu_{n-1}(B_\epsilon Z)$, an integration from $0$ to $\epsilon$ finishes the proof. 
\endproof

\subsection{Geometric proof of the tube formula}

\proof[Proof of Theorem \ref{mthm:tube}]
Define the function 
\begin{displaymath}
	F_\epsilon(t):=\frac{e^{\epsilon t}-1}{t}=\sum_{i=0}^\infty \frac{t^i \epsilon^{i+1}}{(i+1)!}. 
\end{displaymath}
The definition of $\zeta_\epsilon$ can be rewritten as
\begin{displaymath}
	\zeta_\epsilon=-F_\epsilon(\ad_\xi) \gamma.
\end{displaymath}

For a Lie group $G$ and $\xi \in \g$, the differential $d \exp|_\xi:\g=T_\xi \g \to T_{\exp(\xi)}G$ is given by $dR_{\exp \xi} \circ F_1(\ad \xi)$ \cite[Theorem 1.5.3]{duistermaat_kolk}. Since $\gamma$ is mapped to the identity map under the realization map $\Re_\g$, we have 
\begin{displaymath}
	\Re_\g \zeta_\epsilon=-F_\epsilon(\ad_\xi) \in \Omega^1(\g,\g) \subset \Omega_\g.
\end{displaymath}

Consider the map 
\begin{displaymath}
	\exp_\epsilon:SG=G \times S(\g) \to G, (g,\xi) \mapsto \exp_g \epsilon dL_g(\xi)=g \exp(\epsilon \xi), g \in G, \xi \in S(\g).
\end{displaymath}
Here we have used that the riemannian exponential map is $G$-equivariant and that it coincides with the Lie group exponential map at the neutral element. 

Fix $(g,\xi)$ and set $q:=\exp(\epsilon \xi) \in G$. 
We set  
\begin{displaymath}
	\Phi:= dL_{(gq)^{-1}} \circ d \exp_\epsilon|_{g,\xi} \circ (dL_g \oplus \mathrm{id}):  \g \oplus T_\xi S(\g) \to \g.
\end{displaymath}
One computes that 
\begin{displaymath}
	\Phi(X,Y)=\Ad_{q^{-1}}(X+F_\epsilon(\ad \xi)Y).
\end{displaymath}

Take a positive orthonormal basis $X_1,\ldots,X_n$ of $\g$ and denote $\tau_1,\ldots,\tau_n \in \g^*$ the dual basis. The volume form of $G$ at the neutral element is given by $\vol_G=\largewedge_{i=1}^n \tau_i$. Since it is $\Ad(G)$-invariant, it follows that
\begin{displaymath}
	\Phi^* \vol_G= \largewedge_{i=1}^n (\pi^*\tau_i+F_\epsilon(\ad \xi)^*\tau_i) \in \largewedge^n (\g^* \oplus T_\xi^* S(\g)).
\end{displaymath}
We can identify $ \largewedge^n (\g^* \oplus T_\xi^* S(\g))=\bigoplus_{k=0}^n \largewedge^k T_\xi^* S(\g) \otimes \largewedge^{n-k} \g^*$, see \cite[Equation (13)]{bernig_faifman_kotrbaty_part1}.

For a subset $I=(i_1,\ldots,i_k) \subset \{1,\ldots,n\}$ in increasing order, we let $I^c$ be the complement, ordered increasingly, and $\sigma(I)$ the sign of the permutation $(I,I^c)$. Using the notation $\tau_I=\largewedge_{l=1}^k \tau_{i_l} \in \largewedge^k \g^*$ we can write 
\begin{displaymath}
	\Phi^*\vol_G=\sum_I \sigma(I) F_\epsilon(\ad \xi)^*\tau_I \otimes \tau_{I^c},
\end{displaymath}
where the sum runs over all possible $I$.

We now apply the inverse of the Hodge star $\ast:\largewedge^k \g \otimes \largewedge^n \g^* \to \largewedge^{n-k}\g^*$, where we identify $\largewedge^n \g^* \cong \C$ using the volume form. This gives
\begin{align*}
	\ast^{-1} \Phi^*\vol_G & = \sum_I F_\epsilon(\ad \xi)^*\tau_I \otimes X_I\\
	& = \exp \left(\sum_{i=1}^n F_\epsilon(\ad \xi)^*\tau_i \otimes X_i\right)\\
	& = \exp F_\epsilon(\ad \xi)\\
	& = \mathrm{res} \circ \Re_\g(\rho_\epsilon),
\end{align*}
and therefore
\begin{displaymath}
	\exp_\epsilon^*\vol_G=\Re_\g^S(\rho_\epsilon).
\end{displaymath}

Let $Z \in \mathcal P(G)$. Then $Z$ is of positive reach. For $0<\epsilon<\mathrm{reach}(Z)$, the boundary $\partial B_\epsilon Z$ is given by $(\exp_\epsilon)_* \nc(Z)$. By the Gauss lemma, the unit vector $\xi \in \g = T_{\exp_\epsilon(g,\xi)}G$ is orthogonal to $\partial Z_\epsilon$. Hence the volume form of $\partial Z_\epsilon$ is $\vol_{\partial Z_\epsilon}=\iota_\xi \vol_G$. 

 Since $\xi=\Phi(\xi,0)$, we have 
\begin{displaymath}
\ast^{-1} \Phi^* \iota_\xi \vol_G = \ast^{-1} \iota_{(\xi,0)} \Phi^*\vol_G=\xi \wedge \ast^{-1} \Phi^*\vol_G= \mathrm{res} \circ \Re_g(\xi \wedge \rho_\epsilon)
\end{displaymath}
and thus 
\begin{displaymath}
	\exp_\epsilon^* \vol_{\partial Z_\epsilon}=\Re_\g^S(\xi \wedge \rho_\epsilon).
\end{displaymath}

We conclude that 
\begin{displaymath}
  \frac{d}{d\epsilon} \vol(Z_\epsilon)=\vol(\partial Z_\epsilon)=\int_{\nc(Z)} \Re_\g^S(\xi \wedge \rho_\epsilon)=\Re_\g^V(\xi \wedge \rho_\epsilon)(Z)=\sum_{k=1}^\infty k \upsilon_k(Z) \epsilon^{k-1}, 
\end{displaymath}
and hence 
\begin{displaymath}
	\vol(Z_\epsilon)=\sum_{k=0}^\infty \upsilon_k(Z) \epsilon^k.
\end{displaymath}
\endproof

\subsection{Example}

Let us work out explicitly a particular case, namely $G=S^3$ with its round metric. Note that on each tangent space to the sphere $S^2 \subset T_\xi S^3$, $\ad \xi$ acts by twice the complex structure, in particular $(\ad_ \xi)^2=-4\mathrm{Id}$. The series for $\zeta_\epsilon$ thus simplifies to  
\begin{displaymath}
	\zeta_\epsilon=\frac{\sin^2  \epsilon}{2} \ad \xi \gamma -\frac{\sin (2\epsilon)}{2}  \gamma.
\end{displaymath} 

Let us compute $\mu_{n-1}(B_\epsilon Z)$, where $Z=\{p\}$ is a point. We need the component of degree $2$ and wedge degree $2$ in $\varrho_\epsilon=\widehat\exp(-\zeta_\epsilon)$. We have $\gamma \wedge \ad_ \xi \gamma=0$ and $\ad_ \xi \gamma \wedge \ad_ \xi \gamma=4 \gamma \wedge \gamma$. It follows that the bidegree $(2,2)$-part in $\rho_\epsilon$  given by 
\begin{displaymath}
	\frac12 \frac{\sin^2(2\epsilon)}{4} \gamma \wedge \gamma +\frac12 \frac{\sin^4 \epsilon}{4} \ad_ \xi \gamma \wedge \ad_ \xi \gamma=\frac{\sin^2\epsilon}{2} \gamma \wedge \gamma.
\end{displaymath}
Now $*_1(\xi \wedge \gamma \wedge \gamma)$ is twice the volume form on the sphere $S^2 \subset T_{\{p\}} S^2$ and the tube formula thus gives us the correct value 
\begin{displaymath}
	\mu_2(B_\epsilon Z)=\sin^2 \epsilon \vol(S^2). 
\end{displaymath}

\section{Existence of invariant valuations}
\label{sec:existence}

 In this final section we strengthen Theorem B from \cite{bernig_faifman_kotrbaty_part1}. There we have shown that a \emph{connected} Lie group that admits a smooth bi-invariant valuation outside the span of the Euler characteristic and the Haar measure, must also admit a bi-invariant metric. As it turns out, this holds for general Lie groups.
	
Next, we showed in \cite{bernig_faifman_kotrbaty_part1} that if there is a bi-invariant metric on $G$, then the space $\calV^\infty(G)^{G \times G}$ of smooth bi-invariant valuations is typically infinite-dimensional. For compact semisimple groups $G$ with $\g \neq \mathfrak{so}(3)$ we give here a more explicit argument, showing that the subspace $\TC(G) \subset \mathcal V^\infty(G)^{G \times G}$ is infinite-dimensional.

\subsection{Existence of smooth non-obvious bi-invariant valuations}
The following two lemmas appeared in \cite{bernig_faifman_kotrbaty_part1}, and are reproduced here for convenience. While they were formulated for Lie groups, their proofs are completely general. 
\begin{Lemma}\label{lem:zero_implies_infinite}
	Let $G$ be any group, and $V$ a real finite-dimensional representation of $G$. If $v\in V$ is non-zero and $Gv$ has $0$ as its limit point, then $Gv$ is unbounded.
\end{Lemma}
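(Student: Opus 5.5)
The statement is elementary and does not depend on any Lie structure. I will argue by contradiction using compactness of the unit sphere.

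Fix a norm $\|\cdot\|$ on $V$. Suppose, towards a contradiction, that $Gv$ is bounded, say $\|gv\|\le C$ for all $g\in G$. Since $0$ is a limit point of $Gv$, there is a sequence $g_k\in G$ with $g_kv\to 0$.

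Consider the operators $A_k:=g_k^{-1}$, viewed as elements of $GL(V)$. Each $A_k$ sends $g_kv$ back to $v$. I would like to show that the operator norms $\|A_k\|$ must blow up, and then produce elements of $Gv$ of large norm.

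To get this cleanly, first use boundedness of the orbit for more than $v$ alone. Let $W:=\operatorname{span}(Gv)$, which is a $G$-invariant subspace containing $v$. Pick finitely many elements $h_1v,\dots,h_mv$ of the orbit forming a basis of $W$. Every $w\in W$ can be written as $w=\sum c_i h_iv$, with $|c_i|\le c\|w\|$ by equivalence of norms on $W$. Then for any $g\in G$,
\[
\|gw\|\le \sum |c_i|\,\|gh_iv\|\le mcC\|w\|.
\]
So the restrictions $g|_W$ are uniformly bounded in operator norm by $M:=mcC$.

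Apply this to $g=g_k^{-1}$ and $w=g_kv\in W$:
\[
\|v\|=\|g_k^{-1}g_kv\|\le M\|g_kv\|\to 0.
\]
This forces $v=0$, which contradicts $v\neq 0$. Hence $Gv$ is unbounded.

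The only point requiring care is passing from a bound on the orbit of the single vector $v$ to a uniform operator bound on $W$. This is handled by choosing a basis of $W$ made of orbit elements, which is possible because $W$ is spanned by the orbit. Everything else is routine.
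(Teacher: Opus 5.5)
Your proof is correct. Choosing a basis of $W=\Span(Gv)$ made of orbit points turns boundedness of the single orbit into the uniform bound $\|g|_W\|\le M$ for all $g\in G$, and then $\|v\|=\|g_k^{-1}(g_kv)\|\le M\|g_kv\|\to 0$ gives the contradiction. The paper does not prove the lemma here; it quotes it from the companion paper and only remarks that the argument uses nothing about Lie groups. Your argument fits that remark, since it uses only linearity and invertibility of the action.
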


\begin{Lemma}\label{lem:unbounded}
	Let $G$ be any group, and $V$ a real finite-dimensional representation of $G$. If $V$ has no  $G$-invariant Euclidean structure, then there is an open dense set of vectors $v\in V$ with $Gv$ unbounded.
\end{Lemma}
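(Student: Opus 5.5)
The plan is to argue by contraposition on the orbit of a vector under a finitely generated or arbitrary group action, using compactness of the closure of a bounded orbit. Suppose $V$ admits no $G$-invariant Euclidean structure. Let $\rho:G\to GL(V)$ be the representation and $K=\overline{\rho(G)}\subset GL(V)$... but $K$ need not be compact. Instead, I consider the set
\[
B=\{v\in V:\ Gv \text{ is bounded}\}.
\]
This is a $G$-invariant linear subspace: if $Gv$ and $Gw$ are bounded then $G(av+bw)\subset aGv+bGw$ is bounded. The claim then reduces to showing $B\neq V$, since the complement of a proper linear subspace is open and dense.

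So I assume $B=V$, i.e.\ every orbit is bounded, and derive an invariant Euclidean structure. Choose a basis $v_1,\dots,v_m$ of $V$. Each $Gv_i$ is bounded, so for any $g$ the operator norm $\|\rho(g)\|$ (with respect to a fixed auxiliary norm) is bounded by a constant depending only on the bounds for the $Gv_i$ and the basis. Hence $\rho(G)$ is a bounded subgroup of $GL(V)$. Its closure $K$ in $\mathrm{End}(V)$ is compact, and it consists of invertible maps: the inverses $\rho(g)^{-1}=\rho(g^{-1})$ are also uniformly bounded, so limits of $\rho(g_k)$ have limits of inverses as inverses. Thus $K$ is a compact subgroup of $GL(V)$.

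Finally, average any inner product $\langle\cdot,\cdot\rangle_0$ over $K$ with its Haar probability measure, $\langle u,w\rangle=\int_K\langle ku,kw\rangle_0\,dk$, obtaining a $K$-invariant, hence $G$-invariant, Euclidean structure, contradicting the hypothesis. The only mildly delicate point is verifying that the closure of a bounded group in $GL(V)$ is a compact group (closedness within $GL(V)$), handled by the uniform bound on inverses as above; everything else is routine.
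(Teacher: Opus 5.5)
Your proof is correct and complete. The vectors with bounded orbit form a $G$-invariant linear subspace $B$. If $B=V$, then $\rho(G)$ and $\rho(G)^{-1}$ are uniformly bounded, so $\overline{\rho(G)}$ is a compact subgroup of $\mathrm{GL}(V)$, and Haar averaging produces a $G$-invariant inner product. Hence $B$ is proper, and $V\setminus B$, which is exactly the set of vectors with unbounded orbit, is open and dense.

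The paper does not prove this lemma here; it cites it from the companion paper and remarks only that the proof works for arbitrary groups. Your argument uses nothing about $G$ beyond the homomorphism $\rho$, so it is a valid self-contained proof in that generality.
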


By $\sigma(A)\subset\C$ we denote the (algebraic) spectrum of $A\in\mathrm{GL}_n(\C)$, that is the roots of the characteristic polynomial of $A$ including multiplicity. The following is standard linear algebra.
\begin{Lemma}\label{lem:elliptic} Let $A\in\mathrm{GL}_n(\R)$. The following are equivalent:
	\begin{enumerate}
		\item $A$ is orthogonal for some Euclidean structure on $\R^n$.
		\item $\{A^j\}_{j=-\infty}^\infty$ is bounded in $\mathrm{GL}_n(\R)$.
		\item $|\lambda|=1$ for all $\lambda\in\sigma(A)$, and $A$ is semisimple, that is diagonalizable over $\C$.
		
	\end{enumerate}
\end{Lemma}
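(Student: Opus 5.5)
We prove the cycle of implications (i)$\Rightarrow$(ii)$\Rightarrow$(iii)$\Rightarrow$(i), all of which are elementary linear algebra.

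\emph{(i)$\Rightarrow$(ii).} If $A$ preserves a Euclidean structure $\langle\cdot,\cdot\rangle$, then so do all powers $A^j$, $j\in\mathbb Z$. Hence $\{A^j\}$ lies in the orthogonal group of $\langle\cdot,\cdot\rangle$. This group is a compact subset of $\mathrm{GL}_n(\R)$, being conjugate to $O(n)$, so the powers are bounded.

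\emph{(ii)$\Rightarrow$(iii).} Boundedness in $\mathrm{GL}_n(\R)$ is understood as relative compactness in $\mathrm{GL}_n(\R)$. In particular both $\{A^j\}_{j\ge0}$ and $\{A^{-j}\}_{j\ge0}$ are bounded in operator norm.
\begin{itemize}
\item Modulus one: if $\lambda\in\sigma(A)$ has $|\lambda|>1$, take a complex eigenvector $v$. Then $|A^jv|=|\lambda|^j|v|\to\infty$, a contradiction. If $|\lambda|<1$, apply the same argument to $A^{-1}$, which has eigenvalue $\lambda^{-1}$.
\item Semisimplicity: suppose $A$ is not diagonalizable over $\C$. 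Its Jordan form then contains a block $\lambda I+N$ with $N\neq0$ nilpotent and $|\lambda|=1$. There are vectors $v,w$ with $(A-\lambda)w=v\neq0$ and $(A-\lambda)v=0$, and then $A^jw=\lambda^jw+j\lambda^{j-1}v$. So $|A^jw|\ge j|v|-|w|\to\infty$, again a contradiction.
\end{itemize}

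\emph{(iii)$\Rightarrow$(i).} Diagonalize $A$ over $\C$ with eigenvalues $e^{i\theta_k}$. Since $A$ is real, the non-real eigenvalues come in conjugate pairs, and the corresponding eigenvectors can be chosen to be complex conjugates. This yields a real decomposition of $\R^n$ into blocks: $1$-dimensional blocks on which $A$ acts by $\pm1$, and $2$-dimensional blocks spanned by $\mathrm{Re}\,v,\mathrm{Im}\,v$ on which $A$ acts by a rotation matrix. Declare these real basis vectors to be orthonormal. In this Euclidean structure $A$ is block-diagonal with orthogonal blocks, hence orthogonal.

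An alternative route to (ii)$\Rightarrow$(i) avoids the spectrum entirely. Let $K$ be the closure of the group $\{A^j\}$. It is a closed subgroup of $\mathrm{GL}_n(\R)$, compact since it is bounded and closed in $\mathrm{GL}_n(\R)$ (here boundedness of $A^{-j}$ is used). Averaging any inner product over the Haar measure of $K$ gives an $A$-invariant one.

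The only mildly delicate point is the interpretation of ``bounded in $\mathrm{GL}_n(\R)$'', which must control negative powers as well. With that understood, every step above is routine.
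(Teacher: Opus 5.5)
Your proof is correct and complete. The paper gives no argument, calling the lemma standard linear algebra, and your cycle (i)$\Rightarrow$(ii)$\Rightarrow$(iii)$\Rightarrow$(i), together with the Haar-averaging shortcut for (ii)$\Rightarrow$(i), is the standard proof it alludes to. The interpretive point you flag is in fact automatic: the family is indexed by all $j\in\mathbb Z$, so operator-norm boundedness of $\{A^j\}$ already bounds $\|A^{-j}\|$, which keeps $|\det A^j|$ away from $0$ and makes the closure compact inside $\mathrm{GL}_n(\R)$.
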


\begin{Definition}
	We say that $A\in\mathrm{GL}_n(\R)$ is elliptic type if it satisfies the equivalent conditions of Lemma \ref{lem:elliptic}.
\end{Definition} 

\begin{Lemma}\label{lem:radical_elliptic}
 If $A\in\mathrm{GL}_n(\R)$ satisfies that $A^k$ is elliptic type for some $k>0$, then so is $A$.
\end{Lemma}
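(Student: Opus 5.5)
The plan is to use characterization (iii) of Lemma \ref{lem:elliptic}, treating the spectral condition and semisimplicity separately.

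For the spectrum, I would pass to the Jordan decomposition of $A$ over $\C$. If $\sigma(A)=\{\lambda_1,\dots,\lambda_n\}$ with multiplicity, then $\sigma(A^k)=\{\lambda_1^k,\dots,\lambda_n^k\}$, since $A$ is conjugate to an upper triangular matrix with the $\lambda_i$ on the diagonal and $A^k$ is then upper triangular with diagonal $\lambda_i^k$. Ellipticity of $A^k$ gives $|\lambda_i|^k=1$, hence $|\lambda_i|=1$ for all $i$.

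For semisimplicity, I would write $A=A_sA_u=A_uA_s$ (multiplicative Jordan decomposition), with $A_s$ semisimple and $A_u$ unipotent commuting. Then $A^k=A_s^kA_u^k$ is again a Jordan decomposition: $A_s^k$ is semisimple, $A_u^k$ is unipotent, and they commute. By uniqueness of the Jordan decomposition and semisimplicity of $A^k$, we get $A_u^k=\mathrm{Id}$. Writing $A_u=\mathrm{Id}+N$ with $N$ nilpotent, this reads $(\mathrm{Id}+N)^k-\mathrm{Id}=N\,(k\,\mathrm{Id}+\binom k2 N+\cdots)=0$. The second factor is invertible because $k\neq0$ and $N$ is nilpotent, so $N=0$. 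Hence $A=A_s$ is semisimple, and together with the spectral condition, (iii) holds.

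An alternative route avoids Jordan theory entirely: use (ii). If $\{A^{kj}\}_j$ is bounded, then every $A^m$ equals $A^{kj}A^{r}$ with $0\le r<k$, so $\{A^m\}_m$ lies in a finite union of bounded sets, and is therefore bounded. This is shorter, and I would present it as the main proof.

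No step is a real obstacle. The only point requiring care is the decomposition $m=kj+r$ with $r\in\{0,\dots,k-1\}$ for negative $m$ as well, which Euclidean division provides.
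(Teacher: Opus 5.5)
Your main argument through characterization (ii) is correct and is exactly the paper's proof. With the bounded set $X=\{A^{kj}\}_{j\in\mathbb Z}$, you write $m=kj+r$ with $0\le r\le k-1$ to get $\{A^m\}_{m\in\mathbb Z}\subset X\cup XA\cup\dots\cup XA^{k-1}$. Your alternative via the multiplicative Jordan decomposition is also valid, but not needed.
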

\proof
Denote $X=\{A^{kj}:j\in\mathbb Z\}$, a bounded set in $\mathrm{GL}_n(\R)$ by assumption. Then for any $m\in\mathbb Z$ we can write $m=kj+r$ with $0\leq r\leq k-1$, and so
$\{A^{m}:m\in\mathbb Z\} \subset X\cup XA\cup\dots\cup XA^{k-1}$ is also a bounded set.
\endproof

\begin{Lemma}\label{lem:few_zero_orbits}
	Let $A\in \mathrm{GL}_n(\R)$ and denote $G=\{A^k\}_{k=-\infty}^\infty$. Assume there exist $\lambda, \lambda'\in\sigma(A)$ with $|\lambda|\geq 1\geq |\lambda'|$. Then $G v$ does not contain $0$ in its closure for some open dense set $U\subset \R^n$ of vectors $v\in\R^n$.
\end{Lemma}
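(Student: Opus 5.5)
We split $\R^n$ into generalized eigenspaces of $A$ according to the modulus of the eigenvalues. Write $\R^n = V_{>1}\oplus V_{=1}\oplus V_{<1}$, where $V_{>1}$ (resp. $V_{=1}$, $V_{<1}$) is the real part of the sum of the complex generalized eigenspaces for eigenvalues with $|\lambda|>1$ (resp. $=1$, $<1$). Each summand is real and $A$-invariant, since eigenvalues come in conjugate pairs with equal modulus. Let $\pi_{>1},\pi_{=1},\pi_{<1}$ be the corresponding $A$-equivariant projections. The hypothesis that $\sigma(A)$ contains some $\lambda$ with $|\lambda|\geq 1$ and some $\lambda'$ with $|\lambda'|\leq 1$ is used to find an invariant subspace on which forward iterates do not decay and another on which backward iterates do not decay.

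Case 1: $V_{=1}\neq 0$. Take $U=\{v:\pi_{=1}v\neq 0\}$, which is open and dense, being the complement of a proper linear subspace. On $V_{=1}$, the restriction $B$ of $A$ has all eigenvalues of modulus one. Write $B=SN$ (Jordan decomposition, commuting semisimple and unipotent parts). Then $S$ is elliptic type by Lemma \ref{lem:elliptic}, so $|B^k w|$ is comparable, uniformly in $k$, to $|N^k w|$ in an $S$-invariant norm. We claim $|B^k w|\geq c>0$ for all $k\in\mathbb Z$ and $w\neq 0$. Indeed, let $j$ be maximal with $(N-I)^j w\neq 0$. Then $N^k w=\sum_i\binom{k}{i}(N-I)^i w$ has the top-order term $\binom{k}{j}(N-I)^j w$, which does not vanish for large $|k|$, and lower-order terms do not cancel it. So $N^kw$ is bounded below for $|k|$ large, and nonzero for every $k$ by invertibility, hence bounded below for all $k$. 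Since $\pi_{=1}$ is equivariant and continuous, $|A^kv|\geq C^{-1}|B^k\pi_{=1}v|\geq c'>0$, so $0\notin\overline{Gv}$.

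Case 2: $V_{=1}=0$. Then the hypothesis forces both $V_{>1}\neq0$ and $V_{<1}\neq0$. Take $U=\{v:\pi_{>1}v\neq0,\ \pi_{<1}v\neq0\}$, which is open and dense. For $k\geq0$, the norm $|A^k\pi_{>1}v|$ grows exponentially, because $A^{-1}$ restricted to $V_{>1}$ has spectral radius less than one. Similarly, for $k\leq 0$ the norm $|A^{k}\pi_{<1}v|$ grows. Hence $|A^kv|$ is bounded below by a positive constant over all $k$ once we handle the finitely many middle values of $k$, where $A^kv\neq0$ by invertibility. So again $0\notin\overline{Gv}$.

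The only step needing care is the unipotent lower bound in Case 1, that is, showing that a polynomial orbit $k\mapsto N^kw$ stays away from zero. This follows from the leading-coefficient argument above. Everything else is standard spectral decomposition.
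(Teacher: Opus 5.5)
Your proof is correct and follows essentially the same approach as the paper. You use the same splitting into unstable, center and stable parts, the same two cases ($V_{=1}\neq 0$, or both $V_{>1},V_{<1}\neq 0$), and the same open dense sets $U$. The only difference is that you prove the linear-dynamics facts the paper cites as well known: the exponential growth off the center, and the polynomial lower bound for $N^k w$ on the center via the Jordan decomposition.
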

\proof
Let $\R^n=E^+\oplus E^0\oplus E^-$ be the decomposition into the unstable, center, and stable eigenspaces. They correspond to direct sums of generalized eigenspaces with eigenvalues of absolute value more, equal, or less than $1$, intersected with $\R^n$. By assumption, either $E^0$ or both of $E^+, E^-$ are non-trivial. 

It is a well known fact from linear dynamics that $A^kv \to \infty$ for $k\to \infty$ if $v^+\neq 0$, and $0$ is not in the closure of $A^kv$, $k\geq 1$, if $v^0\neq 0$. 

If $E^+, E^-$ are non-trivial, we may take $U=\{v: v_+\neq 0, v_-\neq 0\}$. If $E^0$ is non-trivial, we may take $U=\{v: v_0\neq 0\}$.
\endproof

Let $h$ be a hermitian structure on $\C^n$. The $h$-singular values of $X\in\mathrm{GL}_n(\C)$, denoted $\sigma_1^h(X)\geq\dots\geq \sigma_n^h(X)$, are the square roots of the eigenvalues of $X^hX$, where $X^h$ is the transposed with respect to $h$: $h(X^hx, y)=h( x, Xy)$ for all $x,y\in\C^n$.
\begin{Lemma}\label{lem:hermitian}
	Let $A\in\mathrm{GL}_n(\C)$ be such that $|\lambda|>1$ for all $\lambda\in\sigma(A)$.
	Then there is a hermitian structure $h$ on $\C^n$ such that the the $h$-singular values of $A^k$ satisfy $\sigma_j^h(A^k)\to\infty$ as $k\to\infty$.
\end{Lemma}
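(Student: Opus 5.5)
The plan is to construct a hermitian structure adapted to a Jordan-type normal form of $A$ whose off-diagonal part is arbitrarily small compared to the expansion. Then $A$ is uniformly expanding in that norm, and the conclusion follows from the minimax characterization of singular values.

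\emph{Step 1: the norm.} Since every $\lambda\in\sigma(A)$ satisfies $|\lambda|>1$, set $c:=\min_{\lambda\in\sigma(A)}|\lambda|>1$ and fix $1<c'<c$. Choose a basis of $\C^n$ in which $A$ is upper triangular, for instance a Jordan basis. Conjugate by $\mathrm{diag}(1,\delta,\delta^2,\dots,\delta^{n-1})$ with $\delta>0$ small. This rescales the entry $a_{ij}$ ($i<j$) by $\delta^{j-i}$ and leaves the diagonal unchanged. In the new basis $A=\Lambda+N_\delta$, where $\Lambda$ is diagonal with entries from $\sigma(A)$ and $\|N_\delta\|\to0$ as $\delta\to0$, norms taken with respect to the standard hermitian form in this basis. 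Let $h$ be that hermitian form.

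\emph{Step 2: uniform expansion.} For every $x$ we have
\[|Ax|_h\geq|\Lambda x|_h-\|N_\delta\|\,|x|_h\geq (c-\|N_\delta\|)|x|_h\geq c'|x|_h\]
once $\delta$ is small enough. Iterating gives $|A^kx|_h\geq (c')^k|x|_h$ for all $x$ and all $k\geq1$.

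\emph{Step 3: singular values.} The smallest $h$-singular value satisfies $\sigma_n^h(X)=\min_{|x|_h=1}|Xx|_h$. This holds because $X^hX$ is $h$-selfadjoint and positive, and its smallest eigenvalue is $\min_{|x|_h=1} h(X^hXx,x)=\min|Xx|_h^2$. Hence $\sigma_n^h(A^k)\geq (c')^k\to\infty$. Since $\sigma_j^h(A^k)\geq\sigma_n^h(A^k)$ for every $j$, all singular values tend to infinity.

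\emph{Main obstacle.} The only real issue is the possible non-diagonalizability of $A$. Without rescaling, a nilpotent part could make $A$ fail to expand in a given norm, even though the spectral radius of $A^{-1}$ is below $1$. The $\delta$-rescaling of the triangular form in Step 1 handles this.

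An alternative route gives the same bound more quickly: the spectral radius of $A^{-1}$ is $1/c<1$, so by Gelfand's formula $\|A^{-k}\|\leq (c')^{-k}$ for large $k$ in any fixed hermitian norm. Then $\sigma_n(A^k)=1/\|A^{-k}\|\to\infty$, and in fact any $h$ works.
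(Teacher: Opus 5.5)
Your proof is correct, but it takes a different route from the paper. The paper first reduces to a single Jordan block. It takes $h$ to be the orthogonal sum of hermitian structures on the block subspaces, each making a Jordan basis orthonormal. It then never estimates $\sigma_n$ directly. Instead it bounds the \emph{largest} singular value from above by Gelfand's formula, $\sigma_1(A^k)\leq|\lambda|^k(1+\epsilon)^k$ for large $k$. It converts this into a lower bound for the smallest one via $\sigma_1(A^k)\cdots\sigma_n(A^k)=|\lambda|^{nk}$. This comparison with the determinant is tailored to the case where all eigenvalues have the same modulus, which is why the block reduction, and hence a specific $h$, is needed.

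You instead bound $\sigma_n^h$ directly. Your main argument, via the rescaled triangular form, builds an adapted norm in which $A$ is a uniform expansion. This gives the explicit bound $\sigma_n^h(A^k)\geq (c')^k$ for every $k\geq 1$, not just asymptotically. Your alternative, $\sigma_n^h(A^k)=\|A^{-k}\|_h^{-1}$ combined with Gelfand's formula for $A^{-1}$, is the shortest argument. It shows that the conclusion holds for \emph{every} hermitian structure, so the existential choice of $h$ in the statement is superfluous.

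This observation would also streamline Corollary~\ref{cor:determinant}. For any Euclidean $P$ one gets $|\det_P(A^k:E\to A^kE)|\geq\sigma_n^P(A^k)^r\to\infty$ uniformly in $E\in\Gr_r(\R^n)$. No polar decomposition or passage to a subsequence is then needed.
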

\proof
We may assume $A$ has only one Jordan block in its standard Jordan form. Indeed, assuming the statement for such matrices, and letting $E_j$ be the invariant subspaces of $A$ corresponding to the Jordan blocks, we may take the $l^2$ sum of the hermitian structures $h_j$ on $E_j$ to get the desired metric.

Now choose a Jordan basis of $A$, and let $h$ be a hermitian structure on $\C^n$ for which it is an orthonormal basis. Thus $A=J_n(\lambda)$ with $|\lambda|>1$, and henceforth we work with the standard hermitian structure on $\C^n$.

Recall that the top singular value $\sigma_1(X)$ is a matrix norm, and by Gelfand's formula, the spectral radius is $\rho(X)=\lim_{k\to \infty} \sigma_1(X^k)^{1/k}$ \cite[Example 5.6.6, Corollary 5.6.14]{horn_johnson}. Hence
\begin{displaymath}
  |\lambda|=\rho(A)= \lim \sigma_1(A^k)^{1/k}.
 \end{displaymath}
In particular, for any $\epsilon>0$, $\sigma_1(A^k)\leq |\lambda|^k(1+\epsilon)^k$ for $k$ sufficiently large. Fix $\epsilon$ such that $(1+\epsilon)^{n-1}<|\lambda|$.

Recall that $\sigma_1(A^k)\cdots\sigma_n(A^k)= |\det A|^k=|\lambda|^{nk}$. Thus

\[\sigma_n(A^k)\geq \frac{|\lambda|^{nk}}{\sigma_1(A^k)^{n-1}}\geq \frac{|\lambda|^{nk}} { |\lambda|^{(n-1)k}(1+\epsilon)^{(n-1)k} }=\frac{|\lambda|^k}{(1+\epsilon)^{(n-1)k}}\to\infty.\]

\endproof

\begin{Corollary}\label{cor:determinant}
	Let $A\in\mathrm{GL}_n(\R)$ have $|\lambda|>1$ for all $\lambda\in\sigma(A)$, and $1\leq r\leq n-1$.  Then there is a Euclidean metric $P$ on $\R^n$ and a sequence $k_j\to\infty$ such that 
	\begin{displaymath}
		\inf_{E\in\Gr_r(\R^n)} |\det{}_P (A^{k_j}:E\to A^{k_j}E)| \to \infty.
	\end{displaymath}

\end{Corollary}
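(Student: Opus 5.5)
We deduce the statement from Lemma \ref{lem:hermitian}, passing from the complexification to $\R^n$ and then to exterior powers. The idea is that $|\det_P(B:E\to BE)|$ equals the norm of $Bw/|w|$ in the induced metric on $\largewedge^r\R^n$, where $w$ is a unit decomposable $r$-vector spanning $E$. This norm is bounded below by the product of the $r$ smallest singular values of $B$. Indeed, for $w=e_1\wedge\dots\wedge e_r$ with $(e_i)$ $P$-orthonormal, the Gram determinant of $Be_1,\dots,Be_r$ equals $\det(B_E^TB_E)$. Here $B_E$ is the restriction of $B$ to $E$, and by Cauchy interlacing the eigenvalues of $B_E^TB_E$ dominate the $r$ smallest eigenvalues of $B^TB$. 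So it suffices to find a Euclidean metric $P$ for which all $P$-singular values of $A^{k}$ tend to infinity. In that case $\inf_E|\det_P|\geq \sigma_n(A^k)^r\to\infty$ along the whole sequence, so any $k_j\to\infty$ works.

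To produce $P$, apply Lemma \ref{lem:hermitian} to $A$, viewed in $\mathrm{GL}_n(\C)$. This gives a hermitian structure $h$ with $\sigma_n^h(A^k)\to\infty$, i.e. $|A^kx|_h\geq \sigma^h_n(A^k)|x|_h$ for all $x\in\C^n$. Restricting $\mathrm{Re}\,h$ to $\R^n\subset\C^n$ gives a Euclidean metric $P$ with $|v|_P=|v|_h$ for real $v$. Since $A$ is real, $A^kv\in\R^n$, so $|A^kv|_P\geq\sigma^h_n(A^k)|v|_P$. Hence the smallest $P$-singular value of $A^k$ on $\R^n$ is at least $\sigma_n^h(A^k)$, which tends to infinity. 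No realness of $h$ or Jordan structure is needed; the restriction argument handles complex eigenvalues automatically.

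The only step requiring care is the singular value inequality for the restricted determinant. I would verify it by the min–max characterization: for $E$ of dimension $r$, the $i$-th singular value of $B|_E$ is at least $\sigma_{n-r+i}(B)$. The product of the singular values of $B|_E$ is then at least $\sigma_{n-r+1}(B)\cdots\sigma_n(B)\geq \sigma_n(B)^r$. This is the main obstacle, though it is standard linear algebra. The sequence $k_j$ in the statement can be taken to be all positive integers.
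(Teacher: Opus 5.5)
Your proposal is correct and is essentially the paper's argument: apply Lemma \ref{lem:hermitian}, restrict $h$ to $\R^n$ to get $P$ with $\sigma_n^P(A^k)\geq\sigma_n^h(A^k)$, and bound $|\det_P(B:E\to BE)|$ from below by the smallest singular value of $B$. The only difference is in the last step. The paper fixes one $m$ with $\sigma_n^h(A^m)>1$, uses the polar decomposition $A^m=BS$ and multiplicativity of determinants to get growth along $k_j=mj$. You instead apply $\inf_E|\det_P(A^k:E\to A^kE)|\geq\sigma_n^h(A^k)^r$ directly to every power, which is a bit more streamlined and gives the conclusion along the full sequence.
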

\proof
Choose $h$ using Lemma \ref{lem:hermitian}. We choose $m$ large enough so that $\sigma^h_n(A^m)>1$. Let $P$ be the restriction of $h$ to $\R^n$. 
Let $A^m=BS$ be the polar decomposition of $A^m$ with $B$ $P$-orthogonal and $S$ $P$-positive definite. The eigenvalues $\sigma^h_j(S)$ of $S$ are the $P$-singular values of $A^m$, in particular $\sigma^h_n(S)=\sigma^h_n(A^m)>1$.

Note that $|\det_P(B:E\to BE)|=1$ for all $E$, while $|\det_P(S:E\to SE)|\geq \sigma^h_n(S)$. Therefore $|\det(A^{m j}:E\to A^{mj}E)|\geq \sigma^h_n(A^m)^j\to \infty$ as $j\to\infty$, and we take $k_j=mj$.

\endproof

The space of translation-invariant, smooth valuations on a vector space $V$ is denoted by $\Val^\infty(V)$. The subspace of even/odd $k$-homogeneous valuations is denoted $\Val^{\pm, \infty}_k(V)$.

The Klain section of a valuation $\phi\in\Val^{+,\infty}_k(\R^n)$ is the section $\Kl_\phi\in\Gamma(\Gr_k(\R^n), \Dens(E))$ given by $\Kl_\phi(E)=\phi|_E$. Klain's theorem \cite{klain00} asserts that $\phi\mapsto \Kl_\phi$ is injective.

A similar result for odd valuations is due to Schneider \cite{schneider96}: There is a short exact sequence 
\begin{displaymath}
	0 \to L\to \Gamma(\widetilde{\mathrm{Fl}}_{k,k+1}(\R^n), \Dens(E)) \to \Val_k^{-,\infty}(\R^n)\to 0,
\end{displaymath}
where $\widetilde{\mathrm{Fl}}_{k,k+1}(\R^n)$ is the partial flag manifold of cooriented pairs $E\subset F$, and $L$ a certain subspace of sections which, for any $F\in \Gr_{k+1}(\R^n)$, has $(k+1)$-dimensional restriction to $\widetilde\Gr_{k}(F)$.

\begin{Theorem}\label{thm:elliptic}
	Assume $A^*\phi=\phi$ for some $A\in\mathrm{GL}_n(\R)$ and non-zero $\phi\in\Val^\infty_k(\R^n)$, $1\leq k\leq n-1$. Then $A$ is elliptic type.
\end{Theorem}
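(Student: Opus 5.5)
We argue by contradiction: assume $A$ is not elliptic type and derive $\phi=0$. The tools are Lemma \ref{lem:elliptic}, the dynamical Lemmas \ref{lem:few_zero_orbits} and \ref{lem:zero_implies_infinite}, Corollary \ref{cor:determinant}, and the injectivity results of Klain and Schneider. Splitting $\phi$ into even and odd parts (the pullback $A^*$ commutes with $\phi\mapsto\phi(-\bullet)$) reduces to $\phi\in\Val^{\pm,\infty}_k$. The invariance $A^*\phi=\phi$ transfers to the Klain (resp.\ Schneider) section: $\Kl_\phi(AE)$ pulled back by $A|_E$ equals $\Kl_\phi(E)$. Writing $\Kl_\phi(E)=f(E)\vol_{P,E}$ for a Euclidean metric $P$, this reads
\[ f(AE)\,|\det{}_P(A:E\to AE)|=f(E). \]
In the odd case the same relation holds for a representing section on cooriented flags, modulo $L$; choosing the representative canonically (e.g.\ $L^2$-orthogonal to $L$ fiberwise) is an issue discussed below.

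\textbf{Expanding case.} First suppose all eigenvalues satisfy $|\lambda|>1$. Iterating the relation with $A^{k_j}$ from Corollary \ref{cor:determinant} gives
\[ |f(E)|\le \frac{\sup|f|}{\inf_E|\det{}_P(A^{k_j}|_E)|}\to0 . \]
Hence $\Kl_\phi=0$ and $\phi=0$ by Klain's theorem. The case where all $|\lambda|<1$ follows by applying the same argument to $A^{-1}$, since $\phi$ is also $A^{-1}$-invariant.

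\textbf{Mixed or unit-modulus case.} Otherwise there are $\lambda,\lambda'\in\sigma(A)$ with $|\lambda|\ge1\ge|\lambda'|$. Since $A$ is not elliptic, there are two possibilities. Either some eigenvalue has modulus $\neq1$, in which case $E^+$ or $E^-$ is nontrivial. Or all moduli equal $1$ but $A$ has a nontrivial Jordan block; here polynomial growth of $A^k$ along generic directions plays the role of expansion.

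The plan in this case is to use homogeneity and continuity of $\phi$. Consider the action on the Grassmannian $\Gr_k$. For generic $E$, the sequence $A^jE$ converges as $j\to\infty$ to an attracting configuration, namely a subspace adapted to the flag of generalized eigenspaces ordered by growth rate. Meanwhile $|\det_P(A^j|_E)|$ tends to $\infty$ or $0$ at a rate determined by the sum of the top $k$ growth exponents. If the top $k$ exponents have positive sum, or the Jordan contribution gives polynomial growth, then $f(E)=f(A^jE)|\det|\to$ would force $f(E)=0$, since $f$ is bounded on the compact Grassmannian. The same applies to $A^{-1}$ when the bottom $k$ exponents have negative sum.

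The \textbf{main obstacle} is the borderline situation where these sums vanish, for example $\sigma(A)=\{2,1/2\}$ with $k=1$ and $n=2$. There the determinant can stay bounded along the orbit, and one must instead show $f$ vanishes on an open dense set. To handle it, I would use the fact that the restriction of $\phi$ to an $A$-invariant subspace $F$ of dimension $k+1$ (a sum of eigen-directions) is an $A|_F$-invariant valuation in $\Val_k(F)$. One then reduces to the codimension-one case, where $\Val_k(F)$ is described by even/odd functions on the sphere of $F^*$ (via the normal vector). There, invariance under a non-elliptic $A|_F$ forces the function to be supported on a lower-dimensional invariant set: by Lemma \ref{lem:few_zero_orbits} generic orbits on $F^*$ do not accumulate at $0$ and escape to infinity. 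Such a function is therefore zero by continuity combined with linearity relations. Finally, Klain/Schneider applied to all such $F$ together with density of $A$-orbits give $\phi=0$.
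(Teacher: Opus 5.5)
The genuine gap is in your mixed/unit-modulus case.

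\textbf{The expanding case.} For the even part this matches the paper's Case 1: the inequality you write holds once you evaluate the relation at $A^{-k_j}E$ and use the uniform lower bound from Corollary \ref{cor:determinant}. The odd part is left open, although you say it is discussed below. An $L^2$-orthogonal representative is not $A$-equivariant, so you only have the relation modulo $L$. The paper instead obtains an exactly invariant lift by solving $(\id-A)\zeta=\xi$ in $L$.

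\textbf{The mixed/unit-modulus case.} The dynamical step runs in the wrong direction. From $f(E)=f(A^jE)\,|\det{}_P(A^j|_E)|$ and $\sup|f|<\infty$ you only get $|f(E)|\le \sup|f|\cdot|\det{}_P(A^j|_E)|$. This forces $f(E)=0$ only when the determinant \emph{decays} along the orbit of $E$, as $j\to+\infty$ or as $j\to-\infty$. Growth of the determinant only shows that $f$ vanishes at the limit plane $\lim_j A^jE$. For a shear, or for $A=\mathrm{diag}(2,1/2)$ with $n=2$, $k=1$, a generic line is expanded in both time directions, so nothing follows. That example is not ``borderline'': the determinant does not stay bounded.

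More fundamentally, no argument that uses only continuity or boundedness can work here. This applies to $\Kl_\phi$, and equally to the McMullen function $g$ describing $\Val_k(F)$ for $\dim F=k+1$. The reason is that the statement is false for continuous valuations:
\begin{itemize}
\item For the shear $A(x,y)=(x+y,y)$, the valuation $K\mapsto\vol_1(\pi_yK)$ with $\pi_y(x,y)=y$ is a nonzero $A$-invariant element of $\Val_1(\R^2)$.
\item For $A=\mathrm{diag}(2,1/2)$, the valuation $K\mapsto\int_{S^1}\sqrt{|u_1u_2|}\,dS_1(K,u)$ is likewise nonzero, continuous and $A$-invariant.
\end{itemize}
Your last paragraph reduces to exactly such low-dimensional situations; for $k=n-1$ the reduction is even vacuous, since $F=\R^n$. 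So ``zero by continuity combined with linearity relations'' must fail. Also, for $k<n-1$ the $A$-invariant $(k+1)$-planes are fixed points of $A$ on $\Gr_{k+1}(\R^n)$ and form a proper closed subset. No orbit-density argument can therefore propagate $\phi|_F=0$ from them to all $F$.

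\textbf{What is missing: a use of smoothness.} The paper proceeds as follows.
\begin{itemize}
\item It reduces to $\det A>0$ by replacing $A$ with $A^2$ and invoking Lemma \ref{lem:radical_elliptic}, and assumes there is no $G$-invariant metric.
\item It views the $n$-form $\tau_\phi$ as a \emph{continuous} $G$-invariant section over $\mathbb P_+(V^*)$, with fiber $\eta^{\otimes(n-k)}\otimes\largewedge^{n-k}\eta^\perp\otimes\largewedge^kV^*$ at $\eta$.
\item By Lemmas \ref{lem:unbounded} and \ref{lem:few_zero_orbits}, for a dense set of lines $\eta=\R v$ the orbit $Gv$ is unbounded and $0\notin\overline{Gv}$. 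Write $\tau_\phi(\eta)=c\,v^{\otimes(n-k)}\otimes Q$ and suppose $c\neq0$.
\item Boundedness of $\tau_\phi$ along $g_iv\to\infty$ forces $g_iQ\to0$. Lemma \ref{lem:zero_implies_infinite} then gives $h_iQ\to\infty$, hence $h_iv\to0$, a contradiction.
\item So $c=0$ on a dense set, $\tau_\phi=0$, and $\phi=0$.
\end{itemize}
Smoothness enters through the continuity of $\tau_\phi$, which is strictly stronger than continuity of $\Kl_\phi$. Indeed $\tau_\phi$ involves second derivatives of the data: for $n=2$, $k=1$ and $\phi=\int g\,dS_1$, it is a multiple of $(g+g'')\,\alpha\wedge d\theta$, where $\alpha$ is the contact form and $\theta$ the angle on $S^1$. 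This fails to be continuous for both examples above.
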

\proof
Denote $V=\R^n$, and $G=\{A^m\}_{m=-\infty}^\infty$. Let $|\lambda_1|\geq\dots\geq |\lambda_n|$ be the spectrum of $A$ including (algebraic) multiplicities.

{\bf Case 1:}  $\{|\lambda|-1:\lambda\in\sigma(A)\}$ all have the same sign. Replacing $A$ by $A^{-1}$ if necessary, we may assume $|\lambda|>1$ for all $\lambda\in \sigma(A)$.

Write $\phi=\phi_++\phi_-$, the even and odd components, both of which are $G$-invariant.

Assume first $\phi_+\neq 0$. Let $\kappa$ be its Klain section. Choose a metric $P$ and a sequence $k_j$ as in Corollary \ref{cor:determinant}, and identify $\kappa$ with a smooth function $\tilde \kappa$ on $\Gr_k(\R^n)$ using $P$ to trivialize the Klain bundle over $\Gr_k(\R^n)$ of densities over the tautological bundle.  The  $G$-invariance of $\kappa$ translates to 
\begin{equation} \label{eq_invariance_tilde_kappa}
	\tilde \kappa(E)=\tilde \kappa(A^mE) |\det(A^m:E \to A^mE)|
\end{equation}
for all $E$ and $m$.

 Since $\phi_+ \neq 0$, there exists some $E$ with $\tilde \kappa(E)\neq 0$. Choose a subsequence of $k_j$, still denoted $k_j$, so that $A^{-k_j}E\to E_0\in\Gr_k(\R^n)$. It then follows from \eqref{eq_invariance_tilde_kappa} applied to $A^{-k_j}E$ that $\tilde\kappa(A^{-k_j}E) =\tilde \kappa(E)  |\det(A^{k_j}:E \to A^{k_j}E)| \to\infty$, while by the continuity of $\tilde\kappa$ we have $\tilde\kappa(A^{-k_j}E)\to \tilde\kappa(E_0)$, a contradiction. Thus Case 1 cannot hold if $\phi_+\neq 0$.

Next assume $\phi_-\neq 0$. By Schneider's theorem we have the short exact sequence \[0\to L\to \Gamma(\widetilde{\mathrm{Fl}}_{k,k+1}(\R^n), \Dens(E)) \to \Val_k^{-,\infty}(\R^n)\to 0.\]
Denote $W=\Gamma(\widetilde{\mathrm{Fl}}_{k,k+1}(\R^n), \Dens(E))$. We claim that $\phi_-\in \Val_k^{-,\infty}(\R^n)^G$ has a $G$-invariant lift $\theta\in W$. It is easily seen to follow from the fact that $\id-A$ is invertible for the induced action of $A$ on $L$: choose any lift $s$ of $\phi_-$ which then satisfies $As=s+ \xi$ with $\xi\in L$. 
We want to find $\zeta\in L$ such that \[A(s+\zeta)=s+\zeta\iff \xi+A\zeta=\zeta\iff (\id-A)\zeta=\xi.\]
The invertibility of $\id-A$ on $L$ in turn follows from the existence of $(I-A)^{-1}\in\mathrm{GL}(\R^n)$ by the assumption on $\sigma(A)$.

We can now repeat the argument for $\phi_+$. Choose a metric $P$ and a sequence $k_j$ as in Corollary \ref{cor:determinant}. Use $P$ to identify $W$ with $C^\infty(\widetilde{\mathrm{Fl}}_{k,k+1}(\R^n))$. Then $\theta$ becomes $\tilde \theta\in C^\infty(\widetilde{\mathrm{Fl}}_{k,k+1}(\R^n))$,
which satisfies an equation analogous to \eqref{eq_invariance_tilde_kappa}:
\begin{equation} \label{eq_invariance_tilde_kappa2}
	\tilde \theta(F,E)=\tilde \theta(A^mF,A^mE) |\det(A^m:E \to A^mE)|.
\end{equation}

Since $\phi_- \neq 0$, there exists $(F,E)$ with $\tilde \theta(F, E)\neq 0$. Choosing a subsequence of $k_j$ such that $A^{-k_j}(F, E)\to (F_0, E_0)$, we deduce that $\lim \tilde\theta( A^{-k_j}(F, E))=\infty$, a contradiction. We conclude that Case $1$ cannot hold under the assumptions of the theorem.

{\bf Case 2:} $|\lambda_1|\geq1 \geq |\lambda_n|$. We first assume $\det A>0$. Assume in the way of contradiction there is no $G$-invariant metric on $V$, equivalently on $V^*$. 

Since $\det A>0$, we may fix an orientation on $V$ so that the $n$-form 
$\tau=\tau_{\phi}$ of $\phi$ is a non-zero $G$-invariant element of 
\begin{align*}
	\Omega^{n-k}(\mathbb P_+(V^*), \largewedge^{k} V^*) & =\Gamma(\mathbb P_+(V^*),\largewedge^{n-k}(\xi^*\otimes V^*/\xi)^*\otimes\largewedge^{k}V^*)\\
	& = \Gamma(\mathbb P_+(V^*),\largewedge^{n-k}(\xi \otimes \xi^\perp) \otimes\largewedge^{k}V^*)\\
	& =  \Gamma(\mathbb P_+(V^*),\xi^{\otimes {n-k}}\otimes \largewedge^{n-k} \xi^\perp\otimes\largewedge^k V^*).
\end{align*}

Let $\eta\in \mathbb P_+(V^*)$ be a line such that $G v$ is unbounded and $0\notin\overline{G v}$ for any non-zero $v\in \eta$. Those assumptions hold by Lemmas \ref{lem:unbounded} and \ref{lem:few_zero_orbits} for a dense set of lines $\eta$.

Write $\tau|_\eta=c(\eta) v^{\otimes k}\otimes Q$, where $v\in\eta, v \neq 0$,  
$Q\in  \largewedge^{n-k}\eta^\perp\otimes\largewedge^k V^* $, $Q\neq 0$, and $c(\eta)\in\R$. Assume $c(\eta)\neq 0$.

Take a sequence $g_i\in G$ such that $g_iv\to \infty$. By the smoothness and $G$-invariance of $\tau$, it must hold that $g_iQ\to 0$.

By Lemma \ref{lem:zero_implies_infinite}, there is a sequence $h_i\in G$ such that $h_iQ\to\infty$ in 
$\largewedge^{n-k}V\otimes\largewedge^k V^*$. Again since $\tau$ is smooth, $h_iv\to 0$, a contradiction. It follows that $c(\eta)=0$ for a dense set of $\eta$, and so $\tau=0$. Thus $\phi=0$, in contradiction.

We conclude that there is an invariant metric for $G$, that is $A$ is of elliptic type.

Finally, if $\det A<0$, then we may apply the previous argument to $A^2$. Thus $A^2$ is elliptic type, and by Lemma \ref{lem:radical_elliptic} so is $A$.
\endproof

\begin{Remark}
We observe that in fact we prove something stronger. For contracting or expanding matrices $A$ (case 1), there are no KS-continuous invariant valuations in the sense of \cite[Definition 3.4]{bernig_faifman_opq}. 
\end{Remark}

\begin{Theorem}\label{thm:precompact}
	Let $G\subset \mathrm{GL}_n(\R)$ be any subgroup, and $\phi\in\Val_k^\infty(\R^n)^G$ non-zero with $1\leq k\leq n-1$.
	Then $G$ is precompact, that is there is a $G$-invariant metric on $\R^n$.
\end{Theorem}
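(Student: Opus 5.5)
By Theorem \ref{thm:elliptic}, every element $A\in G$ is of elliptic type, since $A^*\phi=\phi$ with $\phi\neq0$ and $1\le k\le n-1$. The task is therefore to pass from "every element is elliptic" to "the group is precompact". For abstract subgroups this passage is not automatic, and it is here that the deeper group-theoretic input enters. I would first replace $G$ by its Zariski closure $H\subset \mathrm{GL}_n(\R)$, or at least by its topological closure $\overline G$. The space $\Val_k^\infty(\R^n)$ is a continuous Fr\'echet representation of $\mathrm{GL}_n(\R)$, so the stabilizer of $\phi$ is closed and $\overline G$ still fixes $\phi$. Hence every element of the closed (Lie) subgroup $\overline G$ is of elliptic type.

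The main step is to show that a Lie subgroup $\overline G\subset\mathrm{GL}_n(\R)$ all of whose elements are elliptic is precompact. I would argue in three stages. First, the identity component $\overline G^0$ is handled by the connected case: every element of its Lie algebra $\g$ generates a one-parameter group of elliptic elements, so $\mathrm{ad}$ and the representation have purely imaginary, semisimple spectra. This implies that $\g$ is compact (reductive with compact semisimple part, and the center acting by elliptic elements), so $\overline G^0$ is precompact in $\mathrm{GL}_n(\R)$. Alternatively, one can quote Theorem B of \cite{bernig_faifman_kotrbaty_part1}, or redo its argument. Second, the component group $\Gamma=\overline G/\overline G^0$ may be infinite and is not necessarily finitely generated, which is the genuine difficulty. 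Averaging a metric over the compact group $\overline{\overline G^0}$ gives a subspace of forms preserved by all of $\overline G$. We may then work with the image of $G$ in the normalizer of a compact group, modulo that compact group, which reduces the question to a group all of whose elements are elliptic.

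For such groups, the plan is to invoke the classical fact that a linear group consisting of elliptic elements (bounded cyclic subgroups) is conjugate into $O(n)$, i.e.\ is bounded. Via Zariski closure this runs as follows. Each $g\in G$ has eigenvalues of modulus $1$, so all characters of $\overline{G}^{Zar}$ restricted to $G$ have modulus-one values, and unipotent elements of $G$ are trivial. I would use the Tits alternative to split into cases: if $G$ contains a non-abelian free subgroup, one derives a contradiction or boundedness via the Jordan decomposition of the Zariski closure, since unipotent radical elements cannot be elliptic. If instead $G$ is virtually solvable, then by the Lie--Kolchin theorem a finite-index subgroup is triangularizable over $\C$, with diagonal entries of modulus one. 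Semisimplicity of every element then forces the unipotent part to be trivial, giving a commuting family of elliptic matrices, which is simultaneously orthogonalizable. Finally, Lemma \ref{lem:radical_elliptic}-type averaging over the finite index passes boundedness up to $G$. The main obstacle is the non-amenable case. There I would rely on the theorem (e.g.\ via Zariski density arguments) that a Zariski-connected real algebraic group in which a Zariski-dense subgroup has all elements elliptic must be compact, because any non-compact simple factor contains Zariski-dense sets of hyperbolic elements (by Benoist--Labourie / Prasad's existence of $\R$-regular elements in Zariski-dense subgroups). Once $H$ is compact, $G\subset H$ is precompact and an invariant metric is obtained by averaging.
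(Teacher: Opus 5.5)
Your first step matches the paper. The stabilizer of $\phi$ is closed, so you may pass to $\overline G$, and every element of $\overline G$ is of elliptic type by Theorem~\ref{thm:elliptic}. That leaves one task: show that a closed subgroup of $\mathrm{GL}_n(\R)$ consisting of elliptic-type elements is compact. The paper does this by citing Fresnel--van der Put; your attempt to prove it yourself has a genuine gap.

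\textbf{The second and third stages.} Your second stage is circular: it ``reduces'' the question to a group all of whose elements are elliptic, which is the question itself. Your third stage never uses closedness, and the statements you rely on there are false without it. Neither the ``classical fact'' that a linear group of elliptic elements is bounded, nor the claim that a Zariski-connected real algebraic group with a Zariski-dense subgroup of elliptic elements is compact, is true. For example:
\begin{itemize}
\item Let $\rho\colon F_2\to SU(2)\subset SO(4)$ be injective, and consider the affine isometries $a\colon x\mapsto\rho(a)x$ and $b\colon x\mapsto\rho(b)x+w$ of $\R^4$ with $w\neq 0$, viewed in $\mathrm{GL}_5(\R)$. The group they generate is free.
\item Each $g\neq 1$ in it acts as $x\mapsto\rho(g)x+c(g)$ with $\rho(g)\in SU(2)\setminus\{I\}$. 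So $I-\rho(g)$ is invertible and $g$ has a fixed point. Conjugating by the corresponding translation turns $g$ into the orthogonal matrix $\rho(g)\oplus 1$, so every element is of elliptic type.
\item But $a$ fixes only $0$, while $b(0)=w\neq 0$. So there is no common fixed point, and the group is unbounded. Its Zariski closure is the non-compact group $SU(2)\ltimes\R^4$.
\end{itemize}

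\textbf{Where your non-amenable case fails.} The Prasad/Benoist--Labourie input only shows that the reductive quotient of the Zariski closure $H$ has compact real points. You dismiss the unipotent radical $R_u(H)$ with ``unipotent radical elements cannot be elliptic.'' That only yields $G\cap R_u(H)=\{1\}$, not $R_u(H)=\{1\}$, and the example above has exactly this configuration. The free-subgroup branch of your Tits alternative also carries no argument, since compact groups such as $SU(2)$ contain free subgroups.

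\textbf{What a correct proof needs.} It must use that \emph{all} elements of the closed group $\overline G$ are elliptic, not just a dense subgroup. Your first stage is fine and gives $\overline G^0$ compact. You would then still have to show that the torsion group $\overline G/\overline G^0$ is finite, and nothing in your sketch does this. That is the nontrivial content the paper takes from the Fresnel--van der Put theorem, applied directly to the closed group $\overline G$.
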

In other words, the stabilizer of a nonzero $\phi\in\Val_k^\infty(\R^n)$ with $1\leq k\leq n-1$ in $\mathrm{GL}_n(\R)$ is compact.
\proof
We may replace $G$ by its closure in $\mathrm{GL}_n(\R)$, so that it is a Lie subgroup by Cartan's theorem. By Theorem \ref{thm:elliptic}, all $A\in G$ are elliptic type. It then follows by a theorem of Fresnel-van der Put \cite{fresnel_vanderput} that $G$ is compact. 
\endproof

As an immediate corollary we finally prove Theorem \ref{mthm:existence}.
\begin{proof}[Proof of Theorem \ref{mthm:existence}]
If a bi-invariant metric exists, its first intrinsic volume is a bi-invariant valuation as desired. 

In the other direction, let $\phi\not\in \Span\{\chi, \vol_G\}$ be a bi-invariant valuation. Substracting multiples of $\chi$ and $\vol_G$, we may suppose $\phi\in\mathcal W_k(G)\setminus \mathcal W_{k+1}(G)$, $1\leq k\leq n-1$. The principal symbol of $\phi$ at $e\in G$ is $\psi\in \Val_k^\infty(\mathfrak g)$ which by assumption is nonzero and $\Ad_G$-invariant. By Theorem \ref{thm:precompact}, there is an $\Ad_G$-invariant metric on $\mathfrak g$, giving rise to a bi-invariant metric on $G$.
\end{proof}

\subsection{Semisimple groups}

In \cite[Theorem B]{bernig_faifman_kotrbaty_part1} we have shown by abstract arguments that for compact connected Lie groups, the space of bi-invariant smooth valuations is infinite-dimensional except for the groups $S^3$ and $\mathrm{SO}(3)$. Here we show that for semisimple Lie groups with identity component other than $S^3, \mathrm{SO}(3)$, already the tube coefficients span an infinite-dimensional space.

\begin{Proposition}
	Let $G$ be a compact semisimple Lie group equipped with a bi-invariant riemannian metric. Then $\mathcal{TC}(G)$ is infinite dimensional unless $\g=\mathfrak {so}(3,\R)$.
\end{Proposition}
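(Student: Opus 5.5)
The plan is to show that if $\mathcal{TC}(G)$ were finite-dimensional, then the tube volume $\epsilon\mapsto\vol(Z_\epsilon)$ of a single point would satisfy a linear ODE with constant coefficients, and then to rule this out by a direct computation unless $\g=\mathfrak{so}(3,\R)$.

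\emph{Step 1: reduction to an ODE.} Write $\upsilon_1^{\ast m}=m!\upsilon_m$, which follows from the corollary stating that $\mathcal{TC}(G)$ is spanned by the $\upsilon_m$ and generated by $\upsilon_1$. Suppose $\mathcal{TC}(G)$ is finite-dimensional. Then there is $N$ with $\upsilon_1^{\ast N}=\sum_{j<N}c_j\upsilon_1^{\ast j}$. Convolving with powers of $\upsilon_1$, the sequence $a_m:=\upsilon_1^{\ast m}=m!\upsilon_m$ satisfies a linear recurrence with constant coefficients $c_j$ independent of $m$. Evaluate at any $Z\in\mathcal P(G)$ and use Theorem \ref{mthm:tube}, $\vol(Z_\epsilon)=\sum_m a_m(Z)\epsilon^m/m!$. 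It follows that $f_Z(\epsilon):=\vol(Z_\epsilon)$ satisfies the linear ODE
\begin{displaymath}
f_Z^{(N)}=\sum_{j<N}c_jf_Z^{(j)}
\end{displaymath}
for small $\epsilon$. In particular $f_Z$ is an exponential polynomial $\sum_\lambda p_\lambda(\epsilon)e^{\lambda\epsilon}$ with only finitely many frequencies $\lambda$, and these do not depend on $Z$.

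\emph{Step 2: the ball around a point.} Take $Z=\{e\}$ and use the geometric proof of Theorem \ref{mthm:tube}, namely $\exp_\epsilon^*\vol_G=\Re^S_\g(\rho_\epsilon)$ with $\Re_\g\zeta_\epsilon=-F_\epsilon(\ad\xi)$. This gives
\begin{displaymath}
\frac{d}{d\epsilon}\vol(B_\epsilon)=\epsilon^{n-1}\int_{S(\g)}\det F_1(\epsilon\,\ad\xi)\,d\xi .
\end{displaymath}
For compact $\g$, $\ad\xi$ has eigenvalues $\pm i\alpha(\xi)$ over the positive roots, together with zeros. Hence the integrand equals
\begin{displaymath}
\prod_{\alpha>0}\left(\frac{2\sin(\epsilon\alpha(\xi)/2)}{\epsilon\alpha(\xi)}\right)^2 .
\end{displaymath}
This is an entire function of $\epsilon$, so the ODE from Step 1 extends to all real $\epsilon$ by analyticity. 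Expanding the product, $\epsilon^{n-1}\det F_1$ becomes a trigonometric polynomial in $\epsilon$ whose frequencies are the sums $\sum_\alpha \pm\alpha(\xi)$, with coefficients that are rational functions of $\xi$ and polynomials in $\epsilon$.

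\emph{Step 3: the main obstacle.} The hardest step is to show that after integrating over $\xi\in S(\g)$ the result is not an exponential polynomial, unless all roots have constant absolute value on the sphere. That exceptional case happens only for $\g=\mathfrak{so}(3)$, where $|\alpha(\xi)|$ is constant, consistent with the $\sin^2\epsilon$ computed for $S^3$.

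\begin{itemize}
\item Spectral argument: I would try first to view $f'(\epsilon)$ as a tempered distribution in $\epsilon\in\R$ and take its Fourier transform. An exponential polynomial with purely imaginary frequencies (boundedness forces this) has Fourier transform supported on finitely many points. The $\xi$-integral, by contrast, is a superposition of frequencies $\alpha(\xi)$ that vary continuously.
\item Continuous spectrum for a single root: the pushforward of the measure on $S(\g)$ under $\xi\mapsto\alpha(\xi)$ has a nonatomic component whenever $\alpha$ is nonconstant on the sphere. This is the case as soon as $\dim\g>3$: either the rank is at least $2$, or $\g$ is non-simple, and a compact semisimple algebra of rank $1$ is $\mathfrak{so}(3)$.
\item Avoiding cancellation: one has to check that the contributions of different frequency combinations do not cancel. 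To do this I would isolate the extreme frequency $\sum_{\alpha>0}\alpha(\xi)$, i.e. twice the pairing with $\rho$ after conjugating $\xi$ into a Weyl chamber, which appears with a nonzero coefficient. I would then show that its pushforward measure has an absolutely continuous part near the top of its range, since the map $\xi\mapsto\rho(\xi)$ restricted to the sphere is nonconstant and real-analytic.
\end{itemize}

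This yields a contradiction, so $\mathcal{TC}(G)$ is infinite-dimensional.
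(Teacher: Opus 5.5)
\textbf{Gap: Step 3 is false for every compact semisimple $\g$ of odd rank $r\geq 3$.} It is not just the hardest step. Its claim fails, so no argument using only $Z=\{e\}$ can work.

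Take $G=SU(2)^3=S^3\times S^3\times S^3$ with the product of round unit metrics. It is compact, semisimple, and $\g\neq\mathfrak{so}(3,\R)$. For small $\epsilon$,
\[
\vol(B_\epsilon)=\int_{t\in[0,\pi]^3,\ |t|\leq\epsilon}\prod_j 4\pi\sin^2 t_j\,dt .
\]
Expand $\prod_j\sin^2(\epsilon\theta_j)$ into exponentials and use $\int_{S^2}e^{i\langle x,\theta\rangle}d\theta=4\pi\sin|x|/|x|$. This gives
\[
\frac{d}{d\epsilon}\vol(B_\epsilon)=4\pi^4\sum_{m=0}^{3}(-1)^m\binom{3}{m}\frac{\epsilon\sin(2\sqrt{m}\,\epsilon)}{2\sqrt{m}},
\]
where the $m=0$ term is read as $\epsilon^2$. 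As a check, the leading Taylor term is $\frac{32\pi^4}{105}\epsilon^8=\vol(S^8)\epsilon^8$, as it must be. This is an exponential polynomial, so $f_{\{e\}}$ does satisfy a constant-coefficient linear ODE and you get no contradiction.

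The same happens for every odd rank $r\geq 3$. By the Weyl integration formula your integral equals
\[
c\,\epsilon^{r-1}\int_{S(\mathfrak h)}\prod_{\alpha>0}4\sin^2(\epsilon\alpha(\eta)/2)\,d\eta .
\]
For odd $r$, each spherical mean
\[
\int_{S^{r-1}}e^{i\epsilon\langle v,\eta\rangle}d\eta=c'\int_{-1}^1 e^{i\epsilon|v|t}(1-t^2)^{(r-3)/2}dt
\]
is $\epsilon^{-(r-2)}$ times an exponential polynomial. The prefactor $\epsilon^{r-1}$ clears all these denominators.

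\textbf{Why the spectral heuristic fails.} The polynomial prefactor in $\epsilon$ acts by differentiation on the Fourier side. The pushforward density of the extreme frequency is indeed absolutely continuous; for $SU(2)^3$ it is, by Archimedes, constant near the top of its range. But after $r-1$ derivatives of a piecewise polynomial density of degree at most $r-3$, only finitely many atoms at the breakpoints survive. Integrating over the normal cycle of a single point throws away too much information.

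\textbf{The repair, which is the paper's proof: do not integrate.} The map $\phi\mapsto\tau_\phi$ is linear, so the relation $\upsilon_1^{\ast N}=\sum_{j<N}c_j\upsilon_1^{\ast j}$ passes to the $n$-forms $\tau_{\upsilon_m}=\Re^S_\g\varrho_m$, pointwise on $SG$. The wedge-degree-one part of $\varrho_m$ is $\frac{1}{m!}[\xi^{m-1}\gamma]$, realized as $\frac{1}{m!}\ad_\xi^{m-1}|_{\xi^\perp}$. Hence $p(\ad_\xi)=0$ on $\xi^\perp$ for every unit $\xi$, where $p(x)=x^{N-1}-\sum_{1\leq j<N}c_jx^{j-1}$ is a fixed monic polynomial.

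So every eigenvalue $\pm i\alpha(\xi)$ of $\ad_\xi$ on $\xi^\perp$ is a root of $p$. If the rank is at least $2$, a root $\alpha$ takes a continuum of values on $S(\mathfrak h)$, which is impossible; rank one forces $\g=\mathfrak{so}(3,\R)$. The paper phrases this through $\beta=-\lambda^2$ and a Vandermonde determinant. Of your setup, only the recurrence from Step 1 is needed.
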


\proof
Consider the $n$-forms of $\mathcal{TC}(G)$. The subspace of bi-degree $(n-1, 1)$ consists, after the Hodge star, of the forms $[\xi^k, \gamma]$, $k\geq 0$. We will verify those span an infinite-dimensional space unless $\g=\mathfrak {so}(3,\R)$.

If the span is finite dimensional, there is an identity of the form
\begin{displaymath}
	\sum_{j=0}^N c_j \ad_\xi^j=0,
\end{displaymath} 
which holds on $\xi^\perp$, for all $\xi$. 

Let $u, v$ span an invariant subspace of $\ad_\xi$ with $\ad_\xi u=\lambda v$, $\ad_\xi v=-\lambda u$, $\lambda \neq 0$. Putting $\beta=-\lambda^2$ and plugging $u$, it must hold that
\begin{displaymath}
	\sum_{k=0}^{\lfloor N/2 \rfloor} c_{2k} \beta^k=0, \quad \sum_{k=0}^{\lfloor (N-1)/2\rfloor}c_{2k+1}\beta^k=0. 
\end{displaymath}
If we can find $\lfloor N/2\rfloor+1$ different such values $\beta$, Vandermonde's determinant then shows $c_j=0$ for all $j$.

Assume that a root $\alpha$ of $\g$ and $\xi \in S(\h)$ satisfy $\alpha(\xi) \neq 0$. Let $0 \neq u+iv, u,v \in \g$ be in the root space $\g_\alpha$. Noting that $\alpha(\xi) \in \R i$, the equation $\ad_\xi(u+iv)=\alpha(\xi) (u+iv)$ implies that $(u,v)$ is a pair as above with $\lambda=i \alpha(\xi)$ and $\beta=-|\alpha(\xi)|^2$.

If the Cartan algebra $\mathfrak h\subset \mathfrak g$ is of dimension at least $2$, we may take any root $\alpha$ of $\mathfrak g$ and find $\xi,\xi'\in S(\mathfrak h)$ such that $\alpha(\xi')=0\neq \alpha(\xi)$. Connecting $\xi,\xi'$ by a curve in $S(\mathfrak h)$, the continuity of $\alpha$ along this curve provides us with infinitely many different values $\beta$ as desired.

It follows that $\dim\mathfrak h=1$, which implies $\g=\mathfrak {so}(3,\R)$.
\endproof

%\bibliographystyle{plain}
%\bibliography{biblio}

\begin{thebibliography}{10}
	
	\bibitem{alekseev_ivanov}
	Ilya Alekseev and Sergei~O. Ivanov.
	\newblock Higher {J}acobi identities, 2016.
	\newblock Preprint arXiv:1604.05281.
	
	\bibitem{alesker_val_man2}
	Semyon Alesker.
	\newblock Theory of valuations on manifolds. {II}.
	\newblock {\em Adv. Math.}, 207(1):420--454, 2006.
	
	\bibitem{alesker_survey07}
	Semyon Alesker.
	\newblock Theory of valuations on manifolds: a survey.
	\newblock {\em Geom. Funct. Anal.}, 17(4):1321--1341, 2007.
	
	\bibitem{alesker_val_man4}
	Semyon Alesker.
	\newblock Theory of valuations on manifolds. {IV}. {N}ew properties of the
	multiplicative structure.
	\newblock In {\em Geometric aspects of functional analysis}, volume 1910 of
	{\em Lecture Notes in Math.}, pages 1--44. Springer, Berlin, 2007.
	
	\bibitem{alesker_fourier}
	Semyon Alesker.
	\newblock A {F}ourier type transform on translation invariant valuations on
	convex sets.
	\newblock {\em Israel J. Math.}, 181:189--294, 2011.
	
	\bibitem{alesker_bernig_convolution}
	Semyon Alesker and Andreas Bernig.
	\newblock Convolution of valuations on manifolds.
	\newblock {\em J. Differential Geom.}, 107(2):203--240, 2017.
	
	\bibitem{alesker_val_man3}
	Semyon Alesker and Joseph H.~G. Fu.
	\newblock Theory of valuations on manifolds. {III}. {M}ultiplicative structure
	in the general case.
	\newblock {\em Trans. Amer. Math. Soc.}, 360(4):1951--1981, 2008.
	
	\bibitem{allendoerfer}
	Carl~B. Allendoerfer.
	\newblock Steiner's formulae on a general {$S^{n+1}$}.
	\newblock {\em Bull. Amer. Math. Soc.}, 54:128--135, 1948.
	
	\bibitem{bernig_faifman_opq}
	Andreas Bernig and Dmitry Faifman.
	\newblock Valuation theory of indefinite orthogonal groups.
	\newblock {\em J. Funct. Anal.}, 273(6):2167--2247, 2017.
	
	\bibitem{bernig_faifman_kotrbaty_part1}
	Andreas Bernig, Dmitry Faifman, and Jan Kotrbatý.
	\newblock Invariant valuations on {L}ie groups, 2025.
	\newblock Preprint arXiv:2512.01004.
	
	\bibitem{bernig_faifman_solanes_pseudo}
	Andreas Bernig, Dmitry Faifman, and Gil Solanes.
	\newblock Curvature measures of pseudo-{R}iemannian manifolds.
	\newblock {\em J. Reine Angew. Math.}, (788):77--127, 2022.
	
	\bibitem{bernig_fu_solanes_wannerer}
	Andreas Bernig, Joseph Fu, Gil Solanes, and Thomas Wannerer.
	\newblock {The Weyl tube formula for K\"ahler manifolds}.
	\newblock To appear in Geometry and Topology.
	
	\bibitem{bernig_fu_solanes}
	Andreas Bernig, Joseph H.~G. Fu, and Gil Solanes.
	\newblock Integral geometry of complex space forms.
	\newblock {\em Geom. Funct. Anal.}, 24(2):403--492, 2014.
	
	\bibitem{graded_schouten_nijenhuis}
	J.~A. de~Azc\'{a}rraga, J.~M. Izquierdo, A.~M. Perelomov, and J.~C.
	P\'{e}rez-Bueno.
	\newblock The {$Z_2$}-graded {S}chouten-{N}ijenhuis bracket and generalized
	super-{P}oisson structures.
	\newblock {\em J. Math. Phys.}, 38(7):3735--3749, 1997.
	
	\bibitem{duistermaat_kolk}
	J.~J. Duistermaat and J.~A.~C. Kolk.
	\newblock {\em Lie groups}.
	\newblock Universitext. Springer-Verlag, Berlin, 2000.
	
	\bibitem{faifman_heisenberg}
	Dmitry Faifman.
	\newblock Contact integral geometry and the {H}eisenberg algebra.
	\newblock {\em Geom. Topol.}, 23(6):3041--3110, 2019.
	
	\bibitem{faifman_hofstaetter}
	Dmitry Faifman and Georg~C. Hofst\"atter.
	\newblock Convex valuations from {W}hitney to {N}ash.
	\newblock {\em Duke Math. J.}, 174(14):3063--3133, 2025.
	
	\bibitem{faifman_wannerer_fourier}
	Dmitry Faifman and Thomas Wannerer.
	\newblock The {F}ourier transform on valuations is the {F}ourier transform.
	\newblock {\em J. Funct. Anal.}, 288(3):Paper No. 110741, 42, 2025.
	
	\bibitem{fresnel_vanderput}
	Jean Fresnel and Marius van~der Put.
	\newblock Compact subgroups of {{\(\text{GL}_n(\mathbb C)\)}}.
	\newblock {\em Rend. Semin. Mat. Univ. Padova}, 116:187--192, 2006.
	
	\bibitem{fu_wannerer}
	Joseph H.~G. Fu and Thomas Wannerer.
	\newblock Riemannian curvature measures.
	\newblock {\em Geom. Funct. Anal.}, 29(2):343--381, 2019.
	
	\bibitem{gray_vanhecke}
	A.~Gray and L.~Vanhecke.
	\newblock The volumes of tubes in a {R}iemannian manifold.
	\newblock {\em Rend. Sem. Mat. Univ. Politec. Torino}, 39(3):1--50 (1983),
	1981.
	
	\bibitem{gray_submanifolds}
	Alfred Gray.
	\newblock Volumes of tubes about complex submanifolds of complex projective
	space.
	\newblock {\em Trans. Amer. Math. Soc.}, 291(2):437--449, 1985.
	
	\bibitem{gray_tubes}
	Alfred Gray.
	\newblock {\em Tubes}, volume 221 of {\em Progress in Mathematics}.
	\newblock Birkh\"{a}user Verlag, Basel, second edition, 2004.
	\newblock With a preface by Vicente Miquel.
	
	\bibitem{horn_johnson}
	Roger~A. Horn and Charles~R. Johnson.
	\newblock {\em Matrix analysis}.
	\newblock Cambridge University Press, Cambridge, second edition, 2013.
	
	\bibitem{hotelling}
	Harold Hotelling.
	\newblock Tubes and {S}pheres in n-{S}paces, and a {C}lass of {S}tatistical
	{P}roblems.
	\newblock {\em Amer. J. Math.}, 61(2):440--460, 1939.
	
	\bibitem{klain00}
	Daniel~A. Klain.
	\newblock Even valuations on convex bodies.
	\newblock {\em Trans. Amer. Math. Soc.}, 352(1):71--93, 2000.
	
	\bibitem{manetti}
	Marco Manetti.
	\newblock {\em Lie methods in deformation theory}.
	\newblock Springer Monographs in Mathematics. Springer, Singapore, [2022]
	\copyright 2022.
	
	\bibitem{santalo}
	Luis~A. Santal\'o.
	\newblock {\em Integral geometry and geometric probability}, volume Vol. 1 of
	{\em Encyclopedia of Mathematics and its Applications}.
	\newblock Addison-Wesley Publishing Co., Reading, Mass.-London-Amsterdam, 1976.
	\newblock With a foreword by Mark Kac.
	
	\bibitem{schneider96}
	Rolf Schneider.
	\newblock Simple valuations on convex bodies.
	\newblock {\em Mathematika}, 43(1):32--39, 1996.
	
	\bibitem{solanes_trillo}
	Gil Solanes and Juan~Andr{\'e}s Trillo.
	\newblock Tube formulas for valuations in complex space forms.
	\newblock {\em Math. Ann.}, 391(1):881--913, 2025.
	
	\bibitem{solanes_wannerer}
	Gil Solanes and Thomas Wannerer.
	\newblock Integral geometry of exceptional spheres.
	\newblock {\em J. Differential Geom.}, 117(1):137--191, 2021.
	
	\bibitem{weyl_tubes}
	Hermann Weyl.
	\newblock On the {V}olume of {T}ubes.
	\newblock {\em Amer. J. Math.}, 61(2):461--472, 1939.
	
\end{thebibliography}
\end{document}